\newtheorem{lemma}{Lemma}[section]
\newtheorem{proposition}[lemma]{Proposition}
\newtheorem{theorem}[lemma]{Theorem}
\newtheorem{corollary}[lemma]{Corollary}
\newtheorem{criterion}[lemma]{Criterion}
\newtheorem{predf}[lemma]{Definition} 
\newenvironment{df}{\begin{predf}\rm}{\end{predf}}
\newtheorem{preremark}[lemma]{Remark}  
\newenvironment{remark}{\begin{preremark}\rm}{\end{preremark}}
\newtheorem*{prenotation}{Notation}
\newenvironment{notation}{\begin{prenotation}\rm}{\end{prenotation}}
\numberwithin{equation}{section}
\newcommand\mmnote[1]{}
\newcommand{\nin}{\not\in}
\newcommand{\bC}{\mathbb{C}}
\newcommand{\bF}{\mathbb{F}}
\newcommand{\bP}{\mathbb{P}}
\newcommand{\bR}{\mathbb{R}}
\newcommand{\bZ}{\mathbb{Z}}
\newcommand\lra{\longrightarrow}
\newcommand\Diff{\mathrm{Diff}}
\newcommand\Emb{\mathrm{Emb}}
\newcommand\Bun{\mathrm{Bun}}
\newcommand\Th{\mathrm{Th}}
\newcommand\colim{\operatorname*{colim}}
\newcommand{\Fib}{\mathrm{Fib}}
\newcommand{\map}{\mathrm{map}}
\def\vv{v}
\def\ww{w}
\def\ppartial{\pp}
\newcommand\mirror[1]{\reflectbox{$#1$}}
\def\resp{resp. \!}                         										
\def\Id{{\mathrm{Id}}}        																	
\newcommand\hofib{\mathrm{hofib}}
\def\Im{\operatorname*{Im}}       															
\newcommand\cl[1]{\operatorname{cl}{#1}}												
\def\rigtharrow{\rightarrow}
\newcommand\stminus[2]{#1(#2)}																		
\def\manifold{manifold\xspace}
\def\manifolds{manifolds\xspace}
\def\neat{neat\xspace}              														
\def\d{\delta}                    															
\def\pp{\partial^0}\def\dd{\delta^0}														
\def\ppp{\partial^1}\def\ddd{\delta^1}													
\newcommand\y[1]{\tilde{#1}}																		
\newcommand\z[1]{\overline{#1}}																			
\def\SS{\Sigma}              																		
 \def\Diffor{\Diff^+} 				    																
\def\E{\mathcal{E}}          																		
\newcommand{\e}[2]{\E^+_{#1}(#2)}    															
\newcommand{\JJ}[2]{J_{#1}(#2)}														
\newcommand{\N}{N}																			
\newcommand{\Ncl}{\overline{\N}}												
\def\Tub{\mathrm{\overline{Tub}}}
\def\TubEmb{{\mathrm{\overline{\mathrm{T}}\Emb}}}
\def\GL{\mathrm{GL}}
\def\op{\mathrm{op}}
\def\inj{\mathrm{inj}}                                					
\newcommand{\OO}[3]{{\mathcal O}_{#1}(#2)_{#3}}       					
\newcommand{\h}[3]{{\mathcal O}^1_{#1}(#2)_{#3}}      					
\renewcommand{\b}[3]{{\mathcal O}^2_{#1}(#2)_{#3}}    					
\newcommand\A[2]{A_{#1}(#2)}
\def\aa{\mathfrak{a}}                  													
\def\ab{\mathfrak{b}}																						
\def\ii{\mathfrak{i}} 																					
\newcommand{\p}[3]{\mathcal{P}_{#1}(#2)_{#3}}     												
\def\gg{\mathfrak g}																						
\def\Ss{{\mathcal S}_2^\perp}																    
\def\Sss{{\mathcal S}_{n-2}}																		
\def\s{\mathscr{S}^\tau} 																						
\def\sss{\mathscr{S}^\nu}																				
\def\Gr{{\rm Gr}}                   														
\def\Top{{\rm\bf Top}}
\def\arctg{\operatorname*{arctan}}			
\def\exp{\operatorname*{exp}}																		
\def\fib{\mathrm{fib}}																					
\newcommand{\Dp}[2]{D_\partial(#1;#2)_{\bullet}}
\newcommand{\D}[2]{D(#1;#2)_\bullet}             			
\def\DDn{D^\natural}
\newcommand{\Dn}[2]{\DDn(#1;#2)_\bullet}               
\newcommand{\Dpn}[2]{D^\natural_\partial(#1;#2)_\bullet} 
\def\H{\mathcal H}      																				
\newcommand{\Gmc}[1]{\Gamma_c(\Sss(T{#1}) \to #1)}
\newcommand{\Gmg}[2]{\Gamma_c(\Sss(T#1) \to #1)_{#2}}
\def\Sq{\mathrm{Sq}}																						
\newcommand{\en}[2]{\E^\nu_{#1}(#2)}														
\newcommand{\f}[2]{\mathcal{F}_{#1}(#2)}     									  
\newcommand{\g}[2]{\mathcal{G}_{#1}(#2)}     									  
\newcommand{\supp}{\operatorname*{supp}}
\title{Homological stability for spaces of embedded surfaces}
\author{Federico Cantero Mor\'an}
\thanks{F.\ Cantero Mor\'an was funded through FPI Grant BES-2008-002642 and by Michael Weiss Humboldt professor grant. He was partially supported by project MTM2013-42178-P funded by the Spanish Ministry of Economy. He specially thanks Nathalie Wahl and the Centre for Symmetry and Deformation for their hospitality and the organisers of the 2010 West Coast Algebraic Topology Summer School, where this collaboration began.}
\email{fcant\_01@uni-muenster.de}
\address{Mathematisches Institut \\
Universit\"at M\"unster \\
Einsteinstr. 62 \\
D-48149 M\"unster}
\author{Oscar Randal-Williams}
\thanks{O.\ Randal-Williams was supported by ERC Advanced Grant No.\228082, the Danish National Research Foundation through the Centre for Symmetry and Deformation, and the Herchel Smith Fund.}
\email{o.randal-williams@dpmms.cam.ac.uk}
\address{DPMMS\\
Wilberforce Road\\
Cambridge CB3 0WB\\
UK}
\date{\today}
\subjclass[2010]{
57R40, 
57S05,  
57R50,  
57R20,  
55R40}  
\begin{document}
\begin{abstract}
We study the space of oriented genus $g$ subsurfaces of a fixed manifold $M$, and in particular its homological properties. We construct a ``scanning map'' which compares this space to the space of sections of a certain fibre bundle over $M$ associated to its tangent bundle, and show that this map induces an isomorphism on homology in a range of degrees. 

Our results are analogous to McDuff's theorem on configuration spaces, extended from 0-dimensional submanifolds to 2-dimensional submanifolds.
\end{abstract}
\maketitle
\section{Introduction}

Let $M$ be a smooth manifold, not necessarily compact and possibly with boun\-da\-ry. Our object of study will be certain spaces of oriented surfaces in $M$, which we define as follows. Let $\Sigma_g$ denote a connected closed oriented smooth surface of genus $g$, and let $\Emb(\Sigma_g, M)$ denote the space of all smooth embeddings of this surface into the interior of $M$, equipped with the $C^\infty$ topology. The topological group $\Diff^+(\Sigma_g)$ of orientation preserving diffeomorphisms 
acts continuously and freely on $\Emb(\Sigma_g, M)$, and we define
$$\E^+(\Sigma_g, M) := \Emb(\Sigma_g, M) / \Diff^+(\Sigma_g)$$
to be the quotient space. As a set, $\E^+(\Sigma_g, M)$ is in bijection with the set of all subsets of $M$ which are smooth manifolds diffeomorphic to $\Sigma_g$, equipped with an orientation: hence we refer to $\E^+(\Sigma_g, M)$ as the \emph{moduli space of genus $g$ oriented surfaces in $M$}.

We will study the space $\E^+(\Sigma_g, M)$ using a technique called \emph{scanning}, which compares this space of surfaces in $M$ with a certain space of ``formal surfaces in $M$''. In order to introduce this space, we define, for an inner product space $(V, \langle -, - \rangle)$, the space \label{def:ssv}
$$\Ss(V):= \Th(\gamma_2^\perp \to \Gr_2^+(V)).$$
That is, we take the Grassmannian $\Gr_2^+(V)$ of oriented $2$-planes in $V$, consider the tautological $2$-plane bundle $\gamma_2 \subset V \times \Gr_2^+(V)$, and form its orthogonal complement $\gamma_2^\perp$ using the inner product on $V$. Then we take the Thom space of this vector bundle. We will denote the point at infinity by $\infty \in \Ss(V)$.

If $V \to B$ is a vector bundle with metric, we let $\Ss(V) \to B$ be the fibre bundle obtained by performing this construction fibrewise to $V$. The constant section with value $\infty$ in each fibre gives a canonical section of this bundle.


We fix a Riemannian metric $\gg$ on $M$. The \emph{space of formal surfaces} in $M$ is defined to be
$$\Gamma_c(\Ss(TM) \to M;\infty),$$
the space of sections of $\Ss(TM) \to M$ which are compactly supported, i.e., agree with the canonical section $\infty$ outside of a compact set and on $\partial M$. Every such section chooses for each point $x \in M$ either an oriented affine 2\nobreakdash-dimensional subset of $T_x M$ or the empty subset. The scanning construction associates to each oriented surface $\Sigma \subset M$ such a section by---loosely speaking---assigning to each $x \in M$ the best approximation to $\Sigma$ by an affine subset of $T_x M$.

To make this precise, we let $\E^\tau(\Sigma_g, M) \subset (0, \infty) \times \E^+(\Sigma_g, M)$ be the set of pairs $(\epsilon, W)$ such that the exponential map $\mathrm{exp} \colon NW \to M$ from the normal bundle of $W$ to $M$ (defined using the metric $\gg$ on $M$) restricts to an embedding of the subspace $N^\epsilon W \subset NW$ of vectors of length $< \epsilon$. We also fix a smooth family of diffeomorphisms $\varphi_\epsilon\colon [0,\infty)\cong [0,\epsilon)$.

We then define a map $M \times \E^\tau(\Sigma_g, M) \to  \Ss(TM)$ by 
\begin{align*}
(p, (\epsilon, W)) & \longmapsto 
\begin{cases}
\infty \in \Ss(T_p M) & \text{if } \begin{array}{l} p \nin \exp(N^\epsilon W),\end{array}\\
(D_v\exp)(T_q W -\varphi_\epsilon^{-1}(\|v\|)\cdot v) \subset T_pM & \text{if } p = \exp(v \in N_q^\epsilon W),
\end{cases}
\end{align*}
where we consider the affine oriented 2-plane $T_q W -\varphi_\epsilon^{-1}(\|v\|)\cdot v $ in $T_q W \oplus N_q W$ as lying inside $T_v(N^\epsilon W)$ using the canonical linear isomorphism between these last two vector spaces.
The adjoint to this map,
\begin{equation*}\label{eq:scanning}
\mathscr{S}_g^\tau \colon \E^\tau(\Sigma_g, M) \lra \Gamma_c(\Ss(TM) \to M;\infty),
\end{equation*}
is the \emph{scanning map} (cf.\ Section \ref{section:closed2}). As the forgetful map $\E^\tau(\Sigma_g, M) \to \E^+(\Sigma_g, M)$ is a weak homotopy equivalence, we often consider $\mathscr{S}_g^\tau$ as a map from $\E^+(\Sigma_g, M)$.

In Section \ref{sec:PathComp} we construct a function $\chi \colon \Gamma_c(\Ss(TM) \to M;\infty) \to \bZ$ such that $\chi \circ \mathscr{S}_g^\tau$ takes constant value $2-2g$; we think of $\chi$ as sending a formal surface to its Euler characteristic, and write $\Gamma_c(\Ss(TM) \to M;\infty)_g = \chi^{-1}(2-2g)$. The simplest form of our theorem is then as follows.

\begin{theorem}\label{thm:Main}
If $M$ is simply-connected and of dimension at least $5$, then the scanning map $\mathscr{S}_g^\tau \colon \E(\Sigma_g, M) \to \Gamma_c(\Ss(TM) \to M;\infty)_g$ induces an isomorphism in integral homology in degrees smaller than or equal to $\tfrac{1}{3}(2g-2)$.
\end{theorem}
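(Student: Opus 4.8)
The plan is to follow the standard architecture of scanning arguments (as in McDuff's work on configuration spaces and its later refinements by Segal, McDuff--Segal, and the second author), adapted to $2$-dimensional submanifolds. The core idea is to interpolate between the global moduli space $\E^+(\Sigma_g, M)$ and the section space by a \emph{local-to-global} argument: both sides should be expressible as homotopy colimits of a ``scanning sheaf'' over $M$ built out of small patches, and one checks that the comparison map is a local equivalence patch by patch, then invokes a Mayer--Vietoris / excision spectral sequence. Concretely, I would first set up a \emph{compactly supported, parametrised} version of the moduli space: for an open $U \subseteq M$ let $\E^+_c(\Sigma_g, U)$ be surfaces in $U$ that are standard near $\partial U$, and similarly $\Gamma_c(\Ss(TU) \to U; \infty)$; the scanning map is natural in $U$ and compatible with gluing along codimension-$0$ inclusions. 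The homological stability input—presumably established in an earlier section and here used as a black box through the genus range $\tfrac{1}{3}(2g-2)$—is what lets one stabilise the genus when cutting $M$ into pieces, since adding a handle in one patch must not obstruct the colimit.

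The key steps, in order, are: (1) Reduce to the case where $M$ is the interior of a compact manifold, or at least handle-decomposable, using that homology commutes with the relevant colimits over an exhaustion of $M$ by compact pieces, and that the section space and the moduli space both behave well under such limits (this is where ``compactly supported'' is essential). (2) Prove the theorem for $M = \bR^n$ (equivalently, a single handle of index $0$): here $\Gamma_c(\Ss(T\bR^n) \to \bR^n; \infty) \simeq \Omega^n \Ss(\bR^n)$, and $\E^+(\Sigma_g, \bR^n)$ scanned into this space is exactly the content of a group-completion / Madsen--Weiss-type statement in the stable range—this is the analytic/homotopical heart and should be quotable from the literature on the cobordism category or proved by an explicit parametrised surgery argument. (3) Induct up the handle dimension: given the result for $M$, attach a handle $H = D^k \times D^{n-k}$ along $S^{k-1} \times D^{n-k}$, and compare the pushout squares computing $\E^+(\Sigma_g, M \cup H)$ and the section space $\Gamma_c(\Ss(T(M\cup H)) \to M\cup H;\infty)$; both are homotopy pushouts of the respective invariants of $M$, of $H$, and of the overlap $S^{k-1}\times D^{n-k}\times \bR$, and scanning gives a map of squares. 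Because $M$ is simply connected and $\dim M \geq 5$, one can arrange the handles with $2 \le k \le n-2$, so that attaching maps are unobstructed and the overlap contributes nothing new homologically; the five-lemma (in the form of the relative version for the homology spectral sequence of the pushout) then propagates the isomorphism, with the range $\tfrac{1}{3}(2g-2)$ preserved because at each stage we only ever need surfaces of the \emph{same} genus $g$ (the genus does not change under handle attachment to the ambient manifold), so no stability loss accrues beyond what is already built into the inputs.

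The main obstacle I expect is step (2) together with the compatibility needed in step (3): one must show that the scanning map is a homology equivalence (in the stable range) for an elementary piece \emph{and} that this equivalence is natural enough in $U$ to assemble. The subtlety is that $\E^+(\Sigma_g, U)$ for a patch $U$ is not literally a section space, so one needs an intermediate ``semi-simplicial resolution'' of the moduli space by surfaces decomposed into a bounded number of standard bits sitting in small balls—essentially a parametrised version of the cobordism category $\mathcal{C}_{n}$—and then a group-completion theorem identifying its geometric realisation, after stabilising the genus, with the section space. Controlling the connectivity of this resolution (so that it computes homology in the claimed range, and so that the genus stabilisation only costs the already-assumed $\tfrac{1}{3}(2g-2)$) is where the real work lies; the rest of the proof is a formal assembly via excision once that local statement and its naturality are in hand.
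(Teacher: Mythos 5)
Your proposal takes a genuinely different route from the paper's, and while it captures some of the right underlying tools (semi-simplicial resolutions, group completion, scanning), there is a gap at the central step that I do not think can be repaired without effectively abandoning the handle-induction architecture.

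The problem is your claim in step (3) that $\E^+(\Sigma_g, M \cup H)$ is a homotopy pushout of $\E^+(\Sigma_g, M)$, $\E^+(\Sigma_g, H)$ and $\E^+(\Sigma_g, S^{k-1}\times D^{n-k}\times\bR)$. This is false: a genus-$g$ surface embedded in $M\cup H$ intersects each piece in a surface \emph{with boundary, of some genus between $0$ and $g$, with some number of boundary components}, and the pieces glue along closed $1$-manifolds in the overlap. The moduli spaces one obtains by cutting are therefore indexed over all possible distributions of genus and boundary data, and the correct statement is not a pushout but a homotopy colimit over a cobordism-category-style diagram. In particular, your remark that ``the genus does not change under handle attachment to the ambient manifold, so no stability loss accrues'' conflates the genus of $\Sigma_g$ (fixed) with the genera of its pieces after cutting (which vary arbitrarily); the entire point of invoking homological stability and group completion is to pass to the stable range where this bookkeeping can be ignored, and the precise ranges for the three gluing maps $\alpha$, $\beta$, $\gamma$ in Theorem \ref{thm:stab} show that the loss is real and must be tracked. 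You acknowledge this obliquely in the last paragraph, but at that point the ``intermediate semi-simplicial resolution essentially a parametrised cobordism category'' is no longer a subroutine inside a handle induction --- it \emph{is} the argument, and the handle induction is unnecessary scaffolding. Moreover step (1), reducing the closed case to the open case via compactly supported colimits, is not routine either: removing a point from a closed $M$ changes both sides in a nontrivial way, and one must compare them carefully.

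For contrast, the paper avoids any handle induction. It proves Theorem \ref{thm:MainH} for manifolds with non-empty boundary directly, all at once, by constructing semi-simplicial models $D_\partial(M_{\infty,\infty};\xi)_\bullet$ in which the spaces of simplices fibre over configuration-like spaces with fibres the moduli spaces of surfaces-with-boundary in a smaller manifold; a surgery move (Section \ref{section:surgery}, following GMTW) shows these models are equivalent to their non-``$\partial$'' counterparts, and then a McDuff--Segal group completion (Proposition \ref{prop:group-completion}) converts the homological stability of Theorem \ref{thm:stab} into the identification of the stable homology with that of the section space. The closed case is handled separately (Section \ref{section:closed}) by resolving both the moduli space and the section space over configuration spaces of points in $M$ (removing small discs), which reduces levelwise to the open case already proved. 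The hypothesis that $M$ is simply connected of dimension $\ge 5$ is used not for handle theory but for Haefliger's embedding theorem and general position (to produce arcs and discs for the resolutions, and to compute $\pi_0$ in Lemma \ref{lemma:spaces-pi0}). So if you want to make your sketch work, you would need to prove that the embedded-surface moduli space satisfies a descent statement over open covers of $M$ in the appropriate derived sense --- which is morally what the $\Psi$-space technology does, but proving it is the hard content of the theorem, not a black box you can invoke to set up a Mayer--Vietoris argument.
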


This should be compared with the theorem of McDuff \cite[Theorem 1.1]{McDuff}, which can be viewed as a similar result for spaces of embedded $0$-manifolds, i.e.\ configuration spaces.

\begin{remark}
In principle the homology of spaces of embeddings of a surface into Euclidean space modulo diffeomorphisms can be computed as follows: by \cite{ALV} the rational homology of the fibre of the map from embeddings in Euclidean space to immersions can been computed; by Smale--Hirsch theory the homology of the space of immersions can be computed; then one can study the spectral sequence for the action of the diffeomorphism group of the surface on the space of embeddings. In practice, there are many difficulties in following this programme. By contrast, Theorem \ref{thm:Main} can be used to compute these rational homology groups in one step. Such calculations will appear in forthcoming work of the first author.
\end{remark}

Theorem \ref{thm:Main} follows from two rather more technical results. First, a homology stability theorem analogous to Harer stability \cite{H}. To make sense of such a result it is essential to discuss surfaces with boundary, and we will shortly define spaces of surfaces with boundary inside a manifold $M$ with non-empty boundary. A large part of our work is devoted to proving this homology stability theorem, which requires new techniques. The second technical result is analogous to the Madsen--Weiss theorem \cite{MW}, and identifies the stable homology of these spaces of surfaces. To describe these results we must introduce more terminology.

\subsection{Surfaces with boundary}

Now we suppose that $M$ has non-empty boundary $\partial M$ and that we are given a collar $C \colon \partial M \times [0,1) \hookrightarrow M$. Let $\Sigma_{g,b}$ be a fixed smooth oriented surface of genus $g$ with $b$ boundary components, and let $c \colon \partial \Sigma_{g,b} \times [0,1) \hookrightarrow \Sigma_{g,b}$ be a collar. We also fix an embedding $\delta \colon \partial \Sigma_{g,b} \hookrightarrow \partial M$, which we call a \emph{boundary condition}.

Let $\Emb(\Sigma_{g,b}, M;\delta)$ be the set of those embeddings $f$ that extend to an embedding $F\colon \SS_{g,b}\cup (\partial \SS_{g,b}\times I)\to M\cup (\partial M\times I)$ such that for all $x\in \partial \SS_{g,b}$ and $t\in I$,
\begin{align*}
f(x) = \delta(x), \quad  F(x,t) = (f(x),t).
\end{align*}

Similarly, let $\Diff^+(\SS_{g,b})$ be the group of those diffeomorphisms that extend to a diffeomorphism of $\SS_{g,b}\cup (\partial \SS_{g,b}\times I)$ that is the identity on $\SS_{g,b}\times I$. We endow both sets with the Whitney topology and define
$$\E^+(\Sigma_{g,b}, M;\delta) = \Emb(\Sigma_{g,b}, M;\delta) / \Diff^+(\Sigma_{g,b}).$$

Let $Q\colon \partial M \leadsto N$ be a cobordism which is collared at both boundaries. Then we can glue $Q$ to $M$ along $\partial M$ to obtain a new manifold $M \circ Q$ (using the collars to obtain a smooth structure). Similarly, if $e \colon \Sigma_{b + b'} \hookrightarrow Q$ is an embedding of a surface with $b$ boundary components in $\partial M$ and $b'$ in $N$ (which we call the \emph{incoming} and \emph{outgoing} boundaries $\partial_{\mathrm{in}}$, $\partial_{\mathrm{out}}$ respectively) such that every component of the surface intersects the incoming boundary, and if $e(\partial_{\mathrm{in}}\Sigma_{b+b'}) = \mathrm{Im}(\delta)$, we obtain a \emph{gluing map}
\begin{align*}
\E^+(\Sigma_{g,b}, M;\delta) &\lra \E^+(\Sigma_{h, b'}, M \circ Q;e\vert_{\partial_{\mathrm{out}} \Sigma_{b+b'}})\\
(W \subset M) & \longmapsto (W \cup e(\Sigma_{b+b'}) \subset M \circ Q)
\end{align*}
where the value of $h$ depends on the combinatorics of the topology of $\Sigma_{b + b'}$ (note that, as $\Sigma_{g,b}$ is connected and every component of $\Sigma_{b+b'}$ intersects the incoming boundary, $\Sigma_{g, b} \cup \Sigma_{b+b'}$ is connected).

In particular, if we let $Q = \partial M \times [0,1]$ and choose a diffeomorphism $M \circ Q \cong M$ (for example by reparametrising the collar in M), we obtain gluing maps $\E^+(\Sigma_{g,b}, M;\delta) \to \E^+(\Sigma_{h, b'}, M;\delta')$.

\subsection{Homological stability}

There are three basic types of gluing maps which suffice to generate all general gluing maps under composition. These are when $\Sigma_{b+b'}$ is
\begin{enumerate}
\item the disjoint union of a pair of pants with the legs as incoming boundary and a collection of cylinders;
\item the disjoint union of a pair of pants with the waist as incoming boundary and a collection of cylinders;
\item the disjoint union of a disc with its boundary incoming and a collection of cylinders. 
\end{enumerate}
When these surfaces are embedded in $\partial M \times [0,1]$, we call them \emph{stabilisation maps}, and we denote them by
\begin{eqnarray*}
\alpha_{g,b} = \alpha_{g,b}(M;\delta,\delta')\colon \E^+(\SS_{g,b},M;\delta)&\longrightarrow &\E^+(\SS_{g+1,b-1},M;\delta')\\[0.2cm]
\beta_{g,b} = \beta_{g,b}(M;\delta,\delta')\colon \E^+(\SS_{g,b},M;\delta)&\longrightarrow& \E^+(\SS_{g,b+1},M;\delta') \\[0.2cm]
\gamma_{g,b} = \gamma_{g,b}(M;\delta,\delta')\colon \E^+(\SS_{g,b},M;\delta)&\longrightarrow& \E^+(\SS_{g,b-1},M;\delta').
\end{eqnarray*}
As a warning to the reader, we remark that \emph{the notation does not determine the map}: we will often write, for example, $\beta_{g,b}$ to denote \emph{any} gluing map of this type. There are many because there can be many non-isotopic embeddings of $\Sigma_{b+b'}$ into $\partial M \times [0,1]$.
\vspace{-0.5cm}
\begin{figure}[h]
\centering
\subfloat[$\alpha_{2,2}(D^3)$]{\includegraphics[trim=0mm 0mm 0mm 90mm, clip, width=0.33\textwidth]{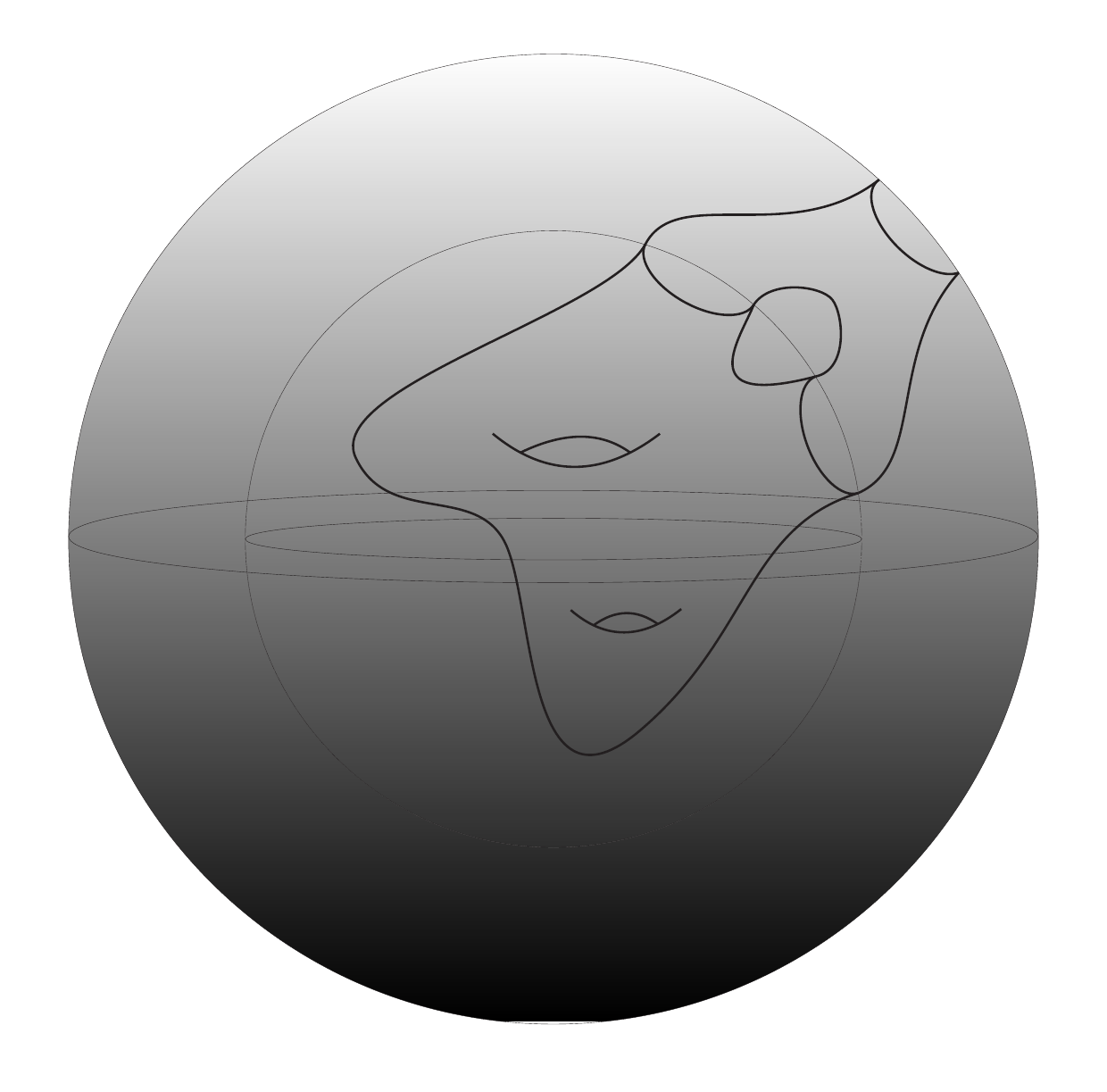}}
\subfloat[$\beta_{2,2}(D^3)$]{\includegraphics[trim=0mm 0mm 0mm 90mm, clip, width=0.33\textwidth]{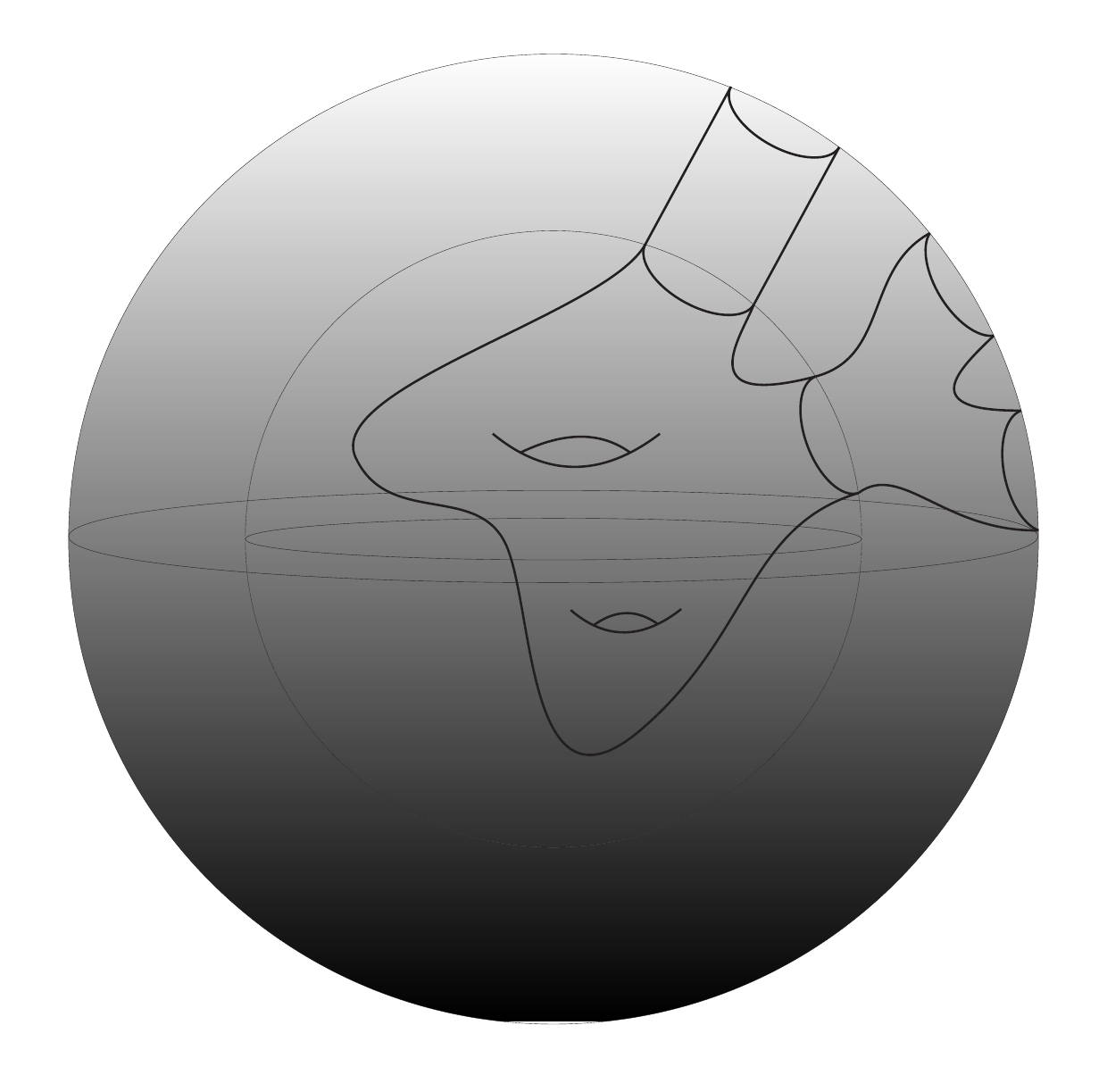}}
\subfloat[$\gamma_{2,2}(D^3)$]{\includegraphics[trim=0mm 0mm 0mm 90mm, clip, width=0.33\textwidth]{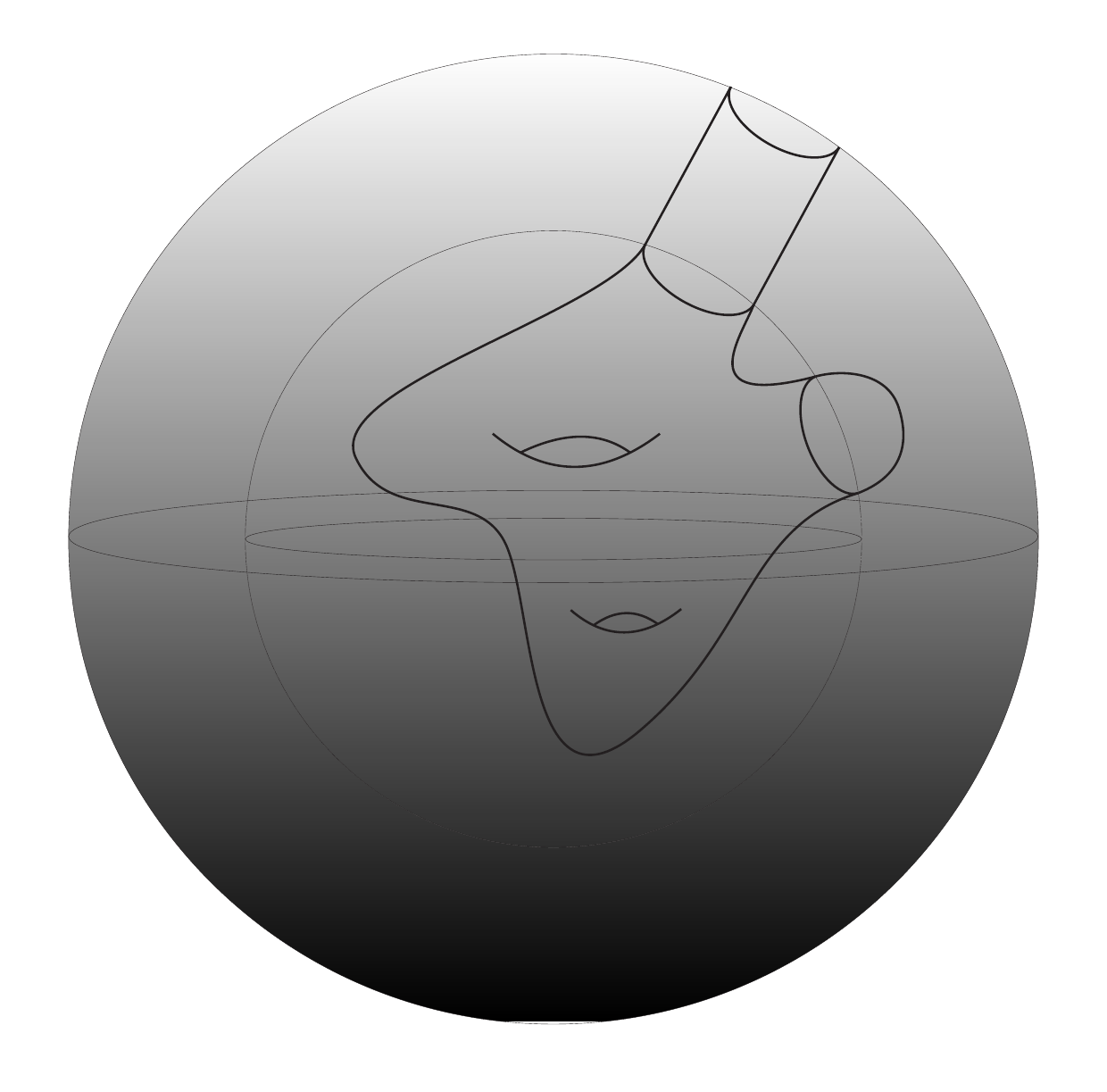}}
\caption{The three basic types of stabilisation maps acting on a surface in the space $\E(\SS_{2,2},D^3;\delta)$ of surfaces in the disc of dimension $3$.}
\end{figure}

The following is our main result concerning homological stability. 

\begin{theorem}\label{thm:stab} 
Let $M$ be a simply-connected manifold of dimension at least $5$. If the dimension of $M$ is $5$, we assume that the pairs of pants defining stabilisation maps are contained in a ball.
\begin{enumerate}
	\item Every map $\alpha_{g,b}$ induces an isomorphism in homology in degrees less than or equal to $\frac{1}{3}(2g-2)$, and an epimorphism in the next degree. 
	\item\label{thm:stab:it:2}  Every map $\beta_{g,b}$ induces an isomorphism in homology in degrees less than or equal to $\frac{1}{3}(2g-3)$, and an epimorphism in the next degree. If one of the newly created boundaries of the pair of pants is contractible in $\partial M$, then the map $\beta_{g,b}$ is also a monomorphism in all degrees.
	\item Every map $\gamma_{g,b}(M;\delta,\delta')$ induces an isomorphism in homology in degrees less than or equal to $\frac{2}{3}g$, and an epimorphism in the next degree. If $b\geq 2$, then it is always an epimorphism.
\end{enumerate}
\end{theorem}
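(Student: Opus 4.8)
The plan is to prove Theorem~\ref{thm:stab} by adapting the ``resolutions of moduli spaces'' technique to the embedded setting, organised as a double induction: an outer induction on the genus $g$, and an inner argument deducing each of $\alpha_{g,b}$, $\beta_{g,b}$, $\gamma_{g,b}$ from the others of lower complexity.

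\textbf{Step 1: semi-simplicial resolutions.} To each stabilisation map I would attach an augmented semi-simplicial space resolving its target. For $\beta_{g,b}\colon \E^+(\SS_{g,b},M;\delta)\to \E^+(\SS_{g,b+1},M;\delta')$, let $X_p$ be the space of pairs consisting of a surface $W$ in the target together with an ordered $(p{+}1)$-tuple of pairwise disjoint embedded arcs in $M$ joining the two boundary circles created by $\beta_{g,b}$, each agreeing near its endpoints with a fixed arc in the collar $\partial M\times[0,1]$ and each non-separating in $W$ in a suitable sense; the $i$-th face map forgets the $i$-th arc. Cutting $W$ along such an arc and rounding corners --- which can be arranged by an arbitrarily small ambient isotopy keeping the surface embedded --- produces a surface of type $\SS_{g,b}$, so that the augmentation $X_0\to \E^+(\SS_{g,b+1},M;\delta')$ is weakly equivalent to $\beta_{g,b}$ and, more generally, cutting along all $p{+}1$ arcs identifies $X_p$ up to weak equivalence with a disjoint union of moduli spaces $\E^+(\SS_{g',b'},M;-)$ of strictly smaller genus, on which the face maps become stabilisation maps of type $\alpha$ and $\beta$. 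For $\alpha_{g,b}$ one uses instead tuples of disjoint embedded arcs with both endpoints on a single boundary circle; the map $\gamma_{g,b}$ is then handled either by the analogous resolution through embedded discs in $M$ meeting $W$ only along their boundaries, or else deduced from $\alpha$ and $\beta$ using the geometric relations among the three stabilisations.

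\textbf{Step 2: high connectivity of the augmentation --- the main obstacle.} The heart of the argument is to show that $|X_\bullet|\to \E^+(\SS_{g,b+1},M;\delta')$ is $c(g)$-connected for a function $c$ growing linearly in $g$. Concretely, given a relative $k$-cycle represented by a family of surfaces carrying a ``formal'' family of arcs, one must surger it to an honest family of pairwise disjoint embedded arcs realising a prescribed combinatorial configuration. This is where the hypotheses on $M$ enter. Since the arcs are $1$-dimensional, they can be made disjoint from one another and from the $2$-dimensional surface by general position as soon as $\dim M\geq 4$; but making the cutting operation coherent across the simplicial directions, and producing prescribed non-separating configurations for a \emph{family} of surfaces, requires cancelling excess intersections by the Whitney trick, which needs $\dim M\geq 5$ together with $\pi_1(M)=1$ to kill the relevant $\pi_1$-obstructions and signs. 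The case $\dim M=5$ is the boundary case in which the surgeries involving the $2$-dimensional surface itself cannot be carried out globally, and it is precisely then that one must confine the pairs of pants defining the stabilisation maps to a ball, so that the necessary moves take place inside a standard $\bR^5$. Combined with the classical linear connectivity of arc complexes on the abstract surface in the spirit of Harer \cite{H}, this yields the connectivity estimate. I expect this step to be by far the most technically demanding part of the argument.

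\textbf{Step 3: spectral sequence and conclusion.} With Steps 1 and 2 in hand, the skeletal filtration of $X_\bullet$ yields a first-quadrant spectral sequence with $E^1_{p,q}=H_q(X_p)$ converging, in the range guaranteed by Step~2, to $H_{p+q}(\E^+(\SS_{g,b+1},M;\delta'))$, whose $d^1$-differential is the alternating sum of the face maps. By Step~1 these are stabilisation maps of strictly smaller genus, hence homology isomorphisms in the appropriate range by the outer induction hypothesis; feeding this into the standard connectivity argument of the homological stability literature shows that $(\beta_{g,b})_*\colon H_*(X_0)\to H_*(\E^+(\SS_{g,b+1},M;\delta'))$ is an isomorphism in degrees $\leq\tfrac13(2g-3)$ and an epimorphism in the next degree, and similarly for $\alpha_{g,b}$ and $\gamma_{g,b}$ with their stated ranges. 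Finally, the supplementary injectivity and surjectivity assertions follow from elementary geometric splittings: if a boundary circle created by $\beta_{g,b}$ bounds a disc in $\partial M$, pushing that disc slightly into $M$ and capping yields a $\gamma$-type map $r$ with $r\circ\beta_{g,b}$ isotopic to the identity, so that $\beta_{g,b}$ is split injective on homology; and when $b\geq 2$ the source of $\gamma_{g,b}$ can be realised as the image of its target under a map $\beta_{g,b-1}$ whose newly created boundary circle is exactly the one capped off by $\gamma_{g,b}$, so that $\gamma_{g,b}\circ\beta_{g,b-1}$ is homotopic to the identity and hence $\gamma_{g,b}$ is surjective on homology.
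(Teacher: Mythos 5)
Your high-level architecture --- resolve the moduli space by an arc complex, feed it into a spectral sequence, and induct on genus --- is indeed the strategy of the paper. But there are several genuine gaps, and you have misplaced where the hypotheses on $M$ are actually used.

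First, the identification of the fibre in Step 1 is incorrect. In the paper the arcs are arcs \emph{in the surface} $W$ (simplices of the abstract complex $O(W;k_0,k_1)_\bullet$), together with an ambient closed tubular neighbourhood $u'\subset M$ of each arc. Proposition \ref{prop:fibrations-arcs} shows that the fibre of the map from $i$-simplices to the space $A_i(M;\delta,\ell)$ of arc configurations is a space of smaller-genus surfaces not in $M$ but in the \emph{modified} ambient manifold $M(\mathbf{u})=\overline{M\setminus u'}$, with a changed boundary condition. This is explicitly flagged in the paper as the main technical departure from the tangential-structure setting of \cite{R-WResolution}: one cannot simply run the induction on moduli spaces of surfaces in a fixed $M$, and an additional ``thick disc'' resolution of the spaces $\e{g,b}{M(u);\delta(u)}$, by half-discs spanning the arc, is introduced in Section \ref{section:discresolutions} precisely to repair this mismatch. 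Your Step 1 sees none of this.

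Second, the connectivity of the augmentation in Step 2 does not come from general position or the Whitney trick in $M$. The semi-simplicial fibre over a surface $W$ is contractibly fibred (via Lemma \ref{lemma:tubular-contractible}) over the abstract arc complex $O(W;\cdot)_\bullet$ of $W$ itself, and the $(g-2)$-connectivity of $|O(W;\cdot)_\bullet|$ is a purely two-dimensional Harer-type statement (Proposition \ref{proposition:RW1-4.1}, quoted from \cite{R-WResolution}) involving no hypotheses on $M$. The hypotheses $\pi_1 M=1$, $\dim M\geq 5$, and the ball condition in dimension $5$ are used instead: (a) to compute $\pi_0\e{g,b}{M;\delta}$ via Haefliger's theorem, which is the base case of the induction (Lemma \ref{lemma:spaces-pi0}); (b) to construct the embedded half-disc spanning the arc for the thick disc resolution (Proposition \ref{prop:resolutions-discs}); and (c) in the isotopy argument of Lemma \ref{lemma:triviality}, which is the only place the dimension-$5$ ball condition enters. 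Third, and most importantly, your Step 3 omits the hardest part: Criterion \ref{criterion3} needs not only that the approximate augmentation is an epimorphism in a range --- which the spectral sequence and induction do give --- but also that it is the \emph{zero} homomorphism. Showing this ``1-triviality'' is a serious geometric argument (Sections \ref{section:discresolutions}--\ref{section:triviality}, in particular Lemmas \ref{lemma:triviality} and \ref{lemma:factor}), and without it the induction does not close. Your final paragraph on the retraction arguments for the supplementary injectivity/surjectivity assertions is essentially correct and matches Remark \ref{remark-gamma-beta}.
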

Note that we do \emph{not} require that $\partial M$ is simply-connected, only that $M$ is. Thus there can be many non-isotopic boundary conditions.

These stability ranges are essentially optimal. The space $\E^+(\Sigma_{g,b}, \bR^\infty;\delta)$ is a model for the classifying space of the mapping class group of $\Gamma_{g,b}$, for which the stability ranges of Theorem \ref{thm:stab} are known to be essentially optimal (cf.\ \cite{Boldsen}). 

Another generalisation of McDuff's stability theorem has been developed by Martin Palmer \cite{Palmer-thesis}. He considers the space of all embedded submanifolds of a chosen diffeomorphism type in a background manifold satisfying certain hypotheses, and stabilises by repeatedly adding disjoint copies of a chosen submanifold near the boundary of the background manifold. The case of $1$-dimensional manifolds in $\bR^3$ is not covered by Palmer's result, but is nonetheless true \cite{Kupers:unlinked}.

\subsubsection{Remarks on the proof of Theorem \ref{thm:stab}}

The proof follows the general structure of \cite{R-WResolution2015}, where it is proven that the moduli space of surfaces with tangential structures satisfies homological stability. It is important to observe that embeddings are not determined by a local condition, hence they do not arise as a tangential structure.

As in \cite{R-WResolution2015}, we approximate the moduli space $\E^+(\Sigma_{g,b},M;\delta)$ by a semi-simplicial space whose spaces of $i$-simplices are themselves moduli spaces of surfaces $W$ together with a decomposition of the surface $W$ into $(i+1)$ 1-handles and a surface of smaller genus. In our case, each handle comes with a preferred isotopy to the boundary of $M$. We have found two ways of dealing with this extra data: the first one breaks the problem into two steps, by first considering semi-simplicial spaces $X_\bullet$ that do not incorporate this extra information, and then further taking a semi-simplicial approximation $Y_\bullet$ of the space $X_0$ of $0$-simplices which exclusively contains information about the preferred isotopies to the boundary. The second way uses a new simplicial technique of Galatius and the second author. An earlier draft of this paper used the first method, and the interested reader can find this argument in the second arXiv version of this paper. Here we present the second method, which is somewhat shorter.

The other main difference from \cite{R-WResolution2015} arises in Section \ref{section:triviality}, and concerns the notion called ``1-triviality''. Whereas in \cite{R-WResolution2015} this amounted to (in the case of oriented surfaces) the classification of surfaces, here it involves classifying surfaces embedded in a non-simply-connected manifold (a collar of $\partial M$), and is far more complicated. A final technical difference, not necessary but enlightening, is the technical Lemma \ref{lemma:factor}. This is an alternative to \cite[Lemma~8.2]{R-WResolution2015} and \cite[Lemma~6.2]{Palmer:oriented}, and allows us to work at the level spaces in the whole Section \ref{section:zero-in-homology}, instead of at the level of homology as in those papers.

\subsection{Stable homology}

To identify the stable homology groups resulting from Theorem \ref{thm:stab}, we require a version of the scanning map for surfaces with boundary. We will not describe it in full detail here, but just say that it is a map
$$\mathscr{S}_{g,b} \colon \E^+(\Sigma_{g,b}, M;\delta) \lra \Gamma_c(\Ss(TM) \to M;\bar{\delta})_g$$
to the space of sections of the bundle $\Ss(TM) \to M$ which are compactly supported and which in addition are equal to a fixed section $\bar{\delta} \colon \partial M \to \Ss(TM)|_{\partial M}$ on the boundary.

\begin{theorem}\label{thm:MainH}
If $M$ is simply-connected and of dimension at least $5$, then the map
$$\mathscr{S}_{g, b} \colon \E^+(\Sigma_{g,b}, M;\delta) \lra \Gamma_c(\Ss(TM) \to M;\bar{\delta})_g$$
induces an isomorphism in homology in degrees less than or equal to $\frac{1}{3}(2g-2)$.
\end{theorem}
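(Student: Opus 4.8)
The plan is to deduce Theorem~\ref{thm:MainH} by combining the two ``technical results'' advertised in the introduction---the homology stability theorem (Theorem~\ref{thm:stab}) and a Madsen--Weiss type identification of the stable homology---together with the corresponding statements for the section space. The basic strategy is to show that scanning is compatible with the stabilisation maps, that both sides stabilise, and that the map induced on the colimits (the ``stable scanning map'') is a homology isomorphism; then Theorem~\ref{thm:stab} transports the isomorphism back into the stable range for each fixed $g$.

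First I would set up a stable object on the surface side: fix a boundary $\partial M \neq \emptyset$ (the closed case is recovered by attaching a collar and taking $b=1$, so it suffices to treat $\E^+(\Sigma_{g,1},M;\delta)$), and iterate the $\alpha$-type stabilisation map $\alpha_{g,1}\colon \E^+(\Sigma_{g,1},M;\delta)\to\E^+(\Sigma_{g+1,1},M;\delta')$, whose pair of pants is supported in a small ball in $\partial M\times[0,1]$ (legal in dimension $5$ by the hypothesis, automatic in higher dimensions). Call the homotopy colimit $\E^+_\infty(M;\delta)$. One checks that the scanning maps $\mathscr{S}_{g,1}$ assemble into a map $\mathscr{S}_\infty\colon \E^+_\infty(M;\delta)\to\Gamma_c(\Ss(TM)\to M;\bar\delta)$, because the effect of an $\alpha$-stabilisation on the associated formal surface is to modify it only inside the small ball supporting the pair of pants, and inside that ball the new surface is $\alpha$-related to the old one \emph{within the ball}, so on the section level one is precomposing with a map that is homotopic to the identity (this uses that the bordism adding a handle in a ball is, after scanning, a loop in the relevant section space over the ball which is nullhomotopic because $\Ss(\bR^d)$ is highly connected for $d\geq 5$). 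By Theorem~\ref{thm:stab}(i), $H_*(\E^+_\infty(M;\delta))\cong H_*(\E^+(\Sigma_{g,1},M;\delta))$ in degrees $\leq\frac13(2g-2)$; and the section space $\Gamma_c(\Ss(TM)\to M;\bar\delta)_g$ is independent of $g$ up to the choice of a path component, so the scanning-map statement for fixed $g$ is equivalent to the statement that $\mathscr{S}_\infty$ is a homology isomorphism onto the appropriate component.

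The heart of the argument is therefore to prove that the stable scanning map $\mathscr{S}_\infty$ is a homology equivalence. Here I would use a parametrised-surgery / Madsen--Weiss style argument in families over $M$: stratify $M$ by handles of a handle decomposition (with all handles of index $\leq d-1$ since $\partial M\neq\emptyset$), and prove by induction up the skeleta that scanning from ``embedded surfaces in (a neighbourhood of) the $k$-skeleton, stabilised'' to ``sections over the $k$-skeleton'' is a homology isomorphism. The base case, over a point/ball, is precisely the content of the Madsen--Weiss theorem \cite{MW} (reinterpreted via the group-completion theorem: $\colim_g\E^+(\Sigma_{g,1},D^d;\delta)$ has the homology of $\Omega^\infty$ of the Thom spectrum whose infinite loop space receives scanning, and over $D^d$ the section space $\Gamma_c(\Ss(TD^d)\to D^d;\infty)\simeq\Omega^d\Ss(\bR^d)$ is exactly that infinite delooping once $d$ is large). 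The inductive step attaches a handle $D^i\times D^{d-i}$: on the section side this is a homotopy pullback (sections glue), and on the surface side one has a corresponding gluing/pushout description of $\E^+_\infty$ coming from the gluing maps of the introduction, and one checks the squares are homology-cartesian using the stability theorem to control the homology of the pieces. Simple connectivity of $M$ is used to ensure the boundary conditions are, up to the relevant connectivity, standard, so that the local (ball) model genuinely computes the local contribution.

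The main obstacle I expect is exactly this inductive gluing step: reconciling the ``global'' pushout structure of the space of \emph{embedded} surfaces with the homotopy-pullback structure of the \emph{section} space. Unlike tangential structures, an embedded surface is not a local/sheaf-like datum---a surface over a handle need not extend, and embeddedness is a nonlocal constraint---so one cannot simply say ``surfaces form a sheaf on $M$''. The fix is to let homological stability do the work: one proves that, \emph{after stabilising}, the obstruction to extending is killed (any two choices over the overlap become isotopic after adding enough genus, by an argument in the style of the proof of Theorem~\ref{thm:stab}), so that the relevant comparison squares become homology-cartesian in a range, and then the colimit squares become genuinely homology-cartesian. Making this precise---identifying the correct homotopy colimit diagram of stabilised surface-spaces indexed by the handle poset, and verifying the homology-cartesian property with honest bounds---is where essentially all the work lies; the rest is bookkeeping plus the two quoted theorems \cite{MW,H} and our Theorem~\ref{thm:stab}.
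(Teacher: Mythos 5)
Your high-level architecture matches the paper's---identify the stable homology via scanning, then use Theorem~\ref{thm:stab} to transport the isomorphism into a range for fixed $g$---but the central technical step is proposed quite differently, and your version has a gap that the paper's route is specifically designed to avoid.

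You propose to prove that the stable scanning map is a homology equivalence by inducting over a handle decomposition of $M$, with Madsen--Weiss as the base case. You correctly anticipate the obstruction: spaces of \emph{embedded} surfaces are not sheaf-like, so gluing along handles doesn't give a homotopy pushout on the surface side, and the comparison with the section-space pullback is not automatic. But your proposed remedy---``after stabilising, any two choices over the overlap become isotopic, so the squares become homology-cartesian in a range''---is exactly where the argument needs to be made, and I don't see how to make it work without essentially reproving a scanning h-principle from scratch. In dimension $\geq 5$ one has a lot of room, but ``stability kills obstructions'' is not a proof of Mayer--Vietoris for moduli of embedded surfaces; it is the statement one is trying to establish.

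What the paper actually does (Sections~\ref{section:stablehomology}--\ref{section:surgery}) sidesteps this entirely. The local-to-global comparison is done once and for all by Proposition~\ref{prop:embedded-61}, imported from \cite{R-WEmbedded}: the non-affine scanning map $\Psi(M)\to\Gamma(\Psi(TM)\to M)$ is a weak equivalence when $M$ has no compact components. The point is that $\Psi(M)$---the space of \emph{all} closed oriented $2$-submanifolds, with no compactness or connectedness assumptions---does behave like a sheaf and does admit a handle-by-handle h-principle argument; that argument lives in \cite{R-WEmbedded}, not here. The remaining work in this paper is then to relate $\Psi(M)$ to the single-genus, connected spaces $\E^+(\Sigma_g,M;\delta)$. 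This is done via the auxiliary semi-simplicial spaces $D(M_{\infty,b};\xi)_\bullet$ and $D_\partial(M_{\infty,b};\xi)_\bullet$, a group-completion argument (Proposition~\ref{prop:group-completion}, invoking \cite[Proposition 4]{McDuff-Segal} and Theorem~\ref{thm:stab} to show $|\pi_\partial|$ is a homology fibration), and, crucially, the surgery result Proposition~\ref{prop:Surgery} (all of Section~\ref{section:surgery}), which shows one can replace the unconstrained semi-simplicial resolution by the connected one up to weak equivalence. Your proposal has no counterpart to the surgery step, and this is where a large fraction of the paper's technical content lies.

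One smaller point: your parenthetical ``the closed case is recovered by attaching a collar and taking $b=1$'' is not quite right---if $\partial M=\emptyset$ there is nothing to attach a collar to, and the surfaces in question are closed. The paper's Section~\ref{section:closed} handles this with a different device: a resolution of $\en{g}{M}$ by discs in $M$ disjoint from the surface (Definition~\ref{defn:DiscRes}), together with a parallel resolution of the section space by homotopies supported in those discs, reducing each space of $i$-simplices to the $\partial M\neq\emptyset$ case via Proposition~\ref{prop:fibrations-sections}.

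So: your sketch identifies the right structural skeleton and correctly locates the hard point, but it replaces the paper's actual mechanism (h-principle for $\Psi(M)$ as a black box, then semi-simplicial group completion plus a surgery move) with a handle-decomposition induction on the moduli spaces themselves, which is precisely the kind of direct Mayer--Vietoris argument that the $\Psi$-formalism exists to avoid. To make your route rigorous you would, in effect, have to re-derive the content of \cite{R-WEmbedded} and of Section~\ref{section:surgery}.
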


This theorem will be proved in Sections \ref{section:stablehomology} and \ref{section:surgery} for manifolds $M$ with non-empty boundary. An additional argument in Section \ref{section:closed} establishes this theorem in full generality. Theorem \ref{thm:Main} follows as the particular case when $b=0$.

\subsubsection{Remarks on the proof of Theorem \ref{thm:MainH}}

The proof of this theorem relies on the results of \cite{GMTW} and  \cite{R-WEmbedded}, where the homotopy type of the cobordism category of submanifolds in a background manifold was found, using an $h$-principle argument. This homotopy type in the case $M=\bR^n$ was previously found by Galatius at the end of \cite{galatius-2006}.

The idea of the proof in the case $\partial M \neq \emptyset$ is to choose an embedding $\bR^{n-1} \subset \partial M$ and then consider the space of all (possibly disconnected) embedded surfaces in $M$ as a module over the cobordism category of surfaces in $\bR^{n-1}\times [0,1]$, and apply a group completion argument as in \cite{GMTW}. This is done in Section \ref{section:stablehomology}. As in \cite{GMTW} we are required to perform a parametrised surgery move in order to pass to a subcategory of ``positive boundary'' cobordisms. However, we then require an additional surgery step to pass to the ``connected surfaces'' submodule. In practice we do these two moves simultaneously. 

While the surgery moves, which are done in Section \ref{section:surgery}, broadly follow \cite{GR-W2}, we give a new technical development of this method. For example, we construct the ``do surgery'' maps as semi-simplicial maps, rather than as \emph{ad hoc} maps defined on the geometric realisation as in \cite{GR-W2}. This is a simplification which will be useful elsewhere.

When $\partial M=\emptyset$ we can not use this argument. One might hope then that the method used by McDuff \cite{McDuff} to solve the analogous problem for configuration spaces in closed manifolds could be adapted, but this is not the case (she uses a long exact sequence
\[\cdots\lra H_i(C_k(M\setminus \{\mathrm{pt}\}))\lra H_i(C_k(M))\lra H_{i-\dim M}(C_{k-1}(M\setminus\{\mathrm{pt}\}))\lra \cdots \]
which does not have a useful generalisation in the case of embedded surfaces).

Therefore, when $\partial M=\emptyset$ we introduce a new technique, which we develop in Section \ref{section:closed3}. We approximate the moduli space of surfaces in $M$ by a semi-simplicial space whose space of $i$-simplices is the moduli space of pairs $(N,W)$, where $N\subset M$ is the complement of $(i+1)$ balls in $M$ and $W$ is a surface in $N$. We construct a similar approximation of the target of the scanning map, and we observe that the scanning map extends to a semi-simplicial map between these semi-simplicial spaces. Finally, as $\partial N \neq \emptyset$ we can apply the case of the theorem already proved to show that this extension of the scanning map induces a levelwise homology isomorphism in a range of degrees, from which we deduce the same for the scanning map.

This technique should also be useful elsewhere: for example, it can be adapted to give a new proof of McDuff's theorem for configuration spaces in closed manifolds.

\section{Manifolds, submanifolds, and fibre bundles}
\subsection{Manifolds with corners} 
Let $A \subset \bR^n$, $B \subset \bR^m$, and let $f: A \to B$ be a continuous map. We say that $f$ is  \emph{smooth} if it extends to a smooth map from an open neighbourhood of $A$ to $\bR^m$. If $f$ is a homeomorphism, we say that it is a \emph{diffeomorphism} if it is smooth and has a smooth inverse. In particular, this defines the notion of diffeomorphism between subsets of $\bR^m_+ := [0,\infty)^m$.

A \emph{manifold with corners} $M$ of dimension $m$ is a Hausdorff, second countable topological space locally modeled on the space $\bR^m_+$ and its diffeomorphisms \cite{Cerf,Laures}.

In detail, a \emph{smooth atlas of $M$} is a family of topological embeddings (called \emph{charts}) $\{\varphi_i\colon U_i\rightarrow [0,\infty)^m\}_{i\in I}$, where each $U_i$ is an open subset of $M$ and if $U_i\cap U_j\neq \emptyset$ then the composite $\varphi_j\varphi_i^{-1}$ is a diffeomorphism from $\varphi_i(U_i\cap U_j)$ to $\varphi_j(U_i\cap U_j)$. A \emph{smooth structure} on a Hausdorff, second countable topological space is a smooth atlas which is maximal with respect to inclusion.
 
A \emph{$k$-submanifold} of a manifold with corners is a subset $W\subset M$ such that for each point $p\in W$ there is a chart $(U,\varphi)$ of $M$ with $p\in U$ such that 
$W\cap U = \varphi^{-1}(v+\bR^{k}_+)$, where $v\in \bR^m_+$ (see \cite[Definition~ 1.3.1]{Cerf}).

Let $N$ and $M$ be manifolds with corners. A continuous function $f\colon M\rightarrow \bR$ is \emph{smooth} if the composition $f\circ\varphi\colon U\rightarrow \bR$ is smooth for any chart $(U,\varphi)$ of $M$.  A continuous map $g\colon N\rightarrow M$ is \emph{smooth} if for each smooth function $f\colon M\rightarrow \bR$, the composite $f\circ g$ is smooth. A \emph{diffeomorphism} is a smooth map with smooth left and right inverse. 
An \emph{embedding} is a smooth map whose image is a submanifold and that induces a diffeomorphism onto its image.

The \emph{tangent space $T_p\bR^m_+$} at a point $p\in \bR^m_+$ is defined to be $T_p\bR^m$. Since diffeomorphisms of $\bR^m_+$ induce isomorphisms between tangent spaces of $\bR^m_+$, we can define the tangent bundle of a manifold with corners following the usual procedure. A smooth map $f\colon N\rightarrow M$ induces a fibrewise linear map $Df\colon TN\rightarrow TM$ between tangent bundles. 

If $(U,\varphi)$ is a chart and $p\in U$, then the number $c(p)$ of zero coordinates in $\varphi(p)$ is independent of the chart. The boundary of $M$ is the subspace 
\[\partial M = \{p\in M\mid c(p)>0\}.\] 
A \emph{connected $k$-face} of $M$ is the closure of a component of the subspace \[\{p\in M\mid c(p) = k\}.\] A manifold with corners $M$ is a \emph{manifold with faces} if each $p\in M$ belongs to $c(p)$ connected $1$-faces. A \emph{face} is a (possibly empty) union of pairwise disjoint connected $k$-faces, for some $k$, and is itself a manifold with faces. Observe that the boundary of a $k$-face is a union of $(k-1)$-faces.
\begin{df} If $M$ is a \manifold and $\pp M$ is a $1$-face in $M$, a \emph{collar} of $\pp M$ is an embedding $c$ of the \manifold $\pp M\times[0,1)$ into $M$ such that $c(x,0) = x$ and such that $c_{|(F\cap \pp M)\times [0,1)}$ is a collar of $\pp M\cap F$ in $F$ for any other $1$-face $F$. A \manifold is \emph{collared} if a $1$-face $\pp M$ and a collar of $\pp M$ are given. 
\end{df}
Note that if $M$ is collared, then we can endow $M\cup (\partial M\times I)$ with a canonical smooth structure.

\begin{df} A map $e\colon B\rightarrow M$ between collared manifolds $B$ and $M$ is a map of $B$ into $M$ that extends to a map $$F\colon B\cup (\pp B\times I)\to M\cup (\pp M\times I)$$ such that for all $x\in \pp B$ and $t\in I$, $F(x,t) = (f(x),t)$.
\end{df}

\begin{notation} During the first seven sections, the word \manifold will be used sy\-no\-ny\-mously with collared manifold with faces, and the word map (or embedding, or diffeomorphism) will always refer to a map between collared manifolds.
\end{notation}

A smooth map $f\colon B\rightarrow M$ between \manifolds is \emph{transverse} to a submanifold $W\subset M$ if for each $p\in B$ such that $f(p)\in W$ we have $Df(T_{p}B) + T_{f(p)}W = T_{f(p)}M$. A smooth map $f\colon B\rightarrow M$ to a \manifold $M$ is \emph{transverse to $\partial M$} if it is transverse to any connected face. A \emph{\neat} embedding $f$ is an embedding that is transverse to the boundary and that is collared if $B$ is collared. A \neat sub\-ma\-ni\-fold is the image of a \neat embedding.

If $A$ and $W$ are submanifolds of manifolds $B$ and $M$, a \emph{\neat embedding of the pair $(B,A)$ into the pair $(W,M)$} is an embedding $e\colon B\rightarrow M$ such that $e^{-1}(W)=A$ 
and such that $\dim (e(T_p B) + T_{e(p)} W) = \min\{\dim B + \dim W, \dim M\} $.

\subsection{Boundary conditions for spaces of embeddings}\label{ss:spaces-of-manifolds}


Let $B$ and $M$ be collared manifolds, and $f \colon B \to M$ be an embedding. Following Cerf, we denote by
\[\Emb(B,M;[f]) \subset \Emb(B,M)\]
the subspace of \neat embeddings $g \colon B \to M$ such that for each $x \in B$ the point $g(x)$ lies in the same face of $M$ as $f(x)$, equipped with Whitney $C^\infty$ topology. Similarly, if $f\colon (B,A)\rightarrow (M,W)$ is an embedding of a pair, we denote by 
\[\Emb((B,A),(M,W);[f])\subset \Emb(B,M;[f])\]
the subspace consisting of \neat embeddings of pairs.

We call a function
$$q : \{\text{connected faces of $B$}\} \lra \{\text{subspaces of $M$}\}$$
a \emph{face constraint}, and we let
\[\Emb(B,M;q) \subset \Emb(B,M)\]
denote the subspace of those embeddings $g : B \to M$ such that for each face $F \subset B$, $g(F) \subset q(F)$. Similarly we denote by 
\[\Emb((B,A),(M,W);q)\subset \Emb(B,M;q)\]
the subspace consisting of \neat embeddings of pairs.

We denote by $\Diff(M)$ (\resp $\Diff_c(M)$) the space of diffeomorphisms (\resp compactly supported diffeomorphisms) of $M$ which restrict to diffeomorphisms of each connected face, endowed with the Whitney $C^\infty$ topology. If $W_1,\ldots,W_k$ is a set of submanifolds of $M$ then we denote by \[\Diff(M;W_1,\ldots,W_k)\subset \Diff(M)\] the subspace of those diffeomorphisms of $M$ which restrict to a diffeomorphism of each $W_j$, which is orientation-preserving if $W_j$ is oriented. We denote by $\Diff_\partial(M)$ the space of diffeomorphisms of $M$ which restrict to the identity on the boundary.

For a boundary condition $q$, we let $\Diff(M;q)$ be $\Diff(M;W_1,\ldots,W_k)$ where $W_1,\ldots,W_k$ are the values of the function $q$.

If $f,g\colon B\rightarrow M$ are \neat embeddings, we say that $f$ and $g$ \emph{have the same jet along $\partial M$} if $f^{-1}(\partial M) = g^{-1}(\partial M)$ and all the partial derivatives of $f$ and $g$ at all points in $f^{-1}(\partial M)$ agree. This defines an equivalence relation on the set of neat embeddings, and we let
$$J : \Emb(B,M) \lra \JJ{\partial}{B,M}$$
be the quotient map. If $d\in \JJ{\partial}{B,M}$, we write
\[\Emb(B,M;d) := J^{-1}(d)\]
for the subspace of all \neat embeddings of $B$ into $M$ whose jet is $d$.

Let us now specialise to the case where the manifold $B$ is a connected orientable surface. If $\SS_{g,b}$ is an oriented surface of genus $g$ with $b$ boundary components, and $M$ is a manifold, then we define
\begin{align*}
\E^+(\SS_{g,b},M) &:= \Emb(\SS_{g,b},M)/\Diff^+(\SS_{g,b})\\
\Delta_{b}(M):= \Delta(\SS_{g,b},M) &:= \JJ{\partial}{\SS_{g,b},M} / \Diff^+(\SS_{g,b}).
\end{align*}
Write
$$\text{\dh} \colon \E(\SS_{g,b},M) \lra \Delta_b(M)$$
for the map induced by $J$ between quotient spaces and define, for $\delta\in \Delta_b(M)$, the subspace
\[\e{g,b}{M;\delta}:=\E^+(\SS_{g,b},M;\delta) := j^{-1}(\delta).\]
Note that if $\delta=[d]$ there is a bijection $\e{g,b}{M;\delta} \cong \Emb(\SS_{g,b},M;d)/\Diff_\partial^+(\SS_{g,b})$. We write $\dd$ for the class of $\delta_{|\pp \SS_{g,b}}$, and $\Diff(M;\delta)$ for the space of those diffeomorphisms that preserve $\delta$, i.e. the stabiliser of $\delta$ for the action of $\Diff(M)$ on $\Delta_b(M)$.


\subsection{Retractile spaces and fibrations} \label{section:retractile}

During the proof of Theorem \ref{thm:stab}, we will need to prove that certain restriction maps onto spaces of embeddings or spaces of embedded surfaces are Serre fibrations. We approach this problem in this section, following Palais and Cerf, through Lemma \ref{lemma:retractil-target}, wich says that if $E$ and $X$ are spaces with an action of a group $G$, then an equivariant map $f\colon E\to X$ between them is a locally trivial fibration (in particular a Serre fibration) provided that $X$ is ``$G$-locally retractile''. We point out that the role of the group $G$ here is merely auxiliary to prove that a map is a fibration. The rest of the section is devoted to verify that the spaces of embeddings and embedded surfaces that will concern us are $G$-locally retractile when $G$ is the diffeomorphism group of $M$.

\begin{df}(\cite{Cerf,Palais}) Let $G$ be a topological group. A $G$-space $X$ is \emph{$G$-locally retractile} if for any $x\in X$ there is a neighbourhood $U$ of $x$ and a continuous map $\xi\colon U\rightarrow G$ (called the \emph{$G$-local retraction around $x$}) such that $y = \xi(y)\cdot x$ for all $y\in U$.
\end{df}
\begin{lemma}\label{lemma:retractil-identity} If $X$ is a $G$-locally retractile locally path connected space and $G_0\subset G$ denotes the path-connected component of the identity, then $X$ is also $G_0$-locally retractile.
\end{lemma}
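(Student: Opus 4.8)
The plan is to take a $G$-local retraction around an arbitrary point of $X$ and cut it down, both in its domain and in its target group, until it becomes a $G_0$-local retraction. Fix $x\in X$. By hypothesis there is a neighbourhood $U$ of $x$ and a continuous map $\xi\colon U\to G$ with $y=\xi(y)\cdot x$ for all $y\in U$. The first step is to normalise $\xi$ at $x$: from $x=\xi(x)\cdot x$ we get $\xi(x)^{-1}\cdot x=x$, so the map $\xi'(y):=\xi(y)\,\xi(x)^{-1}$ (which is $\xi$ followed by right translation by the fixed element $\xi(x)^{-1}$, hence still continuous) satisfies $\xi'(y)\cdot x=\xi(y)\cdot(\xi(x)^{-1}\cdot x)=\xi(y)\cdot x=y$, while now $\xi'(x)=e$. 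After renaming we may therefore assume $\xi(x)=e$.

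Next, since $X$ is locally path connected, I would shrink $U$ to an open path-connected neighbourhood $V\subseteq U$ of $x$. For any $y\in V$ choose a path $\gamma\colon[0,1]\to V$ from $x$ to $y$; then $\xi\circ\gamma\colon[0,1]\to G$ is a path from $\xi(x)=e$ to $\xi(y)$, so $\xi(y)$ lies in the path component $G_0$ of the identity. Hence $\xi|_V$ takes all its values in $G_0$, and being continuous as a map into $G$ it is continuous as a map $V\to G_0$ (subspace topology). Thus $\xi|_V\colon V\to G_0$ is a $G_0$-local retraction around $x$, and since $x\in X$ was arbitrary, $X$ is $G_0$-locally retractile.

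The argument is essentially formal, so there is no deep obstacle; the only point that needs genuine care is the normalisation $\xi(x)=e$. Without it, $\xi(x)$ lies in the stabiliser of $x$ but possibly outside $G_0$, and the path-connectedness argument would only place $\xi(y)$ in the same coset as $\xi(x)$ rather than in $G_0$ itself. The accompanying observation that the defining identity $y=\xi(y)\cdot x$ is unchanged when $\xi$ is right-translated by a stabiliser element of $x$ is what makes this normalisation legitimate. Beyond this bookkeeping, the only input is the standard fact that in a locally path connected space one can always choose the path-connected neighbourhood $V$ inside the given neighbourhood $U$.
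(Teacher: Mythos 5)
Your proof is correct and takes essentially the same route as the paper: shrink to a path-connected neighbourhood and observe that $\xi$ then lands in $G_0$. The one point you handle more carefully than the paper does is the normalisation $\xi(x)=e$: the paper simply asserts "since $\xi(x)=\Id$," but the definition of a $G$-local retraction only forces $\xi(x)$ to lie in the stabiliser of $x$, not to equal the identity. Your observation that one can right-translate $\xi$ by the stabiliser element $\xi(x)^{-1}$ without disturbing the retraction property fills in this small gap cleanly, and is exactly the right fix.
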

\begin{proof} If $\xi\colon U\rigtharrow G$ is a local retraction around $x\in X$, and $x\in U_0\subset U$ is a path-connected neighbourhood of $x$, then, since $\xi(x) = \Id$, we deduce that $\xi_{|U}$ factors through $G_0$ and defines a $G_0$-local retraction around $x$.
\end{proof}

\begin{lemma}[\cite{Cerf}]\label{lemma:retractil-target} A $G$-equivariant map $f\colon E\rightarrow X$ onto a $G$-locally retractile space is a locally trivial fibration.
\end{lemma}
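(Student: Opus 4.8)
The plan is to trivialise $f$ locally over $X$ by transporting fibres around with the $G$-action, using the local retractions guaranteed by the hypothesis. Fix $x_0 \in X$, and let $\xi \colon U \to G$ be a $G$-local retraction around $x_0$, so that $y = \xi(y)\cdot x_0$ for all $y \in U$ and $\xi(x_0) = \Id$ (this last normalisation can always be arranged by replacing $\xi$ by $g\mapsto \xi(g)\xi(x_0)^{-1}$, since $\xi(x_0)\cdot x_0 = x_0$). I would then define the candidate local trivialisation
\[
\Phi \colon U \times f^{-1}(x_0) \lra f^{-1}(U), \qquad \Phi(y, e) = \xi(y)\cdot e.
\]
This is well-defined because $f$ is $G$-equivariant: $f(\xi(y)\cdot e) = \xi(y)\cdot f(e) = \xi(y)\cdot x_0 = y \in U$. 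It is continuous because the $G$-action $G\times E\to E$ and $\xi$ are continuous.

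Next I would exhibit the inverse. For $e' \in f^{-1}(U)$, set $y = f(e') \in U$ and define
\[
\Psi(e') = \bigl(f(e'),\ \xi(f(e'))^{-1}\cdot e'\bigr).
\]
One checks that $\xi(f(e'))^{-1}\cdot e'$ really lies in $f^{-1}(x_0)$, again by equivariance: $f(\xi(y)^{-1}\cdot e') = \xi(y)^{-1}\cdot y = x_0$. The maps $\Phi$ and $\Psi$ are mutually inverse: $\Psi(\Phi(y,e)) = (y, \xi(y)^{-1}\xi(y)\cdot e) = (y,e)$, and $\Phi(\Psi(e')) = \xi(f(e'))\xi(f(e'))^{-1}\cdot e' = e'$. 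Both are continuous (for $\Psi$, continuity of inversion in $G$ is used), and by construction $\mathrm{pr}_U \circ \Psi = f$ on $f^{-1}(U)$, i.e.\ $\Phi$ is a trivialisation of $f$ over $U$. Since such a $U$ exists around every point of $X$, $f$ is a locally trivial fibration, hence in particular a Serre fibration.

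There is no serious obstacle here: the argument is a direct unwinding of the definition of $G$-locally retractile together with equivariance, and the only point requiring a moment's care is the normalisation $\xi(x_0) = \Id$, which ensures $\Phi(x_0, -)$ is the identity on the fibre and makes the bookkeeping clean. (If one does not normalise, $\Phi$ and $\Psi$ must be adjusted by the constant element $\xi(x_0)$ throughout, which works equally well but is less transparent.) Continuity of all maps involved is immediate from continuity of the action and of multiplication and inversion in the topological group $G$.
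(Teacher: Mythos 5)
Your construction is exactly the one in the paper: the paper's proof defines the local trivialisation $(z,y)\mapsto \xi(y)\cdot z$ over a neighbourhood $U$ admitting a local retraction, which is your $\Phi$ up to the order of the factors. Your write-up just spells out the inverse $\Psi$ and the normalisation $\xi(x_0)=\Id$ explicitly, which the paper leaves implicit.
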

\begin{proof}
For each $x\in X$ there is a neighbourhood $U$ and a $G$-local retraction $\xi$ that gives a homeomorphism
\begin{align*}
f^{-1}(x)\times U &\longrightarrow f^{-1}(U)\\
(z,y) &\longmapsto \xi(y)\cdot z.\qedhere
\end{align*}
\end{proof}
\begin{lemma}\label{lemma:retractil-source} If $f\colon X\rightarrow Y$ is a $G$-equivariant map that has local sections and $X$ is $G$-locally retractile, then $Y$ is also $G$-locally retractile. In particular, $f$ is a locally trivial fibration.
\end{lemma}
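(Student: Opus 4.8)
The plan is to deduce the statement from Lemma \ref{lemma:retractil-target} by transporting the $G$-local retractility from $X$ to $Y$ along the local sections of $f$. First I would fix $y \in Y$ and, since $f$ has local sections, choose an open neighbourhood $V$ of $y$ together with a continuous section $s \colon V \to X$ of $f$, so that $f \circ s = \mathrm{id}_V$. Set $x := s(y) \in X$; then $f(x) = y$. Since $X$ is $G$-locally retractile, there is an open neighbourhood $U \subset X$ of $x$ and a continuous $G$-local retraction $\xi \colon U \to G$ with $z = \xi(z)\cdot x$ for all $z \in U$.

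Next I would shrink $V$ to the open neighbourhood $V' := V \cap s^{-1}(U)$ of $y$ and define $\eta \colon V' \to G$ by $\eta := \xi \circ s|_{V'}$, which is continuous as a composite of continuous maps. For $y' \in V'$ we then have $s(y') = \xi(s(y'))\cdot x = \eta(y')\cdot x$ in $X$; applying the $G$-equivariant map $f$ and using that $s$ is a section, $y' = f(s(y')) = f(\eta(y')\cdot x) = \eta(y')\cdot f(x) = \eta(y')\cdot y$. Hence $\eta$ is a $G$-local retraction around $y$, and since $y$ was arbitrary, $Y$ is $G$-locally retractile. The final assertion that $f$ is a locally trivial fibration is then immediate from Lemma \ref{lemma:retractil-target} applied to the $G$-equivariant map $f \colon X \to Y$.

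There is essentially no obstacle here: the only mild point is the bookkeeping of neighbourhoods — one must intersect the domain $V$ of the local section with the preimage $s^{-1}(U)$ of the retractility neighbourhood of $x$ to ensure $\eta$ is defined and the identity $s(y') = \eta(y')\cdot x$ is valid — and the observation that applying the equivariant $f$ turns the retraction identity in $X$ into the desired one in $Y$. No continuity subtleties arise beyond composition of continuous maps, and no hypotheses on $Y$ (such as local path-connectedness) are needed, in contrast to Lemma \ref{lemma:retractil-identity}.
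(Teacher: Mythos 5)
Your proposal is correct and matches the paper's own proof exactly: the paper also observes that composing a local section of $f$ with a $G$-local retraction around its value gives a $G$-local retraction on $Y$, and then applies Lemma \ref{lemma:retractil-target} for the final assertion. You have merely spelled out the neighbourhood bookkeeping that the paper leaves implicit.
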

\begin{proof}
The composite of a local section for $f$ and a $G$-local retraction for $X$ gives a $G$-local retraction for $Y$.
\end{proof}

Let $P\to B$ be a principal $G$-bundle and let $F$ be a left $G$-space. There is an associated bundle of groups $P \times_G G^{ad} \to B$, and we let $\Gamma(P \times_G G^{ad} \to B)$ denote the space of its sections, equipped with the compact-open topology. This space of sections is a topological group, and it acts on the space $\Gamma(P \times_G F \to B)$ of sections of the associated $F$-bundle.

\begin{lemma} Suppose $B$ is compact and locally compact, $F$ is a $G$-space that is also $G$-locally retractile and for each neighbourhood $V'$ of the diagonal $D\subset F\times F$ there is a neighbourhood $V$ of $D$ contained in $V'$ that deformation retracts onto $D$. Then $\map(B,F)$ is $\map(B,G)$-locally retractile. Similarly, for a principal $G$-bundle $P \to B$, the space of sections $\Gamma(P \times_G F \rigtharrow B)$ is $\Gamma(P\times_G G^{ad}\to B)$-locally retractile.
\end{lemma}
\begin{proof} Here is a proof of the first part. A proof of the second part is similar to this one, working with spaces over $B$.

Let $\Phi\colon G\times F\to F\times F$ be defined as $\Phi(x,g) = (x,g\cdot x)$. If we prove that there is a neighbourhood $V$ of the diagonal $D = f(B)\times f(B)\subset F\times F$ and a global section $\varphi$ of the restriction $\Phi_{|V}\colon \Phi^{-1}(V)\to V$, then we obtain a $\map(B,G)$-local retraction $\psi$ around any point $f_0\in \map(B,F)$ as the composition
\[
A = \{f\mid (f_0\times f)(B)\subset V\} \stackrel{f\mapsto f_0\times f}{\longrightarrow} \map(B,V)\stackrel{\varphi\circ-}{\longrightarrow} \map(B,G\times F)\stackrel{\pi}{\to} \map(B,G),\]
where the last map is induced by the projection onto the second factor. To see that this is a local retraction, we first notice that $A$ is an open neighbourhood of $f_0$ because $B$ is compact and locally compact. 
Second, write $\Psi\colon \map(B,G)\times \{f_0\}\to \map(B,F)$, and 
\begin{align*}(\Psi\psi(f))(x) &= (\Psi\pi\varphi (f_0\times f))(x) = \Psi\pi\varphi(f_0(x),f(x)) \\&= (\pi\varphi(f_0(x),f(x)))\cdot f_0(x) = f(x).\end{align*}
So we only need to find $V$ and $\varphi$. Let us denote $G_{x,y} = \{g\in G\mid gx = y\}$. Let $x_0\in  F$ and let $\xi\colon U_{x_0}\to G\times \{x_0\}$ be a local retraction around $x_0$. Then there is a homeomorphism 
\[ U_{x_0}\times U_{x_0} \times \Phi^{-1}(x_0,x_0) \cong U_{x_0}\times U_{x_0}\times G_{x_0,x_0}\]
 that sends a triple $(x,y,g)$ to the triple $(x,y,\xi(y)^{-1}g\xi(x))$. Define $V'$ to be the open set $\bigcup_{x\in F} U_x\times U_x$. The restriction $\Phi_{|V'}\colon \Phi^{-1}(V')\to V'$ is locally trivial, hence a fibre bundle.

Let $V$ be a neighbourhood of $D$ that is contained in $V'$ and deformation retracts to $D$. Then in the following pullback square of fibre bundles
\[\xymatrix{
\Phi^{-1}(D)\ar[r]\ar[d] & \Phi^{-1}(V)\ar[d] \\
D\ar[r]& V
}\] 
the bottom map is a homotopy equivalence and the left vertical map has a global section that sends a pair $(x,x)$ to the pair $(x,e)$, where $e\in G$ is the identity element. Hence the right vertical map has a global section $\phi$ too.
\end{proof}

If $p\colon E\rightarrow B$ is a rank $n$ vector bundle over a manifold $B$, we denote by 
\[{\rm Vect}_k(E):=\Gamma(\Gr_k(E)\to B)\] the space of rank $k$ vector subbundles of $E$. If $P\to B$ is the principal $GL_n(\bR)$-bundle associated to $p$, let 
\[\GL(E):=\Gamma(P\times_{\GL_n(\bR)}\GL_n(\bR)^{ad}\to B).\]
be the space of sections of its adjoint bundle. If $L_\partial\in \mathrm{Vect}_k(E_{|\partial B})$ is a vector subbundle, we denote by ${\rm Vect}_k(E;L_\partial)\subset {\rm Vect}_k(E)$ the subspace of those vector subbundles whose restriction to $\partial B$ is $L_\partial$. Similarly, we denote by $\GL_\partial(E)$ the group of bundle automorphisms of $E$ that restrict to the identity over $\partial B$. The Grassmannian $\Gr_k(\bR^n)$ is $\GL_n(\bR)$-locally retractile and the inclusion of the diagonal $D\subset \Gr_k(\bR^n)\times \Gr_k(\bR^n)$ is a smooth submanifold, therefore one can find a neighbourhood $V$ of $D$ contained in $V'$ that deformation retracts onto $D$ by shrinking a tubular neighbourhood of $D$. As a consequence,

\begin{corollary} If $B$ is compact and locally compact and $E$ is a vector bundle over $B$, then the space $\mathrm{Vect}_k(E)$ is $\GL(E)$-locally retractile and the space ${\rm Vect}_k(E;L_\partial)$ is $\GL_\partial(E)$-locally retractile.  
\end{corollary}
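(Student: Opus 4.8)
The plan is to deduce this corollary from the preceding lemma by checking that its hypotheses are met and then relativising to the boundary.

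First I would establish the absolute statement. Given a compact $B$ and a rank $n$ vector bundle $E \to B$ with associated principal $\GL_n(\bR)$-bundle $P \to B$, observe that $\mathrm{Vect}_k(E) = \Gamma(P \times_{\GL_n(\bR)} \Gr_k(\bR^n) \to B)$, so this is precisely a space of sections of an associated bundle with fibre $F = \Gr_k(\bR^n)$. As recorded just before the statement, $\Gr_k(\bR^n)$ is compact, locally equiconnected (it is a smooth closed manifold, hence a CW-complex), and $\GL_n(\bR)$-locally retractile (a local retraction around a $k$-plane $P_0$ is obtained by writing nearby $k$-planes as graphs over $P_0$ and extending the resulting linear isomorphism of $\bR^n$ smoothly). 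A compact manifold is automatically locally compact, so $B$ satisfies the hypotheses of the lemma. Applying the second part of the lemma verbatim yields that $\Gamma(P \times_{\GL_n(\bR)} \Gr_k(\bR^n) \to B) = \mathrm{Vect}_k(E)$ is $\Gamma(P \times_{\GL_n(\bR)} \GL_n(\bR)^{ad} \to B) = \GL(E)$-locally retractile.

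Next I would handle the relative statement ${\rm Vect}_k(E;L_\partial)$. Here I would use Lemma~\ref{lemma:retractil-source}: restriction to $Z$ gives a $\GL(E)$-equivariant map $\mathrm{Vect}_k(E) \to \mathrm{Vect}_k(E|_Z)$, but it is cleaner to argue directly. Fix $L \in \mathrm{Vect}_k(E;L_\partial)$. Applying the fibrewise local-retraction construction above, but starting from the section $L$ itself rather than a constant section, produces a neighbourhood $\mathcal{U}$ of $L$ in $\mathrm{Vect}_k(E)$ and a continuous map $\xi \colon \mathcal{U} \to \GL(E)$ with $L' = \xi(L') \cdot L$ for all $L' \in \mathcal{U}$ and $\xi(L) = \Id$; the point is that the construction can be performed so that $\xi(L')$ is the identity over $Z$ whenever $L'$ agrees with $L$ over $Z$, because over $Z$ the graph construction then returns the identity linear map. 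Restricting $\xi$ to $\mathcal{U} \cap \mathrm{Vect}_k(E;L_\partial)$ therefore lands in $\GL_Z(E)$ and exhibits the required $\GL_Z(E)$-local retraction around $L$.

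The main obstacle is the relative refinement: one must be careful that the globalisation of the fibrewise local retraction (the passage through the neighbourhood deformation retract $V$ of the diagonal in the proof of the lemma) can be taken to be compatible with the boundary condition, i.e.\ that it does not disturb the bundle over $Z$. This is not automatic from the black-box statement of the lemma and requires inspecting its proof: one needs the section $\varphi$ of $\Phi_{|V}$ to be chosen so that $\varphi(x,x) = (x,e)$, which it already is, and then the induced automorphism over a point of $Z$ where $L' = L$ is $\varphi(L(z),L(z)) = (L(z), e)$, i.e.\ the identity — so in fact the compatibility comes for free once one keeps track of the basepoint section. Everything else is a routine transcription of the lemma to the vector-bundle setting.
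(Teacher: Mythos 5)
Your proof is essentially correct and follows the route the paper clearly intends: the absolute statement is a direct application of the preceding lemma to $F=\Gr_k(\bR^n)$ (compact, locally equiconnected, $\GL_n(\bR)$-locally retractile), $G=\GL_n(\bR)$, $P$ the frame bundle of $E$; the relative statement follows by tracking how the constructed local retraction behaves over $Z$. One small imprecision in the relative case: you assert that the section $\varphi$ of $\Phi\vert_V$ already satisfies $\varphi(x,x)=(x,e)$, but the lemma's proof as written only uses that the inclusion $D\hookrightarrow V$ is a homotopy equivalence, which by itself produces \emph{some} section over $V$, not necessarily one restricting to the given $(x,x)\mapsto(x,e)$ on $D$. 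To get the restriction property (which is exactly what forces $\xi(L')\in\GL_Z(E)$ when $L'$ agrees with $L$ over $Z$) one must use that $D\hookrightarrow V$ is a cofibration — supplied by local equiconnectedness — and solve a relative homotopy lifting problem to extend the given section over $D$ to all of $V$. So it does not quite ``come for free''; it comes from the NDR structure rather than from the homotopy equivalence alone. With that adjustment, the argument is sound.
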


The following lemma and its corollary are a consequence of a more general theorem proved by Cerf \cite[2.2.1~Th\'eor\`eme~5, 2.4.1~Th\'eor\`eme~5$^\prime$]{Cerf} in full generality for \manifolds with faces. Palais \cite{Palais} proved it for manifolds without corners and Lima \cite{Lima} gave later a shorter proof. 

\begin{proposition}[\cite{Cerf}]\label{prop:retractil-embeddings} If $f\colon B\rightarrow M$ is an embedding of a compact manifold $B$ into a manifold $M$, then $\Emb(B,M;[f])$ is $\Diff(M)$-locally retractile. If $d$ is a jet of an embedding of $B$ into $M$, then $\Emb(B,M;d)$ is $\Diff_\partial(M)$-locally retractile.
\end{proposition}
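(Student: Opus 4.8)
The plan is to deduce Proposition~\ref{prop:retractil-embeddings} from the formalism of $G$-local retractility developed earlier in this section, together with the classical results of Cerf and Palais on isotopy extension. The key point is that $\Diff(M)$ (\resp $\Diff_\partial(M)$) acts on $\Emb(B,M;[f])$ (\resp $\Emb(B,M;d)$) by post-composition, and one must produce, near each embedding, a neighbourhood $U$ and a continuous assignment $g \mapsto \xi(g) \in \Diff(M)$ of an ambient diffeomorphism carrying $f$ to $g$ (and fixing the relevant boundary jet). This is exactly the \emph{local} (i.e.\ uniform-in-families) form of the isotopy extension theorem: given a small neighbourhood of $f$ in the embedding space, there is a canonical tube/ambient-isotopy construction, depending continuously on the embedding, pushing $f$ onto each nearby $g$.

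The steps, in order, would be as follows. First, reduce to a local statement: since $B$ is compact, one may cover $f(B)$ by finitely many tubular-neighbourhood charts and work there. Second, invoke the parametrised isotopy extension theorem of Cerf \cite{Cerf} (in the form cited in the excerpt, valid for manifolds with faces): for embeddings $g$ sufficiently $C^\infty$-close to $f$, there is a straight-line homotopy $g_t$ from $f$ to $g$ through embeddings, and one extends this to an ambient isotopy $\Xi_t^g \in \Diff(M)$ with $\Xi_0^g = \Id$, constructed by integrating a time-dependent vector field obtained from $\partial_t g_t$ via a fixed tubular neighbourhood of $f(B)$ and a fixed bump function; the construction is manifestly continuous in $g$ and the resulting $\xi(g) := \Xi_1^g$ satisfies $\xi(g)\cdot f = g$. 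Third, handle the face/boundary bookkeeping: because we work inside $\Emb(B,M;[f])$, every nearby $g$ sends each point to the same face as $f$ does, so the straight-line homotopy and the ambient isotopy can be taken to preserve each face of $M$ (the vector field is tangent to each face), giving $\xi(g) \in \Diff(M)$ as required; this uses neatness of the embeddings. Fourth, for the jet version, one arranges that the constructed vector field, hence $\xi(g)$, vanishes to infinite order along $\partial M$ whenever $g$ has the same jet $d$ as $f$ there — this follows because $\partial_t g_t$ then vanishes to infinite order along $f^{-1}(\partial M)$, so one can choose the extending field supported away from $\partial M$ up to flat terms, yielding $\xi(g) \in \Diff_\partial(M)$.

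The main obstacle is the boundary/corner analysis in the third and fourth steps: one must be careful that the ambient isotopy extension can be performed \emph{within} the category of manifolds with faces, respecting all the $1$-faces, their intersections, and the collars — this is precisely where the generality of Cerf's theorem (as opposed to the classical smooth-manifold version of Palais or Lima) is needed, and where the neatness hypothesis on the embeddings is essential. A secondary technical point is verifying that the tubular-neighbourhood and flow constructions depend continuously on $g$ for the \emph{Whitney $C^\infty$ topology} (not merely a finite-order topology), but since $B$ is compact this is standard: the flow of a $C^\infty$-continuously-varying family of vector fields varies $C^\infty$-continuously. Once these points are in place, the proposition is immediate from the definition of $G$-local retractility.
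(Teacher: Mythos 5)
The paper itself does not give a proof of Proposition~\ref{prop:retractil-embeddings}: it is stated as a citation of Cerf (\cite[2.2.1~Th\'eor\`eme~5, 2.4.1~Th\'eor\`eme~5$^\prime$]{Cerf}), with Palais and Lima mentioned as giving proofs in the boundaryless case. Your proposal is a faithful sketch of that cited argument — the parametrised isotopy extension theorem, proved by straightening $g$ onto $f$ via a fixed tubular neighbourhood, extending $\partial_t g_t$ to an ambient time-dependent vector field with a bump function, integrating, and checking the face/jet constraints are respected — so it is substantively the same route the paper takes, just spelled out rather than referenced. Your identification of the genuinely delicate points (keeping the extending vector field tangent to every $1$-face so the flow stays in $\Diff(M)$, and, for the jet version, arranging the field to vanish on $\partial M$ so the flow lands in $\Diff_\partial(M)$) correctly locates where the corners/faces generality of Cerf's version, as opposed to the classical Palais version, is actually needed; the only place you are slightly loose is in asserting a ``straight-line homotopy'' from $f$ to $g$ — what one actually does is write a nearby $g$ in the tubular-neighbourhood chart of $f$ as a section over a diffeomorphism of $B$ close to the identity, and contract the normal component, but this is a standard and repairable imprecision.
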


Applying Lemma \ref{lemma:retractil-target} to the restriction map between spaces of embeddings, we obtain

\begin{corollary}[\cite{Cerf}]\label{Cerf-fibration} If $A\subset B$ is a compact submanifold and $f\colon B\rightarrow M$ is an embedding, then the restriction map
\[\Emb(B,M;[f])\longrightarrow \Emb(A,M;[f_{|A}])\]
is a locally trivial fibration.
\end{corollary}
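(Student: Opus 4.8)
The plan is to deduce Corollary~\ref{Cerf-fibration} from Proposition~\ref{prop:retractil-embeddings} and Lemma~\ref{lemma:retractil-target} by exhibiting the restriction map as a $G$-equivariant map onto a $G$-locally retractile space, for a suitable group $G$. The natural choice is $G = \Diff(M)$ (or $\Diff_\partial(M)$, depending on which version we are after), which acts on both $\Emb(B,M;[f])$ and $\Emb(A,M;[f_{|A}])$ by post-composition, and the restriction map $g \mapsto g_{|A}$ is manifestly equivariant for these actions.

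First I would check that the restriction map actually lands in $\Emb(A,M;[f_{|A}])$: if $g \colon B \to M$ is a neat embedding with $g(x)$ in the same face of $M$ as $f(x)$ for every $x \in B$, then its restriction to the compact submanifold $A \subset B$ is again a neat embedding (neatness and the collar condition are inherited by restriction to a submanifold, using that $A$ is itself collared compatibly), and the face condition holds on $A$ a fortiori. So the map is well-defined and continuous for the Whitney $C^\infty$ topologies, and $\Diff(M)$-equivariant.

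Next, Proposition~\ref{prop:retractil-embeddings} tells us exactly that $\Emb(A,M;[f_{|A}])$ is $\Diff(M)$-locally retractile, since $A$ is compact. Then Lemma~\ref{lemma:retractil-target} applies verbatim: a $\Diff(M)$-equivariant map onto a $\Diff(M)$-locally retractile space is a locally trivial fibration. This gives the first statement. For the second (jet) version, one replaces $\Diff(M)$ by $\Diff_\partial(M)$ and uses the corresponding clause of Proposition~\ref{prop:retractil-embeddings}, noting that the restriction map also respects the jet stratification in the appropriate sense; the same two-line argument then applies.

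There is essentially no genuine obstacle here — the corollary is a formal consequence of the two quoted results — so the only thing requiring a little care is the bookkeeping that restriction sends $\Emb(B,M;[f])$ into $\Emb(A,M;[f_{|A}])$ respecting all the decorations (neatness, collars, faces, and in the second version jets along $\partial M$), and that the post-composition actions on source and target are compatible with the restriction map. Once that is in place, one simply cites Proposition~\ref{prop:retractil-embeddings} and Lemma~\ref{lemma:retractil-target}.
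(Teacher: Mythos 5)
Your proof is correct and takes the same route as the paper, which literally presents the corollary as ``applying Lemma~\ref{lemma:retractil-target} to the restriction map'' immediately after Proposition~\ref{prop:retractil-embeddings}: observe that restriction is $\Diff(M)$-equivariant, that $\Emb(A,M;[f_{|A}])$ is $\Diff(M)$-locally retractile because $A$ is compact, and conclude. (Note the stated corollary has no separate jet clause, so your last paragraph about $\Diff_\partial(M)$, while correct, addresses a variant not actually asserted.)
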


We will need local retractibility for the space of surfaces in a manifold. The following theorem is stated for closed submanifolds in the reference, but its proof adapts to give the following:
\begin{proposition}[\cite{MR613004,Michor}]\label{prop:binz-fisher} If $B$ and $M$ are manifolds and $d$ is a jet of $B$ in $M$, then the quotient map \[\Emb(B,M;d)\longrightarrow \Emb(B,M;d)/\Diff_\partial(B)\] is a fibre bundle. $\Diff_\partial(B)$ may be replaced by $\Diffor_\partial(B)$ if $B$ is oriented. 
\end{proposition}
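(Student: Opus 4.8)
The plan is to reduce the statement to the local retractibility results already available. The key observation is that Proposition \ref{prop:retractil-embeddings} provides, for a jet $d$ of an embedding of $B$ into $M$, that $\Emb(B,M;d)$ is $\Diff_\partial(M)$-locally retractile, and hence in particular locally retractile for the group $\Diff_\partial(B)$ acting by precomposition (once we observe this action is by a subgroup, or more precisely, that the same local retractions can be adapted). However, the cleanest route is instead to appeal to the cited work of Binz--Fischer \cite{MR613004} and Michor \cite{Michor}: they prove that for a closed manifold $B$, the quotient map $\Emb(B,M) \to \Emb(B,M)/\Diff(B)$ is a principal $\Diff(B)$-bundle, and in particular a fibre bundle. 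What I would do is explain why their argument goes through verbatim (or with only cosmetic changes) for the relative situation, where $B$ has boundary and corners, we fix the jet $d$ along $\partial M$, and we quotient only by $\Diff_\partial(B)$.

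First I would recall the structure of the Binz--Fischer/Michor proof. The crucial input is the existence of local sections of the quotient map, constructed using a tubular neighbourhood of (the image of) an embedding $f \colon B \hookrightarrow M$: given $f$, one identifies a neighbourhood of $f$ in $\Emb(B,M)$ with an open set in the space of sections of the normal bundle $Nf \to B$ that are $C^\infty$-small, via the exponential map (or any tubular neighbourhood map); any embedding $g$ close to $f$ is the graph of a unique such section composed with a unique reparametrisation in $\Diff(B)$, and extracting that reparametrisation gives the local section. To adapt this, I would: (i) use a tubular neighbourhood of $f(B)$ in $M$ that is compatible with the face structure and with the collar, so that the normal bundle $Nf$ and the exponential map respect the boundary strata; (ii) restrict attention to those sections of $Nf$ that vanish to infinite order along $f^{-1}(\partial M)$, which corresponds exactly to fixing the jet $d$ along $\partial M$; (iii) note that the reparametrisation produced is then automatically the identity near $\partial B$, i.e.\ lies in $\Diff_\partial(B)$; and (iv) observe that, $B$ being compact, all the function-space topologies behave as in the closed case. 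Together these give local sections of $\Emb(B,M;d) \to \Emb(B,M;d)/\Diff_\partial(B)$.

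Having local sections, local triviality follows by the standard argument: a local section $s \colon U \to \Emb(B,M;d)$ over an open set $U \subset \Emb(B,M;d)/\Diff_\partial(B)$ yields a homeomorphism $U \times \Diff_\partial(B) \to q^{-1}(U)$, $(u,\phi) \mapsto s(u)\circ\phi$, with inverse $e \mapsto (q(e), s(q(e))^{-1} \circ e)$, where the second coordinate makes sense and is continuous because the $\Diff_\partial(B)$-action on $\Emb(B,M;d)$ is free (an embedding and a reparametrisation of it determine the reparametrisation uniquely) and because composition and inversion in $\Diff_\partial(B)$ are continuous in the Whitney topology. Since $B$ is oriented, the same construction restricted to orientation-preserving reparametrisations shows $\Emb(B,M;d) \to \Emb(B,M;d)/\Diffor_\partial(B)$ is a $\Diffor_\partial(B)$-bundle, proving the last sentence.

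The main obstacle, and the only place where genuine care is needed, is step (i)--(ii) above: constructing a tubular neighbourhood and normal-bundle parametrisation that is simultaneously adapted to the corner structure of $B$ and $M$ and to the prescribed jet $d$ along $\partial M$, so that the slice through $f$ consists precisely of embeddings with jet $d$. This is exactly the point where one must check that the Binz--Fischer and Michor arguments, written for closed manifolds, do not secretly use closedness beyond what a collared manifold with faces supplies; the collar hypothesis on all our manifolds is what makes this work, since it lets us choose the tubular neighbourhood to be a product near the boundary and thus to leave the jet $d$ undisturbed. I would phrase the proof as: cite \cite{MR613004,Michor} for the closed case, then indicate that the relative/cornered version is obtained by performing the same construction with a collar-compatible tubular neighbourhood and restricting to sections with the prescribed infinite jet along the boundary, the rest of the argument being unchanged.
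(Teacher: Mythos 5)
Your proposal ultimately takes essentially the same route as the paper, which gives no new proof but simply cites \cite{MR613004,Michor} and remarks that the argument for closed $B$ ``adapts'' to the collared, jet-constrained setting; your steps (i)--(iv) are a reasonable unpacking of what that adaptation involves (collar-compatible tubular neighbourhood; sections of $N_f$ with vanishing $\infty$-jet along $f^{-1}(\partial M)$; the induced reparametrisation lying in $\Diff_\partial(B)$), and the deduction of local triviality from local sections is standard.

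One thing to flag, though: the opening reduction is not correct. Proposition~\ref{prop:retractil-embeddings} says $\Emb(B,M;d)$ is $\Diff_\partial(M)$-locally retractile, where $\Diff_\partial(M)$ acts by \emph{post}-composition; that has no bearing on retractibility for $\Diff_\partial(B)$ acting by \emph{pre}-composition, and indeed $\Emb(B,M;d)$ is \emph{not} $\Diff_\partial(B)$-locally retractile (an embedding near $f$ need not be a reparametrisation of $f$). What the Binz--Fischer/Michor construction actually provides is local \emph{sections} of the quotient map, which is precisely what Lemma~\ref{lemma:retractil-source} needs to then conclude that the quotient $\e{g,b}{M;\delta}$ is $\Diff_\partial(M)$-locally retractile (Corollary~\ref{cor:retractil-embedded}) --- so the logical dependency runs in the opposite direction from what your first sentence suggests. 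Since you abandon that route immediately, this does not affect the validity of the proof you actually give, but the sentence should be cut rather than left as a stated ``key observation.'' Also, a minor imprecision: the reparametrisation produced by the slice construction has identity $\infty$-jet along $\partial B$ (which, combined with collaredness, is what places it in $\Diff_\partial(B)$), but it need not be literally the identity on a neighbourhood of $\partial B$.
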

By definition the space $\e{g,b}{M;\delta}$ is the quotient $\Emb(\SS_{g,b},M;d)/\Diff_\partial(\SS_{g,b})$ with $d \in \text{\dh}^{-1}(\delta)$. Hence from Lemma \ref{lemma:retractil-source} and Proposition \ref{prop:binz-fisher} we deduce that
\begin{corollary}\label{cor:retractil-embedded} The space $\e{g,b}{M;\delta}$ is $\Diff_\partial(M)$-locally retractile.
\end{corollary}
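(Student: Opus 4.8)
The plan is to realise $\e{g,b}{M;\delta}$ as a $\Diff_\partial(M)$-equivariant quotient of a space already known to be $\Diff_\partial(M)$-locally retractile, and then quote Lemma~\ref{lemma:retractil-source}, which propagates local retractility along equivariant maps that admit local sections.

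First I would choose a jet $d\in\dh^{-1}(\delta)$, so that, by the very definitions involved, $\e{g,b}{M;\delta}$ is homeomorphic to the quotient $\Emb(\SS_{g,b},M;d)/\Diffor_\partial(\SS_{g,b})$ (the oriented version of Proposition~\ref{prop:binz-fisher} applies, as $\SS_{g,b}$ is oriented). Next I would record the two facts that make the machine run: (i) by Proposition~\ref{prop:retractil-embeddings} the space $\Emb(\SS_{g,b},M;d)$ is $\Diff_\partial(M)$-locally retractile, using that $\SS_{g,b}$ is compact; and (ii) by Proposition~\ref{prop:binz-fisher} the quotient map $\pi\colon\Emb(\SS_{g,b},M;d)\to\e{g,b}{M;\delta}$ is a fibre bundle, hence in particular admits local sections.

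Then I would verify equivariance: $\Diff_\partial(M)$ acts on $\Emb(\SS_{g,b},M;d)$ by post-composition, and this preserves the subspace of neat embeddings with jet $d$ precisely because every element of $\Diff_\partial(M)$ is the identity along $\partial M$, so all partial derivatives at points of $f^{-1}(\partial M)$ are unchanged. Since post-composition commutes with the pre-composition action of $\Diffor_\partial(\SS_{g,b})$, this descends to an action on $\e{g,b}{M;\delta}$ for which $\pi$ is $\Diff_\partial(M)$-equivariant. Applying Lemma~\ref{lemma:retractil-source} with $G=\Diff_\partial(M)$, $X=\Emb(\SS_{g,b},M;d)$, $Y=\e{g,b}{M;\delta}$ and $f=\pi$ then yields the conclusion (and simultaneously re-proves that $\pi$ is a locally trivial fibration).

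There is essentially no hard step here: the substantive input is entirely contained in Propositions~\ref{prop:retractil-embeddings} and~\ref{prop:binz-fisher}. The only point that deserves a sentence of care is the equivariance bookkeeping --- that the $\Diff_\partial(M)$-action really restricts to the jet-fixed embedding space and passes to the orbit space --- after which the corollary is a one-line citation of Lemma~\ref{lemma:retractil-source}.
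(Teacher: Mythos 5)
Your proof is correct and follows exactly the route the paper takes: identify $\e{g,b}{M;\delta}$ with $\Emb(\SS_{g,b},M;d)/\Diffor_\partial(\SS_{g,b})$, invoke Proposition~\ref{prop:retractil-embeddings} for local retractility of the source and Proposition~\ref{prop:binz-fisher} for local sections of the quotient map, and apply Lemma~\ref{lemma:retractil-source}. The only difference is that you spell out the $\Diff_\partial(M)$-equivariance bookkeeping, which the paper leaves implicit.
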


We will also need local retractibility for two more spaces.

\begin{proposition}\label{prop:retractil-resolutions}\label{cor:retractil-resolutions} Let $W\subset M$ be a submanifold and let $A\subset B$ be \manifolds.
Then the space of embeddings of pairs $\Emb((B,A),(M,W))$ is $\Diff(M;W)$-locally retractile.

More generally, the space $\Emb(B,M;q)$ is $\Diff(M;q)$-locally retractile and the space $\Emb((B,A),(M,W);q)$ is $\Diff(M;W,q)$-locally retractile.
\end{proposition}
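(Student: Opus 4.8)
The plan is to recognise that the three assertions are instances of a single construction and, in the end, of the same theorem of Cerf \cite{Cerf} that is quoted for Proposition \ref{prop:retractil-embeddings}: around a given embedding one must exhibit a continuous map into the appropriate diffeomorphism group sending the base point to the identity, and the only feature not already present in Proposition \ref{prop:retractil-embeddings} is that the ambient diffeomorphisms are now constrained to preserve the submanifold $W$, respectively the finitely many submanifolds $W_1,\dots,W_k$ occurring as the values of the face constraint $q$. Cerf's parametrised isotopy extension theorem is available precisely in this constrained form --- one prescribes finitely many submanifolds of $M$ and only allows ambient isotopies preserving them setwise, extending isotopies of the pair $(B,A)$, resp.\ of $B$ subject to the incidence conditions --- so the work consists in checking that the spaces in the statement satisfy its hypotheses. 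As in the passage preceding Proposition \ref{prop:retractil-embeddings}, this is all contained in Cerf's general results for manifolds with faces \cite[2.2.1~Th\'eor\`eme~5, 2.4.1~Th\'eor\`eme~$5'$]{Cerf}; what follows is the reduction that makes the appeal transparent.

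First I would dispose of the face constraints. The face-constrained assertions say that $\Emb(B,M;q)$ and $\Emb((B,A),(M,W);q)$ are locally retractile for the groups preserving, respectively, the submanifolds $W_1,\dots,W_k$ that are the values of $q$ and the submanifolds $W,W_1,\dots,W_k$; that these groups do preserve the corresponding subspaces of embeddings, so that the statement makes sense, is immediate from the definitions. Since these differ from $\Emb((B,A),(M,W))$ only by additional incidence constraints of the same kind, I would set all of them up at once and, for readability, write out only the case of $\Emb((B,A),(M,W))$, the remaining cases following by carrying the extra constraint submanifolds through the argument verbatim.

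For $\Emb((B,A),(M,W))$ the quickest route is to invoke the pairs analogue of the result used for Proposition \ref{prop:retractil-embeddings}: Cerf's parametrised isotopy extension theorem for a neat pair $(B,A)$ inside $(M,W)$, with ambient isotopies of $M$ constrained to preserve $W$, directly yields a $\Diff(M;W)$-local retraction around any neat embedding of pairs $f\colon(B,A)\to(M,W)$. If one prefers to reduce to Proposition \ref{prop:retractil-embeddings} as stated, one can proceed in two stages. Stage one: restriction to $A$ gives $f|_A\colon A\to W$, and Proposition \ref{prop:retractil-embeddings} supplies a $\Diff(W)$-local retraction around it; a diffeomorphism of $W$ that is $C^\infty$-close to the identity can be extended --- using a tubular neighbourhood of $W$ in $M$ and a bump function damping to the identity away from $W$ --- to a diffeomorphism of $M$ preserving $W$, continuously in the input and equal to $\Id_M$ when the diffeomorphism of $W$ is $\Id_W$, so that composing gives a continuous map $g\mapsto\psi_g\in\Diff(M;W)$, defined near $f$, with $\psi_f=\Id$ and with $\psi_g^{-1}\cdot g$ agreeing with $f$ on $A$. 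Stage two: one is reduced to a neat embedding of pairs $h$ with $h|_A=f|_A$, i.e.\ to two neat embeddings of $B$ into $M$ coinciding on $A$ and transverse to $W$ along $A$; after a further reduction of the same type matching their jets along $A$, the relative parametrised isotopy extension theorem (isotopies of $B$ rel a neighbourhood of $A$, ambient isotopies of $M$ preserving a neighbourhood of $W$) produces $\phi_h\in\Diff(M;W)$ with $\phi_h\cdot f=h$ and $\phi_f=\Id$, continuously in $h$. The composite $g\mapsto\psi_g\cdot\phi_{\psi_g^{-1}\cdot g}$ is then the required $\Diff(M;W)$-local retraction around $f$, and running the whole argument with the $W_i$ held invariant gives the face-constrained versions.

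The hard part is not any single step but the parametrised and constraint-compatible nature of the construction: that Cerf's isotopy extension can be carried out continuously over the family \emph{and} so as to preserve all of the relevant submanifolds simultaneously, including the matching of jets along $A$ that lets the ambient isotopy be taken to preserve $W$. This is exactly what Cerf's general theorem for manifolds with faces provides, so beyond the reductions above the proof is the citation \cite[2.2.1~Th\'eor\`eme~5, 2.4.1~Th\'eor\`eme~$5'$]{Cerf}.
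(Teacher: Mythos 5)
Your proposal is correct in outline but takes a genuinely different route from the paper. You reach for Cerf's isotopy extension theorem directly in a ``pairs with constrained ambient isotopies'' form, either as a citation or via the two-stage construction (extend a $\Diff(W)$-retraction to a continuous family in $\Diff(M;W)$ using a tubular neighbourhood and a bump function, then run a relative isotopy extension). The paper instead never invokes a pairs version of Cerf's theorem at all: it introduces the auxiliary space $X\subset\Emb(B,M)$ of embeddings with $e(A)\subset W$, observes that $X$ sits in the pullback square with legs $X\to\Emb(A,W)$, $X\to\Emb(B,M)$ over the restriction square $\Emb(A,W)\to\Emb(A,M)\leftarrow\Emb(B,M)$, and builds a commuting cube whose top face expresses $\Diff(M;W)\times\{e_0\}$ as the pullback of $\Diff(W)\times\{e_{0|A}\}$ and $\Diff(M)\times\{e_0\}$ over $\Emb(W,M)$. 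Three of the four vertical maps of the cube are orbit maps or precomposition maps onto spaces that are $\Diff$-locally retractile by the single-manifold case (Proposition~\ref{prop:retractil-embeddings}), hence locally trivial fibrations by Lemma~\ref{lemma:retractil-target}; the fourth, $h\colon\Diff(M;W)\times\{e_0\}\to X$, is the pullback of the other three and so is also a fibration, in particular has local sections. Lemma~\ref{lemma:retractil-source}, applied to the tautologically $\Diff(M;W)$-locally retractile source $\Diff(M;W)\times\{e_0\}$, gives that $X$ is $\Diff(M;W)$-locally retractile, and the $\Diff(M;W)$-invariant subspace $\Emb((B,A),(M,W))\subset X$ inherits the retraction by restriction. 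The face-constrained cases follow by iterating the same cube.

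The comparison is worth drawing out. Your Stage~1 --- extending a near-identity diffeomorphism of $W$ to a near-identity element of $\Diff(M;W)$, continuously in the input --- is exactly the step the paper's pullback finesses: the extension is never constructed; instead, its role is played by the pullback property of $\Diff(M;W)=\Diff(W)\times_{\Emb(W,M)}\Diff(M)$ together with local sections of the three surrounding fibrations, each of which is an instance of the already-proved Proposition~\ref{prop:retractil-embeddings}. Your route also needs some care with the regularity of the extension and (if $W$ is non-compact, which the statement allows) with whether ``$C^\infty$-close to the identity'' in the Whitney topology really supplies the uniform control needed to damp by a bump function, and it leans on a form of Cerf's theorem for constrained pairs whose exact hypotheses one would have to check against the cited Th\'eor\`emes. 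The paper's argument buys robustness and economy: only the compactness of $A$ and $B$ is needed, the appeal to Cerf is confined to a single already-quoted proposition, and everything else is formal manipulation of fibrations, orbit maps and pullbacks via Lemmas~\ref{lemma:retractil-target} and~\ref{lemma:retractil-source}.
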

\begin{proof} 
Let $e_0$ be one such embedding, and consider the diagram
\[\xymatrix@C=-.3cm{
& \Diff(M;W)\times \{e_0\}\ar[rr]\ar[dl]\ar[dd] && \Diff(M)\times \{e_0\}\ar[dl]\ar[dd] \\
\Diff(W)\times \{e_{0|A}\} \ar[rr]\ar[dd] && \Emb(W,M) \ar[dd]_<<<<<<<<{\circ e_{0|A}} & \\
& X \ar[dl]\ar @{} [dr] \ar[rr] && \Emb(B,M)\ar[dl] \\
\Emb(A,W) \ar[rr] && \Emb(A,M)&
}\]
where the space $X\subset \Emb(B,M)$ is the subspace of those embeddings $e$ such that $e(A)\subset W$. Hence the bottom square is a pullback square and has a natural action of $\Diff(M;W)$. The space $\Emb((B,A),(M,W))$ is a subspace of $X$ and is invariant under the action of $\Diff(M;W)$. All the vertical maps except $\Emb(W,M)\rightarrow \Emb(A,M)$ are orbit maps. All the vertical maps except possibly $h\colon \Diff(M;W)\times \{e_0\}\rightarrow X$ are locally trivial fibrations by Lemmas \ref{lemma:retractil-target} and Proposition \ref{prop:retractil-embeddings}. Moreover, $h$ is the pullback of the other three vertical maps, hence is also a locally trivial fibration, so it has local sections. As the subspace $\Emb((B,A),(M,W))\subset X$ is $\Diff(M;W)$-invariant, any $\Diff(M;W)$-local retraction around $e_0$ in $X$ gives, by restriction, a local retraction around $e_0$ in $\Emb((B,A),(M,W))$.

Recall that preserving a face constraint means preserving a sequnce of submanifolds, so the general case is obtained by iterating the above argument.
\end{proof}

\subsection{Tubular neighbourhoods} \label{section:tubular}

Let $V\subset M$ be a \neat submanifold and denote by $\N_MV = TM_{|V}/TV$ the normal bundle of $V$ in $M$. A \emph{tubular neighbourhood} of $V$ in $M$ is a \neat embedding
\[f\colon \N_M V\longrightarrow M\]
such that 
\begin{enumerate}[(i)]
\item the restriction $f_{|V}$ is the inclusion $V\subset M$, and

\item the composition
\[T V\oplus \N_M V \overset{\sim}\lra T(\N_M V)_{|V}\overset{Df}\lra TM_{|V} \overset{\text{proj.}}\lra \N_M V\]
agrees with the projection onto the second factor.
\end{enumerate}

If $W\subset M$ is a submanifold and the inclusion map defines an embedding of pairs of $(V,V\cap W)$ into $(M,W)$, then we define a \emph{tubular neighbourhood of $V$ in the pair $(M,W)$} to be a tubular neighbourhood $f \colon \N_M V\longrightarrow M$ of $V$ in $M$ such that $f_{|\N_W (V\cap W)}$ is a tubular neighbourhood of $V\cap W$.

We may compactify $\N_M V$ fibrewise by adding a sphere at infinity to each fibre, obtaining the \emph{closed normal bundle} $\Ncl_M V$ of $V$ in $M$. We denote by $S(\Ncl_MV) \subset \Ncl_MV$ the subbundle of spheres at infinity. We define a \emph{closed tubular neighbourhood} of a collared submanifold $V$ to be an embedding of $\Ncl_M V$ into $M$ whose restriction to $\N_M V$ is a tubular neighbourhood.

Note that $V$ determines a face constraint $q$ for $\Ncl_M V$ in $M$, by assigning to each connected face of $\Ncl_M V$ the minimal face to which its projection to $V$ belongs. We denote by 
\[\Tub(V,M)\subset \Emb((\Ncl_M V,V),(M,V);f)\]
 the subspace of tubular neighbourhoods. A boundary condition $q_N$ for a tubular neighbourhood of $V$ is a boundary condition for $V$ in $M$, and we denote by 
\[\Tub(V,M;q_N)\subset \Tub(V,M)\] the subspace of those tubular neighbourhoods $t$ such that $t(x,v) \subset q_N(x)$. We denote by 
\[\Tub(V,(M,W);q_N)\subset \Tub(V,M;q_N)\]
 the subspace of tubular neighbourhoods of $V$ in the pair $(M,W)$. Finally, if $q\subset q_N$ is a pair of boundary conditions, we denote by \[\Tub(V,M;(q_N,q))\] the space of tubular neighbourhoods of $V$ in $M$ such that the restriction to each face $F$ is a tubular neighbourhood in the pair $(q_N(x),q(x))$, where $x$ is any point in $F$.

The following lemma follows from the proof of \cite[Proposition 31]{godin-2007}, where it is stated for the space of all tubular neighbourhoods of compact submanifolds.

\begin{lemma}\label{lemma:tubular-contractible}
If $V$ and $W$ are compact submanifolds of $M$ and $q_N$ is a boundary condition for $V$ in $M$ such that $q_N(x)$ is a neighbourhood of $x$ in the face $[V\subset M](x)$, then the spaces $\Tub(V,M;q_N)$ and $\Tub(V,(M,W);q_N)$ are contractible. 
\end{lemma}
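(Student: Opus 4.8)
The plan is to adapt the proof of \cite[Proposition~31]{godin-2007}, which establishes that the space of \emph{all} tubular neighbourhoods of a compact submanifold is contractible, and to check that every step can be performed relative to the boundary condition $q_N$ and, in the second case, relative to $W$. First I would reduce the pair statement to the absolute one: restriction of a tubular neighbourhood of $V$ in $(M,W)$ to the normal disc subbundle of $V\cap W$ inside $W$ defines a map
\[
\Tub(V,(M,W);q_N)\lra \Tub(V\cap W,\ W;\ q_N|_W),
\]
which by the methods of Section~\ref{section:retractile} (local retractibility of the ambient spaces of embeddings under the relevant diffeomorphism groups, together with Lemma~\ref{lemma:retractil-target}) is a fibre bundle, its fibre over a fixed tubular neighbourhood of $V\cap W$ in $W$ being a space of tubular neighbourhoods of $V$ in $M$ prolonging that datum, to which the absolute argument applies with the datum on $W$ held fixed. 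So it suffices to prove $\Tub(V,M;q_N)$ is contractible.

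This space is non-empty: choosing a Riemannian metric on $M$ which is a product in the collar and for which $q_N(x)$ contains the image of a small normal disc at $x$ (possible by the hypothesis on $q_N$; the space of such metrics is convex, so the choice is harmless), the fibrewise compactified and radially reparametrised normal exponential map furnishes a reference element $e_0\in\Tub(V,M;q_N)$. To contract $\Tub(V,M;q_N)$ onto $\{e_0\}$ I would realise all homotopies by precomposing $f$ with continuously varying self-embeddings of $\Ncl_MV$ supported away from the zero section and the sphere at infinity, so that conditions (i)--(ii) of Section~\ref{section:tubular} and the boundary condition $q_N$ are automatically preserved, in three moves. \emph{(a) Matching the normal derivative.} A tubular neighbourhood $f$ determines, via $Df|_V$ and the canonical splitting $T(\Ncl_MV)|_V\cong TV\oplus\N_MV$, a splitting $\sigma_f$ of $0\to TV\to TM|_V\to\N_MV\to 0$, and the set of such splittings is an affine space over $\Gamma(\mathrm{Hom}(\N_MV,TV))$. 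Precomposing $f$ with a self-embedding of $\Ncl_MV$ supported in a small disc subbundle whose differential along $V$ sends $(t,n)\in TV\oplus\N_MV$ to $(t+A(n),n)$ changes $\sigma_f$ to $\sigma_f+\iota\circ A$ (with $\iota\colon TV\hookrightarrow TM|_V$ the inclusion) without disturbing (i) or (ii); letting $A$ traverse the straight-line path carrying $\sigma_f$ to $\sigma_{e_0}$ deforms $f$ inside $\Tub(V,M;q_N)$ to one with $Df|_V=De_0|_V$. \emph{(b) Confining.} Fixing an auxiliary fibre metric on $\N_MV$ and using compactness of $V$, compress the disc bundle by a self-embedding which is the identity near $V$ so that the image of $f$ comes to lie inside $e_0(\N_MV)$; the necessary amount of compression depends on $f$ but is locally bounded, which is what allows it to be chosen continuously. \emph{(c) Alexander trick.} For $f$ as in (a)--(b) the composite $g_f:=e_0^{-1}\circ f$ is a self-embedding of $\Ncl_MV$ fixing the zero section pointwise and tangent to the identity along $V$; writing $\mu_s$ for fibrewise multiplication by $s>0$ (a diffeomorphism of $\Ncl_MV$ fixing $V$ and the sphere at infinity, suitably modified near infinity), the rescalings $\mu_{1/t}\circ g_f\circ\mu_t$, $t\in(0,1]$, form a path of self-embeddings converging as $t\to 0$ to $\Id$ in the $C^\infty$ topology, since $g_f$ fixes the zero section and is tangent to the identity along $V$ so that the higher-order terms are killed in the limit. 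This contracts $g_f$ to $\Id$, hence $f$ to $e_0$.

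The content is then almost entirely bookkeeping, and this is where the one genuine difficulty lies: one must verify that the self-embeddings occurring in (a)--(c) can be chosen to depend continuously on $f$ (here compactness of $V$, and of $W$, is essential, as it bounds the compressions and rescalings required), that they are supported away from the zero section and the sphere at infinity and are natural in the face structure of $\Ncl_MV$ — so that the homotopy never leaves $\Tub(V,M;q_N)$ — and, for the pair version, that being built fibrewise from the bundle $\Ncl_MV$ they restrict compatibly over $\Ncl_W(V\cap W)$ and hence preserve $\Tub(V,(M,W);q_N)$. The only point requiring care beyond this is the behaviour near the sphere at infinity in move (c), so that the rescalings genuinely converge to $\Id$ there rather than to some other diffeomorphism of the collapsed end; this is arranged exactly as in \cite[Proposition~31]{godin-2007}, by first confining with a little room to spare and then rescaling in the resulting open tube.

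Granting these routine verifications, the composite of moves (a), (b), (c) is a deformation retraction of $\Tub(V,M;q_N)$ onto $\{e_0\}$, and the same argument carried out over $W$ in parallel gives the corresponding statement for $\Tub(V,(M,W);q_N)$.
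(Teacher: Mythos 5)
Your three-move outline is Godin's argument (\cite[Proposition~31]{godin-2007}): your (b) is his retraction onto the tubular neighbourhoods with image in a fixed one, your (c) is his contraction of that subspace by the Alexander trick, and the paper's proof is essentially the two-sentence assertion that these two steps preserve the pair and face constraints. So the route is the same, and your added fibration $\Tub(V,(M,W);q_N)\to\Tub(V\cap W,W;q_N|_W)$ and the reduction it effects, while not in the paper, are a reasonable bookkeeping device.

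However, the central claim in your framing --- that all the homotopies are realised by self-embeddings of $\Ncl_M V$ \emph{supported away from the zero section}, ``so that conditions (i)--(ii) \ldots and the boundary condition $q_N$ are automatically preserved'' --- is contradicted by moves (a) and (c). In move (a) you deliberately prescribe a nontrivial derivative $(t,n)\mapsto(t+A(n),n)$ along $V$; a self-embedding supported away from the zero section is the identity in a neighbourhood of $V$ and so has the identity as its derivative there. In move (c), $g_f=e_0^{-1}\circ f$ fixes the zero section pointwise but is not the identity near it, and the conjugates $\mu_{1/t}\circ g_f\circ\mu_t$ are likewise not supported away from $V$. Since this claim is precisely your mechanism for seeing that the homotopy never leaves $\Tub(V,M;q_N)$, and this is the one point where the Lemma goes beyond Godin's statement, it needs to be replaced by a direct check (which does go through: the self-embeddings preserve faces, and $q_N$ is a face constraint, so postcomposition with $f$ lands in the right place; (i) holds because the zero section is fixed pointwise; (ii) holds by the explicit derivative computations). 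Separately, move (a) is unnecessary: condition (ii) already forces $Dg_f|_V$ to be unipotent with identity in the normal direction, so the conjugates $\mu_{1/t}\circ g_f\circ\mu_t$ already converge to the identity without first matching normal derivatives --- which is presumably why Godin, and the paper, have only two steps.
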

\begin{proof}
Let us denote by $\mathrm{Tub}(V,M;q_N)$ the space of all non-closed, collared tubular neighbourhoods of $V$ in $M$. The proof in \cite{godin-2007} has two steps. In the first, a tubular neighbourhood $f$ is fixed and a weak deformation retraction $H$ of $\mathrm{Tub}(V,M;q_N)$ is constructed into the subspace $T_f$ of all tubular neighbourhoods whose image is contained in $\Im f$. The second step provides a contraction of $T_f$ to the point $\{f\}$. It is easy to see that if $f$ is a closed, collared tubular neighbourhood of a pair, then both homotopies define homotopies for $\Tub(V,(M,W);q_N)$, and the argument there applies verbatim.
\end{proof}

\subsection{\boldmath The space of thickened embeddings} \label{ss:thickened-embeddings}

Using the notion of tubular neighbourhood we have just defined, we introduce now a space of embeddings equipped with a choice of tubular neighbourhood of their image. We will show that the space can be topologised as a quotient of the space of thickened embeddings. We will consider more generally that the image of the embeddings are endowed with a field of subplanes of the tangent bundle of the background manifold.

\subsubsection{The set $\TubEmb_{k,C}(B,M;q,q_N,q_C)$} Let $B$ and $M$ be manifolds and let $C\subset B$ be a submanifold. Let $q$ and $q_N$ be face constraints for $B$ in $M$ and let $q_C$ be a face constraint for $C$. The set $\TubEmb_{k,C}(B,M;q,q_N,q_C)$ is defined as the set of triples $(e,t,L)$, where
\begin{enumerate}
\item $e\in \Emb(B,M;q)$ is an embedding;
\item $t\in \Tub(e(B),M;(q_N,q))$;
\item $L \in \Gamma(\Gr_k(\N_M e(B))_{|e(C)}\to e(C);\N_{q_C(\pp C)} e(\pp C))$.
\end{enumerate}
If $k= 0$, the subset $C$ is irrelevant and we will write $\TubEmb(B,W;q,q_N)$ for $\TubEmb_{0,C}(B,M;q,q_N)$. Note that, if $\pp C \neq \emptyset$, then the last condition forces $k = \dim q_C(\pp C) - \dim \pp C$.

\subsubsection{The action}The set $\TubEmb_{k,C}(B,M;q,q_N,q_C)$ has a natural action of the discretisation of the group $\Diff(M;q,q_N,q_C)$ given as follows:
 If $g$ is a diffeomorphism of $M$ and $t$ is a tubular neighbourhood as above, then $g$ induces isomorphisms $TM_{|e(B)}\rightarrow TM_{|ge(B)}$ and $Te(B)\rightarrow Tge(B)$, hence an isomorphism $g_*\colon \N_M e(B)\rightarrow \N_M ge(B)$. We define $g(t)$ as the composite
\[\xymatrix{
\N_Mge(B)\ar[r]^-{g_*^{-1}}& \N_M e(B) \ar[r]^-t& M \ar[r]^-g& M. 
}\]
We define $g(L)$ as $g_*\circ L\circ g^{-1}$.

In what follows we omit the face constraints from the notation, for the sake of clarity.

\subsubsection{The topology} 
If $V \to B$ is a vector bundle of dimension $\mathrm{dim}(M)-\mathrm{dim}(B)$, then there is a map
\[\phi_V\colon \Emb(V,M)\times \Gamma(\Gr_k(V_{\vert C})\to C) \lra \TubEmb_{k,C}(B,M)\]
given by sending a pair $(f,s)$ to the triple
$$(B \overset{f_{|B}}\to M, N_M f(B) \overset{f_*^{-1}}\to V \overset{f}\to M,  f(C) \overset{f_{\vert C}^{-1}}\to C \overset{s}\to \Gr_k(V_{\vert C}) \overset{f_*}\to \Gr_k(N_M f(C))),$$
where $f_* : V \to N_V B \to N_M f(B)$ is the map induced by $f$ on normal bundles.

If $\varphi : V \to V$ is a bundle map, then $\phi_V(f \circ \varphi, \varphi(s)) = \phi_V(f,s)$, and furthermore if $\phi_V(f,s) = \phi_V(h,r)$, then 
\begin{enumerate}[(i)]
\item $f_{|B} = h_{|B}$,
\item $ff_*^{-1} = hh_*^{-1}$, hence $f=h \circ (h_*^{-1}f_*)$ and
\item $f_*sf_{|B}^{-1} = h_*rh_{|B}^{-1}$, hence $(h_*^{-1}f_*)(s) = r$.
\end{enumerate}
(The first condition follows from the second, and the third follows from the first two conditions.) Thus that part of the set $\TubEmb_{k,C}(B,M)$ which is the image of $\phi_V$ may be described as the quotient of $\Emb(V,M)\times \Gamma(\Gr_k(V_{\vert C})\to C)$ by the action of $\Bun(V,V)$ which we have described. We thus give $\TubEmb_{k,C}(B,M)$ the finest topology making the maps $\phi_V$ continuous for every choice of $V$.

\begin{lemma}\label{lemma:111}\mbox{}
\begin{enumerate}[(i)]
\item The map $\phi_V$ is a (locally trivial) principal $\Bun(V,V)$-bundle onto those components which it hits.

\item The action of $\Diff(M)$ on $\TubEmb_{k,C}(B,M)$ is continuous.
\end{enumerate}
\end{lemma}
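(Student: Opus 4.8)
The plan is to establish (i) first, and deduce (ii) as a formal consequence of the universal property of the topology on $\TubEmb_{k,C}(B,M)$. For (i), I would argue that $\phi_V$ exhibits its image as the quotient of the source by the free action of $\Bun(V,V)$ described above, and that this quotient map is moreover a principal bundle. The freeness of the action is immediate from items (i)--(iii) in the preceding discussion: if $(f\circ\varphi, \varphi(s)) = (f,s)$ then $f\circ\varphi = f$, and since $f$ is an embedding (hence injective on each fibre of $V$) we get $\varphi = \Id$. The identification of the image of $\phi_V$ with the orbit space is exactly what was observed before the statement of the lemma. The remaining — and this is the heart of (i) — is to produce local sections of $\phi_V$ over the components it hits, i.e.\ local sections of the orbit map $\Emb(V,M)\times\Gamma(\Gr_k(V_{|C})\to C) \to \TubEmb_{k,C}(B,M)$.

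To build local sections, I would fix a triple $\tau_0 = (e_0,t_0,L_0)$ in the image of $\phi_V$ and exhibit a neighbourhood of $\tau_0$ together with a continuous choice of preimage. The idea is that a triple $(e,t,L)$ near $\tau_0$ has $t$ a tubular neighbourhood of $e(B)$, and composing $t$ with the canonical bundle isomorphism $N_M e(B) \cong V$ (induced by $e$, well-defined up to the $\Bun(V,V)$-action) gives a map $V \to M$; the issue is that this is only canonical after a choice, so instead one uses the local retractility results of the previous subsection. Concretely, I would invoke Corollary \ref{cor:retractil-embedded} and Proposition \ref{prop:retractil-embeddings}, together with the fact (Lemma \ref{lemma:tubular-contractible}) that spaces of tubular neighbourhoods are contractible, to produce, locally around $\tau_0$, a continuous family of embeddings $V \hookrightarrow M$ restricting to the given $e$ on $B$ and inducing the given $t$: the key point is that $\Emb(V,M) \to \TubEmb(B,M)$ (forgetting $L$) can be analysed via the fibration $\Emb(V,M)\to\Emb(B,M;[e_0])$ of Corollary \ref{Cerf-fibration} whose fibre over $e$ is essentially $\Tub(e(B),M)$, contractible by Lemma \ref{lemma:tubular-contractible}; a local section of this fibration, combined with the obvious local section for the $\Gr_k$-component, gives the desired local section of $\phi_V$. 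Local triviality of the principal bundle then follows by translating a local section by the structure group.

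For (ii), the action of $\Diff(M)$ on $\TubEmb_{k,C}(B,M)$ is continuous because it is compatible, through the maps $\phi_V$, with the (continuous) diagonal action of $\Diff(M)$ on $\Emb(V,M)\times\Gamma(\Gr_k(V_{|C})\to C)$: one checks directly from the definitions that $g\cdot\phi_V(f,s) = \phi_V(g\circ f, g_*(s))$, and since $\TubEmb_{k,C}(B,M)$ carries the finest topology making all the $\phi_V$ continuous, and each $\phi_V$ is an open map (being a principal bundle projection by part (i)), the induced map $\Diff(M)\times\TubEmb_{k,C}(B,M)\to\TubEmb_{k,C}(B,M)$ is continuous.

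I expect the main obstacle to be the construction of the local section of $\phi_V$ in part (i): one must patch together the tubular-neighbourhood data and the jet/embedding data in a way that is genuinely continuous in the Whitney topology, and care is needed because a triple $(e,t,L)$ only determines an embedding $V\hookrightarrow M$ up to the $\Bun(V,V)$-ambiguity, so the section has to break this ambiguity continuously — which is precisely why the contractibility in Lemma \ref{lemma:tubular-contractible} and the local retractility of the embedding spaces are needed rather than merely a pointwise construction.
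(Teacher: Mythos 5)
Your part (ii) is exactly the paper's argument: equivariance of $\phi_V$, continuity of the upstairs $\Diff(M)$-action, and openness of the vertical maps (which follows from part (i) since a locally trivial bundle projection is open). No issues there.

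In part (i) the overall structure — freeness of the $\Bun(V,V)$-action, identification of the image with the orbit space, and construction of local sections — is correct, but your recipe for the local section has a gap. You propose to take a local section of the restriction fibration $\Emb(V,M)\to\Emb(B,M;[e_0])$ of Corollary \ref{Cerf-fibration}, pointing to the contractibility of tubular-neighbourhood spaces (Lemma \ref{lemma:tubular-contractible}). But such a local section produces, for each $e$ near $e_0$, an embedding $f_e\colon V\hookrightarrow M$ with $f_e\vert_B = e$; the tubular neighbourhood $f_e\circ(f_{e*})^{-1}$ it induces will in general \emph{not} equal the tubular neighbourhood $t$ that forms part of the triple $(e,t,L)\in\TubEmb_{k,C}(B,M)$. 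So you have controlled the $e$-component but not the $t$-component, and you have not actually produced a preimage of $(e,t,L)$ under $\phi_V$. A further (continuous in $e$) correction step using the contractibility of $\Tub(e(B),M)$ would be needed, and it is not sketched. Also, Corollary \ref{cor:retractil-embedded} is about spaces of embedded \emph{surfaces} and is not the relevant input here; the relevant local-retractility statement is Proposition \ref{prop:retractil-embeddings}.

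The paper's actual construction sidesteps this and is simpler: take a $\Diff(M)$-local retraction $\xi\colon U\to\Diff(M)$ of $\Emb(B,M)$ around $e_0$ (Proposition \ref{prop:retractil-embeddings}), fix one preimage $(f_0,s_0)$ of $(e_0,t_0,L_0)$, and define the section over a triple $(e,t,L)$ directly as
\[
f = t\circ\xi(e)_*\circ f_{0*}\colon V\lra N_M e_0(B)\lra N_M e(B)\lra M,\qquad s = f_*^{-1}Lf.
\]
Because $t$ appears explicitly in the formula, the constructed $f$ automatically induces the given $t$, and continuity is clear since the formula depends continuously on $\xi(e)$, $t$ and $L$. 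The full local trivialization is obtained by precomposing with $\alpha\in\Bun(V,V)$. Neither Lemma \ref{lemma:tubular-contractible} nor Corollary \ref{Cerf-fibration} is needed.
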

\begin{proof}
For part (i), choose a $\Diff(M)$-local retraction of $\Emb(B,M)$ around an embedding $e_0$,
\[\xi\colon U\lra \Diff(M),\]
then we obtain a local trivialisation of $\phi_V$ as follows: Let $\hat{U}\subset \TubEmb_{k,C}(B,M)$ be the preimage of $U$ under the forgetful map to $\Emb(B,M)$. Let $(f_0,s_0)\in \phi_V^{-1}(e_0,t_0,L_0)$. Then define $\sigma : \phi_V^{-1}(\hat{U}) \to \hat{U}\times \Bun(V, V)$ by $\phi_V\times \psi$, with
\[\psi(f,s)\colon V \overset{f_*}{\lra} \N_M f(B) \overset{\xi(f_{|B})_*^{-1}}{\lra} \N_Mf_0(B) \overset{f_{0*}^{-1}}{\lra} V.\]
Define $\tau : \hat{U}\times \Bun(V, V) \to \phi_V^{-1}(\hat{U})$ by sending $((e,t,L), \alpha)$ to the pair $(f,s)$ given by
\begin{align*}
f\colon V\overset{\alpha}{\lra}  V \overset{f_{0*}}{\lra} \N_{e_0(B)} M \overset{\xi(e)_*}{\lra} \N_{e(B)} M \overset{t}{\lra} M \quad\quad s = f_*^{-1}Lf.
\end{align*}
The maps $\sigma$ and $\tau$ are both continuous, and if $\tau((e,t,L), \alpha)=(f,s)$ then $f_* = \xi(e)_*f_{0*}\alpha$, from what it follows that $\sigma$ and $\tau$ are mutual inverses.

For part (ii), note that there is an action of $\Diff(M)$ on $\Emb(V,M)\times \Gamma(\Gr_k(V_{\vert C})\to C)$, by postcomposition on the first factor. The map $\phi_V$ is equivariant with respect to these actions, so there is a commutative square
\begin{equation*}
\xymatrix{
\Diff(M) \times \Emb(V,M)\times \Gamma(\Gr_k(V_{\vert C})\to C) \ar[d]^-{\mathrm{Id} \times \phi_V}\ar[r] & \Emb(V,M)\times \Gamma(\Gr_k(V_{\vert C})\to C) \ar[d]^-{\phi_V}\\
\Diff(M) \times \TubEmb_{k,C}(B,M) \ar[r]& \TubEmb_{k,C}(B,M),
}
\end{equation*}
where the horizontal maps are given by the action. The top map is continuous, as $\Diff(M)$ acts on $\Emb(V,M)$ continuously, and the vertical maps are open since $\phi_V$ is a locally trivial fibre bundle. It follows that the action of $\Diff(M)$ on the image of $\phi_V$ is continuous, and this holds for all $V$, so the action of $\Diff(M)$ on $\TubEmb_{k,C}(B,M)$ is continuous.
\end{proof}

This discussion also applies to the subspace $\TubEmb_{k,C}((B,A),(M,W);q,q_N,q_C)$ of tubular neighbourhoods of embeddings of a pair $(B,A)$ in a pair $(M,W)$, and this latter space has a natural action of $\Diff(M;W,q,q_N,q_C)$.  

\begin{proposition}\label{prop:retractil-fibrations} The space $\TubEmb_{k,C}(B,M;q,q_N,q_C)$ is $\Diff(M;q,q_N,q_C)$-locally retractile. 
\end{proposition}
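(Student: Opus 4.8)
The plan is to deduce local retractibility of $\TubEmb_{k,C}(B,M;q,q_N)$ from the fact, established in Lemma \ref{lemma:111}(i), that the map
\[
\phi_V\colon \Emb(V,M)\times \Gamma(\Gr_k(V_{\vert C})\to C) \lra \TubEmb_{k,C}(B,M)
\]
is a locally trivial principal $\Bun(V,V)$-bundle onto the components it hits, together with the fact that it is $\Diff(M)$-equivariant (Lemma \ref{lemma:111}(ii)). The strategy is to apply Lemma \ref{lemma:retractil-source}: if the source of $\phi_V$ is $\Diff(M;q,q_N)$-locally retractile and $\phi_V$ has local sections, then so is the target. A bundle projection certainly has local sections, so the only real work is to see that $\Emb(V,M)\times \Gamma(\Gr_k(V_{\vert C})\to C)$ is $\Diff(M;q,q_N)$-locally retractile, and that the whole picture is compatible with the face constraints $q$, $q_N$.

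The first step is therefore to identify a suitable $V$. Fix $(e_0,t_0,L_0)\in \TubEmb_{k,C}(B,M;q,q_N)$ and let $V:=\N_M e_0(B)$, pulled back to $B$ via $e_0$, so that $(e_0,t_0,L_0)$ lies in the image of $\phi_V$; the closed tubular neighbourhood $\Ncl_M e_0(B)$ gives a compact model if one prefers to work with a manifold rather than a vector bundle, but $\phi_V$ is stated for $V$ itself. Second, I would check that $\Emb(V,M)$ is $\Diff(M)$-locally retractile. For $V$ compact this is exactly Proposition \ref{prop:retractil-embeddings}; for a vector bundle over the (compact) surface $B$ one restricts to the closed disc bundle, which is a compact manifold with corners, and uses Proposition \ref{prop:retractil-embeddings} there — a $\Diff(M)$-local retraction around an embedding of the disc bundle restricts to one around the corresponding embedding of $V$, since the retracting diffeomorphisms of $M$ act by postcomposition and do not depend on the domain. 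Third, the factor $\Gamma(\Gr_k(V_{\vert C})\to C)$ carries a $\Diff(M)$-action through the isomorphisms $g_*$ on normal bundles; it is $\Diff(M)$-locally retractile by the Corollary following the lemma on sections of associated bundles (using that $C$, a subsurface of $B$, is compact and that $\Gr_k(\bR^m)$ is compact, locally equiconnected and $\GL_m(\bR)$-locally retractile). A product of two $G$-locally retractile $G$-spaces is $G$-locally retractile (multiply the two local retractions into $G$), so the source of $\phi_V$ is $\Diff(M)$-locally retractile.

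To bring in the face constraints, I would observe that all the spaces above have their evident $q$/$q_N$-constrained subspaces, that these are $\Diff(M;q,q_N)$-invariant, and that the local retractions produced by Proposition \ref{prop:retractil-embeddings} and the associated-bundle Corollary can be taken with values in the subgroup $\Diff(M;q,q_N)$ — this is the same kind of bookkeeping already carried out in the proof of Proposition \ref{prop:retractil-resolutions}, where preserving a face constraint is unwound as preserving a finite list of submanifolds and the argument iterated. Thus the constrained source $\Emb(V,M;\ldots)\times \Gamma(\Gr_k(V_{\vert C})\to C;\ldots)$ is $\Diff(M;q,q_N)$-locally retractile, $\phi_V$ restricts to a $\Diff(M;q,q_N)$-equivariant locally trivial bundle onto its image, and Lemma \ref{lemma:retractil-source} gives a $\Diff(M;q,q_N)$-local retraction around $(e_0,t_0,L_0)$. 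Since the images of the maps $\phi_V$ (for varying $V$) cover $\TubEmb_{k,C}(B,M;q,q_N)$ and the topology is the finest making them continuous, this proves the proposition; the same argument applies to the pair version $\TubEmb_{k,C}((B,A),(M,W);q,q_N)$ with $\Diff(M;W,q,q_N)$ in place of $\Diff(M;q,q_N)$.

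The main obstacle I anticipate is the second step: verifying that $\Emb(V,M)$ is $\Diff(M)$-locally retractile when $V$ is a (non-compact) total space of a vector bundle rather than a compact manifold, so that Proposition \ref{prop:retractil-embeddings} does not apply directly. The clean fix is to pass to the compact closed disc (or closed normal) bundle and note that $\Diff(M)$-local retractibility is about the diffeomorphisms of the \emph{target}, so it transfers along the restriction map from embeddings of the disc bundle to embeddings of its interior; alternatively, one notes that in Lemma \ref{lemma:111} only the image of $\phi_V$ matters, and by reparametrising one may always assume the tubular neighbourhoods in question have image in a fixed compact closed tubular neighbourhood, reducing to the compact case outright. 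Everything else is a routine combination of Lemmas \ref{lemma:retractil-target}–\ref{lemma:retractil-source} with the preservation-of-constraints bookkeeping.
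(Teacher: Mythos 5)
Your overall strategy --- apply Lemma \ref{lemma:retractil-source} to the map $\phi_V$, using the local sections furnished by Lemma \ref{lemma:111}(i) --- is the same as the paper's, and your remark that $V$ is non-compact, so that Proposition \ref{prop:retractil-embeddings} does not apply to $\Emb(V,M)$ directly, is a fair point that the paper's one-sentence proof passes over (your fix via the closed disc bundle is sensible).

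The gap is in your third step. You claim that $\Gamma(\Gr_k(V_{\vert C})\to C)$ carries a $\Diff(M)$-action through the induced maps $g_*$ on normal bundles and is $\Diff(M)$-locally retractile, and then conclude that the product source of $\phi_V$ is $\Diff(M)$-locally retractile. But $V_{\vert C}$ is an abstract vector bundle over $C$, not a normal bundle in $M$; the unique $\Diff(M)$-action on the source making $\phi_V$ equivariant is the one the paper identifies in the proof of Lemma \ref{lemma:111}(ii): postcomposition on $\Emb(V,M)$ and the \emph{trivial} action on the second factor. A non-discrete space on which $G$ acts trivially is never $G$-locally retractile --- the $G$-orbit of any point is the point itself and cannot contain a neighbourhood --- so for $0<k<\dim M-\dim B$ the product $\Emb(V,M)\times\Gamma(\Gr_k(V_{\vert C})\to C)$ is \emph{not} $\Diff(M)$-locally retractile, and the appeal to Lemma \ref{lemma:retractil-source} does not go through as you have written it. (The paper's own proof is equally terse here.) A repair that stays within this framework is to restrict $\phi_V$ to the slice $\Emb(V,M)\times\{s_0\}$, with $s_0$ chosen so that $\phi_V(f_0,s_0)=(e_0,t_0,L_0)$: since $\GL(V_{\vert C})$ acts transitively on each component of $\Gamma(\Gr_k(V_{\vert C})\to C)$, this restriction still hits the same components of $\TubEmb_{k,C}(B,M)$ that $\phi_V$ does, and its source genuinely is $\Emb(V,M)$; one must then re-verify that the restricted map has local sections, which is the missing content in both your proposal and the original.
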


\begin{proof}
Since the source of $\phi_V$ is $\Diff(M)$-locally retractile by Proposition \ref{prop:retractil-embeddings} and $\phi_V$ has local sections over its image, it follows that its image is also $\Diff(M)$-locally retractile by Lemma \ref{lemma:retractil-source}. 
\end{proof}

Using the proof of Proposition \ref{prop:retractil-resolutions}, and the previous lemma we obtain

\begin{corollary}\label{cor:retractil-fibrations} The spaces $\TubEmb_{k,C}((B,A),(M,W);q,q_N)$ are $\Diff(M;q,q_N)$-locally retractile. 
\end{corollary}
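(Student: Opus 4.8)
The plan is to reduce Corollary \ref{cor:retractil-fibrations} to Proposition \ref{prop:retractil-fibrations} in exactly the same way that Corollary \ref{cor:retractil-fibrations}'s hypotheses suggest: namely, to exhibit $\TubEmb_{k,C}((B,A),(M,W);q,q_N)$ as a $\Diff(M;W,q,q_N)$-invariant subspace of a space already known to be $\Diff(M;W,q,q_N)$-locally retractile, and then invoke the general principle that a local retraction on an ambient $G$-space restricts to a local retraction on any $G$-invariant subspace. The ambient space to use is $\TubEmb_{k,C}(B,M;q,q_N)$ together with the pullback construction from the proof of Proposition \ref{prop:retractil-resolutions}, which handles the constraint ``$e(A)\subset W$'' (and more generally $e(A\cap F)\subset W\cap (\text{corresponding face})$), while the extra conditions that $t$ be a tubular neighbourhood \emph{of the pair} $(M,W)$ and that $L$ restrict correctly over $e(C\cap A)$ cut out a further invariant subspace.

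Concretely, I would first set up the analogue of the big commutative diagram in the proof of Proposition \ref{prop:retractil-resolutions}, but one level up: replace $\Emb(B,M)$ by $\TubEmb_{k,C}(B,M;q,q_N)$, replace $\Emb(A,M)$ by $\TubEmb_{k,C\cap A}(A,M;q|_A,q_N|_A)$, and replace $\Emb(W,M)$ and $\Emb(A,W)$ by the corresponding spaces of thickened embeddings into $W$ (using that a tubular neighbourhood of $V$ in $W$ sits inside one of $V$ in $M$ after choosing a tubular neighbourhood of $W$ in $M$, which is the content of the definition of ``tubular neighbourhood of a pair'' in Section \ref{section:tubular}). The restriction map $\TubEmb_{k,C}(B,M;q,q_N) \to \TubEmb_{k,C\cap A}(A,M)$ is the analogue of the restriction map for embeddings; it is a fibration because the target is $\Diff(M)$-locally retractile by Proposition \ref{prop:retractil-fibrations} and the map is $\Diff(M)$-equivariant, so Lemma \ref{lemma:retractil-target} applies. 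Then the fibre product $X$ of this restriction map with the orbit map $\Diff(M;W)\cdot(\text{some thickened embedding of }W) \to \TubEmb_{k,C\cap A}(A,M)$ is, by the pullback-of-fibrations argument of Proposition \ref{prop:retractil-resolutions}, again a locally trivial fibration over $\TubEmb_{k,C}(B,M;q,q_N)$ with local sections, and hence is $\Diff(M;W,q,q_N)$-locally retractile by Lemma \ref{lemma:retractil-source}.

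It then remains to observe that $\TubEmb_{k,C}((B,A),(M,W);q,q_N)$ is a $\Diff(M;W,q,q_N)$-invariant subspace of $X$: invariance of the subspace $\{e(A)\subset W\}$ is clear since diffeomorphisms preserving $W$ preserve the condition, invariance of the subspace of tubular neighbourhoods of the pair follows because the action of $g\in\Diff(M;W)$ on $t$ is by conjugating with $g_*$ on normal bundles and $g$ preserves $\N_W(V\cap W)\subset\N_M V$, and invariance of the condition on $L$ is immediate from $g(L)=g_*\circ L\circ g^{-1}$. Restricting a $\Diff(M;W,q,q_N)$-local retraction around a point $(e_0,t_0,L_0)$ in $X$ to this invariant subspace gives the required local retraction, exactly as at the end of the proof of Proposition \ref{prop:retractil-resolutions}. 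As there, the face-constraint case is obtained by iterating the argument over the finitely many submanifolds in the sequence defining $q$ and $q_N$.

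The main obstacle I anticipate is bookkeeping rather than conceptual: one must check carefully that the three data $(e,t,L)$ behave compatibly under passage to the pair---in particular that restricting a thickened embedding of $B$ into $M$ to $A$ produces a thickened embedding of $A$ into $M$ whose tubular-neighbourhood part, when $e$ carries $A$ into $W$, actually lies in $\Tub(e(A),(M,W);\dots)$, so that the fibre product really does cut out $\TubEmb_{k,C}((B,A),(M,W);q,q_N)$ on the nose and not some slightly larger space. Once the definitions in Section \ref{section:tubular} and Section \ref{ss:thickened-embeddings} are unwound this is routine, and no new analytic input beyond Proposition \ref{prop:retractil-fibrations}, Proposition \ref{prop:retractil-resolutions}, and Lemmas \ref{lemma:retractil-target}--\ref{lemma:retractil-source} is needed.
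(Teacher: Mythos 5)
Your strategy — run the cube/pullback argument from the proof of Proposition \ref{prop:retractil-resolutions}, but with Proposition \ref{prop:retractil-fibrations} supplying local retractibility of the ambient space of thickened embeddings, and then restrict the resulting local retraction to the $\Diff(M;W,q,q_N)$-invariant subspace — is exactly the argument the paper intends by its one-line justification, so at the level of strategy you match the paper.

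However, there is a genuine gap in how you set up the diagram, and it is not the ``bookkeeping'' issue you anticipate. The restriction map you rely on,
$\TubEmb_{k,C}(B,M;q,q_N) \to \TubEmb_{k,C\cap A}(A,M;q|_A,q_N|_A)$,
does not exist. Given $(e,t,L)$ with $t \colon \Ncl_M e(B) \to M$ a tubular neighbourhood of $e(B)$, there is no natural way to produce a tubular neighbourhood of $e(A)$ in $M$: the normal bundle $\N_M e(A) \cong \N_M e(B)|_{e(A)} \oplus \N_{e(B)}e(A)$ has strictly larger rank, and $t$ gives no information in the $\N_{e(B)}e(A)$-directions (and the same problem afflicts restricting $L$). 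Your caveat worries only about whether the restriction lands in $\Tub(e(A),(M,W);\dots)$, but the point is that it is not a tubular neighbourhood of $e(A)$ at all, so the fibration you want to build the pullback square over is missing. Similarly, ``thickened embeddings into $W$'' replacing $\Emb(W,M)$ and $\Emb(A,W)$ has the same problem.

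The fix is to thicken only the two back vertices of the cube and leave the front (and bottom) vertices as ordinary embedding spaces. That is, replace $\Emb(B,M;q)$ by $\TubEmb_{k,C}(B,M;q,q_N)$ and $X$ by $\tilde X := \{(e,t,L) : e(A)\subset W\}$, but keep $\Emb(W,M)$, $\Emb(A,W)$ and $\Emb(A,M)$ unchanged. The vertical map out of the thickened vertex is the composite ``forget the thickening, then restrict to $A$'',
$\TubEmb_{k,C}(B,M;q,q_N) \to \Emb(B,M;q) \to \Emb(A,M)$,
which is $\Diff(M;q,q_N)$-equivariant with $\Diff(M)$-locally retractile target, hence a fibration; and $\tilde X$ is its fibre product with $\Emb(A,W)\to\Emb(A,M)$. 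The orbit map $\Diff(M;W)\times\{(e_0,t_0,L_0)\}\to\tilde X$ is then a pullback of the other three vertical maps (a fibration by Proposition \ref{prop:retractil-fibrations} and Lemma \ref{lemma:retractil-target}, two as in the original proof), hence has local sections, so $\tilde X$ is $\Diff(M;W,q,q_N)$-locally retractile by Lemma \ref{lemma:retractil-source}. Finally $\TubEmb_{k,C}((B,A),(M,W);q,q_N)$ is a $\Diff(M;W,q,q_N)$-invariant subspace of $\tilde X$ (your verification of invariance of the conditions on $t$ and $L$ is correct), so one restricts the local retraction, and iterates over the faces defining $q$ as you say.
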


\section{Maps between spaces of subsurfaces}\label{ss:gluingmaps}

For a collared \manifold $M$, we will use two kinds of maps between spaces of surfaces of the form $\e{g,b}{M;\delta}$. 
The first map glues a collar $\pp M\times I$ to $M$ and a surface $P\subset \pp M\times I$ to the surfaces in $\e{g,b}{M;\delta}$. For the second map, we remove a submanifold $u'\subset M$ from $M$. If a surface $u''\subset u'$ is given, we may glue $u''$ to the surfaces in $M\setminus u'$, obtaining a map from $\e{h,c}{M\setminus u';\delta'}$ to $\e{g,b}{M;\delta}$, where $h,c,\delta'$ depend on the surfaces $u''$. In the following paragraphs these constructions are explained in detail.

The \manifold $M_1$ is defined as the union of the \manifold $\pp M\times [0,1]$ and the \manifold $M$ along $\pp M\times \{0\}$ using the collar of $M$. The collar of $M$ gives a canonical collar both to $\pp M_1 := \pp M\times \{1\}$ and to $\pp(\pp M\times I) := \pp M\times \{0,1\}$. The boundary condition $\delta$ also gives boundary conditions 
\begin{align*}
\y{\d} &= \text{\dh}(\dd\times I)\in \Delta_2(\pp M\times I)\\
\z{\d} &= \text{\dh}(W\cup(\dd\times I))\in \Delta_2(M_1)
\end{align*}
where $W$ is any surface in $\e{g,b}{M;\delta}$. Let $\SS'$ be another collared surface with $\partial \SS' = \partial(\pp\SS\times I)$. For each $P\in \E^+(\SS',\pp M\times [0,1];\y{\d})$, there is a continuous map
\begin{align*}
-\cup P\colon \E^+(\SS,M;\delta)&\lra \E^+(\SS\cup\SS',M_1;\z{\delta}) 
\end{align*} 
that sends a submanifold $W$ to the union $W\cup P$. These are \emph{maps of type $I$}.

Our main theorem will prove that the maps of type I have certain homology stability properties. If $P$ and $P'$ are isotopic, then the induced maps $-\cup P$ and $-\cup P'$ will be isotopic. The following follows easily:

\begin{lemma} Let $M$ be of dimension at least $5$. If $P\subset \pp M\times [0,1]$ then there exist $P_1,\ldots,P_k$, such that:
\begin{enumerate}
\item $P_1\cup \ldots\cup P_k \simeq k\cdot P\subset \pp M\times [0,k]$ (the product means taking a dilation in the second coordinate).
\item $P_i$ a submanifold all of whose components but one are of the form $\delta^0\times [i,i+1]\subset \pp M\times [i,i+1]$,
\item the projection to the second coordinate of $\pp M\times [i,i+1]$ restricted to the remaining component is a Morse function with at most one critical point.
\item if there is a critical point in this remaining component, then there exists a pair of points $q_0,q_1 \in \delta^0$ such that $q_j\times [i,i+1]\subset P$ and $\{q_0\times \{i\}, q_0\times \{i+1\}, q_1\times \{i\}, q_1\times \{i+1\}\}$ hit all the components of the boundary of the remaining component of $P$.
\end{enumerate}
\end{lemma}

\begin{df}\label{df:relevant} If $P$ is as in the previous lemma, it is called a basic cobordism, and the remaining component will be called the \emph{relevant} cobordism and the boundary of it will be called the \emph{relevant} boundary condition.
\end{df}

If $\SS$ is a compact connected oriented surface of genus $g$ and $b$ boundary components, and $P$ is a basic cobordism, we will denote the map $-\cup P$ by
\begin{align*}
\alpha_{g,b}(M;\delta,\bar{\delta})\colon &\e{g,b}{M;\delta}\lra \e{g+1,b-1}{M;\bar{\delta}}\\  
\beta_{g,b}(M;\delta,\bar{\delta})\colon &\e{g,b}{M;\delta}\lra \e{g,b+1}{M;\bar{\delta}}\\
\gamma_{g,b}(M;\delta,\bar{\delta})\colon &\e{g,b}{M;\delta}\lra \e{g,b-1}{M;\bar{\delta}}
\end{align*}
 depending on the genus and the number of boundary components of the surfaces in the target. Note that, if $\SS$ has no corners, then $P$ will be a disjoint union of connected surfaces, one of them a pair of pants or a disc, and the rest diffeomorphic to cylinders. Now we define the second type of gluing map. Let $s\subset \SS'$ be either empty or a closed tubular neighbourhood of an arc or a point in $\SS$. The complement $\SS\setminus s$ is again a collared \manifold, but if $s$ is an arc and $M$ is a manifold with boundary, its complement is no longer a manifold with boundary. This justifies working with manifolds with corners.

Consider a tuple $u= (u',u'',u''')$ consisting on a \neat embedding $u'''\in \Emb(B,M;q)$, a closed tubular neighbourhood $u'$ of $u'''$ in $M$, and a (possibly empty) surface $u''\in \E(s,u';\delta[u])$ such that $\delta[u]\cap \delta = \delta[u]^0$. Then $\cl (M\setminus u')$ is a collared \manifold with $\pp \cl (M\setminus u') = \cl(\pp M\setminus \pp u')$. The boundary conditions $\delta$ and $\delta[u]$ give rise to a boundary condition 
\[\delta(u) = \text{\dh}(\cl(W\setminus u''))\in \Delta_2(\cl M\setminus u')\]
where $W\in\e{g,b}{M;\delta}$ is any surface that contains $u''$.

The triple $u=(u',u'',u''')$ defines a map
\[\E(\cl \SS\setminus s,\cl M\setminus u';\delta(u))\longrightarrow \E(\SS,M;\d)\]
that sends a submanifold $W$ to the union $W\cup u''$. These are \emph{maps of type II}.
\begin{notation} First, since the map defined above is completely determined by the tuple $u$, we will use the notation $\stminus{M}{u}$ for $\cl {M}\setminus{u'}$. Second, for maps of type I, we denote with a tilde $~\y{}~$ the objects that we glue to the space of surfaces and with a dash $~\z{\phantom{u}}~$ the objects obtained by removing or gluing surfaces to $\e{g,b}{M;\d}$. For maps of type II, we denote with brackets $[~]$ the objects that we remove from the space of surfaces and with parentheses $(~)$ the result of removing those objects. In addition, we denote with $'''$ the submanifold, with $'$ the tubular neighbourhood and with $''$ the surface in the tubular neighbourhood. We will be consistent with these notations. Third, for maps of type I, the triple $u$ defines triples $\y{u}$ and $\z{u}$ in the manifolds $\pp M\times I$ and $M_1$ given by $\y{u} = \pp u\times I$ and $\z{u} = u\cup \y{u}$. If we assume in addition that $P\cap \y{u}' = \y{u}''$ and that $(\pp s)\times I \subset \SS'$, then in the diagram 
\[
\xymatrix{
\E^+(\SS(s),M(u);\delta(u)) \ar[d]\ar@{-->}[rr] && \E^+((\SS\cup\SS')(\z{s})
,M_1(\z{u});\z{\delta}(\z{u}))\ar[d] \\
\E^+(\SS,M;\delta) \ar[rr] && \E^+(\SS\cup \SS',M_1;\z{\delta})
}
\]
we may construct the upper horizontal arrow as $-\cup P\setminus \y{u}''$. As before, we will use the notation $P(\y{u}) = P\setminus \y{u}''$. 
\end{notation}

\section{Homotopy resolutions}

A \emph{semi-simplicial space}, also called \emph{$\Delta$-space}, is a contravariant functor
\[X_\bullet\colon \Delta^{\op}_{\inj}\longrightarrow \Top\]
from the category $\Delta_\inj$ whose objects are non-empty finite ordinals and whose morphisms are injective order-preserving inclusions to the category $\Top$ of topological spaces. The image of the ordinal $n$ is written $X_n$ and we denote by $\partial_j\colon X_{n+1}\rightarrow X_n$ the image of the inclusion $ n = \{0,1,\ldots,n-1\}\hookrightarrow \{0,1,\ldots,n\} = n+1$ that misses the element $j\in \{0,1,\ldots,n\}$. These are called \emph{face maps} and the whole structure of $X_\bullet$ is determined by specifying the spaces $X_n$ for each $n$ together with the face maps in each level.

A \emph{semi-simplicial space augmented over a topological space $X$} is a semi-simplicial space $X_\bullet$ together with a continuous map $\epsilon\colon X_0\rightarrow X$ (the \emph{augmentation}) such that $\epsilon \partial_0 = \epsilon \partial_1\colon X_1\rightarrow X$. Alternatively, an \emph{augmented semi-simplicial space} is a contravariant functor
\[X_\bullet\colon \Delta^{\op}_{\inj,0}\longrightarrow \Top\]
from the category $\Delta_{\inj,0}$ whose objects are (possibly empty) ordinals and whose morphisms are injective order-preserving inclusions to the category $\Top$. As above, we denote by $\partial_j\colon X_n\rightarrow X_{n+1}$ the image of the inclusion that misses $j$ and we denote by $\epsilon$ the image of the unique inclusion $\emptyset\rightarrow 0$. We denote by $\epsilon_i\colon X_i\to X$ the unique composition of face maps and the augmentation map.

A \emph{semi-simplicial map} between (augmented) semi-simplicial spaces is a natural transformation of functors. If $\epsilon_\bullet\colon X_\bullet \to X$ and $\epsilon'_\bullet\colon Y_\bullet \to Y$ are augmented semi-simplicial spaces, a semi-simplicial map $f_\bullet$ is equivalent to a sequence of maps $f_n \colon X_n \to Y_n$ such that $d_i \circ f_n = f_{n-1} \circ d_i$ for all $i$ and all $n$, together with a map $f \colon X \to Y$ such that $\epsilon'\circ f_0 = f\circ \epsilon$.

There is a realisation functor \cite{SegalCatCohom}
\[|\,\cdot\,|\colon \text{Semi-simplicial~spaces}\longrightarrow \Top\]
and we say that an augmented semi-simplicial space $X_\bullet$ over a space $X$ is a \emph{resolution} of $X$ if the map induced by the augmentation
\[|\epsilon_\bullet|\colon |X_\bullet|\longrightarrow X\]
is a weak homotopy equivalence. It is an \emph{$n$-resolution} if the map induced by the augmentation is $n$-connected (i.e., the relative homotopy groups $\pi_i(X,|X_\bullet|)$ vanish when $i\leq n$).

We will use the spectral sequences given by the skeletal filtration associated with augmented semi-simplicial spaces as they appear in \cite{R-WResolution2015}. For each augmented semi-simplicial space $\epsilon_\bullet\colon X_\bullet \to X$ there is a spectral sequence defined for $t\geq 0$ and $s\geq -1$:
\[E^1_{s,t} = H_t(X_s)\Longrightarrow H_{s+t+1}(X,|X_\bullet|),\]
and for each map between augmented semi-simplicial spaces $f_\bullet\colon X_\bullet\rightarrow Y_\bullet$ there is a spectral sequence defined for $t\geq 0$ and $s\geq -1$:
\[E^1_{s,t} = H_t(Y_s,X_s)\Longrightarrow H_{s+t+1}((|\epsilon_{\bullet}^Y|),(|\epsilon_{\bullet}^X|)),\]
where, for a continuous map $f\colon A\rightarrow B$, we denote by $M_f$ the mapping cylinder of $f$ and by $(f)$ the pair $(M_f,A)$. 

The following criteria will be widely used throughout the paper.

\begin{criterion}[{\cite[Lemma~2.1]{R-WResolution}}]\label{criterion1} Let $\epsilon_\bullet\colon X_\bullet \to X$ be an augmented semi-simplicial space
. If each $\epsilon_i$ is a fibration and $\Fib_x(\epsilon_i)$ denotes its fibre at $x$, then the realisation of the semi-simplicial space $\Fib_x(\epsilon_\bullet)$ is weakly homotopy equivalent to the homotopy fibre of $\vert\epsilon_\bullet\vert$ at $x$.
\end{criterion}

An {\em augmented topological flag complex} is an augmented semi-simplicial space $\epsilon_\bullet\colon X_\bullet \to X$ such that
\begin{enumerate}
    \item the product map $X_i \rightarrow X_0\times_{X}\cdots\times_{X} X_0$ is an open embedding;
    \item a tuple $(x_0,\ldots,x_i)$ is in $X_i$ if and only if for each $0 \leq j< k\leq i$ the pair $(x_j,x_k)\in X_0\times_{X}X_0$ is in $X_1$.
\end{enumerate}

\begin{criterion}[{\cite[Theorem~ 6.2]{GR-W2}}]\label{cor18}\label{criterion2}
Let $\epsilon_\bullet\colon X_\bullet \rightarrow X$ be an augmented topological flag complex. Suppose that
\begin{enumerate}
\item $X_0 \rightarrow X$ has local sections, that is, $\epsilon$ is surjective and for each $x_0\in X_0$ such that $\epsilon(x_0) = x$ there is a neighbourhood $U$ of $x$ and a map $s\colon U\rightarrow X_0$ such that $\epsilon (s(y)) = y$ and $s(x) = x_{0}$;
\item given any finite collection $\{x_1,\ldots,x_n\} \subset X_0$ in a single fibre of $\epsilon$ over some $x\in X$, there is an $x_\infty$ in that fibre such that each $(x_i, x_\infty)$ is a $1$-simplex.
\end{enumerate}
Then $\vert\epsilon_\bullet\vert: |X_\bullet| \rightarrow X$ is a weak homotopy equivalence.
\end{criterion}

\begin{remark}\label{rem:criterion2}
For the second condition, we could also ask that there is an $x_0$ such that each $(x_0, x_i)$ is a 1-simplex, and the conclusion still holds.
\end{remark}

\subsection{A criterion for homological stability}

\begin{criterion}\label{criterion3} 
Let $f_\bullet : X_\bullet \to Y_\bullet$ be a map of augmented semi-simplicial spaces such that $\vert\epsilon_\bullet^X\vert : \vert X_\bullet\vert \to X$ is $(l-1)$-connected and $\vert\epsilon_\bullet^Y\vert : \vert Y_\bullet\vert \to Y$ is $l$-connected. Suppose there is a sequence of path connected based spaces $(B_i, b_i)$ and maps $p_i \colon Y_i \to B_i$, and form the map
$$g_i \colon \mathrm{hofib}_{b_i}(p_i \circ f_i) \lra \mathrm{hofib}_{b_i}(p_i)$$
induced by composition with $f_i$. Suppose that there is a $k \leq l+1$ such that
$$\text{$H_q(g_i) = 0$\, when $q+i\leq k$, except if $(q,i)=(k,0)$.}$$

Then the map induced in homology by the composition of the inclusion of the fibre and the augmentation map
\[H_q(g_0)\longrightarrow H_q(f_0)\overset{\epsilon}\longrightarrow H_q(f)\]
is an epimorphism in degrees $q\leq k$. 

If in addition $H_k(g_0)=0$, then $H_q(f) = 0$ in degrees $q\leq k$.
\end{criterion}

\begin{proof}
We have a homotopy fibre sequence of pairs $(g_i) \to (f_i) \to B_i$, and so a relative Serre spectral sequence
\[
\widetilde{E}^2_{p,q} = H_p(B_i;\mathcal{H}_q(g_i))  \Longrightarrow H_{p+q}(f_i),
\]
where ${\mathcal H}_q(g_i)$ denotes homology with twisted coefficients. Since $H_q(g_i) = 0$ for all $q\leq k-i$ except $(q,i)=(k,0)$, we have that $H_q(f_i) = 0$ for all $q+i\leq k$ except $(q,i)=(k,0)$. Moreover, if $i=0$, all differentials with target or source $H_0(B;\mathcal{H}_q(g_0))$ for $q\leq k$ are trivial, and these are the only possibly non-trivial groups with total degree $p+q\leq k$. Hence 
\[H_q(g_0){\longrightarrow}H_0(B;\mathcal{H}_q(g_0))\longrightarrow H_q(f_0)\]
is the composition of two epimorphisms if $q\leq k$.

The first page of the spectral sequence for the resolution $(f_\bullet)\rightarrow (f)$ is
\[
E_{p,q}^1 = H_q(f_p), \quad p \geq -1,
\]
and it converges to zero in total degrees $p+q\leq l$. Since $H_q(f_p) = 0$ for all $p+q\leq k$ except $(p,q) = (0,k)$, any differential with target $E^r_{-1,q}$ for $q\leq k$ and $r\geq 2$,
\begin{equation*}
d_r: E^r_{r-1,q-r+1} \longrightarrow  E^r_{-1,q},
\end{equation*}
has source a quotient of $H_{q-r+1}(f_{r-1})$, which is trivial. As $k-1\leq l$, and the spectral sequence converges to zero in total degrees $p+q\leq l$, we have that for each $q\leq k$ there is an $r\geq 1$ such that $E^r_{-1,q} = 0$, hence the homomorphisms induced by the augmentation map $d_1\colon H_q(f_0)\rightarrow H_q(f)$ are epimorphisms in degrees $q\leq k$.

For the second part, note that in that case all epimorphisms $H_q(g_0)\rightarrow H_q(f_0)\rightarrow H_q(f)$ have trivial source when $q\leq k$, hence the target is also trivial in those degrees.
\end{proof}

There is one final concept that we will use rather often to describe the homotopy type of the fibre of a fibration. We say that a pair of maps
$$A \overset{g}\lra B \overset{f}\lra C$$
is a \emph{homotopy fibre sequence} if $f \circ g$ is homotopic to the constant map to a point $c \in C$, and the induced map $A \to \mathrm{hofib}_{c}(f)$ is a weak homotopy equivalence. For our purposes, such data can be treated as if $f$ were a fibration and $f$ were the inclusion of the fibre over $c$.

\section{Resolutions of spaces of surfaces}\label{section:resolutions}
In this section we construct two $(g-1)$-resolutions of the space $\e{g,b}{M;\delta}$, where $M$ is a collared \manifold with non-empty boundary, and $\delta\in \Delta_{2}^+(M)$ is a non-empty boundary condition (in particular $b\geq 1$). We will also characterize the space of $i$-simplices of each resolution as the total space of a certain homotopy fibration. Afterwards we will explain how these $(g-1)$-resolutions give rise to a $(g-1)$-resolution or a $g$-resolution of the stabilisation maps (the connectivity of each resolution depends on the stabilisation map), and how to characterize their spaces of $i$-simplices. 

\subsection{Resolutions of a single surface}\label{ss-resolutions1} In the proof of \cite[Proposition~5.3]{R-WResolution2015} the following semi-simplicial space was introduced: If $W$ is a compact connected oriented surface with non-empty boundary, and $\ell_0,\ell_1$ are embedded intervals in $\partial W$, the semi-simplicial space $O(W; k_0,k_1)_\bullet$ (denoted $A'(X)$ in the cited reference) is defined as follows: An $i$-simplex is a tuple $(\vv_0,\ldots,\vv_i)$ of pairwise disjoint embeddings of the interval $[0,1]$ in $W$ such that
\begin{enumerate}
\item $\vv_j(0)\in k_0$ and $\vv_j(1)\in k_1$;
\item the complement of $\vv_0\cup\ldots\cup \vv_i$ in $W$ is connected;
\item the ordering at the endpoints of the arcs is $(\vv_0(0),\ldots,\vv_i(0))$ in $k_0$ and $(\vv_i(1),\ldots,\vv_0(1))$ in $k_1$, where $k_0$ and $k_1$ are ordered according to the orientation of $\delta$.
\end{enumerate}
The $j$th face map forgets the $j$th arc. In order to simplify the notation we will write $[i]$ for $\{0,\ldots,i\}$. The set of $i$-simplices is topologized as a union of components of the space $\Emb(I\times [i],W;q)$ with $q(x) = k_j$ if $x\in \{j\}\times [i]$ and $q(x) = W$ otherwise. Let $\pi_0O(W,k_0,k_1)_\bullet$ be the semi-simplicial set obtained by applying levelwise the functor $\pi_0$.
\begin{proposition}[{\cite[Theorem 5.3]{R-WResolution2015}}]\label{proposition:RW1-4.1} The realisation $|\pi_0 O(W;k_0,k_1)_\bullet|$ is $(g-2)$-connected, where $g$ is the genus of $W$.
\end{proposition}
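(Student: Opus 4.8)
The plan is to identify $O(W;k_0,k_1)_\bullet$ with a discretized (or ``space-level'') version of a combinatorial arc complex whose connectivity is already known, and then deduce the stated bound. Concretely, let $\mathcal{A}(W;k_0,k_1)$ denote the simplicial complex whose vertices are isotopy classes (rel endpoints, in the appropriate sense) of embedded arcs $a\colon [0,1]\to W$ with $a(0)\in k_0$, $a(1)\in k_1$, and such that a collection of vertices spans a simplex when the arcs can be realised disjointly with connected complement and with the prescribed cyclic ordering of endpoints. First I would observe that, as is standard for such ``spaces of embeddings modulo the diffeomorphism group'', the realization $|O(W;k_0,k_1)_\bullet|$ is homotopy equivalent to $|\mathcal{A}(W;k_0,k_1)|$: forgetting the topology on each $\Emb$-space and passing to $\pi_0$ does not change the homotopy type of the realization, because each space of $i$-simplices deformation retracts onto a discrete set of contractible components (each component of $\Emb(I\times[i],W;q)$ being contractible once one quotients by the contractible identity component of the relevant diffeomorphism group, or simply because arc-embedding spaces with fixed endpoints are homotopy discrete). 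This reduction is the technical bridge to the combinatorics.

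Next I would invoke the known connectivity of the arc complex. The complex $\mathcal{A}(W;k_0,k_1)$ is a variant of the complex of arcs connecting two boundary arcs studied in the homological stability literature (Harer, Hatcher--Wahl, and in precisely this semi-simplicial guise in \cite{R-WResolution}); its geometric realization is known to be $(g-2)$-connected when $W$ has genus $g$, by a surgery argument on the surface. I would either cite this directly or, if a self-contained argument is wanted, reprove it by the standard ``bad simplices / link'' induction: given a sphere $S^n\to |\mathcal{A}|$ with $n\le g-2$, make it simplicial against a fixed arc system, and use that cutting $W$ along an arc drops the genus by at most one while keeping the complement connected, so that links of simplices are themselves highly connected arc complexes on lower-genus pieces; one then pushes the sphere off the bad locus. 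The orientation/ordering condition (iii) only restricts which arcs are allowed and is preserved under the surgeries, so it does not worsen the bound.

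The main obstacle I expect is the first step — rigorously matching the semi-simplicial \emph{space} $O(W;k_0,k_1)_\bullet$ with the discrete simplicial complex $\mathcal{A}(W;k_0,k_1)$. One must check that the map $|O(W;k_0,k_1)_\bullet|\to |\mathcal{A}(W;k_0,k_1)|$ induced by $\pi_0$ on each simplicial level is a weak equivalence, which amounts to verifying that each $\Emb(I\times[i],W;q)$, restricted to the relevant union of components, has contractible components. This is true but requires a small appeal to the theory of spaces of embeddings of $1$-manifolds with constrained endpoints (for instance via the isotopy extension theorem and contractibility of appropriate tubular-neighbourhood spaces, cf.\ Lemma~\ref{lemma:tubular-contractible} and Corollary~\ref{Cerf-fibration}); once this is in hand, the connectivity statement is purely combinatorial and follows from the surgery argument above. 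Since the statement is quoted verbatim from \cite[Proposition~4.1]{R-WResolution}, in practice I would simply cite that proof and remark that conditions (i)--(iii) here are exactly those imposed there.
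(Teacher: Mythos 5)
The paper gives no proof of this proposition: it is stated as a quotation of \cite[Proposition~4.1]{R-WResolution}, with the remark that a detailed proof may also be found in \cite{Wahl-survey}. Your proposal ultimately does the same thing (``I would simply cite that proof''), and your informal sketch of the underlying argument---pass from the semi-simplicial space $O(W;k_0,k_1)_\bullet$ to the discrete ordered arc complex via weak homotopy-discreteness of the spaces of $i$-simplices, then invoke the known high connectivity of the arc complex---does match the structure of the proof in those references, so in that sense the approaches coincide. Two small caveats on the sketch itself: the phrase ``deformation retracts onto a discrete set of contractible components'' should really be ``is weakly homotopy discrete'' (you want each path component of $\Emb(I\times[i],W;q)$ to be weakly contractible, which follows from isotopy extension together with contractibility of the identity components of $\Diff_\partial$ of compact surfaces with boundary, not from any literal retraction); and the parenthetical ``once one quotients by the contractible identity component of the relevant diffeomorphism group'' does not parse as stated, since one is not taking such a quotient of the embedding space---the correct reasoning is that the orbit map $\Diff_\partial(W)_0\to\Emb$ is a fibration with weakly contractible fibre and total space. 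Neither issue affects your conclusion, since you are in any case deferring to the cited source exactly as the paper does.
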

This proposition is the hardest step in the proof of homological stability for diffeomorphism groups of surfaces, and the change in the definition of this complex of curves yielded various improvements \cite{Ivanov, Boldsen, R-WResolution} of the original complex of Harer \cite{H}. A detailed proof of the above proposition may also be found in \cite{Wahl-survey}.

\subsection{Resolutions of spaces of surfaces}

We will use the following notation:
\begin{enumerate}
\item $D^+_{r} = \{(x_1,x_2)\in \bR^2\mid x_2\geq 0, \|(x_1,x_2)\|\leq r\}$;
\item $\pp D^+_{r} = \{(x_1,x_2)\in D^+_{r}\mid x_2 =  0\}$;
\item $\ppp D^+_{r} = \{(x_1,x_2)\in D^+_{r}\mid \|(x_1,x_2)\| = r\}$;
\end{enumerate}
We sometimes will identify $\ppp D^+_\frac{1}{2}$ with $[0,1]$ using the map $(x,y)\mapsto x+\frac{1}{2}$
\begin{df}\label{df:ssresolution} Let $\ell\subset \pp M$ be an open ball that intersects $\dd$ in two intervals denoted $\ell_0$ and $\ell_1$. There is a semi-simplicial space $\OO{g,b}{M;\delta, \ell}{\bullet}$ (for which we write $\OO{g,b}{M;\delta}{\bullet}$ for brevity) whose $i$-simplices are tuples $(W,u_0,\ldots,u_i)$ with $u_j = (u_j',u_j'',u_j''')$, where
\begin{enumerate}
	\item $W\in \e{g,b}{M;\delta}$;
	\item $u_j'''\colon (D^+_1,\ppp D^+_\frac{1}{2})\to (M,W)$ is an embedding of pairs, and the restriction to $\pp D^+_{1}$ has image in $\ell$;
	\item\label{c3} $(u'_j,u''_j)$ is a closed tubular neighbourhood of $u'''_j$ in the pair $(M,W)$;
\end{enumerate}
are such that
\begin{enumerate}
\setcounter{enumi}{3}
	\item if $\vv_j$ denotes the restriction of $u'''_j$ to $\ppp D^+_\frac{1}{2}$, then $(\vv_0,\ldots,\vv_i)$ is an $i$-simplex in $O(W, \ell_0, \ell_1)$.
	\item\label{c5} $u'_0,\ldots,u'_i$ are pairwise disjoint.
\end{enumerate}
The $j$th face map forgets $u_j$, that is, it sends an $i$-simplex $(W,u_0,\ldots,u_i)$ to the $(i-1)$-simplex $(W,u_0,\ldots,\hat{u}_j,\ldots,u_i)$. There is an augmentation map $\epsilon_\bullet$ to the space $\e{g,b}{M;\delta}$ that forgets everything but $W$. This defines a semi-simplicial set, and we topologise the set of $i$-simplices as a subspace of \[\e{g,b}{M;\delta}\times \TubEmb(I\times [i],M).\] 
\end{df}
If we want to emphasise that $\ell$ intersects a single component of $\delta$, we denote the semi-simplicial space by $\h{g,b}{M;\delta,\ell}{\bullet}$; if we want to emphasise that $\ell$ intersects two disctinct components of $\delta$, we denote the semi-simplicial space by $\b{g,b}{M;\delta,\ell}{\bullet}$. 

We denote by $\ww_j$ the image of the restriction of $u''_j$ to $\vv_j$, which is a piece of surface.

\begin{proposition}\label{resolution-arcs} $\OO{g,b}{M;\delta,\ell}{\bullet}$ is a $(g-1)$-resolution of $\e{g,b}{M;\delta}$.
\end{proposition}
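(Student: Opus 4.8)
The plan is to deduce this from Proposition \ref{proposition:RW1-4.1} by a fibration argument that "spreads" the connectivity of $|O(W;k_0,k_1)_\bullet|$ over the family of all surfaces $W \in \e{g,b}{M;\delta}$. First I would observe that the augmented semi-simplicial space $\epsilon_\bullet \colon \OO{g,b}{M;\delta,\ell}{\bullet} \to \e{g,b}{M;\delta}$ is an augmented topological flag complex: the openness of the product map $\OO{g,b}{M;\delta}{i} \to \OO{g,b}{M;\delta}{0} \times_{\e{g,b}{M;\delta}} \cdots \times_{\e{g,b}{M;\delta}} \OO{g,b}{M;\delta}{0}$ and the "pairwise compatibility" characterisation of $i$-simplices are both immediate from the definition (disjointness of arcs and of tubular neighbourhoods are pairwise conditions; connectedness of the complement is \emph{not} pairwise, but this is where the resolution fails to be a weak equivalence and only a $(g-1)$-resolution is claimed — so I must be careful to use the connectivity statement of Proposition \ref{proposition:RW1-4.1}, not a flag-complex criterion, for the fibres). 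The right framework is Criterion \ref{criterion1}: I would show each $\epsilon_i$ is a Serre fibration, identify its fibre over $W$, and then apply the criterion to conclude $|\epsilon_\bullet|$ has homotopy fibre $|\Fib_W(\epsilon_\bullet)|$, which I will show is $(g-2)$-connected; a $(g-2)$-connected homotopy fibre makes the augmentation $(g-1)$-connected, which is exactly a $(g-1)$-resolution.

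The key steps, in order: (1) \emph{$\epsilon_i$ is a fibration.} The space of $i$-simplices sits inside $\e{g,b}{M;\delta} \times \TubEmb(I\times[i],M;q,q_N)$ and $\epsilon_i$ is essentially the projection to the first factor. I would exhibit $\epsilon_i$ as a $\Diff_\partial(M)$-equivariant map onto $\e{g,b}{M;\delta}$, which is $\Diff_\partial(M)$-locally retractile by Corollary \ref{cor:retractil-embedded}, and invoke Lemma \ref{lemma:retractil-target} to conclude it is a locally trivial fibration. One must check equivariance and that the open-and-closed conditions cutting out the $i$-simplices inside the product are $\Diff_\partial(M)$-invariant, which they are. (2) \emph{Identify the fibre.} Over $W$, the fibre $\Fib_W(\epsilon_i)$ is the space of tuples $(u_0,\dots,u_i)$ of disjoint arcs in $W$ with the prescribed endpoint behaviour, each equipped with a closed tubular neighbourhood $(u_j',u_j'')$ of $u_j'''$ in the pair $(M,W)$, with the $u_j'$ disjoint. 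There is a forgetful map $\Fib_W(\epsilon_i) \to O(W;\ell_0\cap\delta,\ell_1\cap\delta)_i$ remembering only the underlying arcs $(u_0''',\dots,u_i''')$, and this is again a fibration (by the same local-retractility machinery, or directly) whose fibre is a product of spaces of closed tubular neighbourhoods of the arcs in the pair $(M,W)$, each of which is contractible by Lemma \ref{lemma:tubular-contractible} (here using that $\ell_0,\ell_1$ are balls, so the prescribed boundary conditions are neighbourhoods of the relevant points in the appropriate faces). Hence $\Fib_W(\epsilon_i) \to O(W;\ell_0\cap\delta,\ell_1\cap\delta)_i$ is a weak equivalence, compatibly with face maps. (3) \emph{Take realisations.} By Criterion \ref{criterion1}, $\hofib_W(|\epsilon_\bullet|) \simeq |\Fib_W(\epsilon_\bullet)| \simeq |O(W;\ell_0\cap\delta,\ell_1\cap\delta)_\bullet|$, which is $(g-2)$-connected by Proposition \ref{proposition:RW1-4.1}. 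Finally, since $\e{g,b}{M;\delta}$ is path-connected (or, working one path-component at a time, noting the fibre connectivity is uniform), a map with $(g-2)$-connected homotopy fibre over a connected base is $(g-1)$-connected, i.e. $\pi_i(\e{g,b}{M;\delta}, |\OO{g,b}{M;\delta,\ell}{\bullet}|) = 0$ for $i \leq g-1$, which is the definition of a $(g-1)$-resolution.

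The main obstacle will be Step (1)–(2): making the identification of the fibre genuinely rigorous requires handling the topology on $\TubEmb$ carefully (it is defined via the quotient construction with the maps $\phi_V$, Lemma \ref{lemma:111}) and checking that restricting a thickened-embedding space to a fixed ambient surface $W$ and forgetting the tubular-neighbourhood and normal-framing data really does yield fibrations with the claimed contractible fibres — in particular that Lemma \ref{lemma:tubular-contractible} applies with the correct face constraints coming from the requirement $u_j'(0,-) \in \ell_0$, $u_j'(1,-)\in \ell_1$. The combinatorics of the arc complex and the connectivity estimate are imported wholesale from Proposition \ref{proposition:RW1-4.1}, so no new curve-complex work is needed; everything else is assembling the fibration-theoretic scaffolding from Section \ref{section:retractile} and Section \ref{section:tubular}.
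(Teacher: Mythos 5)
Your proposal is correct and follows essentially the same line of argument as the paper's proof: apply Criterion \ref{criterion1} after verifying the $\epsilon_i$ are fibrations via the local-retractility machinery, forget tubular neighbourhoods to map the semi-simplicial fibre to $O(W;\ell_0\cap\delta,\ell_1\cap\delta)_\bullet$, use Lemma \ref{lemma:tubular-contractible} for the contractible fibres, and import the $(g-2)$-connectivity from Proposition \ref{proposition:RW1-4.1}. One small imprecision worth fixing in a final write-up: the fibre of the forgetful map $r_i$ is not literally a product over $j$ of tubular-neighbourhood spaces (the $u'_j$ are required to be \emph{pairwise disjoint}), but rather $\Tub(\bigsqcup_j u'''_j,(M,W);q_N)$, the tubular-neighbourhood space of the disjoint union of arcs, which Lemma \ref{lemma:tubular-contractible} covers directly.
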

\begin{proof} In order to find the connectivity of the homotopy fibre of $\epsilon_\bullet$, we use Criterion \ref{criterion1} to assure that the semi-simplicial fibre $\Fib_W(\epsilon_\bullet)$ of $\epsilon_\bullet$ over a surface $W$ is homotopy equivalent to the homotopy fibre of $|\epsilon_\bullet|$: the space $\e{g,b}{M;\delta}$ is $\Diff_\partial(M)$-locally retractile by Corollary \ref{cor:retractil-embedded}, and, as the group $\Diff(M;\delta,\ell)$ also acts on this space, any local retraction for $\Diff_\partial(M)$ gives also a local retraction for $\Diff(M;\delta,\ell)$. In addition, the augmentation maps $\epsilon_i$ are $\Diff(M;\delta,\ell)$-equivariant for all $i$. Therefore, by Lemma \ref{lemma:retractil-target}, they are locally trivial fibrations.

For this proof only, we define $\OO{g,b}{M;\delta,\ell}{\bullet}^{\star}$ to be the semi-simplicial space that results from forgetting the data of the tubular neighbourhoods in the definition of $\OO{g,b}{M;\delta,\ell}{\bullet}$. That is, we take spaces of tuples $(W,u_0''',\ldots,u_i''')$ as above, remove condition \ref{c3} and replace condition \ref{c5} by requiring that $u_0''',\ldots,u_i'''$ must be pairwise disjoint. Again, it is augmented over $\e{g,b}{M;\delta}$, and we denote the augmentation by $\epsilon^{\star}$.

The $i$-simplices of $\Fib_W(\epsilon_\bullet)$ are tuples $(u_0,\ldots,u_i)$ with $u_j = (u'_j,u''_j,u'''_j)$ where $u'''_j$ are embeddings of a half disc in $W$ and $(u'_j,u''_j)$ are pairwise disjoint closed tubular neighbourhoods of $u'''_j$ in the pair $(W,M)$. Forgetting the closed tubular neighbourhoods gives a levelwise $\Diff(M;W,\delta,\ell)$-equivariant semi-simplicial map
\[r_\bullet\colon \Fib_W(\epsilon_\bullet)\longrightarrow \Fib_W(\epsilon_\bullet^{\star}),\]
and the space of $i$-simplices of $O(W;\ell_0,\ell_1)_\bullet$ is $\Diff(W)$-locally retractile, and also $\Diff(M;W,\delta,\ell)$-locally retractile by Corollary \ref{cor:retractil-resolutions}. Therefore, by Lemma \ref{lemma:retractil-target}, $r_\bullet$ is a levelwise locally trivial fibration. The fibre of $r_\bullet$ over an $i$-simplex is a space of closed tubular neighbourhoods of discs in the pair $(M,W)$, which is contractible by Lemma \ref{lemma:tubular-contractible}, so $r_\bullet$ is a homotopy equivalence. In the next paragraph we will show that $\|\Fib_W(\epsilon_\bullet^{\star})\|$ is $(g-2)$-connected, finishing the proof of the proposition.

Taking the discrete topology on the semi-simplicial space $\Fib_W(\epsilon_\bullet^{\star})$, we get a semi-simplicial \emph{set} $\Fib_W(\epsilon_\bullet^{\star})^{\rm disc}$, and a pair of maps:
\[\Fib_W(\epsilon_\bullet^{\star})\longleftarrow \Fib_W(\epsilon_\bullet^{\star})^{\rm disc}\overset{g_\bullet}{\longrightarrow} \pi_0O(W;\ell_0,\ell_1)_\bullet.\]
The first map is the identity on points, and the second map sends each tuple $u_j$ to its restriction $\vv_j$ to $\ppp D^+_{\frac{1}{2}}$, and then takes isotopy classes. Now we use the techniques in \cite{GR-W} as summarised in \cite[Theorem~A.7]{Kupers:unlinked}. The theorem says that if 
\begin{enumerate}
\item $\pi_0 O(W;\ell_0,\ell_1)_\bullet$ is weakly Cohen-Macaulay of dimension $g-1$ (i.e., it is $(g-2)$-connected and the link of each $i$-simplex is $(g-i-3)$-connected).
\item $\Fib_W(\epsilon_\bullet^{\star})$ is a Hausdorff ordered flag space.
\item The map $\vert g_\bullet \vert \colon \vert\Fib_W(\epsilon_\bullet^{\star})^{\rm disc} \vert \to \vert\pi_0 O(W;\ell_0,\ell_1)_\bullet \vert$ is simplexwise injective.
\item If $u_1''',\ldots,u_i'''$ is a collection of $0$-simplices in $\Fib_W(\epsilon_\bullet^{\star})$, and $[\vv]\in \pi_0O(W;\ell_0,\ell_1)_0$ so that $([\vv],[\vv_j'''])$ is an $1$-simplex for all $j$, then there exists a $u_0'''$ with $g_0(u_0''') = [\vv]$, such that $u_0'''\neq u_j'''$ for all $j$ and such that $(u_0''',u_j''')$ is a $1$-simplex for all $j$.
\end{enumerate}
then $\Fib_W(\epsilon_\bullet^{\star})$ is $(g-2)$-connected. 

For the first condition, Proposition \ref{proposition:RW1-4.1} above says that this is indeed $(g-2)$-connected. The link of an $i$-simplex $([\vv_0],\ldots,[\vv_i])$ is canonically isomorphic to the complex $\pi_0 O(W\setminus (\vv_0\cup \ldots \cup \vv_i);k_0,k_1)$ (for certain embedded intervals $k_0,k_1$), and the surface $W\setminus (\vv_0\cup \ldots \cup \vv_i)$ has genus $g-i-1$ or $g-i$, depending on whether $\ell\cap \delta$ is connected or not (see Proposition \ref{prop:fibrations-arcs}). In both cases, the link is $(g-i-3)$-connected, and so it is weakly Cohen-Macaulay of dimension $g-1$. 

The second condition holds by inspection, and the third condition is automatic as $\vert g_\bullet \vert$ is the geometric realisation of a semi-simplicial map. Finally, the fourth condition holds by general position: Since $M$ is simply-connected, it is possible to find a map $u'''_0\colon D^+_{1}\to M$ such that its restriction to $\pp D^+_{1}$ lies in $\ell$ and its restriction to $\ppp D^+_{\frac{1}{2}}$ is $\vv$. Because the dimension of $M$ is at least $5$, a small perturbation of $u'''_0$ makes it to be embedded and also disjoint from each of the other $u'''_j$. Then $(u'''_0,u'''_j)$ is a $1$-simplex for all $j$.
\end{proof}

We now wish to understand the homotopy type of the spaces $\OO{g,b}{M;\delta,\ell}{i}$ of $i$-simplices in these semi-simplicial resolutions. We will show that they can be described in terms of a certain space $A_i(M;\delta,\ell)$ and a space of surfaces $\e{g',b'}{{M}';\delta'}$ in a different manifold $M'$, with different boundary conditions and different genus and number of boundaries. This description will allow us to deduce homological information about $\e{g,b}{{M};\delta}$ from similar information about spaces of surfaces of smaller genus.

\begin{df} Let $A_i(M;\delta,\ell)$ be the set of tuples $(a_0,\ldots,a_i)$ with $a_j = (u'''_j,u'_j,L_j)$ and 
\begin{enumerate}
\item the $u'''_j\colon D^+_{1} \rightarrow M$ are pairwise disjoint embeddings with $\partial_0u'''_j \subset \ell$, and both $\ell_0\cap u'''_j>\ell_0\cap u'''_k$ and $\ell_1\cap u'''_j<\ell_1\cap u'''_k$ if $j>k$;
\item $u'_j$ is a closed tubular neighbourhood of $u'''_j$ in $M$ disjoint from $u'_k$ if $j\neq k$ whose restriction to $\pp D^+_{1}$ is a closed tubular neighbourhood in $\ell\subset \pp M$;
\item $L_j\subset \N_M \ppp u'''_j$  is an oriented $1$-dimensional subbundle such that $L_{j|\partial\ppp u'''_j} = N_{\delta\cap\ell} (\partial \ppp u'''_j)$, i.e., $L_j\in \Gr_1^+(\N_M \ppp u'''_j;\N_{\delta\cap\ell} (\partial \ppp u'''_j))$.
\end{enumerate}
This space is a union of components of the space $\TubEmb_{1,\ppp D^+_{\frac{1}{2}}\times[i]}( D^+_{1}\times [i], M;q,q_N,q_C)$, where $C=\ppp D^+_{\frac{1}{2}}$ and 
\begin{align*}
q_C(\{(-\tfrac{1}{2},0)\}) &= \ell_0 & q(\pp D^+_1) &= \ell \\
q_C(\{(\tfrac{1}{2},0)\}) &= \ell_1 & q_N(\pp D^+_1) &= \ell
\end{align*}
and $q(x)=M$ otherwise. We use this to give a topology to $\A{i}{M;\delta,\ell}$.
\end{df}

There are restriction maps
\begin{equation}\label{eq:834}
\OO{g,b}{M;\delta,\ell}{i} \longrightarrow \A{i}{M;\delta,\ell}
\end{equation}
that send $(W,u_0,\ldots,u_i)$ to $(a_0,\ldots,a_i)$, where $a_j = (u'''_j,u'_j,N_W \vv_j)$. 

\begin{proposition}\label{fibrations-arcs}\label{prop:fibrations-arcs} The restriction maps \eqref{eq:834} are fibrations, and, using the notation of Section \emph{\ref{ss:gluingmaps}}, their fibres over a point ${\bf u} = (u_0,\ldots,u_i)$ in $\A{i}{M;\delta,\ell}$ are given by
\[\begin{array}{rcccc}
\e{g-i-1,b+i+1}{\stminus{M}{\bf u};\stminus{\delta}{\bf u}}&\longrightarrow& \h{g,b}{M;\delta,\ell}{i} &\longrightarrow &\A{i}{M;\delta,\ell} \\[0.2cm]
\e{g-i,b+i-1}{\stminus{M}{\bf u};\stminus{\delta}{\bf u}}&\longrightarrow& \b{g,b}{M;\delta,\ell}{i} &\longrightarrow& \A{i}{M;\delta,\ell},
\end{array}\]
depending on how many components of $\delta$ intersect $\ell$. 
\end{proposition}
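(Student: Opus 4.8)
The plan is to show that the restriction map $\OO{g,b}{M;\delta,\ell}{i} \to \A{i}{M;\delta,\ell}$ is a fibration by exhibiting both spaces as objects with an action of an auxiliary diffeomorphism group and invoking Lemma \ref{lemma:retractil-target}. First I would observe that the discretised group $\Diff_\partial(M)$ (or rather a suitable variant preserving the boundary conditions $\delta$ and the balls $\ell$) acts on $\OO{g,b}{M;\delta,\ell}{i}$ by acting simultaneously on the surface $W$ and on the tubular-neighbourhood/arc data, and similarly acts on $\A{i}{M;\delta,\ell}$; the restriction map is equivariant. By Corollary \ref{cor:retractil-fibrations} the space $\TubEmb_{1,I\times[i]}(I\times[i],M;q,q_N)$ is $\Diff(M;q,q_N)$-locally retractile, and $\A{i}{M;\delta,\ell}$ is identified with a union of its path components, hence inherits local retractility; a local retraction here is again, by the usual argument (a retraction for one group restricts to one for any subgroup acting compatibly, cf.\ the proof of Proposition \ref{resolution-arcs}), a local retraction for the group acting on the source. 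Therefore by Lemma \ref{lemma:retractil-target} the restriction map is a locally trivial fibration.

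Next I would identify the fibre over a point $\mathbf{u} = (u_0,\ldots,u_i) \in \A{i}{M;\delta,\ell}$. By definition of $\OO{g,b}{M;\delta,\ell}{i}$, a point in the fibre over $\mathbf{u}$ is a surface $W \in \e{g,b}{M;\delta}$ together with the requirement that the arcs $u'''_0,\ldots,u'''_i$ lie in $W$ as a member of $O(W;\ell_0\cap\delta,\ell_1\cap\delta)_i$, that the $(u'_j,u''_j)$ are closed tubular neighbourhoods of the $u'''_j$ \emph{in the pair} $(M,W)$, and that the $u'_j$ are disjoint --- but all the tube-and-arc data is now fixed by $\mathbf{u}$, so the only remaining datum is $W$ itself, constrained to contain the embedded surface $u''_0 \cup \cdots \cup u''_i$. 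This is exactly the setup of a gluing map of type II from Section \ref{ss:gluingmaps}: cutting $W$ along the tubular neighbourhoods $u'_j$ (equivalently, along $\stminus{M}{\mathbf{u}} = \cl(M\setminus \bigcup_j u'_j)$) gives a bijection between $\{W \in \e{g,b}{M;\delta} : u''_j \subset W \ \forall j\}$ and $\e{?}{\stminus{M}{\mathbf{u}};\stminus{\delta}{\mathbf{u}}}$, where the genus and number of boundary components of the cut surface are computed by an Euler characteristic / boundary count: cutting a connected oriented surface along $i+1$ disjoint arcs. I would then simply carry out this count in the two cases: when the arcs join $\ell_0$ and $\ell_1$ which meet the \emph{same} component of $\delta$, each cut raises the number of boundary components by one (splitting one boundary circle, or creating a new one) while lowering the genus, giving $\e{g-i-1,b+i+1}{\cdots}$; when $\ell_0$ and $\ell_1$ meet \emph{different} components of $\delta$, the first cut fuses two boundary circles and the effect on $(g,b)$ is $\e{g-i,b+i-1}{\cdots}$. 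One must also check the boundary condition is exactly $\stminus{\delta}{\mathbf{u}}$ and that the bijection is a homeomorphism (both are routine given the definitions in Section \ref{ss:gluingmaps} and the topology on $\TubEmb$ from Lemma \ref{lemma:111}).

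The main obstacle I anticipate is not the fibration statement --- that is a now-standard local-retractility argument --- but making the identification of the fibre genuinely a homeomorphism rather than just a set-level bijection, and getting the combinatorics of genus and boundary components correct and uniform in $i$. Concretely, one has to check that the type-II gluing map $-\cup (u''_0 \cup \cdots \cup u''_i)$ restricts to a homeomorphism onto the subspace of $\e{g,b}{M;\delta}$ of surfaces containing the fixed embedded surface, and that this subspace is precisely the fibre of the restriction map (using that a closed tubular neighbourhood of an arc in the pair $(M,W)$ is determined, once $u'$ and $u'''$ are fixed, by the line field $L_j \subset N_M u'''_j$, which is part of the data of $\mathbf{u}$). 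The Euler-characteristic bookkeeping --- $\chi$ drops by $i+1$ under cutting along $i+1$ arcs with endpoints on the boundary, which pins down $2-2g-b$, combined with tracking how the arcs split boundary circles --- then yields the two displayed formulas. I would present the fibration part briefly and spend most of the proof on the fibre identification, deferring the precise cut-and-paste genus count to the relevant figures and to the conventions of Section \ref{ss:gluingmaps}.
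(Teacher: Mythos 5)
Your approach to the fibration part is the same as the paper's: one observes the restriction map is equivariant for a suitable variant of the diffeomorphism group of $M$ preserving $\delta$ and $\ell$, notes that $\A{i}{M;\delta,\ell}$ is (a union of components of) a $\TubEmb$-space and hence $\Diff(M;q,q_N)$-locally retractile by Proposition~\ref{prop:retractil-fibrations}, and applies Lemma~\ref{lemma:retractil-target}. The fibre identification --- cutting a surface $W$ containing the fixed strips $u''_0,\dots,u''_i$ along the tubes $u'_j$ to get a surface in $M(\mathbf{u})$ with boundary condition $\delta(\mathbf{u})$ --- is likewise identical to the paper's.

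The genuine gap is in the genus-and-boundary count, which you acknowledge ``deferring to the relevant figures and to the conventions of Section~\ref{ss:gluingmaps}.'' Your claim that in the $\h{g,b}{\cdot}{}$ case \emph{every} cut splits a boundary circle, and in the $\b{g,b}{\cdot}{}$ case only the \emph{first} cut merges two circles and all subsequent cuts split, is not automatic: after cutting along $a_0$, the endpoints of $a_1$ could a priori land on two different boundary components of $\stminus{\Sigma}{a_0}$, in which case $a_1$ would merge rather than split and the boundary count would fail to be uniform. What forces the uniform behaviour is condition (iii) in the definition of $O(W;k_0,k_1)_\bullet$: the endpoints of the arcs are ordered as $(a_0(0),\dots,a_i(0))$ in $k_0$ and in the \emph{reverse} order $(a_i(1),\dots,a_0(1))$ in $k_1$. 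The paper formalizes this by classifying each strip $a''_j$ as type~I (endpoints on a single boundary component of $\stminus{\Sigma}{a''_0,\dots,a''_{j-1}}$) or type~II (otherwise), and proves, using the ordering condition, that a type~I strip is always followed by another type~I strip, and that a type~II strip is also followed by a type~I strip; thus at most one strip (necessarily $a_0$) can be of type~II, and it is of type~II precisely in the $\mathcal{O}^2$ case. Without an argument of this form, the displayed formulas $(g-i-1,\,b+i+1)$ and $(g-i,\,b+i-1)$ are not justified, so this step should not be deferred.
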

The manifold $M({\bf u})$ is obtained from $M$ by cutting out an open half disc, which does not change the manifold up to diffeomorphism. It is for this reason that we have cut out a neighbourhood of a half disc rather than a neighbourhood of an arc, and we will use this property in Section \ref{section:triviality}.
\begin{df}\label{def:appaug1} When $i=0$, the composition of the inclusion of the fibre in \eqref{eq:834} with the augmentation $\OO{g,b}{M;\delta,\ell}{0} \to \e{g,b}{M;\delta}$ is called the \emph{approximate augmentation} of the resolution $\OO{g,b}{M;\delta}{\bullet}$ over the $i$-simplex ${\bf u}$.
\end{df}
\begin{proof}[Proof of Proposition \ref{fibrations-arcs}]
The restriction maps are $\Diff(M;\delta,\ell)$-equivariant and, by Proposition \ref{prop:retractil-fibrations}, the space $A_i(M;\delta,\ell)$ is $\Diff(M;\delta,\ell)$-locally retractile, hence the restriction maps are locally trivial fibrations by Lemma \ref{lemma:retractil-target}.

The fibre over a point ${\bf u}$ is the space of surfaces $W$ in $M$ that contain the strips $(u_0'',\ldots,u_i'')$ and such that $W\backslash (u_0'',\ldots,u_i'')$ lies outside $u'_0\cup\ldots\cup u'_i$. If we take a parametrisation $f\colon \SS\rightarrow W$ of any surface and write ${\bf s} = (s_0,\ldots,s_i) = (f^{-1}\circ \ww_0,\ldots,f^{-1}\circ \ww_i)$, then this space is canonically homeomorphic to the space $\E(\stminus{\SS}{{\bf s}},\stminus{M}{{\bf u}};\stminus{\delta}{\bf u})$, so we just need to classify $\stminus{\SS}{\bf s}$.

Removing a strip from $\SS$ is the same as removing a $1$-cell, up to homotopy equivalence, hence $\chi(\stminus{\SS}{{\bf s}}) = \chi(\SS)+i+1$. Now, let us say that a strip $s_j$ in $\SS$ is of type I if both components of $\partial s_j$ are contained in a single component of $\partial \stminus{\SS}{(s_0,\ldots,s_{j-1})}$, and that it is of type II otherwise. 
\begin{itemize}
\item If $s_j$ is of type I, then $\partial \stminus{\SS}{s_0,\ldots,s_j}$ has one more boundary component than $\delta$ and, as a consequence of the last condition of the definition of $O(\SS)_\bullet$, the strip $s_{j+1}$ is again of type I.
\item If $s_j''$ is of type II, then $\partial \stminus{\SS}{s_0,\ldots,s_j}$ has one less boundary component than $\delta$ and, as a consequence of the last condition of the definition of $O(\SS)_\bullet$, the strip $s_{j+1}$ is of type I. 
\end{itemize}
Hence, the only strip of type II that may occur in the construction of $\partial \stminus{\SS}{{\bf s}}$ is the one given by $s_0$ in $O^2(\SS)_{\bullet}$. Hence $\partial \stminus{\SS}{{\bf s}}$ has $b+i+1$ components if ${\bf s}\in O^1(\SS)_\bullet$ and $b+i-1$ components if ${\bf s}\in O^2(\SS)_{\bullet}$. Finally, we obtain the genus of $\stminus{\SS}{\bf{s}}$ from the formula $g=\frac{1}{2}(2-\chi-b)$.
\end{proof}

\begin{figure}[h]
\centering
\subfloat[]{\includegraphics[width=5.5cm,angle=0]{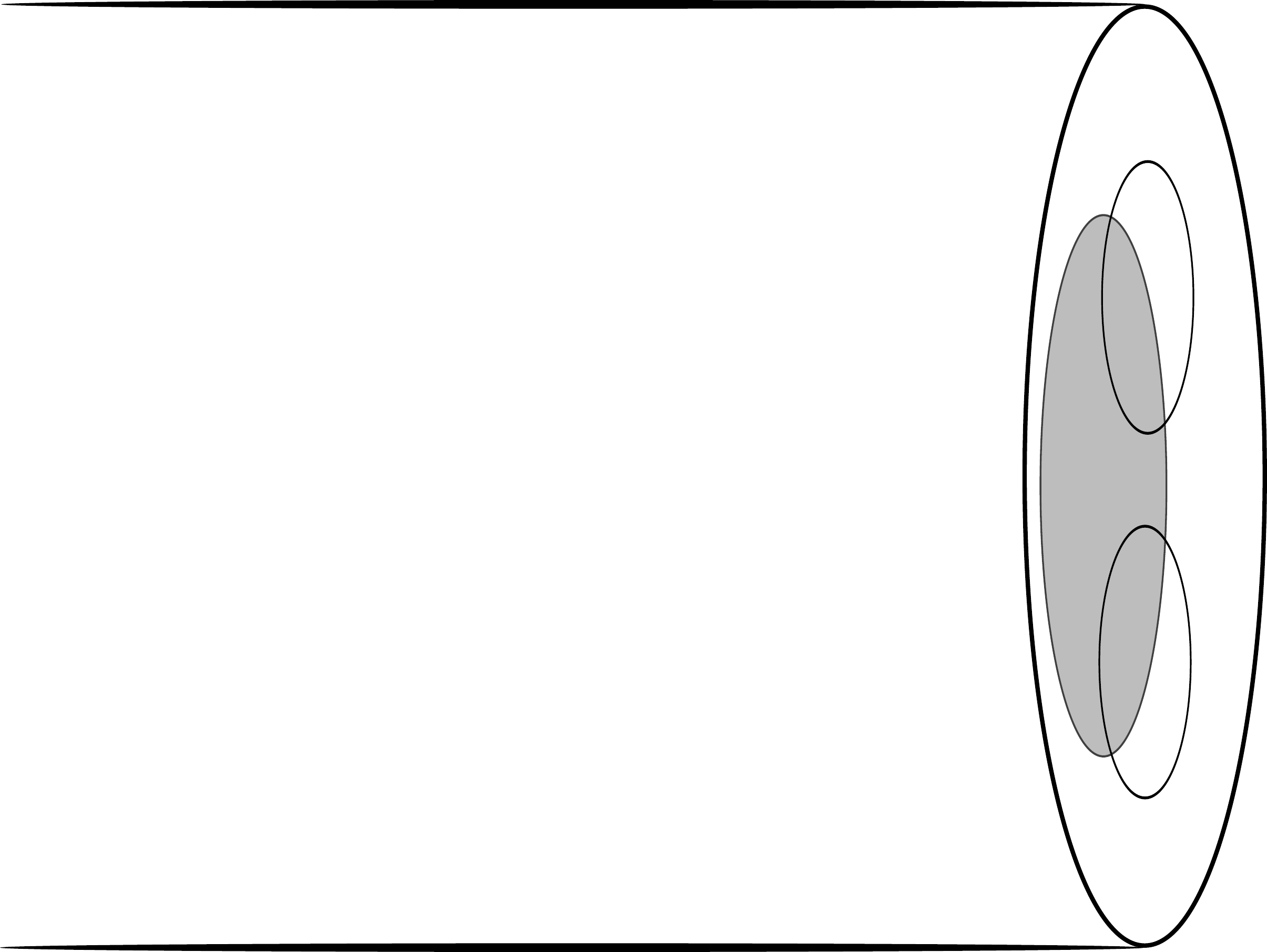}}
\hspace{1cm}
\subfloat[]{\includegraphics[width=5.5cm,angle=0]{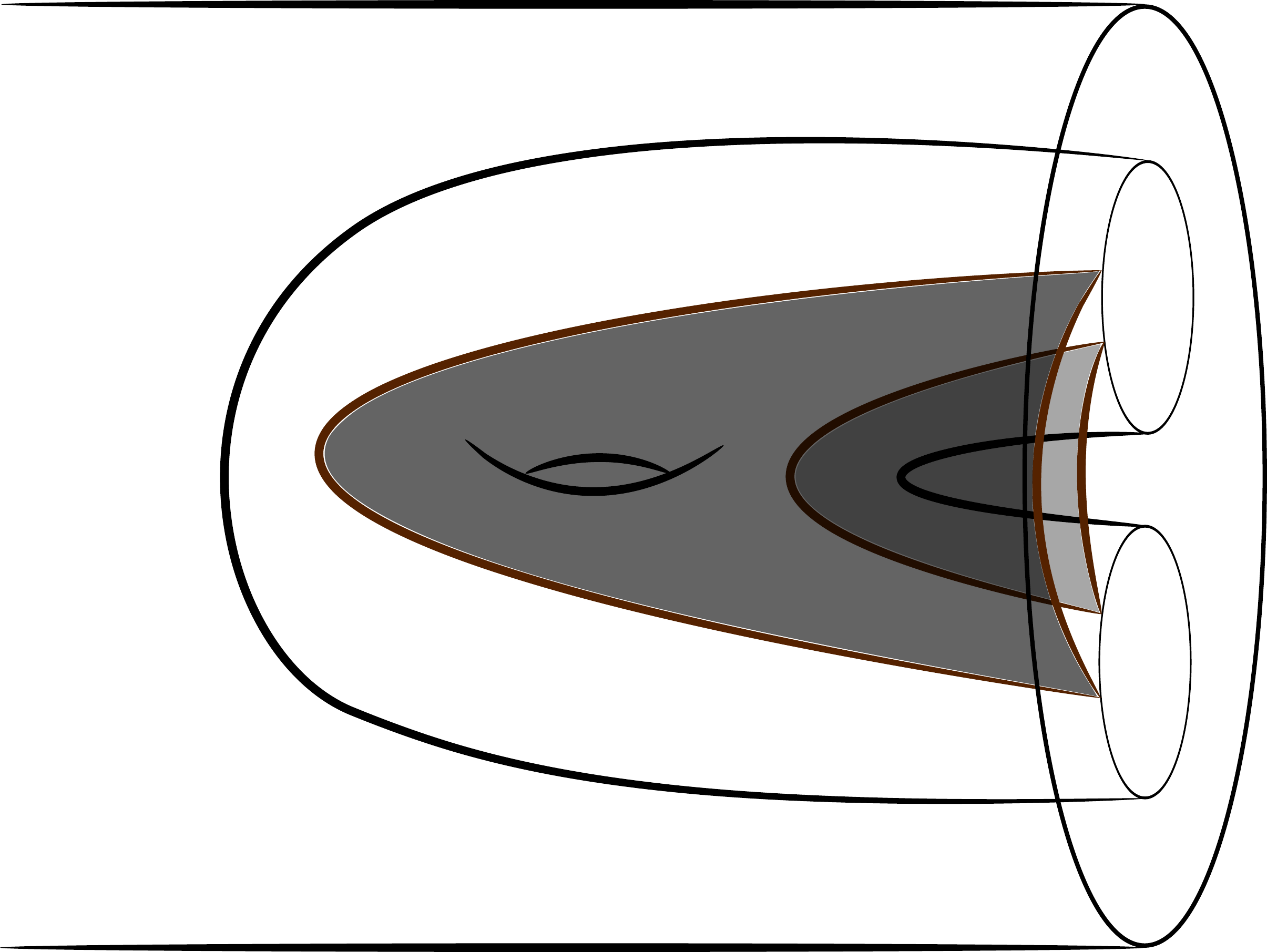}}
\caption{(a) A background manifold $M = D^2\times [0,1)$, a boundary condition consisting of two circles, and an embedded ball $\ell$ (in grey) that intersects the circles in two intervals. (b) A $1$-simplex in the boundary resolution.}
\end{figure}


\subsection{Stabilisation maps between resolutions}\label{stab-maps-resolution-arcs} 

In this subsection we show how to extend the stabilisation maps defined in Section \ref{ss:gluingmaps} to maps between the resolutions we have constructed.
\footnotesize
\begin{equation}\label{eq:stab-resolutions}
\begin{gathered}
\begin{array}{cc}
\xymatrix@C=.9cm{
\b{g,b}{M;\delta,\ell}{i}\ar@{-->}[r] \ar[d]^{\epsilon_i} &
\h{g+1,b-1}{M_1;\z{\delta}, \z{\ell}}{i} \ar[d]^{\epsilon_i} \\
\e{g,b}{M,\delta} \ar[r]^-{\alpha_{g,b}(M;\delta,\z{\delta})} & 
\e{g+1,b-1}{M_1;\z{\delta}}
}
&
\xymatrix@C=.9cm{\h{g,b}{M;\delta,\ell}{i}\ar@{-->}[r] \ar[d]^{\epsilon_i} &
\b{g,b+1}{M_1;\z{\delta}, \z{\ell}}{i} \ar[d]^{\epsilon_i} \\
\e{g,b}{M,\delta} \ar[r]^-{\beta_{g,b}(M;\delta,\z{\delta})} & 
\e{g,b+1}{M_1;\z{\delta}}.
}
\end{array}
\end{gathered}
\end{equation} 
\normalsize
In Section \ref{ss:gluingmaps}, we defined the maps $\alpha_{g,b}(M;\delta,\z{\delta})$ by gluing a cobordism $P \subset\pp M\times I$ to each surface in $\e{g,b}{M;\delta}$. As we did there, in the following constructions we will assume, without loss of generality, that 
\begin{enumerate}
\item $\z{\ell} = \ell\times \{1\}$. 
\item $P \cap (\ell\times [0,1]) = (\ell\cap \delta)\times [0,1]$, in particular $\z{\ell}\cap \z{\delta} = (\ell\cap \delta)\times \{1\}$.
\item $\ell_0\cup \ell_1$ hit all components of the relevant part of $\dd$ (see Definition \ref{df:relevant}). 
\end{enumerate}
These assumptions make the extension of the stabilisation map canonical: Let us define $\y{u}_j = \pp u_j\times I$. Then, joining each term in the tuple $(u_0,\ldots,u_i)$ ---that are subsets of $M$--- 
 to the products $(\y{u}_0,\ldots,\y{u}_i)$ ---that are subsets of $\pp M\times I$---, we obtain new triples $(\z{u}_0,\ldots,\z{u}_i)$ ---that are subsets of $M_1$---
, defined as $\z{u}_j = u\cup \y{u}_j$. This yields the dashed maps $\alpha_{g,b}(M;\delta,\z{\delta})_i$ in the first diagram. These maps commute with the face maps and with the augmentation maps, so they define a map of semi-simplicial spaces 
\[\alpha_{g,b}(M;\delta,\z{\delta})_\bullet\colon \b{g,b}{M;\delta,\ell}{\bullet}\longrightarrow \h{g+1,b-1}{M_1;\z{\delta},\z{\ell}}{\bullet}\]
which is augmented over $(\alpha_{g,b}(M;\delta,\z{\delta}))$. Analogously, we can define a map
\[\beta_{g,b}(M;\delta,\z{\delta})_\bullet\colon \h{g,b}{M;\delta,\ell}{\bullet}\longrightarrow \b{g,b+1}{M_1;\z{\delta},\z{\ell}}{\bullet}\]
which is augmented over $(\beta_{g,b}(M;\delta,\z{\delta}))$.

\begin{corollary}[To Proposition \ref{resolution-arcs}]\label{cor:relative-resolutions-arcs} The semi-simplicial pair $(\alpha_{g,b}(M;\delta,\z{\delta})_\bullet)$ together with the natural augmentation map to $(\alpha_{g,b}(M;\delta,\z{\delta}))$ is a $g$-resolution. The semi-simplicial pair $(\beta_{g,b}(M;\delta,\z{\delta})_\bullet)$ together with the natural augmentation map to $(\beta_{g,b}(M;\delta,\z{\delta}))$ is a $(g-1)$-resolution.
\end{corollary}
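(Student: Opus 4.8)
The plan is to derive this corollary formally from Proposition~\ref{resolution-arcs} together with the long exact sequences attached to the augmented semi-simplicial spaces. The semi-simplicial maps $\alpha_{g,b}(M;\delta,\z{\delta})_\bullet$ and $\beta_{g,b}(M;\delta,\z{\delta})_\bullet$, and the fact that they are augmented over $(\alpha_{g,b}(M;\delta,\z{\delta}))$ and $(\beta_{g,b}(M;\delta,\z{\delta}))$ respectively, have already been established above, so the only thing left is to compute the connectivity of the induced maps of realisations \emph{relative to} the augmentations.

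First I would record the connectivity of each of the four augmented semi-simplicial spaces appearing as a source or target in~\eqref{eq:stab-resolutions}. The hypotheses of Proposition~\ref{resolution-arcs} are satisfied throughout: $M$ and $M_1$ are collared manifolds with non-empty boundary, and $\delta$, $\z{\delta}$ are non-empty boundary conditions. Applied to surfaces of genus $g$ it gives that
\begin{gather*}
\epsilon_\bullet\colon\b{g,b}{M;\delta,\ell}{\bullet}\lra\e{g,b}{M;\delta},\qquad \epsilon_\bullet\colon\h{g,b}{M;\delta,\ell}{\bullet}\lra\e{g,b}{M;\delta},\\
\epsilon_\bullet\colon\b{g,b+1}{M_1;\z{\delta},\z{\ell}}{\bullet}\lra\e{g,b+1}{M_1;\z{\delta}}
\end{gather*}
are all $(g-1)$-connected, while applied to surfaces of genus $g+1$ it gives that $\epsilon_\bullet\colon\h{g+1,b-1}{M_1;\z{\delta},\z{\ell}}{\bullet}\to\e{g+1,b-1}{M_1;\z{\delta}}$ is $g$-connected. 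The bookkeeping point to keep in mind is that the target of $\alpha_{g,b}$ has genus $g+1$, so its resolution is one degree more connected than the source's.

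Then I would upgrade these absolute statements using the long exact sequence of a ``pair of pairs''. For $\alpha_{g,b}(M;\delta,\z{\delta})_\bullet$, write $X_\bullet=\b{g,b}{M;\delta,\ell}{\bullet}$ and $Y_\bullet=\h{g+1,b-1}{M_1;\z{\delta},\z{\ell}}{\bullet}$, augmented over $X=\e{g,b}{M;\delta}$ and $Y=\e{g+1,b-1}{M_1;\z{\delta}}$. Since an $a$-connected map has relative homology vanishing in degrees $\le a$, the previous paragraph gives $H_i(X,|X_\bullet|)=0$ for $i\le g-1$ and $H_i(Y,|Y_\bullet|)=0$ for $i\le g$. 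The relative homology $H_\ast\big((|\epsilon^Y_\bullet|),(|\epsilon^X_\bullet|)\big)$, which the spectral sequence $E^1_{s,t}=H_t(Y_s,X_s)\Longrightarrow H_{s+t+1}\big((|\epsilon^Y_\bullet|),(|\epsilon^X_\bullet|)\big)$ converges to, fits into the long exact sequence
\[\cdots\lra H_i(X,|X_\bullet|)\lra H_i(Y,|Y_\bullet|)\lra H_i\big((|\epsilon^Y_\bullet|),(|\epsilon^X_\bullet|)\big)\lra H_{i-1}(X,|X_\bullet|)\lra\cdots,\]
so this group vanishes whenever $i\le g$ and $i-1\le g-1$, that is for $i\le g$; hence $(\alpha_{g,b}(M;\delta,\z{\delta})_\bullet)$ is a $g$-resolution of $(\alpha_{g,b}(M;\delta,\z{\delta}))$. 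The argument for $\beta_{g,b}(M;\delta,\z{\delta})_\bullet$ is identical with $X_\bullet=\h{g,b}{M;\delta,\ell}{\bullet}$ and $Y_\bullet=\b{g,b+1}{M_1;\z{\delta},\z{\ell}}{\bullet}$; now the target resolution is only $(g-1)$-connected, so the same exact sequence forces vanishing for $i\le\min(g,g-1)=g-1$, giving a $(g-1)$-resolution.

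I do not anticipate a genuine obstacle here: all the real content lies in Proposition~\ref{resolution-arcs}. The two points that need care are the genus bookkeeping for the target of $\alpha_{g,b}$ (this extra unit of connectivity is exactly what promotes the conclusion from a $(g-1)$-resolution to a $g$-resolution), and the relative index convention, namely that an $a$-resolution of the source together with a $b$-resolution of the target yields a $\min(a+1,b)$-resolution of the pair.
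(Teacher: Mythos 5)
Your proof is correct and matches the intended deduction: the paper offers no explicit proof of this corollary precisely because it is exactly the application of Proposition~\ref{resolution-arcs} to the four augmented semi-simplicial spaces in diagram~\eqref{eq:stab-resolutions}, with the key genus bookkeeping that you identify --- the target of $\alpha_{g,b}$ has genus $g+1$, so its resolution picks up one extra degree of connectivity, while for $\beta_{g,b}$ both source and target have genus $g$ and only $(g-1)$-connectivity is available. The passage through the long exact sequence of the relative pair, and the observation that an $(a)$-connected source together with a $(b)$-connected target gives $\min(a+1,b)$-vanishing for the pair, correctly spells out why this connectivity is what Criterion~\ref{criterion3} needs.
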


There is a commutative square
\begin{equation}\label{eq:98}
\begin{gathered}
\xymatrix{
\b{g,b}{M;\delta,\ell}{i} \ar[d]\ar[rr]^-{\alpha_{g,b}(M;\delta,\z{\delta})_i}& & \h{g+1,b-1}{M_1;\z{\delta,},\z{\ell}}{i} \ar[d]\\
A_i(M;\delta,\ell) \ar[rr]^-{{\bf u}\mapsto {\bf\z{u}}} & & A_i(M_1;\z{\delta},\z{\ell})
}
\end{gathered}
\end{equation}
where the lower map is a homotopy equivalence. Hence we obtain a map between the fibres over the points ${\bf u}$ and ${\bf \z{u}}$ of the fibrations of Proposition \ref{fibrations-arcs},
\begin{equation}\label{eq:65}\e{g-i,b+i-1}{\stminus{M}{{\bf u}};\stminus{\delta}{{\bf u}}}\longrightarrow \e{g-i,b+i}{\stminus{M_1}{{\bf \z{u}}};\stminus{\z{\delta}}{{\bf \z{u}}}}.\end{equation}
More concretely, this is a map of type I given by the cobordism $P({\bf u}):= P\setminus{\bf\tilde{u}''}\subset \stminus{\pp M}{{\bf u'}}\times I$, which is denoted $\beta_{g-i,b+i-1}(\stminus{M}{{\bf u}};\stminus{\delta}{{\bf u}},\stminus{\z{\delta}}{{\bf \z{u}}})$, as can be seen from the difference between the genus of the surfaces in the source and target spaces. 

As the map $A_i(M;\delta,\ell)\rightarrow A_i(M_1;\z{\delta},\z{\ell})$ is a homotopy equivalence, the space $\e{g-i,b+i-1}{\stminus{M}{{\bf u}};\stminus{\delta}{{\bf u}}}$ is homotopy equivalent to the homotopy fibre of the composition of the augmentation map of $\b{g,b}{M;\delta,\ell}{\bullet}$ with this map. Moreover, we have shown that the map between the fibres of the locally trivial fibrations of diagram (\ref{eq:98}) is a stabilisation map $\beta_{g-i,b+i-1}(\stminus{M}{{\bf u}};\stminus{\delta}{{\bf u}},\stminus{\z{\delta}}{{\bf \z{u}}})$.

As a consequence, we have a diagram
\begin{equation}\label{eq:900}
\begin{gathered}
\xymatrix{
\e{g-i,b+i-1}{\stminus{M}{{\bf u}};\stminus{\delta}{{\bf u}}}\ar[rr]^{\beta_{g-i,b+i-1}(\stminus{M}{{\bf u}};\stminus{\delta}{{\bf u}},\stminus{\z{\delta}}{{\bf \z{u}}})}\ar[d]^{\simeq}&& \e{g-i,b+i}{\stminus{M_1}{{\bf \z{u}}};\stminus{\z{\delta}}{{\bf \z{u}}}}\ar[d]^{\simeq} \\
\hofib_{{\bf \z{u}}}(\rho)\ar[rr]\ar[d] && \hofib_{{\bf \z{u}}}(\rho') \ar[d]\\
\b{g,b}{M;\delta,\ell}{i} \ar[dr]^{\rho}\ar[rr]^-{\alpha_{g,b}(M;\delta,\z{\delta})_i}& & \h{g+1,b-1}{M_1;\z{\delta},\z{\ell}}{i} \ar[dl]_{\rho'}\\
 & A_i(M_1;\z{\delta},\z{\ell}).&
}
\end{gathered}
\end{equation}
This gives that the pair $(\hofib_{{\bf \z{u}}}(\rho'),\hofib_{{\bf \z{u}}}(\rho))$ is homotopy equivalent to the pair $(\beta_{g-i,b+i-1}(\stminus{M}{{\bf u}};\stminus{\delta}{{\bf u}},\stminus{\z{\delta}}{{\bf \z{u}}}))$.

Following the same procedure with the map $\beta_{g,b}(M;\delta,\z{\delta})$, we obtain that the pair given by the map from the homotopy fibre of 
\[\h{g,b}{M;\delta,\ell}{i}\lra A_i(M_1;\z{\delta},\z{\ell})\]
 to the fibre of the composition
\[\h{g,b}{M;\delta}{i}\lra A_i(M;\delta,\ell)\lra A_i(M_1;\z{\delta},\z{\ell})\]
 is homotopy equivalent to the pair given by
\[\e{g-i-1,b+i+1}{\stminus{M}{{\bf u}};\stminus{\delta}{{\bf u}}}\longrightarrow \e{g-i,b+i}{M_1({\bf \z{u}});\stminus{\z{\delta}}{{\bf \z{u}}}},\]
which is a map of type $\alpha_{g-i-1,b+i+1}(\stminus{M}{{\bf u}};\stminus{\delta}{{\bf u}},\stminus{\z{\delta}}{{\bf \z{u}}})$. 
\begin{corollary}[To Proposition \ref{fibrations-arcs}]\label{cor:relative-fibrations-arcs} There are homotopy fibre sequences
\[\begin{array}{rcccl}
(\beta_{g-i,b+i-1}(\stminus{M}{{\bf u}};\stminus{\delta}{{\bf u}}))&\longrightarrow& (\alpha_{g,b}(M;\delta)_i) &\longrightarrow& A_i(M_1;\z{\delta},\z{\ell}), \\[0.2cm]
(\alpha_{g-i-1,b+i+1}(\stminus{M}{{\bf u}};\stminus{\delta}{{\bf u}}))&\longrightarrow& (\beta_{g,b}(M;\delta)_i) &\longrightarrow& A_i(M_1;\z{\delta},\z{\ell}),
\end{array}\]
that is, the homotopy fibre over ${\bf \z{u}}$ is homotopy equivalent to the pair shown.
\end{corollary}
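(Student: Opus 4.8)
The statement records the conclusion of the discussion immediately preceding it, so the plan is to assemble that discussion into a proof. I will treat the $\alpha$-sequence in detail; the $\beta$-sequence follows from the identical argument applied to the map $\beta_{g,b}(M;\delta,\z{\delta})_\bullet\colon \h{g,b}{M;\delta,\ell}{\bullet}\to \b{g,b+1}{M_1;\z{\delta},\z{\ell}}{\bullet}$ in place of $\alpha_{g,b}(M;\delta,\z{\delta})_\bullet$, with the roles of $\mathcal{O}^1$ and $\mathcal{O}^2$ interchanged. First I would invoke Proposition \ref{fibrations-arcs}: the restriction maps $\rho\colon \b{g,b}{M;\delta,\ell}{i}\to \A{i}{M;\delta,\ell}$ and $\rho'\colon \h{g+1,b-1}{M_1;\z{\delta},\z{\ell}}{i}\to \A{i}{M_1;\z{\delta},\z{\ell}}$ are locally trivial, hence Serre, fibrations, with fibre over ${\bf u}$ equal to $\e{g-i,b+i-1}{\stminus{M}{{\bf u}};\stminus{\delta}{{\bf u}}}$ and fibre over ${\bf \z{u}}$ equal to $\e{g-i,b+i}{\stminus{M_1}{{\bf \z{u}}};\stminus{\z{\delta}}{{\bf \z{u}}}}$ (the latter indices being what one obtains by substituting $(g+1,b-1)$ into the $\mathcal{O}^1$-formula of that proposition). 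Then I would use square (\ref{eq:98}): the map $\alpha_{g,b}(M;\delta,\z{\delta})_i$ covers the map ${\bf u}\mapsto {\bf \z{u}}$ between base spaces, which is a homotopy equivalence; precomposing $\rho$ with it makes $\b{g,b}{M;\delta,\ell}{i}$ and $\h{g+1,b-1}{M_1;\z{\delta},\z{\ell}}{i}$ into spaces over the single base $\A{i}{M_1;\z{\delta},\z{\ell}}$ in a way compatible with $\alpha_{g,b}(M;\delta,\z{\delta})_i$, and this is the map to $\A{i}{M_1;\z{\delta},\z{\ell}}$ in the corollary. Since the base map is a homotopy equivalence, the homotopy fibres over ${\bf \z{u}}$ of these two maps are weakly equivalent to the genuine fibres of $\rho$ over ${\bf u}$ and of $\rho'$ over ${\bf \z{u}}$ identified above.

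Next I would identify the map these homotopy fibres receive from $\alpha_{g,b}(M;\delta,\z{\delta})_i$. Unwinding the construction of this map from Sections \ref{ss:gluingmaps} and \ref{stab-maps-resolution-arcs} — one glues the collar $\pp M\times I$, the cobordism $P\subset \pp M\times I$, and the product pieces $\y{u}_j=\partial u_j\times I$ — shows that on the fibre over ${\bf u}$ it is exactly the type-I gluing map of $\stminus{M}{{\bf u}}$ determined by the cobordism $P(\y{{\bf u}})=P\setminus{\bf\tilde u''}\subset \stminus{\pp M}{{\bf u'}}\times I$, that is, the stabilisation map $\beta_{g-i,b+i-1}(\stminus{M}{{\bf u}};\stminus{\delta}{{\bf u}},\stminus{\z{\delta}}{{\bf \z{u}}})$. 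Finally I would pass to pairs: recalling that $(\alpha_{g,b}(M;\delta)_i)$ denotes the pair consisting of the mapping cylinder of $\alpha_{g,b}(M;\delta,\z{\delta})_i$ and its source $\b{g,b}{M;\delta,\ell}{i}$, the map to $\A{i}{M_1;\z{\delta},\z{\ell}}$ extends over this mapping cylinder (using the commutativity of (\ref{eq:98})) and restricts to $\rho'$ on the target end, onto which the mapping cylinder deformation retracts compatibly; hence the homotopy fibre over ${\bf \z{u}}$ of $(\alpha_{g,b}(M;\delta)_i)\to \A{i}{M_1;\z{\delta},\z{\ell}}$ is the pair $(\hofib_{{\bf \z{u}}}(\rho'),\hofib_{{\bf \z{u}}}(\rho))$, which by the preceding steps is the mapping cylinder pair of $\beta_{g-i,b+i-1}(\stminus{M}{{\bf u}};\stminus{\delta}{{\bf u}},\stminus{\z{\delta}}{{\bf \z{u}}})$, i.e.\ the pair $(\beta_{g-i,b+i-1}(\stminus{M}{{\bf u}};\stminus{\delta}{{\bf u}}))$. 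This proves the first homotopy fibre sequence. The same recipe applied to $\beta_{g,b}(M;\delta,\z{\delta})_\bullet$ gives the second, where now the fibres of $\rho$ and $\rho'$ have indices $(g-i-1,b+i+1)$ and $(g-i,b+i)$ (from the $\mathcal{O}^1$- and $\mathcal{O}^2$-formulas, with $b$ replaced by $b+1$ for $\rho'$) and the induced map on fibres is the type-$\alpha$ stabilisation map $\alpha_{g-i-1,b+i+1}(\stminus{M}{{\bf u}};\stminus{\delta}{{\bf u}},\stminus{\z{\delta}}{{\bf \z{u}}})$.

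The only genuinely delicate step is the fibrewise identification used above: one must check that, after deleting the strips ${\bf\tilde u''}$, the restriction of $\alpha_{g,b}(M;\delta,\z{\delta})_i$ to the fibre is not merely \emph{some} gluing map of type I but a \emph{standard} stabilisation map with exactly the advertised genus and boundary-component indices and with target boundary condition $\stminus{\z{\delta}}{{\bf \z{u}}}$. This is precisely what the diagram displayed before the statement accomplishes; the combinatorial count of the boundary components of $P(\y{{\bf u}})$ (one pair of pants with its waist outgoing, the rest cylinders) is what pins down the boundary-index shift $b+i-1\mapsto b+i$ in the $\alpha$-case and $b+i+1\mapsto b+i$ in the $\beta$-case.
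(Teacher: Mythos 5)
Your proof is correct and follows exactly the same route as the paper's: the corollary simply records the conclusion of the discussion in Section \ref{stab-maps-resolution-arcs}, and you have assembled the same ingredients — the fibrations of Proposition \ref{fibrations-arcs}, the commutative square \eqref{eq:98} with its homotopy-equivalence base map, the identification of the induced map on fibres via the genus and boundary-count formulas, and the passage to mapping-cylinder pairs. One small slip in your closing parenthetical: in the $\alpha$-case the fibre map is of type $\beta$, which corresponds to a pair of pants with its waist \emph{incoming} (one boundary in, two out, taking $b+i-1$ to $b+i$), not outgoing; moreover the cobordism $P(\y{{\bf u}})$ is actually a union of strips rather than a pair of pants, and the indices are really read off from the formulas in Proposition \ref{fibrations-arcs} as the paper does, so the argument is unaffected.
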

\begin{df} When $i=0$, the composition of the inclusion of the fibre in the fibre sequences of the last corollary with the relative augmentations 
\[\begin{array}{rcccl}
(\beta_{g,b-1}(\stminus{M}{{\bf u}};\stminus{\delta}{{\bf u}}))&\longrightarrow& (\alpha_{g,b}(M;\delta)_0) &\longrightarrow& (\alpha_{g,b}(M;\delta)), \\[0.2cm]
(\alpha_{g-1,b+1}(\stminus{M}{{\bf u}};\stminus{\delta}{{\bf u}}))&\longrightarrow& (\beta_{g,b}(M;\delta)_0) &\longrightarrow& (\beta_{g,b}(M;\delta)),
\end{array}\]
are called the \emph{approximate augmentations} of the resolutions $(\alpha_{g,b}(M;\delta)_\bullet)$ and $(\beta_{g,b}(M;\delta)_\bullet)$ over the $i$-simplex ${\bf u}$.
\end{df}

\section{Homological stability for surfaces with boundary}\label{section:homologicalstability}
In this section we prove the first two assertions of Theorem \ref{thm:stab}, leaving some details until Section \ref{section:triviality}. The proof of the last assertion will be deferred to Section \ref{section:lastboundary}. 

\begin{proposition}\label{prop:71}
Let $M$ be a simply-connected manifold of dimension at least $5$. If the dimension of $M$ is $5$, we assume in addition that the pairs of pants defining the stabilisation maps are contractible in $\pp M\times [0,1]$. Then
\begin{enumerate}
\item $H_k(\alpha_{g,b}(M)) = 0$ for $k\leq \frac{1}{3}(2g+1)$;
\item $H_k(\beta_{g,b}(M)) = 0$ for $k\leq \frac{2}{3}g$.
\end{enumerate} 
\end{proposition}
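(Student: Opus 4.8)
### Proof proposal

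The plan is to run the standard ``resolution + induction'' machine set up in the previous sections, applying Criterion \ref{criterion3} to the two families of maps of augmented semi-simplicial spaces
\[
\alpha_{g,b}(M;\delta,\bar\delta)_\bullet \colon \b{g,b}{M;\delta,\ell}{\bullet} \lra \h{g+1,b-1}{M_1;\bar\delta,\bar\ell}{\bullet}, \qquad
\beta_{g,b}(M;\delta,\bar\delta)_\bullet \colon \h{g,b}{M;\delta,\ell}{\bullet} \lra \b{g,b+1}{M_1;\bar\delta,\bar\ell}{\bullet}
\]
constructed in Subsection \ref{stab-maps-resolution-arcs}. The induction is on $g$: write $\varphi(g) := \tfrac{1}{3}(2g+1)$ and $\psi(g) := \tfrac{2}{3}g$ for the claimed vanishing ranges of $H_*(\alpha_{g,b})$ and $H_*(\beta_{g,b})$ respectively, and assume inductively that $H_k(\alpha_{g',b'}(M')) = 0$ for $k \leq \varphi(g')$ and $H_k(\beta_{g',b'}(M')) = 0$ for $k \leq \psi(g')$ for all $g' < g$ and all admissible $M'$, $b'$, $\delta'$ (note the hypotheses are preserved: $\stminus{M}{{\bf u}}$ is still simply connected of dimension $\geq 5$ once the arcs are small enough, and the pairs of pants defining the relevant $\alpha$, $\beta$ on the subquotients are still contained in a ball when $\dim M = 5$). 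The base cases $g$ small are vacuous because the ranges are then negative or zero and $H_0$ of a map of connected spaces vanishes.

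For the inductive step on $\alpha_{g,b}$: by Corollary \ref{cor:relative-resolutions-arcs} the augmented pair $(\alpha_{g,b}(M;\delta,\bar\delta)_\bullet)$ is a $g$-resolution, so in Criterion \ref{criterion3} we have $|\epsilon^Y_\bullet|$ being $l$-connected with $l = g-1$ (the target resolution $\h{g+1,b-1}{\cdots}{\bullet}$ resolves with connectivity $(g+1)-1 = g$, hence $l$-connected for $l=g$ — and the source resolution $\b{g,b}{\cdots}{\bullet}$ is $(g-1)$-connected, matching the ``$(l-1)$-connected'' hypothesis with $l = g$). Take $B_i = A_i(M_1;\bar\delta,\bar\ell)$, which is path connected, and $p_i$ the restriction map $\rho'$ of Proposition \ref{fibrations-arcs}; then by Corollary \ref{cor:relative-fibrations-arcs} the map $g_i$ of the criterion is precisely the stabilisation map
\[
\beta_{g-i,\,b+i-1}\bigl(\stminus{M}{{\bf u}};\stminus{\delta}{{\bf u}},\stminus{\bar\delta}{{\bf \bar u}}\bigr).
\]
We must check $H_q(g_i) = 0$ whenever $q + i \leq k$, with $k := \lfloor \varphi(g)\rfloor$, except at $(q,i) = (k,0)$. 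For $i \geq 1$ the relevant map is $\beta$ on a surface of genus $g - i < g$, so by induction $H_q(g_i) = 0$ as soon as $q \leq \psi(g-i) = \tfrac{2}{3}(g-i)$; one checks the elementary arithmetic that $q + i \leq \tfrac{1}{3}(2g+1)$ and $i \geq 1$ forces $q \leq \tfrac{2}{3}(g-i)$. For $i = 0$ the map $g_0$ is $\beta_{g,b-1}(M;\delta,\bar\delta)$ (genus still $g$!), which is not covered by the induction on $g$; here we invoke assertion (ii) of this very proposition for the \emph{same} $g$, which forces us to prove (i) and (ii) by a simultaneous induction. Assuming (ii) for genus $g$ gives $H_q(g_0) = 0$ for $q \leq \psi(g) = \tfrac{2}{3}g$, and one checks $\lfloor\tfrac{1}{3}(2g+1)\rfloor \leq \tfrac{2}{3}g$ fails only by the boundary term, leaving exactly the permitted exceptional box $(q,i) = (k,0)$. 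Then Criterion \ref{criterion3} (second part, using $H_k(g_0) = 0$ in the remaining cases — this needs the strict inequality $k < \tfrac{2}{3}g + 1$, valid because $\tfrac{1}{3}(2g+1) < \tfrac{2}{3}g + 1$) yields $H_q(\alpha_{g,b}(M)) = 0$ for $q \leq k$, i.e.\ for $q \leq \tfrac{1}{3}(2g+1)$.

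For the inductive step on $\beta_{g,b}$ the structure is identical, now using the second resolution: $(\beta_{g,b}(M;\delta,\bar\delta)_\bullet)$ is a $(g-1)$-resolution by Corollary \ref{cor:relative-resolutions-arcs}, so we apply Criterion \ref{criterion3} with $l = g-1$; by Corollary \ref{cor:relative-fibrations-arcs} the maps $g_i$ are now the stabilisation maps $\alpha_{g-i-1,\,b+i+1}(\stminus{M}{{\bf u}};\cdots)$. For \emph{every} $i \geq 0$ the genus $g - i - 1$ is strictly less than $g$, so the induction on $g$ alone suffices: $H_q(g_i) = 0$ for $q \leq \varphi(g-i-1) = \tfrac{1}{3}(2(g-i-1)+1) = \tfrac{1}{3}(2g - 2i - 1)$, and one checks that $q + i \leq k := \lfloor\tfrac{2}{3}g\rfloor$ implies $q \leq \tfrac{1}{3}(2g-2i-1)$ for all $i \geq 0$ (including $i = 0$, where it reads $q \leq \tfrac{2}{3}g$ vs $q \leq \tfrac{1}{3}(2g-1)$ — here the exceptional box $(q,i)=(k,0)$ absorbs the discrepancy), and moreover $H_k(g_0) = 0$ holds outside that box. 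Criterion \ref{criterion3} then gives $H_q(\beta_{g,b}(M)) = 0$ for $q \leq \tfrac{2}{3}g$, completing the simultaneous induction.

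The main obstacle is not the homological algebra, which is packaged into Criterion \ref{criterion3}, but rather the bookkeeping that makes the hypotheses of that criterion genuinely apply: verifying that the connectivity estimates for the two relative resolutions (Corollary \ref{cor:relative-resolutions-arcs}) line up correctly with the ``$(l-1)$-connected source, $l$-connected target'' pattern — in particular that the off-by-one shifts in $g$ between source and target resolutions produce exactly the stated ranges $\tfrac{1}{3}(2g+1)$ and $\tfrac{2}{3}g$ and not weaker ones — and checking that the two stated arithmetic implications between the vanishing ranges hold with the exceptional box $(k,0)$ landing exactly where Criterion \ref{criterion3} permits it. One subtlety worth isolating: the $i = 0$ term in the $\alpha$-induction is a $\beta$-map at the \emph{same} genus $g$, which is why (i) and (ii) cannot be proved separately and must be interleaved; fortunately the $i = 0$ term in the $\beta$-induction is an $\alpha$-map at genus $g - 1 < g$, so no circularity arises, and the simultaneous induction closes. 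The dimension and simple-connectivity hypotheses enter only through Proposition \ref{proposition:RW1-4.1} (to know the arc resolutions are $(g-2)$-connected, which is what powers Propositions \ref{resolution-arcs} and \ref{fibrations-arcs}) and through the stability of those hypotheses under passing to $\stminus{M}{{\bf u}}$; the extra ``contained in a ball'' assumption when $\dim M = 5$ is exactly what guarantees the subquotient stabilisation maps again satisfy the standing hypotheses.
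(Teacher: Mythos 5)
Your proposal correctly sets up the arc resolutions and the identification of the fibres, and you correctly identify that the $i=0$ term of the $\alpha$-induction is a $\beta$-map at the \emph{same} genus (forcing a simultaneous induction) while the $i=0$ term of the $\beta$-induction drops genus. The arithmetic for $i\geq 1$ is also correct. But there is a genuine gap at the heart of the argument: you try to close the induction using only the two statements (i) and (ii), by invoking the second half of Criterion \ref{criterion3}, and that second half is not applicable.

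Concretely: the second conclusion of Criterion \ref{criterion3} requires $H_k(g_0)=0$ in addition to the vanishing pattern off the exceptional box $(k,0)$. In the $\alpha$-step, $g_0$ is a $\beta$-map at genus $g$ and $k=\lfloor\tfrac13(2g+1)\rfloor$; assertion (ii) gives $H_q(g_0)=0$ only for $q\leq\tfrac23 g$. When $g\equiv 1 \pmod 3$ we have $k = \tfrac13(2g+1) > \tfrac23 g$, so $H_k(g_0)$ is exactly the group sitting in the exceptional box and is \emph{not} known to vanish. Your phrase ``$H_k(g_0)=0$ in the remaining cases'' contradicts your own earlier observation that the box $(k,0)$ absorbs the discrepancy: the box is precisely where the needed vanishing fails, and the criterion cannot be applied there. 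The same defect appears in the $\beta$-step when $g\equiv 0\pmod 3$. Since the simultaneous induction needs \emph{all} earlier cases, these gaps propagate and the induction does not close. This is not a boundary-term quibble; it is exactly the reason the paper does not attempt to run Criterion \ref{criterion3} alone. Instead it splits the argument into the six statements $F_g, G_g, X_g, Y_g, A_g, B_g$. Criterion \ref{criterion3} (first part only) proves the \emph{epimorphism} statements $X_g, Y_g$; the \emph{vanishing} of the approximate augmentation (statements $A_g, B_g$, in the range one notch larger) is proved by a wholly separate geometric argument via the disc resolutions of Section \ref{section:discresolutions} and the 1-triviality Lemma \ref{lemma:triviality}. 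Only the combination ``epi \emph{and} zero'' yields $F_g, G_g$. Your proposal has no substitute for this half of the argument, and it cannot be extracted from Criterion \ref{criterion3}.

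A secondary issue: the base case $g=0$ is not vacuous. The spaces $\e{g,b}{M;\delta}$ are not connected in general; by Lemma \ref{lemma:spaces-pi0} their set of components is a torsor over $H_2(M;\bZ)$, which may be nontrivial. To see $H_0(\alpha_{0,b})=0$ and $H_0(\beta_{0,b})=0$ one needs the $\pi_0$-analysis of Lemma \ref{lemma:stabpi0}, which identifies the effect of the stabilisation maps on $\pi_0$ as an isomorphism of $H_2(M;\bZ)$-sets.
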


This proposition will be proven by induction: Lemma \ref{lemma:stabpi0} gives the starting step and Lemma \ref{lemma:induction} gives the inductive step. The proof of Proposition \ref{prop:71} broadly follows the proof of Theorem 7.1 in \cite{R-WResolution2015}. In the language in that paper, stabilisation on $\pi_0$ is covered by Lemma \ref{lemma:stabpi0} and $1$-triviality will be the subject of Section \ref{section:triviality}.

\subsection{The inductive step}

Let $\mathcal M$ be the class of simply-connected manifolds of dimension at least $5$ with non-empty boundary. We define the following statements, which we will prove by simultaneous induction: Firstly, for the stabilisation maps,

\begin{itemize}
	\item[$F_g:$]  $H_{k}(\alpha_{h,b}(M))=0$ for all $M\in \mathcal M$, all $h\leq g$ and all $k\leq \frac{1}{3}(2h+1)$;
	\item[$G_g:$]  $H_{k}(\beta_{h,b}(M))=0$ for all $M\in \mathcal M$, all $h\leq g$ and all $k\leq \frac{2}{3}h$.
\end{itemize}

\noindent Secondly, for the approximated augmentations for the $g$-resolution $\alpha_{g,b}(M)_\bullet$ and the $(g-1)$-resolution $\beta_{g,b}(M)_\bullet$ over any $0$-simplex $u$,

\begin{itemize}
	\item[$X_g:$]  $H_{k}(\beta_{h,b-1}(M(u)))\rightarrow H_k(\alpha_{h,b}(M))$ is an epimorphism for all $M\in \mathcal M$, all $h\leq g$ and all $k\leq \frac{1}{3}(2h+1)$; 
	\item[$A_g:$]  $H_{k}(\beta_{h,b-1}(M(u)))\rightarrow H_k(\alpha_{h,b}(M))$ is zero for all $M\in \mathcal M$, all $h\leq g$ and all $k\leq \frac{1}{3}(2h+2)$; 
	\item[$Y_g:$]  $H_{k}(\alpha_{h-1,b+1}(M(u)))\rightarrow H_k(\beta_{h,b}(M))$ is an epimorphism for all $M\in \mathcal M$, all $h\leq g$ and all $k\leq \frac{2}{3}h$; 
	\item[$B_g:$]  $H_{k}(\alpha_{h-1,b+1}(M(u)))\rightarrow H_k(\beta_{h,b}(M))$ is zero for all $M\in \mathcal M$, all $h\leq g$ and all $k\leq \frac{1}{3}(2h+1)$.
\end{itemize}

\begin{lemma}\label{lemma:induction} If $M$ satisfies the hypotheses of Proposition \emph{\ref{prop:71}}, then
\[\begin{array}{ccc}
	 {\rm (i)}~X_g \text{ and } A_g \Rightarrow F_g; & {\rm (iii)}~G_{g} \Rightarrow X_g; & 
	{\rm (v)}~G_{g} \text{ and } X_{g-1}\Rightarrow A_g; \\
	 {\rm (ii)}~Y_g \text{ and } B_g \Rightarrow G_g; & {\rm (iv)}~F_{g-1}\Rightarrow Y_g; & 
	 {\rm (vi)}~F_{g-1} \text{ and } Y_{g-1}\Rightarrow B_g.
\end{array}\]
\end{lemma}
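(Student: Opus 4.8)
The plan is to prove the six implications of Lemma~\ref{lemma:induction} by feeding the structural results of Sections \ref{section:resolutions} and \ref{section:homologicalstability} into Criterion~\ref{criterion3}, following the logical skeleton of \cite[Section~7]{R-WResolution}. Throughout, the augmented semi-simplicial spaces are the $g$-resolution $\alpha_{g,b}(M)_\bullet$ and the $(g-1)$-resolution $\beta_{g,b}(M)_\bullet$ from Corollary~\ref{cor:relative-resolutions-arcs}, the base spaces $B_i$ of Criterion~\ref{criterion3} are the contractible-up-to-homotopy spaces $A_i(M_1;\z\delta,\z\ell)$ from Proposition~\ref{fibrations-arcs}, and the maps $p_i$ are the restriction maps $\rho,\rho'$. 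By Corollary~\ref{cor:relative-fibrations-arcs}, the induced map $g_i$ on homotopy fibres is precisely a lower stabilisation map: for the $\alpha$-resolution, $g_i$ is (up to homotopy of pairs) $\beta_{g-i,b+i-1}(\stminus{M}{\bf u};\stminus\delta{\bf u})$, and for the $\beta$-resolution it is $\alpha_{g-i-1,b+i+1}(\stminus{M}{\bf u};\stminus\delta{\bf u})$. This is the key identification that converts statements about $g_i$ into statements about stabilisation maps of lower genus, which is exactly what the inductive hypotheses $F_{g-1}, G_g$ etc.\ control.

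For implication (i), $X_g;A_g\Rightarrow F_g$: here $|\epsilon^X_\bullet|$ is $g$-connected (the relative connectivity, one more than the $(g-1)$-resolution of each space), so in the notation of Criterion~\ref{criterion3} we may take $l=g-1$ and aim for $k=\lfloor\tfrac{1}{3}(2g+1)\rfloor$, which is $\le l+1$ once $g\ge 1$. The hypothesis $A_g$ says $H_q(g_i)=0$ in the range $q\le\tfrac13(2(g-i)+2)$; a routine check shows $q+i\le k$ forces $q\le\tfrac13(2(g-i)+2)$, \emph{except} at the corner $(q,i)=(k,0)$, where instead $X_g$ tells us that the approximate augmentation $H_k(g_0)\to H_k(\alpha_{g,b}(M))$ is onto. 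Criterion~\ref{criterion3} then yields that $H_q(\alpha_{g,b}(M))\to H_q(f)$ — which in our setting is $H_q$ of a point, hence $H_q(\alpha_{g,b}(M))$ itself relative — vanishes in degrees $q\le k$; more precisely one applies the ``if in addition $H_k(g_0)=0$'' clause after noting $A_g$ gives exactly this at $i=0$ when $k\le\tfrac13(2g+2)$. Implication (ii) is formally identical with $\alpha\leftrightarrow\beta$, using that $|\epsilon^{Y}_\bullet|$ for the $\beta$-resolution is $(g-1)$-connected (so $l=g-1$) and the target range is $k=\lfloor\tfrac23 g\rfloor$.

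Implications (iii) and (iv) are the ``easy'' half: they say that the \emph{first} map $H_k(g_0)\to H_k(f_0)$ in the approximate augmentation is already epi/zero in the stated range, purely because $g_0$ is a stabilisation map of smaller genus to which $G_g$ (resp.\ $F_{g-1}$) applies directly. Concretely, $g_0$ for the $\alpha$-resolution at $i=0$ is $\beta_{g,b-1}(M(u))$, and $G_g$ says $H_k(\beta_{g,b-1}(M(u)))=0$ for $k\le\tfrac23 g$; since $\tfrac13(2g+1)\le\tfrac23 g+1$, a short arithmetic argument (this is where one must be slightly careful about floors) shows the relative group vanishes through degree $\lfloor\tfrac13(2g+1)\rfloor$ except possibly at the top, giving the epimorphism $X_g$ — and in fact (iv) gives the cleaner statement $Y_g$ by the same token with $F_{g-1}$. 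Implications (v) and (vi) are the genuinely two-step arguments: there $g_0$ does \emph{not} vanish in the full range by hypotheses of genus $g$ alone, so one must first pass through the base space. One invokes Criterion~\ref{criterion3} for the \emph{lower} resolution one dimension down: the fibre $g_0$ of, say, $\beta_{g-1,b-1}$ has its own $\alpha$-resolution, and $X_{g-1}$ (resp.\ $Y_{g-1}$) controls \emph{its} approximate augmentation, while $G_g$ (resp.\ $F_{g-1}$) controls the part of the spectral sequence of $g_0$ itself. Composing, one gets that $H_k(g_0)\to H_k(f)$ is zero in the wider range defining $A_g$ (resp.\ $B_g$).

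The main obstacle, and the place where real care is needed, is the bookkeeping of the three interlocking numerical ranges $\tfrac13(2h+1)$, $\tfrac13(2h+2)$, $\tfrac23 h$ together with the genus shifts $h\mapsto h-i$ and $h\mapsto h-i-1$ coming from Corollary~\ref{cor:relative-fibrations-arcs}: one must verify in each of (i)--(vi) that ``$q+i\le k$'' implies the hypothesis range holds for the genus-$(g-i)$ or $(g-i-1)$ stabilisation map, with the single corner exception $(q,i)=(k,0)$ handled separately by the epimorphism statement, and that the floors line up so that $k\le l+1$. A secondary technical point is that one must know the $A_i(M_1;\z\delta,\z\ell)$ are path-connected — this follows from Proposition~\ref{prop:fibrations-arcs} and the fact that the space of configurations of disjoint tubular neighbourhoods of arcs with prescribed boundary behaviour in a connected manifold of dimension $\ge 5$ is connected (the codimension being large enough to make general position apply), which is exactly the place the dimension hypothesis enters. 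Once these arithmetic and connectivity checks are in place, each of the six implications is a one-line application of Criterion~\ref{criterion3} to the fibre sequence supplied by Corollary~\ref{cor:relative-fibrations-arcs}.
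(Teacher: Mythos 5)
Your proposal misreads the logical roles of the statements $X_g, A_g, Y_g, B_g$ and as a result gets the structure of the proof wrong in several places.

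For (i) and (ii), the paper's argument is a one-liner that does \emph{not} invoke Criterion~\ref{criterion3}. By definition, $X_g$ says the approximate augmentation $H_k(\beta_{g,b-1}(\stminus{M}{u})) \to H_k(\alpha_{g,b}(M))$ is onto for $k \le \tfrac13(2g+1)$, and $A_g$ says this \emph{same map} is zero for $k\le\tfrac13(2g+2)$; a map that is both onto and zero has trivial target, giving $F_g$ immediately. Your version instead treats $A_g$ as asserting $H_q(g_i)=0$ — vanishing of the relative homology groups themselves — which is not what $A_g$ says; $A_g$ is a statement that a certain induced homomorphism is zero, and the source group need not vanish. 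Consequently your appeal to the ``if in addition $H_k(g_0)=0$'' clause of Criterion~\ref{criterion3} does not apply.

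For (iii) and (iv) your sketch has the right ingredients but only addresses $i=0$. Criterion~\ref{criterion3} requires the vanishing $H_q(g_i)=0$ for \emph{all} $(q,i)$ with $q+i\le k$ except the corner $(k,0)$. The paper checks this separately for $i\ge 1$ (using $\tfrac13(2g+1)-i\le\tfrac23(g-i)$ together with $G_g$, which genuinely is a vanishing statement applied to the fibre $\beta_{g-i,b+i-1}(\stminus{M}{\bf u})$) and for $i=0$ (using $\tfrac13(2g+1)-1\le\tfrac23 g$). You also confuse the conclusion of Criterion~\ref{criterion3}: it gives surjectivity of $H_q(g_0)\to H_q(f)$, not of $H_q(g_0)\to H_q(f_0)$ as you write; the latter is only an intermediate factor.

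The most serious gap is in (v) and (vi). The paper explicitly defers these to Sections~\ref{section:discresolutions} and~\ref{section:triviality}, where an entirely new semi-simplicial resolution (the thick disc resolution of Definition~\ref{df:discs}) is built, together with the ``1-triviality'' Lemma~\ref{lemma:triviality} and the Puppe-sequence factorisation of Lemma~\ref{lemma:factor}. The key geometric input is the construction of the cobordism $Q$ in Lemma~\ref{lemma:triviality} and the injectivity-on-$\pi_0$ statement of Lemma~\ref{lemma:emb-triviality}, which is where the dimension and simple-connectivity of $M$ are truly used. Your phrase ``Composing, one gets that $H_k(g_0)\to H_k(f)$ is zero in the wider range'' replaces roughly a third of the paper's technical content with a single sentence and, as such, does not constitute a proof of these two implications.

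Finally, a small point: the path-connectivity of $A_i(M_1;\z\delta,\z\ell)$ does not need a general-position argument. It follows because $A_i$ is (by construction) in bijection with a union of components of a space $\TubEmb_{1,I\times[i]}(I\times[i],M;q,q_N)$, and the definition of $A_i$ selects precisely one such component, so path-connectivity is essentially automatic from the way the space is defined, not from the dimension hypothesis.
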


\begin{proof}

\mbox{}
 (i) The morphism induced in homology by the approximate augmentation
$$H_{k}(\beta_{g,b-1}(M(u)))\lra H_k(\alpha_{g,b}(M))$$
is both zero and an epimorphism in all degrees $k\leq \frac{1}{3}(2g+1)$ (since $X_g$ and $A_g$ hold), so $H_k(\alpha_{g,b}(M)) = 0$ in these degrees. Similarly for (ii).

 (iii) Consider the $g$-resolution $\alpha_{g,b}(M;\delta)_\bullet$ of $\alpha_{g,b}(M;\delta)$ given by Corollary \ref{cor:relative-resolutions-arcs}, together with the homotopy fibre sequences
$$(\beta_{g-i,b+i-1}(M({\bf u});\delta({\bf u})))\longrightarrow (\alpha_{g,b}(M;\delta)_i) \longrightarrow A_i(M_1;\z{\delta})$$
of Corollary \ref{cor:relative-fibrations-arcs}. For all $i\geq 1$ we have the inequality $\frac{1}{3}(2g+1)-i\leq \frac{2}{3}(g-i)$, and so, as $M({\bf u})$ is simply-connected, by inductive assumption \[H_q(\beta_{g-i,b+i-1}(M({\bf u});\delta({\bf u}))) = 0\] for $q\leq \frac{1}{3}(2g+1)-i$. When $i=0$, we have the inequality $\frac{1}{3}(2g+1)-1\leq \frac{2}{3}g$ so $H_q(\beta_{g,b-1}(M({\bf u});\delta({\bf u}))) = 0$ for $q\leq \frac{1}{3}(2g+1)-1$. In total we deduce that we have $H_q(\beta_{g-i,b+i-1}(M({\bf u});\delta({\bf u}))) = 0$ for $q\leq \frac{1}{3}(2g+1)-i$ except $(q,i)=(\frac{1}{3}(2g+1),0)$. As $\lfloor \frac{1}{3}(2g+1)\rfloor\leq g+1$, and $A_i(M_1,\z{\delta})$ is path connected, Criterion \ref{criterion3} shows that the approximate augmentations are epimorphisms for $k\leq \frac{1}{3}(2g+1)$. Similarly for (iv).

The implications (v) and (vi) will be proven in Section 
 \ref{section:triviality}. 
\end{proof}

\subsection{Stability of connected components}

In this section we establish the base case of the induction, by proving that the assertions $X_g$, $Y_g$, $A_g$ and $B_g$ hold for $g=0$. This, together with Lemma \ref{lemma:induction}, will finish the proof of Proposition \ref{prop:71}. For that, we first construct a (non-canonical) bijection between $\pi_0(\e{g,b}{M;\delta})$ and the second homology group of $M$.

Suppose that $M$ is a simply-connected manifold of dimension $d\geq 5$, and let us describe an action of the abelian group $H_2(M;\bZ)$ on the set $\pi_0\e{g,b}{M;\delta}$ of isotopy classes of surfaces of genus $g$ in $M$ with boundary condition $\delta$. Let $\hat{e} : \Sigma_{g,b} \hookrightarrow M$ be an embedding with boundary condition $\delta$, representing an element $e \in \pi_0\e{g,b}{M;\delta}$. Let $x \in \pi_2(M) \cong H_2(M;\bZ)$ be a homotopy class of maps from $S^2$ to $M$.

As $M$ has dimension at least 5, $x$ may be represented by an embedding $\hat{x} : S^2 \hookrightarrow M$ disjoint from the image of $\hat{e}$, and we can then choose an embedded path from the image of $\hat{e}$ to the image of $\hat{x}$. Forming the ambient connected sum along this path we obtain a new embedding $\hat{x} \cdot \hat{e} : \Sigma_{g,b} \hookrightarrow M$.

\begin{lemma}\label{lemma:spaces-pi0} 
The map
\begin{align*}
H_2(M;\bZ) \times \pi_0\e{g,b}{M;\delta} &\lra \pi_0\e{g,b}{M;\delta}\\
(x, e) & \longmapsto [\hat{x} \cdot \hat{e}]
\end{align*}
is well defined and gives a free and transitive action of $H_2(M;\bZ)$ on $\pi_0\e{g,b}{M;\delta}$.

If $\partial\colon H_2(M,\d;\bZ)\rightarrow H_1(\d;\bZ)$ denotes the boundary homomorphism and $[\d]\in H_1(\d;\bZ)$ denotes the fundamental class, then the map
\begin{align*}
\pi_0\e{g,b}{M;\delta} &\longrightarrow \partial^{-1}([\d])\\
[\hat{e}] &\longmapsto \hat{e}_*([\Sigma_{g,b}, \partial \Sigma_{g,b}])
\end{align*}
is an isomorphism of $H_2(M;\bZ)$-sets.
\end{lemma}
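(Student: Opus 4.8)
First I would verify that the action is well-defined. The two choices involved are the embedded representative $\hat x$ of $x \in \pi_2(M)$ (in its regular homotopy/isotopy class, disjoint from $\hat e$) and the embedded path from $\mathrm{Im}(\hat e)$ to $\mathrm{Im}(\hat x)$. Since $\dim M \geq 5$ and $M$ is simply connected, any two embeddings $S^2 \hookrightarrow M$ representing $x$ and disjoint from the fixed surface are isotopic through embeddings disjoint from it (general position: the isotopy between the underlying maps can be made an embedded isotopy, and can be pushed off the surface since $2 + 2 + 1 < d$), and any two embedded paths are isotopic rel endpoints (again $1 + 1 + 1 < d$ and $\pi_1(M) = 0$, with $\pi_1$ of the relevant configuration space also trivial in this range). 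The ambient connected sum is determined up to isotopy by these data once one fixes the framings along the path; here the framing choice is also unique up to homotopy because $\pi_1(SO(d)) $-ambiguities are absorbed by isotopies of $\hat x$ through embeddings (one can rotate the small $S^2$). So $[\hat x \cdot \hat e]$ depends only on $x$ and $e$. That the assignment is an action follows because ambient connected sum along disjoint paths with disjoint spheres is commutative and associative up to isotopy, and $0 \in H_2(M)$ acts trivially (connect-summing an unknotted, nullhomotopic, hence unknotted-ball-bounding $S^2$ does nothing up to isotopy).

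Second, I would establish the homological formula. For the map $[\hat e] \mapsto \hat e_*[\Sigma_{g,b},\partial\Sigma_{g,b}] \in H_2(M,\partial M;\bZ)$, note first its image lands in $\partial^{-1}([\partial\Sigma_{g,b}]) = \partial^{-1}([\delta])$ since the boundary of the relative fundamental class is the fundamental class of $\partial\Sigma_{g,b}$, which maps via the fixed $\delta$ to the prescribed $[\delta] \in H_1(\partial M;\bZ)$. Next, $\partial^{-1}([\delta])$ is a torsor for $\Ker(\partial) = \mathrm{Im}(H_2(M;\bZ) \to H_2(M,\partial M;\bZ))$; since $M$ is simply connected, $H_2(M;\bZ) \cong \pi_2(M)$, and I would check that the torsor structures match: doing an ambient connected sum with an embedded sphere representing $x$ changes the relative fundamental class by exactly the image of $x$ in $H_2(M,\partial M;\bZ)$ (the connected-sum region is a copy of $S^2$ with a small disc removed, glued in along the path, contributing $[\hat x]$ to the cycle). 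So the map of Lemma statement is $H_2(M)$-equivariant.

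Third, I would prove the two properties of the action (freeness and transitivity) \emph{together with} the bijectivity onto $\partial^{-1}([\delta])$ in one stroke: once equivariance is known and the target is a torsor for $H_2(M;\bZ)$, it suffices to show the action is transitive, because a $G$-equivariant map from a $G$-set to a $G$-torsor is automatically injective if the source action is free, surjective if it is transitive — but actually we get both for free: transitivity of the action on $\pi_0$ plus surjectivity of some map to the torsor forces the map to be a bijection and the action to be simultaneously free and transitive. So the crux reduces to: \emph{any two embeddings $\Sigma_{g,b} \hookrightarrow M$ with the same boundary condition $\delta$ and representing the same relative homology class differ by a sequence of ambient connected sums with embedded $2$-spheres}. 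This is a classical surgery-theoretic / Haefliger-type argument: given two such embeddings, the relative class being equal means, after a preliminary isotopy making them agree near $\partial M$ and on a collar, the difference is a class in $H_2(M;\bZ) = \pi_2(M)$; one then realizes a homotopy between the two underlying maps and uses the Whitney trick (valid since $\dim M \geq 5$, and with the codimension-$3$ hypothesis ensuring the normal data is unobstructed and the relevant spaces of embeddings are connected) to convert it into an ambient isotopy followed by connected sums. I would cite the analogous reasoning in \cite{R-WEmbedded} or standard references for the $h$-principle for embeddings in codimension $\geq 3$.

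\textbf{Main obstacle.} The hardest step is transitivity of the action — equivalently, surjectivity onto $\partial^{-1}([\delta])$: showing every relative homology class in $\partial^{-1}([\delta])$ is realized by an embedded genus-$g$ surface, and that two embeddings in the same class are connected-sum equivalent. The realization part needs $g$ large enough or a stabilization argument (a class is first realized by \emph{some} surface, then its genus is adjusted by connect-summing with standard handles, which does not change the homology class); since the statement is for all $g$, one should check $g = 0$ is genuinely covered (an embedded sphere in each class — fine since $\dim M \geq 5$) and that the connected-sum stabilization ($\alpha$-type moves within a ball) is homologically trivial. The connected-sum-equivalence part is where the dimension hypothesis $\dim M \geq 5$ and simple-connectivity are essential, via the Whitney trick; this is the technical heart and is where I would lean most heavily on the embedded-cobordism-category $h$-principle of \cite{R-WEmbedded}.
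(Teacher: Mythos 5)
Your proposal takes a genuinely different route from the paper. The paper's proof reduces immediately to homotopy theory: it invokes Haefliger's theorem to identify $\pi_0\Emb(\Sigma_{g,b},M;\delta)$ with $\pi_0\map(\Sigma_{g,b},M;\delta)$, then computes the latter via the fibration $\map(\Sigma_{g,b},M;\delta)\to\map(\Sigma^1_{g,b},M;\delta)$ (where $\Sigma^1_{g,b}$ is a 1-skeleton) with fibre $\map(D^2,M;\phi)$, whose $\pi_0$ carries a free transitive $\pi_2(M)$-action; an equivariance argument with respect to the Hurewicz isomorphism then forces the map to $\partial^{-1}([\delta])$ to be a bijection, and one descends to $\E^+$. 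You instead try to verify the properties of the action directly by general position, and to reduce everything to a transitivity statement which you then defer to a citation.

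There are two real gaps in your approach as written. First, a dimension problem: you argue that two embedded spheres in a given class, disjoint from the surface, are isotopic through embeddings disjoint from it, ``since $2+2+1<d$,'' i.e.\ by pushing the $3$-dimensional trace of the isotopy off the $2$-dimensional surface. That count requires $d\geq 6$, but the lemma covers $d=5$. The paper's passage to mapping spaces avoids this entirely (and also obviates your concern about realizing every class by a genus-$0$ surface, since once one works with $\map$ rather than $\Emb$ the realization is automatic from the homotopy computation). Second, your reduction of everything to transitivity is fine, but the transitivity statement itself --- ``two embeddings with the same boundary condition and the same relative homology class differ by connected sums with spheres, indeed are isotopic'' --- is precisely where all the content is, and you defer it to ``a classical surgery-theoretic / Haefliger-type argument'' and a citation of \cite{R-WEmbedded}. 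That citation does not contain this statement; filling the gap honestly requires exactly the two ingredients the paper uses, namely Haefliger's homotopy-implies-isotopy theorem for $\pi_0\Emb\cong\pi_0\map$ together with the fibration argument to show that maps with the same relative fundamental class are homotopic. So the apparent ``more elementary'' route in fact collapses back onto the paper's proof once examined. A small additional slip: you write $H_2(M,\partial M;\bZ)$ where you mean $H_2(M,\delta;\bZ)$; this matters, since $H_2(M;\bZ)\to H_2(M,\delta;\bZ)$ is injective because $H_2(\delta;\bZ)=0$, and that injectivity is what makes $\partial^{-1}([\delta])$ an $H_2(M;\bZ)$-torsor rather than a torsor for a quotient.
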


\begin{proof}

Consider the natural $\Diff(\SS_{g,b})$-equivariant inclusion
\[\varphi\colon \Emb(\SS_{g,b},M;\delta)\longrightarrow \map(\SS_{g,b},M;\delta).\]

As the dimension of $M$ is at least 5 and it is simply-connected, the main result of \cite{Haefliger} says that $\varphi$ induces a bijection 
\begin{equation}\label{eq:haefliger}
\pi_0\Emb(\SS_{g,b},M;\delta)\cong\pi_0\map(\SS_{g,b},M;\delta).
\end{equation}
Consider the cofibre sequence $S^1\stackrel{i}{\rightarrow} \SS^1_{g,b}\rightarrow \SS_{g,b}$, where $\SS^1_{g,b}$ denotes a $1$-skeleton of $\SS_{g,b}$ to which just a single $2$-cell needs to be attached to obtain $\SS_{g,b}$. The second inclusion gives the locally trivial fibration
\[\map(\SS_{g,b},M;\delta)\longrightarrow \map(\SS^1_{g,b},M;\delta),\]
whose base space is connected because $M$ is simply-connected. The fibre over a point $\phi\in \map(\SS^1_{g,b},M;\delta)$ is the space $\map(D^2,M;\phi)$ of maps from the $2$-disc $D^2$ to $M$ that restrict to $\phi\circ i$ on the boundary. 

By considering the long exact sequence on homotopy groups for this fibration, in low degrees we find that $\pi_1(\map(\SS^1_{g,b},M;\delta),\phi)$ acts on the set $\pi_0(\map(D^2,M;\phi))$ with quotient $\pi_0(\map(\SS_{g,b},M;\delta))$. 

We have the composition
\begin{equation*}
\xymatrix{
\pi_0(\map(D^2,M;\phi)) \ar@{->>}[r]& \pi_0(\map(\SS_{g,b},M;\delta)) \ar@{->>}[r] & \partial^{-1}([\delta])
}
\end{equation*}
where the source has a free transitive $\pi_2(M)$-action and the target has a free transitive $H_2(M;\bZ)$-action, and the map is equivariant with respect to the Hu\-re\-wicz homomorphism. This shows that both maps are in fact bijections, and that the induced $\pi_2(M)$-action on the set $\pi_0(\map(\SS_{g,b},M;\delta))$ is free and transitive.

Given this calculation, it is clear that the group $\Diff^+(\Sigma_{g,b})$ acts trivially on the set $\pi_0\Emb(\SS_{g,b},M;\delta))$, so there is an induced bijection 
\[\pi_0\e{g,b}{M;\delta} \lra \partial^{-1}([\delta]).\] It is then easy to see that the $H_2(M;\bZ)$-action on $\partial^{-1}([\delta])$ corresponds to the one that we constructed on $\pi_0\e{g,b}{M;\delta}$.
\end{proof}

This lemma allows us to begin the inductive proof of Proposition \ref{prop:71}, as it tells us what the zeroth homology of $\e{g,b}{M;\delta}$ is.

\begin{lemma}\label{lemma:stabpi0} If $M$ is simply-connected of dimension at least $5$, then the statements $F_0$ and $G_0$ hold. As a consequence, the statements $X_0$, $Y_0$, $A_0$ and $B_0$ hold too.
\end{lemma}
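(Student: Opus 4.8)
The plan is to observe that for $g=0$ the bounds in $F_0$ and $G_0$ only constrain homology in degree $0$, since $\lfloor\tfrac{1}{3}(2\cdot 0 + 1)\rfloor = \lfloor\tfrac{2}{3}\cdot 0\rfloor = 0$. Since for a map $f\colon A\to B$ the pair $(f)=(M_f,A)$ has $H_0(f)=\Coker(H_0(A)\to H_0(B))$, the assertion $F_0$ is precisely that every stabilisation map $\alpha_{0,b}(M;\delta,\z{\delta})\colon\e{0,b}{M;\delta}\to\e{1,b-1}{M_1;\z{\delta}}$ is surjective on path components, and $G_0$ is the same for every $\beta_{0,b}(M;\delta,\z{\delta})\colon\e{0,b}{M;\delta}\to\e{0,b+1}{M_1;\z{\delta}}$. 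I will in fact prove these maps are bijective on $\pi_0$.

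First I would record that every space $\e{g,b}{M;\delta}$ appearing here is non-empty: Lemma \ref{lemma:spaces-pi0} identifies $\pi_0\e{g,b}{M;\delta}$ with $\partial^{-1}([\delta])\subset H_2(M,\delta;\bZ)$ for the connecting map $\partial\colon H_2(M,\delta;\bZ)\to H_1(\delta;\bZ)$, and $\partial$ is surjective because $H_1(M;\bZ)=0$. Granting non-emptiness, Lemma \ref{lemma:spaces-pi0} exhibits each $\pi_0\e{g,b}{M;\delta}$ as a non-empty torsor over the abelian group $H_2(M;\bZ)$, the action being ambient connected sum with an embedded $2$-sphere. The stabilisation maps glue a fixed cobordism $P\subset\pp M\times I$ to each surface, and the embedded $2$-sphere and connecting arc realising a class of $H_2(M;\bZ)$ may always be chosen in the interior of $M$, disjoint from $P$ and from the collar along which the gluing is carried out; hence $\alpha_{0,b}$ and $\beta_{0,b}$ are $H_2(M;\bZ)$-equivariant on $\pi_0$, once one identifies $H_2(M;\bZ)\cong H_2(M_1;\bZ)$ along the inclusion. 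An $H_2(M;\bZ)$-equivariant map between two non-empty $H_2(M;\bZ)$-torsors is automatically a bijection; this proves $F_0$ and $G_0$.

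For the last clause: $F_0$ gives $H_0(\alpha_{0,b}(M))=0$, and in the statements $X_0$ and $A_0$ the bounds force both the genus $h$ and the degree $k$ to be $0$, so the homomorphisms in question map into $H_0(\alpha_{0,b}(M))=0$ and are therefore vacuously both surjective and zero; hence $X_0$ and $A_0$ hold. Likewise $G_0$ gives $H_0(\beta_{0,b}(M))=0$, whence $Y_0$ and $B_0$. The only real input is Lemma \ref{lemma:spaces-pi0} (which via Haefliger's embedding theorem is itself the substantive step); within the present argument the one point needing care is the non-emptiness claim, since it is what turns the two $\pi_0$-sets into honest torsors and so forces an equivariant map between them to be a bijection, and it is exactly here that simple-connectivity of $M$ is used.
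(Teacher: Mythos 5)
Your proposal is correct and follows essentially the same route as the paper: both use Lemma \ref{lemma:spaces-pi0} to identify the $\pi_0$ of the surface spaces as $H_2(M;\bZ)$-torsors and argue that the stabilisation map is a map of such torsors, hence a bijection. The paper exhibits the induced map on $\partial^{-1}([\delta])$ explicitly as addition of the relative fundamental class of $P$, whereas you argue abstractly from equivariance plus non-emptiness of the torsors; these are equivalent, and your added spelling out of the deduction of $X_0,A_0,Y_0,B_0$ is a harmless elaboration of what the paper leaves to the reader.
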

\begin{proof}
Each stabilisation map glues on a cobordism $P$ with incoming boundary $\d$ and outgoing boundary $\z{\d}$. With the notation of Lemma \ref{lemma:spaces-pi0}, adding on the 2-chain representing the relative fundamental class of $P$ defines an isomorphism of $H_2(M;\bZ)$-sets $\partial^{-1}([\d])\rightarrow \partial^{-1}([\z{\d}])$ between the inverse images of the fundamental classes $[\d]$ and $[\z{\d}]$, and hence $F_0$ and $G_0$ hold.
\end{proof}

\section{Trivial homology of approximate augmentations}\label{section:triviality}

This section is divided in two parts. In the first part we prove Lemma \ref{lemma:triviality}, which in the language of \cite{R-WResolution2015}  says that the space of embedded subsurfaces is 1-trivial. In the second part, we apply this lemma to prove assertions (v) and (vi) in Lemma \ref{lemma:induction}, hence finishing the proof of Proposition \ref{prop:71}. 

Throughout this section, we assume for simplicity that the manifold $M$ has no corners (observe that $M(u)$ does have corners). In Figures \ref{fig:triv-01}, \ref{fig:triv-02}, \ref{fig:triv-03} and the following Notation, we recall the decorations $\z{u},\y{u},u',u'',P(u)\ldots$ given in Definition \ref{df:ssresolution} and at the beginning of Section \ref{stab-maps-resolution-arcs}. 

\begin{notation} Recall that
\begin{itemize}
\item $\ell$ is a ball in $\pp M$,
\item $M_1 = M\cup (\pp M\times [0,1])$,
\item $u'''$ is an embedding of a half disc $D^+_1$ whose boundary lies in $W$ and in $\ell$,
\item $(u',u'')$ is a tubular neighbourhood of $u'''$ in the pair $(M,W)$,
\item $\vv$ is the restriction of $u'''$ to $\ppp D^+_{\frac{1}{2}}$,
\item $\ww$ is the image of  $u''\circ \vv$. It is also the intersection $u'\cap W$,
\item $\z{u}',\z{\ww},\z{v},\ldots$ are the prolongations of $u',\ww,\vv,\ldots\subset M$ to $M_1$,
\item $M(u)$ is the closure of the complement of $u'$ in $M$,
\item $M_1(\z{u})$ is the closure of the complement of $\z{u}'$ in $M$,
\item $\delta(u)$ is the boundary condition in $M$ obtained from $\delta$ by removing $\pp w$ and adding $\ppp w$. 
\item $\z{\delta}(\z{u})$ is the boundary condition in $M_1$ obtained from $\z{\delta}$ by removing $\pp \z{w}$ and adding $\ppp \z{w}$. 
\item $P(u)$ is the closure of $P\setminus \y{\ww}$.
\end{itemize}
\end{notation}

We recall some of this notation in Figures \ref{fig:triv-01}, \ref{fig:triv-02}, and \ref{fig:triv-03} below.

\begin{figure}[h]
\centering
\def\svgwidth{.6\textwidth}
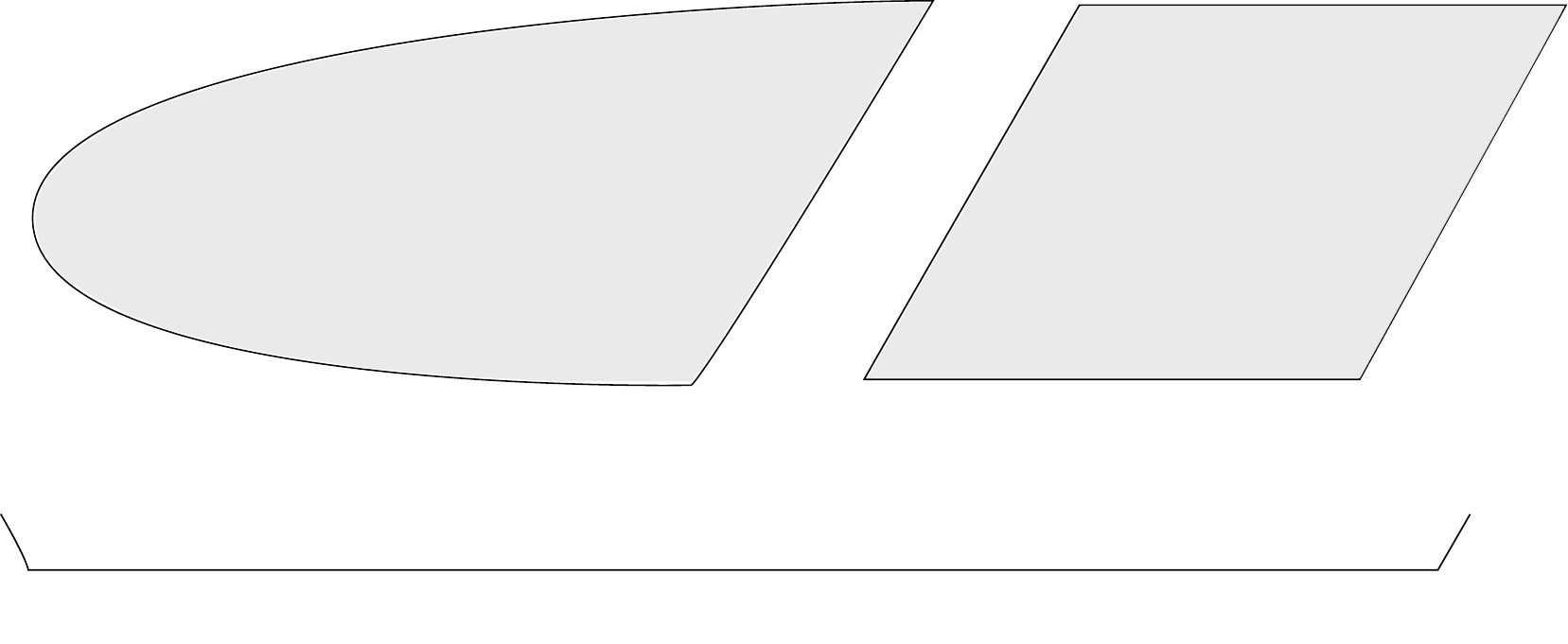
\caption{The discs $u'''\subset M$, $\y{u}'''\subset \pp M\times[0,1]$ and $\z{u}'''\subset M_1:= M\cup \pp M\times [0,1]$. The subsets $u'$, $\y{u}'$ and $\z{u}'$ are tubular neighbourhoods of them.
}
\label{fig:triv-01}
\end{figure}

\begin{figure}
\centering
\def\svgwidth{.6\textwidth}
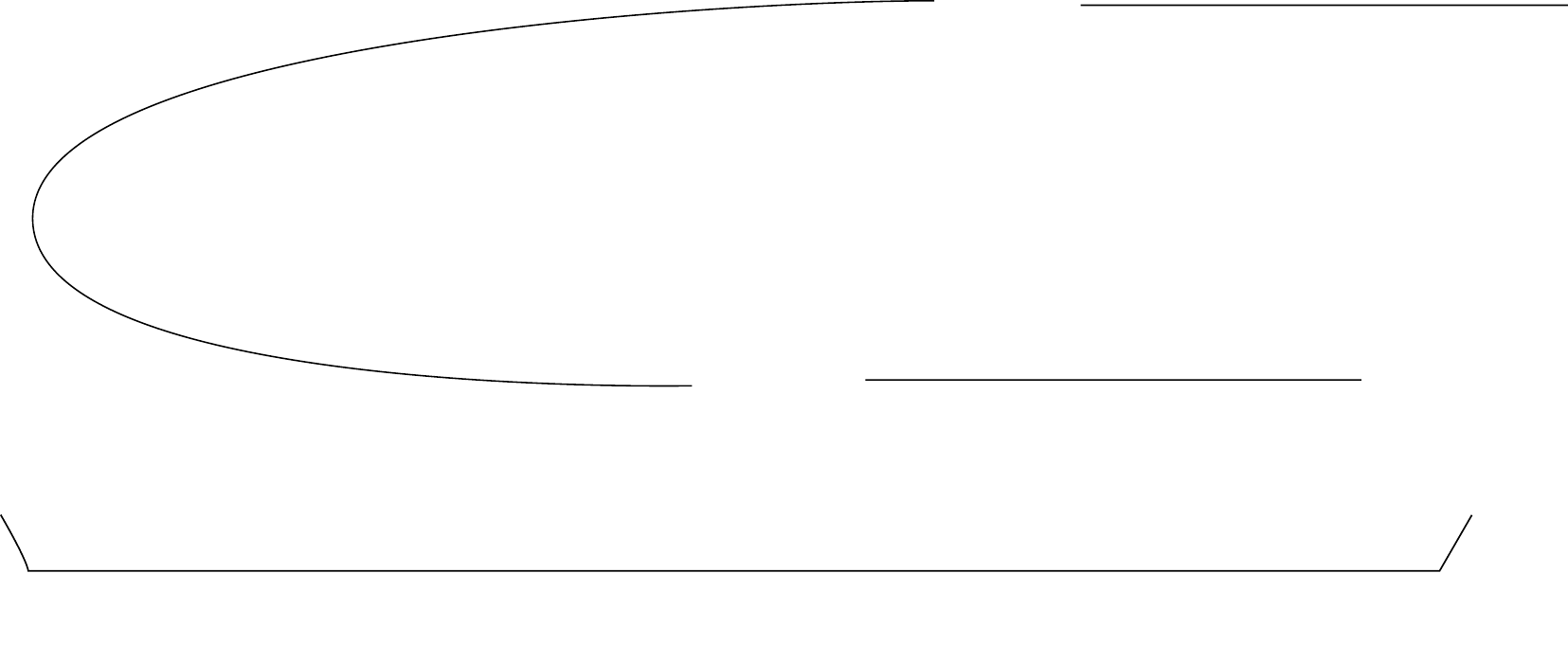
\caption{The arcs $\vv\subset W\subset M$, $\y{\vv}\subset P\subset \pp M\times[0,1]$ and $\z{\vv}\subset W\cup P\subset M_1$.
}
\label{fig:triv-02}
\end{figure}

\begin{figure}
\centering
\def\svgwidth{.6\textwidth}
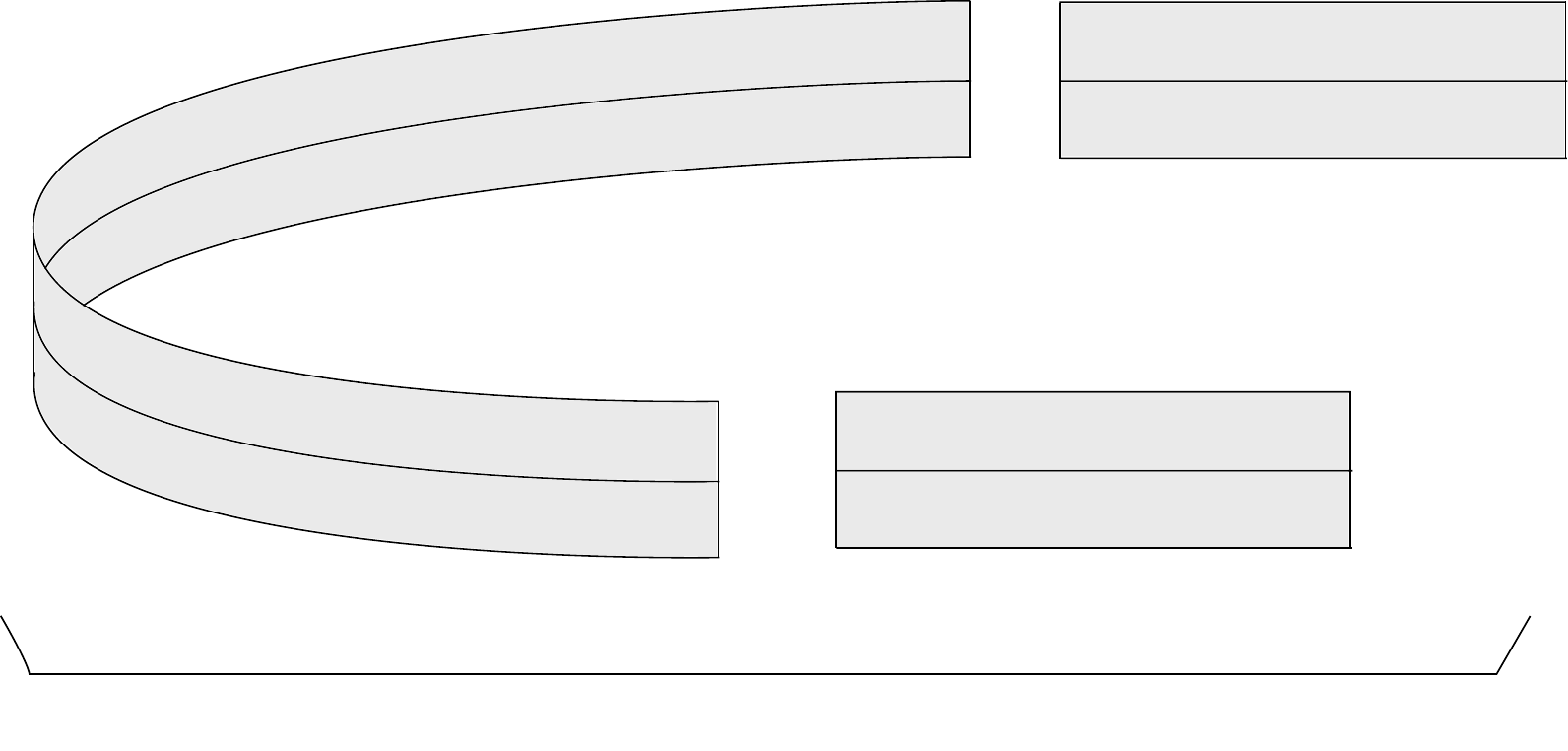
\caption{The strips $\ww\subset W\subset M$, $\y{\ww}\subset P\subset \pp M\times[0,1]$ and $\z{\ww}\subset W\cup P\subset M_1$.
}
\label{fig:triv-03}
\end{figure}

\subsection{A diagram of approximate augmentations}

Let us denote by $\ab_{g,b-1}(u)$ (\resp $\aa_{g,b}(u)$) the approximate augmentation for the resolution $\b{g,b}{M;\delta,\ell}{\bullet}$ (\resp $\h{g,b}{M;\delta}{\bullet}$) of $\e{g,b}{M;\delta}$ over a $0$-simplex $u$ (see Definitions \ref{df:ssresolution} and \ref{def:appaug1}). Then, for a stabilisation map $\alpha_{g,b}(M;\delta,\z{\delta})$, we may extend it to the following diagram, which results from gluing diagram \eqref{eq:stab-resolutions} to the big square of diagram \eqref{eq:900}:

\begin{equation}\label{eq:YYY}
\begin{gathered}
\xymatrix{
\e{g,b-1}{M(u);\delta(u)} \ar[rrrr]^{\beta_{g,b-1}(M(u);\delta(u))} \ar[d]_{\ab_{g,b-1}(u)}&&&& \e{g,b}{M_1(\z{u});\z{\delta}(\z{u})} \ar[d]^{\aa_{g,b}(u)}  \ar@{-->}[dllll]\\
\e{g,b}{M;\delta} \ar[rrrr]^{\alpha_{g,b}(M;\delta)} &&&& \e{g+1,b-1}{M_1;\z{\delta}}.
}
\end{gathered}
\end{equation}

The first result of this section is Lemma \ref{lemma:triviality} where we construct a dotted map as shown making both triangles commute up to homotopy (and enjoying certain other properties). In order to better understand the diagram, observe first that all its maps consist of gluing to the background manifold a cobordism with a surface inside, as we did in Section \ref{ss:gluingmaps}. In that notation we have: 
\begin{align*}
\alpha_{g,b}(M;\delta)        &= -\cup P                      &\text{with}&& P&\subset\ppartial M\times [0,1] &(\text{type}~I),\\ 
\beta_{g,b-1}(M(u);\delta(u)) &= -\cup P(u) &\text{with}&&P(u)&\subset \pp M(u)\times [0,1] &(\text{type}~I),\\ 
\ab_{g,b-1}(u)                &= -\cup \ww                   &\text{with}&& \ww&\subset u' &(\text{type}~II), \\
\aa_{g,b}(u)                  &= -\cup \z{\ww}  &\text{with}&&\z{\ww}&\subset \z{u}' &(\text{type}~II).
\end{align*}
\subsection{Enlarging the diagram} In practice, the only maps between spaces of submanifolds that we can define are the ones in Section \ref{ss:gluingmaps}. Since $M_1(\z{u})$ is not contained in $M$, such maps do not exist on the nose. For that reason, we now enlarge diagram \eqref{eq:YYY} by including $M$ into $M_2:=M\cup \ppartial M\times [0,2]$ and including $M_1$ into $M_3 := M\cup \ppartial M\times [0,3]$ (see also Figure \ref{fig:triv01}):
\begin{equation}\label{eq:XXX}
\begin{gathered}
\xymatrix{
\e{g,b-1}{M(u);\delta(u)} \ar[rrrr]^{\beta_{g,b-1}(M(u);\delta(u))}_{-\cup P(u)} \ar[d]_{\ab_{g,b-1}(u)}^{-\cup \ww}&&&& \e{g,b}{M_1(\z{u});\z{\delta}(\z{u})} \ar[d]_{\aa_{g,b}(u)}^{-\cup \z{\ww}}  \\
\e{g,b}{M;\delta} \ar[d]_{\ii_0(\delta)}^{-\cup \dd\times [0,2]}\ar[rrrr]^{\alpha_{g,b}(M;\delta)}_{-\cup P} &&&& \e{g+1,b-1}{M_1;\z{\delta}}\ar[d]_{\ii_1(\z{\delta})}^{-\cup \z{\delta}^0\times [1,3]}\\
\e{g,b}{M_2;\delta + 2} \ar[rrrr]^{\alpha_{g,b}(M_2)}_{-\cup (P+2)} &&&& \e{g+1,b-1}{M_3;\z{\delta} + 2},
}
\end{gathered}
\end{equation}
The maps in the bottom square are defined by 
\begin{align*}
\ii_0(\delta) &= -\cup \dd\times [0,2] &\text{with}&& \dd\times [0,2]&\subset \ppartial M\times [0,2]&(\text{type}~I),\\
\ii_1(\delta) &= -\cup \dd\times [1,3] &\text{with}&& \dd\times [1,3]&\subset \ppartial M\times [1,3]&(\text{type}~I),\\
\alpha_{g,b}(M_2) &= -\cup (P+2) &\text{with}&& P+2&\subset \ppartial M\times [2,3]&(\text{type}~I),
\end{align*}
where $P+2$ is the cobordism $P$ translated $2$ units. Finally, the new boundary conditions are:
\begin{align*}
(\delta + 2)^0 &= \dd\times \{2\} ,&\quad (\z{\delta} + 2)^0 &= \z{\delta}^0\times \{3\} \\
(\delta + 2)^1 &= \ddd\cup (\partial \dd\times [0,2]) ,&\quad (\z{\delta} + 2)^1 &= \z{\delta}^1\cup (\partial \z{\delta}^0\times [1,3]) 
\end{align*}
(if $M$ does not have corners, then $\ddd = \z{\delta}^1 = (\delta+2)^1 = (\z{\delta} + 2)^1 = \emptyset$). By stretching collars one sees that the maps $\ii_0(\delta)$ and $\ii_1(\z{\delta})$ are weak homotopy equivalences and that the bottom square commutes up to homotopy. Therefore in order to find a diagonal map in \eqref{eq:YYY} making the two triangles commute up to homotopy it is enough to find a diagonal map in \eqref{eq:XXX} making the two triangles commute up to homotopy.

\subsection{A diagonal factorisation of the enlarged diagram} Now, let $N$ be the manifold with corners $\z{u}'\cup \ppartial M\times [1,2]$. As $M_1(\z{u})\cup N = M_2$, any trivial cobordism $Q\subset N$ satisfying the boundary condition
\[\xi  := (\dd \times\{2\}) \;\cup \;  \z{\d}(\z{u})\quad \text{ inside }\quad \partial N =  (\ppartial M\times \{2\}) \cup \partial  M_1(\z{u}).\] 
defines a map
\begin{equation}\label{eq:mapQ}
\xymatrix{-\cup Q\colon \e{g,b}{M_1(\z{u});\z{\delta}(\z{u})}\ar[r]& \e{g,b}{M_2;\delta +  2}. }
\end{equation}
which is a diagonal map for the outer square in diagram \eqref{eq:XXX}. In order for this map to make the two triangles commute up to homotopy, $Q$ is required to have certain properties: a $Q$ enjoying the properties is shown to exist in the following lemma.

\begin{figure}
\centering
\def\svgwidth{\textwidth}
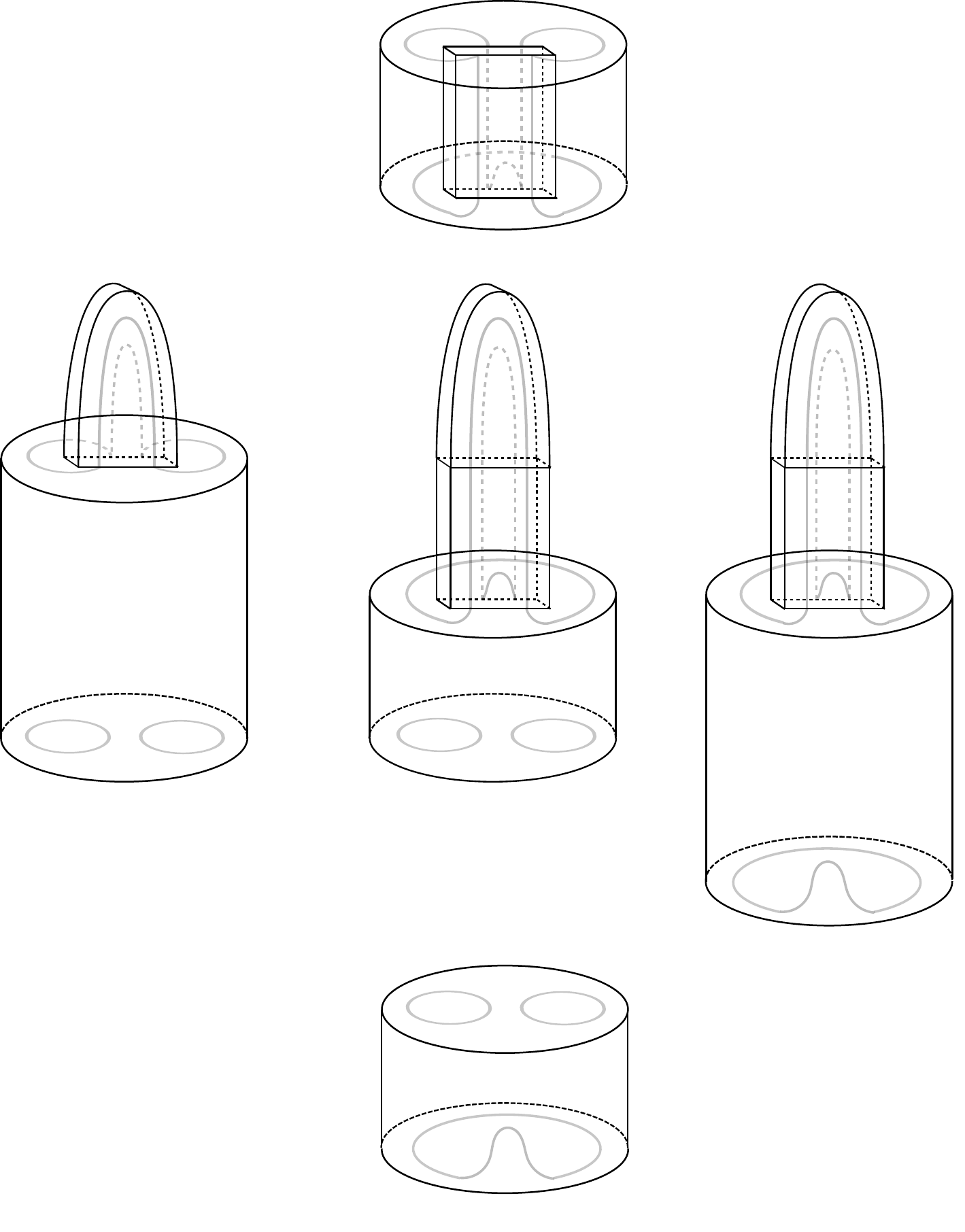
\caption{Background manifolds (in black) and boundary conditions (in grey) for the cobordisms $P(u)$, $\ww\cup (\dd\times [0,2])$,$Q$,$\z{\ww}\cup (\z{\delta}^0\times[1,3])$, $P+2$ that define the maps in diagram \eqref{eq:XXX}, with $\partial M$ a ball (so $M$ does not have corners in the picture). 
}
\label{fig:triv01}
\end{figure}
\clearpage

\begin{lemma}\label{lemma:triviality} If $\dim M\geq 5$, there is an $\ell$ as in Definition \ref{df:ssresolution} that depends only on $P$, for which there exist:
\begin{enumerate}
\item\label{lemma:triviality:1} a trivial cobordism $Q \in \E(N;\xi)$,
\item\label{lemma:triviality:2} isotopies 
\[\begin{array}{rclcl}
 P(u)\cup Q&\simeq & \ww\cup (\dd\times [0,2])&\subset& u'\cup \ppartial M\times [0,2]\\
 Q\cup (P+2) &\simeq &\z{\ww}\cup (\z{\delta}^0\times [1,3])&\subset &\z{u}'\cup\ppartial M\times [1,3]
\end{array}\]
relative to the boundaries.
\end{enumerate}
Therefore in the square \eqref{eq:YYY} a dashed diagonal map exists making both triangles commute up to homotopy.
\end{lemma}

Before entering into the proof of this lemma, we give some definitions to do with surgery on embedded surfaces which will be used in the proof.

\begin{notation}
If $E\to B$ is a vector bundle, we denote by $S(E)$ its unit sphere bundle.
\end{notation}

Choose an embedding $a\colon D^2\times \partial D^1\to D^2\times D^1$ such that $a_{|\partial D^2\times \partial D^1}$ is the identity and the image of $\{x\in D^2\mid \|x\|\geq \frac{1}{2}\} \times \partial D^1$ is contained in $\partial D^2\times D^1$.

\begin{df}\label{df:surg} Let $W\subset M$ be an oriented surface, and let $\gamma\subset W$ be an oriented embedded circle. The normal bundle $N_W\gamma$ is then oriented. As it has rank $1$, the orientation is the same as a trivialisation. Let $s_0$ be the section of $S(N_W\gamma)$ given by the trivialisation. Let $(D,s)$ be a pair consisting of an embedded oriented $2$-disc $D\subset M$ bounding $\gamma$ transverse to $W$, and a section $s$ of $S(N_MD)$ whose restriction to $S(N_W\gamma)$ is $s_0$.

A \emph{surgery datum} for $(W,(D,s))$ is an embedding of pairs $e\colon (D^2\times D^1, \partial D^2\times D^1)\to (M,W)$ such that 
\begin{enumerate}
\item the restriction of $e$ to $D^2\times \{0\}$ is an orientation preserving diffeomorphism onto $D$,
\item the canonical section of $S(N_{D^2\times D^1}D^2)$ is mapped to the section $s$ of $S(N_MD)$. 
\end{enumerate}
The \emph{ambient surgery} on $W$ along $\gamma$ by means of the pair $(D,s)$, denoted $W\natural D$ is the submanifold of $W$ obtained by removing $e((\partial D^2)\times D^1)$ from $W$ and then gluing $e\circ a(D^2\times \partial D^1)$. The orientation of $W$ determines an orientation on $W\natural D$.
\end{df}
 Any two parametrisations of $D$ differ by an element of $\Diff^+(D^2)$, which is connected. Additionally, the restriction map 
\[\Emb((D^2\times D^1,\partial D^2\times D^1),(M,W))\lra \Emb(D^2\times \{0\},\partial D^2 \times \{0\},(M,W))\]
is a fibration by Proposition \ref{prop:retractil-resolutions} and Lemma \ref{lemma:retractil-target}, and the fibre is weakly contractible. Therefore, any two surgery data of $(W,(D,s))$ are isotopic, and we deduce that: 
\begin{lemma}\label{lemma:solely} The isotopy class of $W\natural D$ is determined by $W$ and $(D,s)$.
\end{lemma}

\begin{lemma}\label{lemma:interchange} Let $B$ be a compact manifold, possibly with boundary. Let $W$ be an embedded oriented surface in $B$, not necessarily collared, with boundary condition $\delta$, and let $(\gamma,\gamma')$ be either a pair of collared arcs in $W$ with the same boundary, or a pair of curves in $W$. If $M$ is a compact manifold, possibly with boundary, contractible and of dimension at least $5$, and $\gamma$ and $\gamma'$ are non-separating (i.e., their complements are connected), then there is an isotopy $f_t\colon M\to M$ constant on the boundary of $M$ such that
\begin{enumerate}
\item $f_1(W) = W$,
\item $f_1(\gamma) = \gamma'$.
\end{enumerate}
\end{lemma}
\begin{proof}
As $\Diff^+_\partial(W)$ acts transitively on the space of non-separating arcs (curves) in $W$, there is a diffeomorphism $F$ that sends $\gamma$ to $\gamma'$. If $d$ is a boundary condition for $\Emb(W,M)$ such that $[d]=\delta$, we have that the group $\Diff_\partial^+(W)$ acts on the space $\Emb(W,M;d)$ by precomposition. As $\dim M\geq 5$, by \cite{Haefliger} (cf. \eqref{eq:haefliger}) we have that $\pi_0(\Emb(W,M;d))\cong \pi_0(\map(W,M;d))$, and the latter is trivial because $M$ is contractible. Therefore, there exists a path of embeddings from the inclusion $i\colon W\subset M$ to the embedding $i\circ F$. Using the parametrised isotopy extension theorem we promote this isotopy of embeddings to an ambient isotopy $f_t$.
\end{proof}

\begin{figure}
\centering
\def\svgwidth{.6\textwidth}
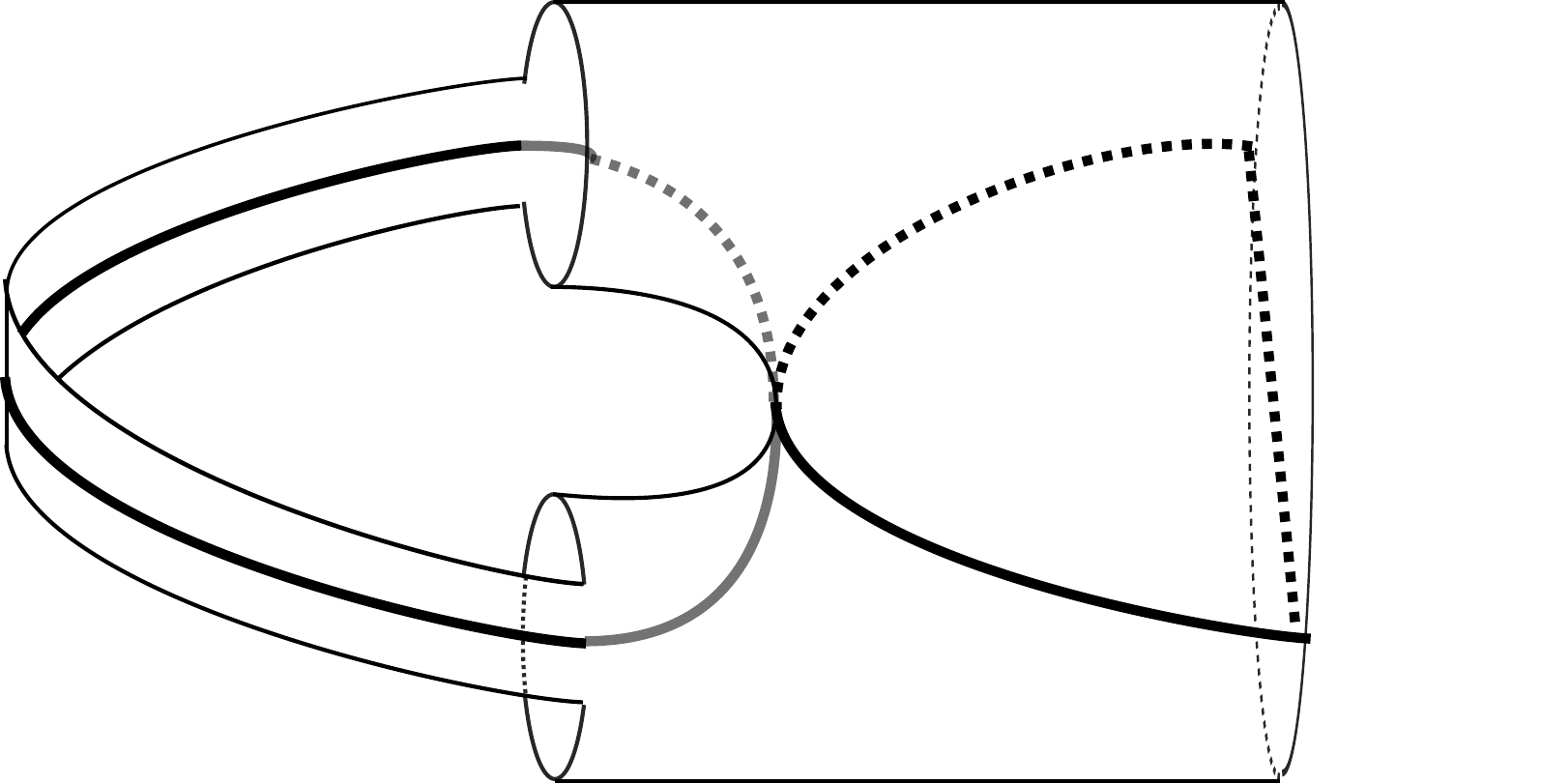
\caption{The curves $\vv$, $\alpha$, $\beta$, and $\sigma$ in $u'\cup P$.}
\label{fig:triv08}
\end{figure}

\begin{figure}
\centering
\def\svgwidth{.6\textwidth}
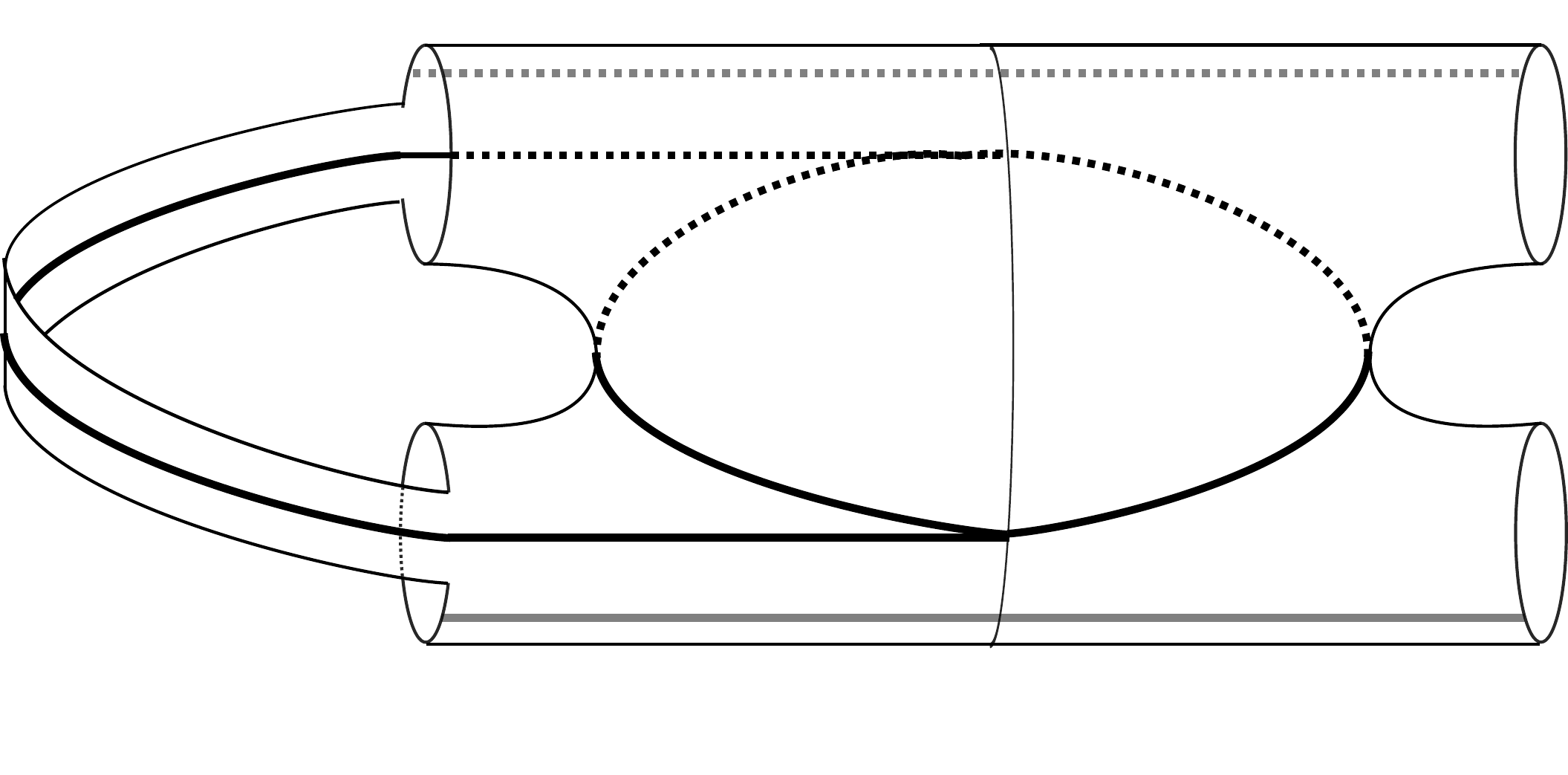
\caption{}
\label{fig:triv09}
\end{figure}

\begin{proof}[Proof of Lemma \ref{lemma:triviality}]

 Let us denote by $\varphi$ the image under $\z{u}'''$ of $\ppp D^+_{\frac{1}{2}}$ (i.e., the image of $\z{\vv}$). Let $D_{\varphi}$ be the image of $D^+_{\frac{1}{2}}$ under the embedding $\z{u}'''$, and let $\sigma$ be the image of $\pp D^+_{\frac{1}{2}}$, so that $D_{\varphi}$ is a disc in $\z{u}'$ that bounds $\varphi\cup \sigma$. Observe that the isotopy class of $\sigma$ is determined by $\ell$. Choose a section $s_\varphi$ of $S(N_{M}D_\varphi)$ that is trivial on $\varphi$ (i.e., whose restriction to $\varphi$ is the constant section with value $1$ of the trivialised bundle $S(N_W\gamma)$), and collared (i.e., as $\z{u}'''$ is collared, there is an $\epsilon$ for which $D_\varphi = \sigma\times (1-\epsilon,1]$, and a canonical identification $N_{\partial M\times (1-\epsilon,1]}{D_{\varphi}\cap \partial M\times (1-\epsilon,1]}\cong (N_\sigma \partial M)\times (1-\epsilon,1]$. Say that $s$ is collared if $s(x,t) = (s(x,0),t)$ under this identification).

Let $p_0$ and $p_1$ be the initial and final points of the path $\varphi$. As $p_0$ and $p_1$ are contained in $\ell\cap \z{\dd}$, they are points in the relevant boundary of $P$ (see Definition \ref{df:relevant}). Let $\beta$ be a path embedded in $P$, from $p_0$ to $p_1$, with a unique minimum at the unique critical point $p$ of $\pi\colon P\subset \pp M\times [0,1]\to [0,1]$, and such that $T_p\beta\subset T_pP$ is the unstable subspace. 

Now, $\ell$ may be chosen so that $\sigma$ is isotopic to $\beta$ in $\partial M\times [0,1]$, so suppose this has been done. 

Then $\beta$ is isotopic to $\varphi$, so there is a ball $B\subset u'\cup \pp M\times [0,1]$ containing them and $D_{\varphi}$. In addition, both are non-separating: as $w\cup P$ has only critical points of index $1$, the stable submanifold at the point $p$ connects both sides of the complement of $\varphi$ to the incoming boundary of $w\cup P$, which is connected. Regarding $\beta$, again both sides connect to the incoming boundary of $w\cup P$. By Lemma \ref{lemma:interchange}, there exists an isotopy $f_t$ of $u'\cup \pp M\times [0,1]$ supported on $B$ such that
\begin{enumerate}
\item $f_1(\ww\cup P) = \ww\cup P$,
\item $f_1(\varphi) = \beta$.
\end{enumerate}
Define $D_\beta = f_1(D_{\varphi})$ and $s_\beta = f_1(s_\varphi)$. We assume, without loss of generality, that $D_\beta$ has no critical points. Define $\mirror{P},\mirror{\beta},\mirror{D_\beta}\subset \partial M\times [1,2]$ to be the reflections of $P$, $\beta$ and $D_\beta$ along $\partial M\times \{1\}$, and orient $P$ compatibly with the orientation of $\z{\dd}$. Similarly, define the section $\mirror{s_\beta}$ to be the reflection of the section $s_\beta$. The union $\z{\ww}\cup \mirror{P}$ is a submanifold of $N$ satisfying the boundary condition $\xi$, but it is not a trivial cobordism.

The circle $\varphi\cup \mirror{\beta}\subset \z{\ww}\cup \mirror{P}$ is bounded by the disc $D_{\varphi}\cup \mirror{D_\beta}\subset N$, on which we have the section $s_\varphi\cup \mirror{s_\beta}$. Define 
\[Q:= (\z{\ww}\cup \mirror{P})\natural (D_{\varphi}\cup \mirror{D_\beta}).\] 

We now show that $P(u)\cup Q$ is isotopic to $\ww\cup \dd\times[1,2]$ (See Figure \ref{fig:triv09}). If we extend the isotopy $f_t$ of $u'\cup \pp M\times [0,1]$ to $u'\cup \pp M\times [0,2]$ by the identity, we have that: 
\begin{align*}
P(u)\cup Q &= P(u)\cup ((\z{\ww}\cup \mirror{P})\natural (D_{\varphi}\cup \mirror{D_\beta})) \\
&\simeq f_1^{-1}[(P(u)\cup \z{\ww}\cup \mirror{P})\natural (D_{\varphi}\cup \mirror{D_\beta})] \\
&= (f_1^{-1}[P(u)\cup \z{\ww}]\cup \mirror{P})\natural (f_1^{-1}[D_{\varphi}]\cup \mirror{D_\beta}) \\
&= (P(u)\cup \z{\ww}\cup \mirror{P})\natural (f_1^{-1}[D_{\varphi}]\cup \mirror{D_\beta}) \\
&= (\ww\cup P \cup \mirror{P})\natural (D_{\beta}\cup \mirror{D_\beta})\\
&= \ww\cup ((P \cup \mirror{P})\natural (D_{\beta}\cup \mirror{D_\beta}))
\end{align*}
But $\pi\colon P \cup \mirror{P}\subset \pp M\times[0,2]\to [0,2]$ has only two critical points which are cancelled by the surgery along $D_\beta\cup \mirror{D_\beta}$. Therefore $(P \cup \mirror{P})\natural (D_{\beta}\cup \mirror{D_\beta})$ has no critical points and is therefore a trivial cobordism. In addition, the part of this cobordism lying in $\pp M\times [1,2]$ is a reflection of the part lying in $\pp M\times [0,1]$, and therefore the union is homotopic to $\dd\times[0,2]$, and therefore isotopic to it too (because $\dim M\geq 5$, using again \cite{Haefliger} as in \eqref{eq:haefliger}). This gives the first isotopy of statement \ref{lemma:triviality:2} of the lemma.

Since $g_1(P(u)\cup Q)$ has a unique critical point and so does $P(u)$, it follows that $Q$ is a trivial cobordism. Hence $Q$ is as in statement \ref{lemma:triviality:1} of the lemma.

To show that $Q\cup (P+2)$ is isotopic to $\z{\ww}\cup \z{\delta}^0\times [1,3]$, we proceed as follows: Let $q_0$ and $q_1$ be the initial and final points of the path $v$.  Let $\alpha$ be a path embedded in $P$, from $q_0$ to $q_1$, with a unique minimum at the unique critical point $p$ of $\pi\colon P\subset \pp M\times [0,1]\to [0,1]$, and such that $T_p\alpha\subset T_pP$ is the stable subspace. 

Let $D_\alpha\subset \pp M\times [0,1]$ be a collared half $2$-disc transverse to $P$, bounding $\alpha\cup \rho$ for some path $\rho \subset \pp M$, and let $s_\alpha$ be a section of $S(N_M D_\alpha)$ trivialised over $\alpha$ and collared (as in the first paragraph of this proof). Assume that the intersection of $D_\alpha$ with $D_\beta$ consists only on the point $p$. The discs $D_{\varphi}\cup \mirror{D_\beta}$ and $\mirror{D_\alpha}\cup (D_\alpha +2)$ intersect in a single point, namely the reflection of $p$ in $\mirror{P}$, therefore there is a ball $B$ that contains both discs. In addition, both $\varphi\cup \beta$ and $\alpha\cup (\alpha +2)$ are non-separating in $\z{w}\cup \mirror{P}\cup (P+2)$: each curve is transverse to the other and they meet in a single point, therefore the possible two sides of the complement of each curve are connected by the other curve. By Lemma \ref{lemma:interchange}, there is an isotopy $h_t$ of $\z{u}'\cup \pp M\times [1,3]$ supported on $B$ such that:
\begin{enumerate}
\item $h_1(\z{w}\cup \mirror{P}\cup (P+2)) = \z{w}\cup \mirror{P}\cup (P+2)$ and
\item $h_1(\varphi\cup \mirror{\beta}) = \mirror{\alpha}\cup (\alpha+2)$
\end{enumerate}
Because $h_1(D_{\varphi}\cup \mirror{D_\beta})$ and $\mirror{D_\alpha}\cup (D_\alpha+2)$ are contained in the ball $B$ they are isotopic, so there exists another isotopy $h'_t$ of $\z{u}'\cup \pp M\times [1,3]$ supported on $B$ such that:
\begin{enumerate}
\item $h'_1(\z{w}\cup \mirror{P}\cup (P+2)) = \z{w}\cup \mirror{P}\cup (P+2)$ and
\item $h'_1(h_1(D_{\varphi}\cup \mirror{D_\beta})) = \mirror{D_\alpha}\cup (D_\alpha+2)$
\end{enumerate}
Therefore, endowing $\mirror{D_\alpha}\cup (D_\alpha+2)$ with the section $h'_1(h_1(s_\varphi\cup \mirror{s_\beta}))$, we have:
\begin{align*}
Q\cup (P+2) &= (\z{\ww}\cup \mirror{P})\natural(D_{\varphi}\cup \mirror{D_\beta}) \cup (P+2) \\
&\simeq h'_1\circ h_1[(\z{\ww}\cup \mirror{P})\natural(D_{\varphi}\cup \mirror{D_\beta}) \cup (P+2)] \\
&= (\z{\ww}\cup \mirror{P}\cup (P+2))\natural(h_1[D_{\varphi}\cup \mirror{D_\beta}]) \\
&= (\z{\ww}\cup \mirror{P}\cup (P+2))\natural(\mirror{D_\alpha}\cup (D_\alpha+2)) \\
&= \z{\ww}\cup ((\mirror{P}\cup (P+2))\natural(\mirror{D_\alpha}\cup (D_\alpha+2))) 
\end{align*}
But $\pi\colon \mirror{P} \cup (P +2)\subset \pp M\times[1,3]\to [1,3]$ has only two critical points which are cancelled by the surgery along $\mirror{D_\alpha}\cup (D_\alpha+2)$. Therefore $(\mirror{P} \cup (P+2))\natural (\mirror{D_\alpha}\cup (D_\alpha+2))$ has no critical points and is therefore a trivial cobordism. In addition, the part of this cobordism lying in $\pp M\times [2,3]$ is a reflection of the part lying in $\pp M\times [1,2]$, and therefore the union is homotopic to $\z{\dd}\times[1,3]$, and therefore isotopic (again, using \cite{Haefliger} as in \eqref{eq:haefliger}). This gives the second isotopy of statement \ref{lemma:triviality:2} of the lemma.
\end{proof}

\subsection{Zero in homology}\label{section:zero-in-homology} During this section, if $A\rightarrow X$ is a map, we will denote by $(X,A)$ its mapping cone. We will use the letter $\Sigma$ for unreduced suspension, and write $CX = [0,1] \times X / \{1\} \times X$.

In this section, we give a slightly different treatment of the problem of $1$-triviality as approached in the literature. The difference is that using the methods of \cite{R-WResolution2015} and \cite{Palmer:oriented}, one can prove that the composition $h\circ \Sigma \aa_{g-1,b}(u_0,u_1)$ in the first display of Proposition \ref{prop:22} is zero in homology, whereas we prove that it is in fact null-homotopic. This is done via the following lemma, which has the additional advantage of avoiding the use of maps that only exist at the level of homology.
 
\begin{lemma}\label{lemma:factor} If
\[\xymatrix{A\ar[r]^i\ar[d]^g& X \ar[d]^f \\ A'\ar[r]^j &X'}\]
is a map of pairs and there is a map $t\colon X\rightarrow A'$ making the bottom triangle commute up to a homotopy $H\colon f\simeq jt$, then the induced map between mapping cones $(f,g)\colon (X,A)\rightarrow (X',A')$ factors as $(X,A)\stackrel{p}{\rightarrow} CA \cup_i CX \stackrel{h}{\rightarrow} (X',A')$, where $p$ comes from the Puppe sequence. In addition, if there is also a homotopy $G\colon g\simeq ti$, then the composite $CA \cup_i CX \stackrel{h}{\rightarrow} (X',A')\stackrel{p'}{\rightarrow} CA' \cup_j CX'$ is nullhomotopic.
\end{lemma}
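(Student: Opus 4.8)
The plan is to build the map $h$ directly from the data $(t,H)$, and then, assuming $G$, to exhibit the required nullhomotopy of $p'\circ h$ using the contractibility of the cones $CX$ and $CX'$. Recall that $(X,A)$ denotes the mapping cone $X\cup_i CA$, that the next term of the Puppe sequence is the mapping cone $CA\cup_i CX$ of the inclusion $X\hookrightarrow X\cup_i CA$, and that $p$ is precisely that inclusion.

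I would define $h\colon CA\cup_i CX\to X'\cup_j CA' = (X',A')$ on the two cones separately. On $CA$ there is no choice: $h(a,u) = (g(a),u)\in CA'$, since $h\circ p$ is to restrict to $(f,g)$ on the subcone $CA$. On $CX$ one interpolates, using $H$: set $h(x,s) = H(x,2s)\in X'$ for $s\in[0,\tfrac12]$ and $h(x,s) = (t(x),2s-1)\in CA'$ for $s\in[\tfrac12,1]$. These agree at $s=\tfrac12$ because $H(x,1) = j(t(x))$, which is the image of $(t(x),0)\in CA'$ under the gluing, and at $s=1$ they reach the cone point of $CA'$ independently of $x$; on the locus $A\times\{0\}\subset CA$ glued to $i(A)\subset X\subset CX$ the two formulas agree because $h(a,0) = j(g(a))$ and $h(i(a),0) = H(i(a),0) = f(i(a)) = j(g(a))$, by commutativity of the square. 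One then checks $h\circ p = (f,g)$ directly: on $X\subset(X,A)$ one has $p(x) = (x,0)\in CX$ and $h(x,0) = f(x)$, and on $CA$ both sides are $(a,u)\mapsto(g(a),u)$. No use of $G$ is needed here.

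For the second part, write $Z := CA'\cup_j CX'$, so that $p'$ is again the Puppe inclusion $X'\cup_j CA'\hookrightarrow CX'\cup_j CA'$. Unwinding the definitions, $p'\circ h$ sends $(a,u)\in CA$ to $(g(a),u)\in CA'\subset Z$, sends $(x,s)\in CX$ with $s\le\tfrac12$ to $(H(x,2s),0)\in CX'\subset Z$, and sends $(x,s)\in CX$ with $s\ge\tfrac12$ to $(t(x),2s-1)\in CA'\subset Z$. I would introduce the collapse map $c\colon CA\cup_i CX\to CX$ which is the identity on $CX$ and sends $(a,u)\in CA$ to $(i(a),u)\in CX$ (this is well defined), and let $\bar h\colon CX\to Z$ be the map given by the last two formulas, so that $p'h$ and $\bar h\circ c$ agree on $CX$. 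Since $CX$ is a cone, $c$ is nullhomotopic, hence so is $\bar h\circ c$; therefore it suffices to show $p'\circ h\simeq \bar h\circ c$ as maps $CA\cup_i CX\to Z$.

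It then remains to homotope $p'h$ to $\bar h c$ on the cone $CA$, relative to $A\times\{0\}$ (where both are the constant path $a\mapsto j(g(a))$) and to the cone point of $CA'$. For fixed $a$, $p'h$ runs the straight cone path in $CA'$ from $j(g(a))$ down to the cone point, whereas $\bar h c$ first runs the path $H(i(a),-)$ in $X'$ from $f(i(a)) = j(g(a))$ to $j(t(i(a)))$ and then the cone path from $j(t(i(a)))$. Here $G$ enters: the paths $H(i(a),-)$ and $j\circ G(a,-)$ have the same endpoints, and their difference is a loop in $X'\subset CX'\subset Z$, hence nullhomotopic in $Z$ because $X'\hookrightarrow CX'$ is; this gives a homotopy, continuous in $a$ and fixing endpoints, replacing the $\bar h c$ path by $j\circ G(a,-)$ followed by the cone path. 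That new path lies entirely in the contractible cone $CA'$, so it is homotopic rel endpoints to the straight cone path $p'h$, again continuously in $a$. Concatenating these two homotopies over $CA$ and extending by constancy over $CX$ yields $p'h\simeq \bar h c$. The main obstacle I anticipate is precisely this last step: arranging the path-level homotopies to be simultaneously continuous in the parameter $a\in A$ and fixed on $A\times\{0\}$ and at the cone point, so that they assemble into an honest homotopy of maps; this should be handled by working throughout with the explicit straight-line contractions of $CA'$ and $CX'$, which depend continuously on all data.
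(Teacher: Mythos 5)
Your construction of $h$ and the verification that $hp = (f,g)$ coincide exactly with the paper's; the difference is entirely in how the nullhomotopy of $p'\circ h$ is produced. The paper works at the level of homotopy classes: composing with the collapse $Z \to Z/CX' \cong \Sigma A'$ (an equivalence since $CX'$ is contractible), the map $p'h$ is seen to factor through the quotient $(CA \cup_i CX)/C_{1/2}X \cong \Sigma A \vee \Sigma X$ as $\nabla \circ (\Sigma g \vee \Sigma t)$, and after precomposing with the equivalence $\Sigma A = CA\cup CA \to CA\cup_i CX$ and replacing $\Sigma(ti)$ by $\Sigma g$ via $G$, the whole composite becomes $\Sigma g - \Sigma g = 0$ in $[\Sigma A, \Sigma A']$. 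No homotopies are written down; only the co-H-structure of the suspension is used. Your route is instead to exhibit a map $\bar h\circ c$ that factors through the contractible cone $CX$, and then to build an explicit homotopy $p'h \simeq \bar h c$ from $G$ together with the canonical radial contractions of $CA'$ and $CX'$. This is a valid alternative: the point you flag (making the path-level homotopies simultaneously continuous in $a$, constant on $A\times\{0\}$, and compatible with the cone point) can indeed be discharged, since the radial contraction of $CX'$ gives a nullhomotopy of the loop bounding the square with its two constant arcs on the boundary of the square, and the deformation inside $CA'$ can be written by an explicit interpolation of both the $A'$-coordinate (via $G$) and the cone height. The payoff of your argument is a concrete geometric homotopy that shows exactly where $G$ enters; the cost is that this last bookkeeping takes more writing than the paper's algebraic cancellation. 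Both proofs are correct.
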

\begin{proof}
The map $h\colon CA\cup_i CX\rightarrow CA'\cup_j X'$ is given by 
\begin{align*}
h(a,s) &= (g(a),s) \in CA' &\text{if}~& (a,s)\in CA \\
h(b,s) &= H(b,2s) \in X' &\text{if}~ &(b,s)\in CX ~\text{and}~ 0\leq s\leq 1/2 \\
h(b,s) &= (t(b),2s-1)\in CA' &\text{if}~ &(b,s)\in CX ~\text{and}~ 1/2\leq s\leq 1,
\end{align*}
and it restricts to $(f,g)$ in the mapping cone $CA\cup_i X$, hence $hp = (f,g)$. 

For the second part, let $C_{\frac{1}{2}}Y = \{(y,s)\in CY\mid 0\leq s\leq 1/2\}$, notice that
$$(CA \cup_i CX) / C_{\frac{1}{2}}X \cong \Sigma A \vee \Sigma X,$$
and consider the diagram

\footnotesize
\[\begin{gathered}
\xymatrix@C=.4cm{
CA \cup CA = \Sigma A \ar[d]^{\mathrm{Id} \cup Ci}_\simeq\\
CA\cup_i CX \ar[r]^h\ar[d]^{\text{collapse } C_{\frac{1}{2}}X} & CA'\cup_j X'\ar[r]^{p'} & CA'\cup_j CX' \ar[rr]_-{\simeq}^-{\text{collapse }CX'}&& \Sigma A' \\
(CA\cup_i CX)/C_{\frac{1}{2}}X \cong \Sigma A \vee \Sigma X \ar[rrrr]^{\Sigma g \vee \Sigma t}&&&& \Sigma A'\vee \Sigma A'\ar[u]^\nabla
}
\end{gathered}\]
\normalsize
which is easily checked to commute. As $ti$ is homotopic to $g$, the lower composition is homotopic to $\nabla \circ (\Sigma g \vee \Sigma g) \circ \vee$, i.e.\ $\Sigma g - \Sigma g$, so it is nullhomotopic, as required.
\end{proof}

 We now return to \eqref{eq:YYY}, where we had chosen a $0$-simplex $u_0:=u \in \b{g,b}{M;\delta}{0}$. Suppose that we have another $0$-simplex $u_1\in \h{g,b-1}{M(u_0);\delta(u_0)}{0}$ (we can also consider $u_1$ as a point in $\b{g,b}{M;\delta}{0}$ and $u_0$ as a point in $\h{g,b-1}{M(u_1);\delta(u_1)}{0}$). We now construct the following diagram:

{\footnotesize
\[\xymatrix@C=.4cm{
\ar @{} [dr] |{(4)}
\e{g-1,b}{M(u_0,u_1)}\ar[r] \ar[d]_{\aa_{g-1,b}(u_0)} & \e{g,b-1}{M(u_0,u_1)} \ar[r]\ar[d]^{\ab_{g,b-1}(u_0)} & \alpha_{g-1,b}(M(u_0,u_1))\ar[r]\ar[d] & \Sigma\e{g-1,b}{M(u_0,u_1)}\ar[d]. \\ 
\ar @{} [dr] |{(1)}
\e{g,b-1}{M(u_0)}\ar[r] \ar[d]_{\ab_{g,b-1}(u_0)} & \e{g,b}{M(u_0)} \ar[r]\ar[d]^{\aa_{g,b}(u_0)} & \beta_{g,b-1}(M(u_0))\ar[r]^p\ar[d] & \Sigma\e{g,b-1}{M(u_0)} \ar[dl]^{h} \\
\ar @{} [dr] |{(2)}
\e{g,b}{M}\ar[r]& \e{g+1,b-1}{M} \ar[r] & \alpha_{g,b}(M)& \\
\ar @{} [dr] |{(3)}
\e{g,b-1}{M(u_1)}\ar[r] \ar[u]^{\ab_{g,b-1}(u_1)} & \e{g,b}{M(u_1)} \ar[r]\ar[u]_{\aa_{g,b}(u_1)} & \beta_{g,b-1}(M(u_1))\ar[u]\ar[r]^{p'} & \Sigma\e{g,b-1}{M(u_1)}\ar[ul]_{h'}\\
\e{g-1,b}{M(u_0,u_1)}\ar[r] \ar[u]^{\aa_{g-1,b}(u_0)} & \e{g,b-1}{M(u_0,u_1)} \ar[r]\ar[u]_{\ab_{g,b-1}(u_0)} & \alpha_{g-1,b}(M(u_0,u_1))\ar[r]\ar[u] & \Sigma\e{g-1,b}{M(u_0,u_1)} \ar[ul]_{h''}.\\ 
}\]
}

The third line of the diagram is the Puppe sequence for the stabilisation map $\alpha_{g,b}(M;\delta, \z{\delta})$. The second and fourth lines are the Puppe sequences for the approximate augmentations corresponding to the data $u_0$ and $u_1$ respectively. The first and fifth lines are the Puppe sequences for the approximate augmentation of $\OO{g,b}{M(u_0);\delta(u_0)}{\bullet}$ over the $0$-simplex $u_1$ and for the approximate augmentation of $\OO{g,b}{M(u_1);\delta(u_1)}{\bullet}$ over the $0$-simplex $u_0$. Note that the first and fifth lines are the same.

We first apply Lemma \ref{lemma:triviality} to the $0$-simplex $u_0 \in \OO{g,b}{M(u_1);\delta(u_1)}{0}$ in order to provide a cobordism
\[Q\subset N=\z{u}_0'\cup \pp M(u_1)\times [1,2]\]
and a diagonal map $-\cup Q$ that commutes up to homotopy in the square $(3)$. The cobordism
\[Q' := Q\cup \y{\ww}_1\subset N'=\z{u}_0'\cup \pp M\times [1,2]\] 
provides a diagonal map $-\cup Q'$ for the square $(1)$. As the isotopies constructed in that lemma for $Q$ where the identity on the boundary of $N$, they extend through the constant isotopy on $N'$, moreover, by construction of $Q$ and $Q'$ we have:

\begin{lemma}\label{lemma:commutative} The composition of the diagonal map of (3) with $\ab_{g-1,b}(u_1)$ is the same as the composition of $\aa_{g-1,b}(u_0)$ with the diagonal map of (1). The same holds for the homotopies. 
\end{lemma}
We continue by choosing any diagonal map and homotopies for the square $(2)$ using again Lemma \ref{lemma:triviality}, and construct the maps $p$, $h$, $p'$, $h'$, $p''$ and $h''$ applying the first part of Lemma 6.6 to the squares (1), (2) and (3).

The following finishes the proof of parts $(v)$ and $(vi)$ of Lemma \ref{lemma:induction}.
\begin{proposition}\label{prop:22} 
 If $X_{g-1}$ holds, then the map $(\beta_{g,b-1}(M(u_0)))\rightarrow (\alpha_{g,b}(M))$
induces the zero homomorphism in homology degrees $\leq \frac{1}{3}(2g+2)$. If $Y_{g-1}$ holds, then the map $(\alpha_{g,b-1}(M(u,v)))\rightarrow (\beta_{g,b}(M))$
induces the zero homomorphism in homology degrees $\leq \frac{1}{3}(2g+1)$.
\end{proposition}
\begin{proof}
We find homotopies
\[h\circ \Sigma\aa_{g-1,b}(u_0,u_1) \simeq (\aa_{g,b}(u_1),\ab_{g,b-1}(u_1))\circ h'' \simeq h'\circ p'\circ h''\simeq *\]
by first applying Lemma \ref{lemma:commutative} and then applying each part of Lemma \ref{lemma:factor}. Since $X_{g-1}$ holds, the map $\Sigma\aa_{g-1,b}(u_0,u_1)$ induces an epimorphism in homology degrees $\leq \frac{1}{3}(2(g-1)+1)+1$ (although the map $\aa_{g-1,b}(u_0,u_1)$ is not a map of type $\alpha_{g,b}(M(u_0,u_1)$, it is isotopic to such a map after rounding the corners of $M$), hence, in order to have that $h\circ \Sigma\aa_{g-1,b}(u_0,u_1)\simeq *$, the map $h$ must induce the zero homomorphism in those degrees, and so must $hp$, which is the map in question again by Lemma \ref{lemma:factor}. The second part is proven similarly, by rewriting all of this section.
\end{proof}

\section{Homological stability for closed surfaces}\label{last}\label{section:lastboundary}
We have to prove the last assertion of Theorem \ref{thm:stab}, as well as the injectivity in homology of the maps of type $\beta$ for which one of the newly created boundaries is contractible in $\partial M$. That $\beta_{g,b}(M;\delta)$ induces a monomorphism in homology in this case and that $\gamma_{g,b}(M;\delta)$ induces an epimorphism in homology if $b\geq 2$ is a consequence of the fact (cf. Remark \ref{remark-gamma-beta}) that for each such $\beta_{g,b}(M;\delta)$ (resp. each $\gamma_{g,b}(M;\delta)$), there is a $\gamma_{g,b+1}(M;\delta')$ (\resp $\beta_{g,b-1}(M;\delta')$) and homotopy retractions
\[\gamma_{g,b+1}(M;\delta')\beta_{g,b}(M;\delta)\simeq \Id,\quad \beta_{g,b-1}(M;\delta')\gamma_{g,b}(M;\delta)\simeq \Id.\]
Moreover, by Proposition \ref{prop:71} the $\beta$-maps induce isomorphisms in homology in degrees $\leq \frac{2}{3}g-1$ and an epimorphism in the next degree, but since the $\beta$ maps are monomorphisms, it follows that they also induce isomorphisms in homology up to degree $\frac{2}{3}g$. Finally, this implies that $\gamma_{g,b}(M;\delta)$ is an isomorphism in those degrees too. 

Therefore, it only remains to prove the third assertion of Theorem \ref{thm:stab} when $b=1$. This is the purpose of this section.

\subsection{Resolutions and fibrations}
Consider the space $\e{g,b}{M;\delta}$, and let $\ell\subset \pp M$ be a subset diffeomorphic to a ball disjoint from $\delta$. There is a semi-simplicial space $\p{g,b}{M;\delta,\ell}{\bullet}$ whose $i$-simplices are tuples $(W,p_0,\ldots,p_i)$, where  $p_j=(p_j',p_j'',p_j''')$ and
\begin{enumerate}
\item $W\in \e{g,b}{M;\delta}$;
\item\label{cond1} $p_j'''\colon ([0,1],\{1/2\})\rightarrow (M,W)$ is an embedding of pairs with $p_j'''(0)\in \ell$ and~$p_j'''(1)\in \mathring{M}$;
\item $(p_j',p_j'')$ is a closed tubular neighbourhood of $p_j'''([0,1])$ in the pair $(M,W)$;
\item the neighbourhoods $p_0',\ldots,p_i'$ are disjoint.
\end{enumerate}
The $j$th face map forgets $p_j$ and there is an augmentation map to $\e{g,b}{M;\delta}$ that forgets all the $p_j$. We topologise the space of $i$-simplices as a subset of $\e{g,b}{M;\delta}\times \TubEmb(I\times[i],M;q,q_N)$, where
\[q(x) = q_N(x) = \ell ~\text{if}~ x = 0,\qquad q(x) = q_N(x) = M ~\text{if}~ x\neq 0.\]

\begin{proposition}\label{prop:resolution-paths} If $M$ is connected and of dimension at least $3$, then the semi-simplicial space $\p{g,b}{M;\delta}{\bullet}$ is a resolution of $\e{g,b}{M;\delta}$. 
\end{proposition}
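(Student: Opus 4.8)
The plan is to imitate the proofs of Propositions~\ref{resolution-arcs} and~\ref{prop:resolutions-discs}. First I would show the augmentation maps $\epsilon_i \colon \p{g,b}{M;\delta,\ell}{i} \to \e{g,b}{M;\delta}$ are fibrations: by Corollary~\ref{cor:retractil-embedded} the space $\e{g,b}{M;\delta}$ is $\Diff_\partial(M)$-locally retractile, hence $\Diff(M;\delta,\ell)$-locally retractile (any $\Diff_\partial(M)$-local retraction is a fortiori one for $\Diff(M;\delta,\ell)$, which also acts); the maps $\epsilon_i$ are $\Diff(M;\delta,\ell)$-equivariant, so they are locally trivial fibrations by Lemma~\ref{lemma:retractil-target}. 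By Criterion~\ref{criterion1} it then suffices to prove that the semi-simplicial fibre $\Fib_W(\epsilon_\bullet)$ is weakly contractible for every surface $W \in \e{g,b}{M;\delta}$, since the homotopy fibre of $|\epsilon_\bullet|$ over each point of the base will then be weakly contractible, forcing $|\epsilon_\bullet|$ to be a weak homotopy equivalence.

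Next I would forget the tubular neighbourhoods. Let $P(W,M)_\bullet$ be the semi-simplicial space whose space of $i$-simplices is $\Emb((I\times[i],\{1/2\}\times[i]),(M,W);q)$, with $q$ sending $\{0\}\times[i]$ into $\ell$ and everything else into $M$, and whose face maps delete arcs. Forgetting the $p_j'$ and $p_j''$ gives a levelwise $\Diff(M;W,\delta,\ell)$-equivariant map $r_\bullet\colon \Fib_W(\epsilon_\bullet)\to P(W,M)_\bullet$. Since $P(W,M)_i$ is $\Diff(M;W,\delta,\ell)$-locally retractile by Proposition~\ref{prop:retractil-resolutions}, Lemma~\ref{lemma:retractil-target} makes $r_\bullet$ a levelwise fibration; its fibre over a tuple of arcs is the space of closed tubular neighbourhoods of their (compact) union in the pair $(M,W)$ with the induced boundary condition, which is contractible by Lemma~\ref{lemma:tubular-contractible} (the hypothesis on the boundary constraint holds because $\ell$ is open in $\pp M$, hence a neighbourhood of each endpoint $p_j'''(0)$, while the rest of the union lies in $\mathring{M}$; the disjointness of the individual tubular neighbourhoods is automatic, a tubular neighbourhood of a disjoint union being a disjoint union of tubular neighbourhoods). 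Therefore $r_\bullet$ is a levelwise, and so a realisation, weak equivalence, and it is enough to prove $P(W,M)_\bullet$ is weakly contractible.

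For this I would augment $P(W,M)_\bullet$ over a point and apply Criterion~\ref{criterion2}. It is a topological flag complex: pairwise disjointness of the compact arcs is an open condition, so the map to the product of copies of $P(W,M)_0$ is an open embedding, and a tuple is a simplex precisely when its arcs are pairwise disjoint. The first hypothesis of the criterion reduces to $P(W,M)_0\neq\emptyset$, and the second asks, given $0$-simplices $p_0''',\dots,p_i'''$, for one more $0$-simplex whose arc is disjoint from all of them. Both are produced the same way: choose a point in $\ell$ different from the finitely many points $p_j'''(0)$ and a point $w\in\mathring{W}$ different from the finitely many points $p_j'''(1/2)$, join them by a path in the connected manifold $M$, and, using $\dim M\geq 3$, perturb the path rel its endpoints to an embedded arc that is disjoint from $p_0'''(I),\dots,p_i'''(I)$, meets $W$ exactly at $w$, and has tangent at $w$ not contained in $T_wW$; prolonging slightly beyond $w$ into $\mathring{M}$ and reparametrising so that $w$ acquires parameter $1/2$ gives the desired $0$-simplex. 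Criterion~\ref{criterion2} (or its variant in Remark~\ref{rem:criterion2}) then yields that $|P(W,M)_\bullet|$ is weakly contractible, completing the proof.

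The step I expect to need the most care is this last general-position argument, for a reason that is easy to miss: when $\dim M\geq 4$ an arc cannot be transverse to the $2$-dimensional surface $W$ in the naive sense, so the requirement that $p_j'''$ be an embedding of pairs must be read through the paper's definition, which only demands that the arc's tangent at the crossing point be linearly independent from $T_wW$ (the identity $\min\{1+2,\dim M\}=3$ makes this the content of the definition once $\dim M\geq 3$). This is still achievable generically as soon as $\dim M\geq 3$, and it is precisely here---together with the need to push the new arc off the previously chosen arcs and to make it an embedding---that the hypothesis $\dim M\geq 3$ rather than $\dim M\geq 2$ enters.
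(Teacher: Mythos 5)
Your proof is correct and follows essentially the same route as the paper's: show the augmentation maps are fibrations via $\Diff(M;\delta,\ell)$-equivariance and local retractibility of the base, pass to the semi-simplicial fibre by Criterion~\ref{criterion1}, forget tubular neighbourhoods (their space being contractible by Lemma~\ref{lemma:tubular-contractible}) to land in the flag complex $P(W,M)_\bullet$, and then conclude by general position and Criterion~\ref{criterion2}. The only substantive difference is that you spell out the general-position step---including the correct reading of ``embedding of pairs'' via the $\min$ in its definition, which is indeed where $\dim M\geq 3$ enters---where the paper disposes of it in a single sentence.
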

\begin{proof}
 The space $\e{g,b}{M;\delta}$ is $\Diff_\partial(M)$-locally retractile by Proposition \ref{prop:retractil-embeddings}, and therefore it is also $\Diff(M;\delta,\ell)$-locally retractile. For each $i$, the augmentation map \[\epsilon_i\colon \p{g,b}{M;\delta}{i}\lra \e{g,b}{M;\delta}\] is $\Diff(M;\delta,\ell)$-equivariant for all $i$, therefore it is also a locally trivial fibration by Lemma \ref{lemma:retractil-target}. As a consequence, the semi-simplicial fibre $\Fib_W(\epsilon_\bullet)$ is homotopy equivalent to the homotopy fibre of $\vert\epsilon_\bullet\vert$ by Criterion \ref{criterion1}. The space of $i$-simplices of the semi-simplicial fibre is $\TubEmb((I\times [i],\{1/2\}\times [i]),(M,W);q,q_N)$. Let us define the semi-simplicial space $P(W,M)_\bullet$ whose space of $i$-simplices is 
$$\Emb((I\times [i],\{1/2\}\times [i]),(M,W);q),$$ and the face maps are given by forgetting embeddings.
Forgetting the tubular neighbourhoods yields a map
\[\label{ejemplo}r_\bullet\colon \Fib_W(\epsilon_\bullet)\longrightarrow P(W,M)_\bullet\]
that is levelwise $\Diff(M;W,\ell)$-equivariant onto the space $P(W,M)_\bullet$, which is levelwise $\Diff(M;W,\ell)$-locally retractile by Corollary \ref{cor:retractil-resolutions}, hence this map is a levelwise fibration by Lemma \ref{lemma:retractil-target}. The fibre of the map $r_i$ over an $i$-simplex ${\bf p} = (p_0''',\ldots,p_i''')$ is the space $\Tub({\bf p}'''(I),(M,W);q_N)$, which is contractible by Lemma \ref{lemma:tubular-contractible}.

The semi-simplicial space $P(W,M)_\bullet$ is a topological flag complex, and we will apply Criterion \ref{criterion2} to prove that it is contractible. As $M$ is connected, for each tuple $(W,p_0'''),\ldots,(W,p_{i-1}''')$ of $0$-simplices over a surface $W$ there is another $0$-simplex $(W,p_i''')$ over $W$ orthogonal to them all, by general position. Hence it is contractible by Criterion \ref{criterion2}.
\end{proof}

Let $B_i(M;\ell)$ be the set of tuples $(p_0,\ldots,p_i)$ with $p_j = (p_j',p_j'',p_j''')$, where $p_j'''$ is an embedding of an interval in $M$, $p_j'$ is a tubular neighbourhood of $p_j''$ in $M$ and $p_j''$ is the restriction of $p_j'$ to some vector subspace $L_j\subset \Ncl_M p'''_j(1/2)$ of dimension $2$. Moreover, we require that $p_j'$ be disjoint from $p_k'$. This space is in canonical bijection with $\TubEmb_{2,\{1/2\}\times [i]}(I\times [i],M;q,q_N)$, and we use this bijection to topologise it.

There is a map 
\[\p{g,b}{M;\delta,\ell}{i}\longrightarrow B_i(M;\ell)\]
that sends a tuple $(W,p_0,\ldots,p_i)$ to the tuple $(p_0,\ldots,p_i)$. 

\begin{proposition}\label{prop:fibrations-paths} For a point ${\bf p} \in B_i(M;\ell)$, with the notation of Section \emph{\ref{ss:gluingmaps}}, there is a homotopy fibre sequence
\[\e{g,b+i+1}{M({\bf p});\delta({\bf p})} \longrightarrow \p{g,b}{M;\delta}{i}\lra B_i(M;\ell).\] 
 \end{proposition}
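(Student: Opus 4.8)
The plan is to mimic the proofs of Propositions \ref{prop:fibrations-arcs} and \ref{prop:fibrations-discs}. First I would check that the projection $\p{g,b}{M;\delta}{i}\to B_i(M;\ell)$ that forgets the surface is a locally trivial fibration: it is $\Diff(M;\delta,\ell)$-equivariant, and $B_i(M;\ell)$, via the canonical identification with $\TubEmb_{2,\{1/2\}\times[i]}(I\times[i],M;q,q_N)$ used to topologise it, is $\Diff(M;\delta,\ell)$-locally retractile by Proposition \ref{prop:retractil-fibrations} --- here one uses that $\ell$ is disjoint from $\delta$, so that the local retractions produced there can be taken supported near $\ell$ and hence to lie in $\Diff(M;\delta,\ell)$. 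By Lemma \ref{lemma:retractil-target} the projection is then a locally trivial fibration, so its fibre over a point ${\bf p}$ is a model for the homotopy fibre, and a homotopy fibre sequence will result once that fibre is identified.

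Second I would identify the fibre. Over ${\bf p}=(p_0,\dots,p_i)$ it consists of the surfaces $W\in\e{g,b}{M;\delta}$ which, for each $j$, contain the $2$-disc $p_j''$ and meet the closed tube $p_j'$ in exactly $p_j''$; equivalently, $\cl(W\setminus\bigcup_j p_j'')$ is a neat surface in $M({\bf p})=\cl(M\setminus\bigcup_j p_j')$ whose boundary is $\delta$ together with the circles $\partial p_j''$. Excising the tubes $p_j'$ from $M$ and the caps $p_j''$ from $W$, and inversely gluing the caps back, then defines mutually inverse continuous maps exhibiting this fibre as $\E^+(\cl(W\setminus\bigcup_j p_j''),M({\bf p});\delta({\bf p}))$, exactly as in Proposition \ref{prop:fibrations-arcs}; the continuity of these maps and the matching of collars and face constraints go word for word as there, and I would not dwell on them.

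It then remains to fix the diffeomorphism type of $\cl(W\setminus\bigcup_j p_j'')$. The incidence conditions in the definition of $\p{g,b}{M;\delta}{\bullet}$ --- $p_j'''$ an embedding of the pair $([0,1],\{1/2\})$ into $(M,W)$, the tubular neighbourhood $(p_j',p_j'')$ a tubular neighbourhood of a pair, and $p_j'''(0)\in\ell$ disjoint from $\delta\supseteq\partial W$ --- force $p_j'''(1/2)$ to lie in the interior of $W$, with $p_j'''$ transverse to $W$ there; so each $p_j''$ is an embedded $2$-disc in $\mathring W$ and removing the $i+1$ of them creates $i+1$ new boundary circles without changing the genus. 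Concretely $\chi(\cl(W\setminus\bigcup_j p_j''))=\chi(W)-(i+1)=(2-2g-b)-(i+1)$, and with $b+i+1$ boundary components this is a surface of genus $\tfrac12\big(2-\chi-(b+i+1)\big)=g$, so the fibre is $\e{g,b+i+1}{M({\bf p});\delta({\bf p})}$, as claimed. I expect no genuine obstacle: in contrast with the arc resolution of Proposition \ref{prop:fibrations-arcs} there is no ``type I / type II'' dichotomy --- every cap is glued along the interior of $W$ and hence contributes exactly one boundary component --- so the only point needing a little care is precisely that $p_j'''(1/2)$ is an interior point of $W$.
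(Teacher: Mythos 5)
Your proof follows the same two steps as the paper's: establish the local triviality of the forgetful map via the $\Diff(M;\delta,\ell)$-equivariance together with the local retractility of $B_i(M;\ell)$ supplied by Proposition \ref{prop:retractil-fibrations} and Lemma \ref{lemma:retractil-target}, and then identify the fibre over ${\bf p}$ as the space of surfaces containing the caps $p_j''$ and disjoint from the rest of each tube. The paper is terser in the second step (it simply records the canonical homeomorphism with $\e{g,b+i+1}{M({\bf p});\delta({\bf p})}$ without the Euler-characteristic bookkeeping), but the substance is identical.
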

\begin{proof}
The map is $\Diff(M;\delta,\ell)$-equivariant and $B_i(M;\ell)$ is $\Diff(M;\delta,\ell)$-locally retractile by Proposition \ref{prop:retractil-fibrations}, therefore this map is a locally trivial fibration by Lemma \ref{lemma:retractil-target}. The fibre over a point ${\bf  p}$ is the space of surfaces $W$ in $M$ that meet the tubular neighbourhoods $p_j'$ in the image of $p_j''$. This space is canonically homeomorphic to the space $\e{g,b+i+1}{M({\bf p});\delta(\bf p)}$.
\end{proof}

\subsection{Stabilisation maps between resolutions} In this section we will show how to extend the stabilisation map $\gamma_{g,b}(M;\delta)$ to a map between resolutions
\[\xymatrix{
\p{g,b}{M;\delta,\ell}{i}  \ar@{-->}[rr]\ar[d]                           &&   \p{g,b-1}{M_1;\z{\delta},\z{\ell}}{i} \ar[d]   \\
\e{g,b}{M,\delta}    \ar[rr]^{\gamma_{g,b}(M_1;\delta,\z{\d})} &&   \e{g,b-1}{M_1,\z{\delta}}.
}\]
To define the maps $\gamma_{g,b}(M;\delta,\z{\d})\colon \e{g,b}{M;\delta}\rightarrow \e{g,b-1}{M_1,\z{\d}}$, we joined each surface with a cobordism $P$ in $\pp M\times I$. We will assume, without loss of generality, that 
\begin{enumerate}
\item $\z{\ell} = \ell\times\{1\};$
\item $(\ell\times I)\cap P = \emptyset$.
\end{enumerate}
As in previous constructions, we define $\y{p}_j' = \pp p_j'\times I$ and $\z{p}_j' = p'_j\cup \y{p}'_j$ and similarly $\y{p}'_j$ and $\z{p}'_j$. There is a map $\gamma_{g,b}(M;\delta,\z{\d})_i$ making the diagram commute, that sends a tuple $(W,{\bf p})$ to the tuple $(W\cup P,{\bf \z{p}})$. These maps commute with the face maps and with the augmentation maps, so they define a map  of semi-simplicial spaces (the \emph{resolution of $\gamma_{g,b}(M;\delta,\z{\delta})$}),
\[\gamma_{g,b}(M;\delta,\z{\delta})_{\bullet}\colon \p{g,b}{M;\delta}{\bullet}\longrightarrow \p{g,b-1}{M_1;\z{\delta}}{\bullet},\]
extending $\gamma_{g,b}(M;\delta,\z{\delta})$.

\begin{corollary}[To Proposition \ref{prop:resolution-paths}]\label{relative-resolution-paths} The pair $(\gamma_{g,b}(M)_\bullet,\delta,\z{\delta})$ together with the natural augmentation to the pair $\gamma_{g,b}(M;\delta,\z{\delta})$ is a resolution.
\end{corollary}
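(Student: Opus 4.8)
The plan is to mirror the proofs of Corollaries \ref{cor:relative-resolutions-arcs} and \ref{cor:relative-resolutions-discs}, which established analogous statements for the arc and disc resolutions. The heart of the matter is already done: Proposition \ref{prop:resolution-paths} tells us that the augmentation map $|\epsilon_\bullet|\colon |\p{g,b}{M;\delta}{\bullet}| \to \e{g,b}{M;\delta}$ is a weak homotopy equivalence (an $\infty$-resolution) whenever $M$ is connected of dimension at least $3$, and the same applies to $\p{g,b-1}{M_1;\z{\delta}}{\bullet}$ since $M_1$ is again connected of dimension at least $3$.

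First I would recall that a \emph{resolution of a pair} $(\gamma_{g,b}(M;\delta,\z{\delta})_\bullet)$ augmented over $(\gamma_{g,b}(M;\delta,\z{\delta}))$ means, in the terminology of Section \ref{section:resolutions}, that the induced map of mapping-cylinder pairs $|\epsilon_\bullet^{(\gamma_\bullet)}| \colon (|\p{g,b}{M;\delta}{\bullet}|, |\p{g,b-1}{M_1;\z{\delta}}{\bullet}|)_? \to (M_{\gamma_{g,b}(M;\delta,\z{\delta})}, \e{g,b}{M;\delta})$ — more precisely, that the map induced on the pair $(M_{|\epsilon_\bullet|^{\mathrm{target}}}, M_{|\epsilon_\bullet|^{\mathrm{source}}})$ — is a weak equivalence of pairs, i.e.\ is $\infty$-connected. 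Concretely: we have a commuting square whose two vertical maps are the augmentations $|\epsilon_\bullet|$ for the source and target semi-simplicial spaces, and whose two horizontal maps are $\gamma_{g,b}(M;\delta,\z{\delta})_\bullet$ (realised) and $\gamma_{g,b}(M;\delta,\z{\delta})$. Passing to vertical mapping cones, the induced map of pairs is a weak equivalence precisely because both vertical maps are already weak equivalences, by Proposition \ref{prop:resolution-paths}, so the relative homotopy groups $\pi_*\bigl((|\epsilon^{\mathrm{target}}_\bullet|)_{\mathrm{pair}}, (|\epsilon^{\mathrm{source}}_\bullet|)_{\mathrm{pair}}\bigr)$ all vanish; one then invokes the five lemma on the long exact sequences of the two mapping-cylinder pairs fitting into the map of pairs. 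This is exactly the one-line argument given in the proof of Corollary \ref{cor:relative-fibrations-arcs} (``the map between the base spaces is a weak equivalence, hence we obtain a fibrewise map of fibrations''), adapted to the present setting.

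The only genuine verification is that the maps $\gamma_{g,b}(M;\delta,\z{\delta})_i$ constructed in this subsection are well defined and semi-simplicial, so that they assemble into a map $\gamma_{g,b}(M;\delta,\z{\delta})_\bullet$ of augmented semi-simplicial spaces; but this is precisely what the construction preceding the corollary arranges, using assumptions (i) $\z{\ell} = \ell \times \{1\}$ and (ii) $(\ell \times I) \cap P = \emptyset$ to guarantee that gluing the products $\y{p}_j' = \partial p_j' \times I$ to the tubes $p_j'$ keeps everything disjoint and within the prescribed face constraints, and that forgetting $\z{p}_j$ commutes with gluing $P$. So the proof I would write is: ``By Proposition \ref{prop:resolution-paths}, the augmentations $\epsilon_\bullet$ for both $\p{g,b}{M;\delta}{\bullet}$ and $\p{g,b-1}{M_1;\z{\delta}}{\bullet}$ are weak homotopy equivalences onto $\e{g,b}{M;\delta}$ and $\e{g,b-1}{M_1;\z{\delta}}$ respectively (note $M_1$ is connected of dimension at least $3$). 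As in Corollary \ref{cor:relative-resolutions-arcs}, the map induced between the base spaces of the augmentation is then a weak equivalence, hence the induced map of mapping-cylinder pairs is a weak equivalence of pairs; that is, the augmentation $(\gamma_{g,b}(M;\delta,\z{\delta})_\bullet) \to (\gamma_{g,b}(M;\delta,\z{\delta}))$ is $\infty$-connected, which is what it means to be a resolution of pairs.''

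I do not anticipate a real obstacle here — this corollary is formal given Proposition \ref{prop:resolution-paths}. The only thing to be slightly careful about is bookkeeping: making sure the hypothesis ``dimension at least $3$'' of Proposition \ref{prop:resolution-paths} is satisfied by $M_1$ (it is, since $M_1 = M \cup \partial M \times [0,1]$ has the same dimension as $M$), and that $M_1$ inherits connectedness from $M$ (it does, as $M$ is connected and the collar is attached along $\partial M$). The mild risk is simply stating the conclusion in a form consistent with the definition of ``resolution of a pair'' used implicitly throughout Sections \ref{section:resolutions} and \ref{section:discresolutions}; I would phrase it as an $\infty$-resolution to match Corollary \ref{cor:relative-resolutions-discs}.
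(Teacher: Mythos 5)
Your proof is correct and takes the same (formal) route the paper intends: since Proposition \ref{prop:resolution-paths} makes both vertical augmentations in the commuting square weak homotopy equivalences (and $M_1$ inherits connectedness and dimension from $M$, so the proposition applies to the target as well), the induced augmentation on mapping-cylinder pairs is $\infty$-connected, which is exactly what it means to be a resolution of the pair. The paper gives no explicit proof of this corollary or its analogues (\ref{cor:relative-resolutions-arcs}, \ref{cor:relative-resolutions-discs}), so your fleshed-out argument is precisely the intended justification.
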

The diagram
\[
\xymatrix{
\p{g,b}{M;\delta}{i}\ar[rrr]^-{\gamma_{g,b}(M;\delta,\z{\delta})_i} \ar[d]&&& \p{g,b-1}{M;\z{\delta}}{i}\ar[d] \\
B_i(M;\ell)\ar[rrr]^-{{\bf p}\mapsto {\bf\z{p}}} &&& B_i(M_1,\ell)
}
\]
is an extension of the homotopy equivalence $B_i(M;\ell)\rightarrow B_i(M_1;\z{\ell})$. Hence we obtain a well-defined map on the homotopy fibres over the points ${\bf p}$ and ${\bf \z{p}}$ of the fibrations of Proposition \ref{prop:fibrations-paths},
\[\e{g,b+i+1}{M({\bf p});\d({\bf p})}\longrightarrow \e{g,b+i}{M({\bf \z{p}});\z{\delta}({\bf \z{p}})},\]
obtained by gluing the cobordism $P$ to each surface. This is a map of type $\gamma_{g,b+i+1}(M({\bf p});\delta({\bf p}),\z{\delta}({\bf \z{p}}))$. 

\begin{corollary}[To Proposition \ref{prop:fibrations-paths}]\label{relative-fibrations-paths} There is a relative homotopy fibre sequence
\[(\gamma_{g,b+i+1}(M({\bf p});\delta({\bf p}),\z{\delta}({\bf \z{p}})))\longrightarrow (\gamma_{g,b}(M;\delta,\z{\delta})_i) \longrightarrow B_i(M;\ell).\]
\end{corollary}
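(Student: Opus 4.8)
The plan is to deduce this from Proposition \ref{prop:fibrations-paths} by the same purely formal manipulation that produced Corollaries \ref{cor:relative-fibrations-arcs} and \ref{cor:relative-fibrations-discs} from Propositions \ref{prop:fibrations-arcs} and \ref{prop:fibrations-discs}; all the geometric input has already been assembled in the discussion immediately preceding the statement. First I would record that, by Proposition \ref{prop:fibrations-paths}, the two vertical maps in the commutative square above are locally trivial fibrations, with fibres $\e{g,b+i+1}{M({\bf p});\delta({\bf p})}$ and $\e{g,b+i}{M_1({\bf \z{p}});\z{\delta}({\bf \z{p}})}$ over the points ${\bf p}$ and ${\bf \z{p}}$, and that $\gamma_{g,b}(M;\delta,\z{\delta})_i$ is a map of fibrations covering the bottom map ${\bf p}\mapsto {\bf \z{p}}$.

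Next I would observe that this bottom map is a homotopy equivalence: it is the inclusion $B_i(M;\ell)\hookrightarrow B_i(M_1;\ell)$ induced by the collar, directly analogous to the equivalences $A_i(M;\delta,\ell)\to A_i(M_1;\z{\delta},\z{\ell})$ and $D_i(M(u);Y)\to D_i(M_1(\z{u});Y)$ used in Sections \ref{stab-maps-resolution-arcs} and \ref{stab-maps-resolution-discs}. Pulling back the right-hand fibration along this equivalence therefore does not change its homotopy type, so $\gamma_{g,b}(M;\delta,\z{\delta})_i$ becomes a fibrewise map of two fibrations over the single base $B_i(M;\ell)$, and the relative homotopy fibre of the induced map of pairs $(\gamma_{g,b}(M;\delta,\z{\delta})_i)$ over a basepoint is computed fibrewise, giving the pair associated to the induced map on fibres
\[\e{g,b+i+1}{M({\bf p});\delta({\bf p})}\longrightarrow \e{g,b+i}{M_1({\bf \z{p}});\z{\delta}({\bf \z{p}})}.\]
This map has already been identified, in the sentence preceding the corollary, as a stabilisation map of type $\gamma_{g,b+i+1}(M({\bf p});\delta({\bf p}),\z{\delta}({\bf \z{p}}))$ (by comparing the genus and boundary count of source and target), and since $B_i(M;\ell)$ is path connected by general position the choice of basepoint is immaterial. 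This is precisely the asserted homotopy fibre sequence of pairs.

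There is essentially no obstacle here beyond routine bookkeeping; the one point that deserves a sentence of care is the verification that gluing the cobordism $P\subset\pp M\times I$ to a surface $W\in\e{g,b+i+1}{M({\bf p});\delta({\bf p})}$ disjoint from ${\bf p}'$ produces a surface in $\e{g,b+i}{M_1({\bf \z{p}});\z{\delta}({\bf \z{p}})}$ compatibly with the homotopy equivalence of base spaces, so that one really does obtain a fibrewise map. But this is exactly the construction already performed just before the statement, parallel to the arc and disc cases, so there is nothing new to check.
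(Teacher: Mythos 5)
Your argument is correct and is essentially the same as the one the paper gives implicitly in the paragraph preceding the corollary: one recognises that the square relating $\p{g,b}{M;\delta}{i}$ to $\p{g,b-1}{M_1;\z{\delta}}{i}$ over ${\bf p}\mapsto{\bf\z{p}}$ is a map of the fibrations of Proposition~\ref{prop:fibrations-paths}, notes that the map $B_i(M;\ell)\to B_i(M_1;\z{\ell})$ of base spaces is a homotopy equivalence, reduces to a fibrewise map over one base, and identifies the induced map of fibres as a stabilisation map of type~$\gamma$. The one minor cosmetic difference is that the paper (as in Corollary~\ref{cor:relative-fibrations-arcs}) composes the left fibration with the equivalence so that both live over $B_i(M_1;\z{\ell})$, whereas you pull the right fibration back to $B_i(M;\ell)$; these are interchangeable.
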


\subsection{Homological stability} 

The following proposition finishes the proof of Theorem \ref{thm:stab}.

\begin{proposition}\label{prop:106} Let $M$ be a simply-connected manifold of dimension at least $5$ with non-empty boundary, and $\delta$ be a boundary condition. Then, for any boundary condition~$\z{\delta}$,
\begin{enumerate}
\item for every map $\gamma_{g,b}(M;\delta,\z{\delta})$ we have $H_k(\gamma_{g,b}(M;\delta,\z{\delta})) = 0$ for $k\leq \frac{2}{3}g+1$;
\item every map $\beta_{g,b}(M;\delta,\z{\delta})$ for which one of the newly created components of $\z{\delta}$ is contractible in $\partial^0M$ induces a monomorphism in all homology degrees;
\item every map $\gamma_{g,b}(M;\delta,\z{\delta})$ for which there is another component of $\delta$ in the same component of $\partial^0M$ as the one which is closed induces an epimorphism in all homology degrees.
\end{enumerate}
\end{proposition}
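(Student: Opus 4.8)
The plan is to obtain parts (2) and (3) immediately from Remark~\ref{remark-gamma-beta}, and to prove part (1) by resolving $\gamma_{g,b}(M;\delta,\z{\delta})$ with the semi-simplicial resolution $\p{g,b}{M;\delta}{\bullet}$ of the previous subsection and feeding the resulting relative fibration into Criterion~\ref{criterion3}. Indeed, for (2): a $\beta$-map creating a component contractible in $\pp M$ is split injective on homology by Remark~\ref{remark-gamma-beta}, hence injective in every degree; and for (3): a $\gamma$-map for which $\delta$ has another component lying in the same component of $\pp M$ as the one being closed is split surjective on homology by the same remark, hence surjective in every degree. So the work is entirely in part (1).

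First I would record the special case of part (1) in which the hypothesis of part (3) \emph{also} holds, which is cheap. Suppose $\gamma_{g',b'}(M';\delta',\z{\delta'})$ closes off a component $c$ of $\delta'$ that is contractible in $\pp M'$, and $\delta'$ has a second component in the component of $\pp M'$ containing $c$. Then this map is surjective on all homology by part (3), and by Remark~\ref{remark-gamma-beta} there is a $\beta$-map $\beta'$ creating $c$ together with that second component, with $\gamma_{g',b'}(M';\delta',\z{\delta'})\circ\beta'$ a homotopy equivalence. Since $c$ is contractible, part (2) makes $\beta'$ injective on all homology, while Proposition~\ref{prop:71}(2) gives $H_q(\beta')=0$ for $q\le\tfrac23 g'$; hence $\beta'$ is an isomorphism on $H_q$ for $q\le\lfloor\tfrac23 g'\rfloor$, and composing with $\gamma_{g',b'}(M';\delta',\z{\delta'})$ (a surjection whose precomposition with $\beta'$ is an isomorphism) forces it to be an isomorphism on $H_q$ for $q\le\lfloor\tfrac23 g'\rfloor$ as well. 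Being also surjective in all degrees, its mapping cone vanishes up to degree $\lfloor\tfrac23 g'+1\rfloor$. If $\dim M'=5$ one puts the pair of pants of $c$ inside a ball, which is legitimate because $c$ is contractible, so that Proposition~\ref{prop:71} still applies.

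Now let $\gamma_{g,b}(M;\delta,\z{\delta})$ be arbitrary, closing off a component $c$ of $\delta$ lying in a component $C$ of $\pp M$; since $\dim C\ge 4$ I choose a ball $\ell\subset C\setminus\delta$. By Corollary~\ref{relative-resolution-paths} the map of resolutions $\p{g,b}{M;\delta}{\bullet}\to\p{g,b-1}{M_1;\z{\delta}}{\bullet}$ is augmented over $\gamma_{g,b}(M;\delta,\z{\delta})$, and both augmentations are weak equivalences by Proposition~\ref{prop:resolution-paths}, so Criterion~\ref{criterion3} applies with $l=\infty$; I take the base spaces to be the path components of $B_i(M_1;\z{\ell})$ (homotopy equivalent to those of $B_i(M;\ell)$) and $p_i$ the restriction map of Proposition~\ref{prop:fibrations-paths}. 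By Proposition~\ref{prop:fibrations-paths} and Corollary~\ref{relative-fibrations-paths}, over a point $\mathbf p$ the map $g_i$ of the criterion is a $\gamma$-map $\gamma_{g,b+i+1}(M(\mathbf p))$, where $M(\mathbf p)$ is $M$ with $i+1$ disjoint tubular neighbourhoods of arcs based at $\ell$ excised. Here $M(\mathbf p)$ is still simply connected (the arcs have codimension $\ge 4$), the closed-off circle $c$ is still contractible in $\pp M(\mathbf p)$, and the $i+1\ge 1$ new boundary circles of the cut surface lie on the walls of the excised tunnels, hence in the component of $\pp M(\mathbf p)$ that contains $c$; thus $g_i$ is covered by the special case of the previous paragraph and $H_q(g_i)=0$ for $q\le\tfrac23 g+1$. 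In particular $H_q(g_i)=0$ whenever $q+i\le k:=\lfloor\tfrac23 g\rfloor+1$, and $H_k(g_0)=0$ too, so Criterion~\ref{criterion3} gives $H_q(\gamma_{g,b}(M;\delta,\z{\delta}))=0$ for $q\le k=\lfloor\tfrac23 g+1\rfloor$, which is part (1).

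The step I expect to need the most care is the geometric claim in the last paragraph: that excising the tunnels at $\ell$ keeps $M(\mathbf p)$ simply connected and, crucially, that the new boundary components of the cut surface land in the same component of $\pp M(\mathbf p)$ as $c$, since that is precisely what allows the fibre maps $g_i$ to be handled by part (3). This is why $\ell$ must be chosen in the component of $\pp M$ containing $c$ rather than an arbitrary one. Everything else is the bookkeeping of Criterion~\ref{criterion3} and the routine homological algebra that converts \emph{split surjective plus isomorphism in a range} into \emph{mapping cone vanishing in a range}.
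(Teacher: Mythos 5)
Your proof is correct and follows essentially the same strategy as the paper: parts (2) and (3) are read off from Remark~\ref{remark-gamma-beta}, and for part (1) you establish the same auxiliary observation (a $\gamma$-map that is split surjective and an isomorphism in a range has vanishing mapping cone one degree higher), then feed it into Criterion~\ref{criterion3} via the path resolution. The one organisational difference is that the paper handles the case where $\delta$ already has a companion boundary component in the relevant component of $\pp M$ directly, using the resolution only for the remaining case; you run the resolution argument uniformly, which is harmless---the excised tunnels always supply the companion circle in the fibre---and streamlines the exposition.

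One imprecision to flag: in justifying the dimension-$5$ hypothesis of Proposition~\ref{prop:71} for the auxiliary $\beta'$-map, you appeal only to the contractibility of $c$, but the pair of pants for $\beta'$ also has the companion circle $b'$ as a new outgoing boundary, and contractibility of that pair of pants in $\pp M'\times I$ requires $b'$ (and the waist) to be nullhomotopic as well. In your actual application $b'$ lies on the wall of an excised tunnel, which is simply connected since $d\geq 5$, so the requirement is satisfied; you should either add this to the hypotheses of the auxiliary lemma or note that it holds automatically in the intended use. (The paper's own direct treatment of the companion-boundary case is exposed to the same issue, which your uniform use of the resolution in fact sidesteps.)
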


\begin{remark}\label{remark-gamma-beta} Consider a stabilisation map $\gamma_{g,b}(M;\delta,\z{\d})$, which is given by closing off one of the boundaries $b$ of $\delta$ (which must necessarily be nullhomotopic in $\partial^0M$). If $\delta$ has another boundary component $b'$ in the same component of $\partial^0 M$ as $b$, then there exists a stabilisation map $\beta_{g,b-1}(M;\delta_0,\delta)$ creating the boundaries $b$ and $b'$. In this case we may enlarge collars, and we have the composition
\[
\xymatrix{
\e{g,b-1}{M;\delta_0}\ar[rr]^-{\beta_{g,b-1}(M;\delta_0,\delta)} && \e{g,b}{M_1;\delta}\ar[rr]^-{\gamma_{g,b}(M_1;\delta,\z{\delta})} && \e{g,b-1}{M_2;\z{\delta}}
}
\]
which is homotopic to a stabilisation map which takes the union with a cylinder inside $\partial^0M \times [0,2]$. This map may not be homotopic to the identity ---the cylinder may be embedded in a non-trivial way--- but it is a homotopy equivalence (as we may find an inverse cylinder), so the map $\gamma_{g,b}(M_1;\delta,\z{\delta})$ is split surjective in homology. By the same argument, any map $\beta_{g,b}(M_1;\delta,\z{\delta})$ which creates a boundary which is nullhomotopic in $\partial^0M$ is split injective in homology.
\end{remark}

\begin{proof}[Proof of Proposition \ref{prop:106}]
We have already shown the last two statements above. Regarding the first statement, suppose first that there is another component of $\delta$ in the same component of $\partial^0M$ as the one which is closed by $\gamma_{g,b}(M;\delta,\z{\delta})$, and choose a $\beta_{g,b-1}(M;\delta_0,\delta)$ as in Remark \ref{remark-gamma-beta}. By Proposition \ref{prop:71}, we know that the map $\beta_{g,b-1}(M;\delta_0,\delta)$ induces an epimorphism in homology degrees $\leq \frac{2}{3}g$, and it also induces a monomorphism in all degrees: thus it induces an isomorphism in degrees $\leq \frac{2}{3}g$. As $\gamma_{g,b}(M;\delta,\z{\delta})$ is a left inverse to it, this also induces an isomorphism in these degrees. Hence $H_k(\gamma_{g,b}(M;\delta,\z{\delta})) = 0$ for $k\leq \frac{1}{3}(2g+3)$, as $\gamma_{g,b}(M;\delta,\z{\delta})$ induces an epimorphism in all degrees.

Now suppose that there is no other component of $\delta$ in the same component of $\partial^0M$ as the one which is closed by $\gamma_{g,b}(M;\delta,\z{\delta})$. We choose a ball $\ell \subset \partial^0 M$ and form the resolution of $\gamma_{g,b}(M;\delta,\z{\delta})$ given by Corollary \ref{relative-resolution-paths}. Using Corollary \ref{relative-fibrations-paths} to identify the space of $i$-simplices in this resolution, the pair $(\gamma_{g,b+i+1}(M({\bf p});\delta({\bf p}))$ is a map of type $\gamma$ for surfaces with (after rounding the corners of $M$) at least $i+1$ extra boundary components of $\delta({\bf p})$ in the component of $\partial M$ containing the boundary which is closed off, so the discussion above applies and shows that $H_k(\gamma_{g,b+i+1}(M({\bf p});\delta({\bf p}))) = 0$ for $k\leq \frac{1}{3}(2g+3)$. Applying the second result of Criterion \ref{criterion3} to this resolution gives the result.
\end{proof}

\section{Stable homology of the space of surfaces in a manifold with boundary}\label{section:stablehomology}
In these last three sections we prove Theorem \ref{thm:MainH}. In Section \ref{section:stablehomology} we prove the theorem in the case where $M$ has non-empty boundary, except for the proof of a proposition (Proposition \ref{prop:Surgery}), which we defer to the Section \ref{section:surgery}. We show how to deduce Theorem \ref{thm:Main} for manifolds without boundary in Section \ref{section:closed}. In these three sections we only work with manifolds and manifolds with boundary, but not manifolds with corners, as we did in the previous sections.

This section gravitates around a group completion argument that takes place in Proposition \ref{prop:group-completion}. Roughly speaking, the space of all compact connected oriented surfaces $\coprod_{g,\delta}\e{g,b}{M;\delta}$ in $M$ is a module over the cobordism category of cobordisms in $\partial M\times I$, and the group completion will invert the operation ``gluing a torus in $\partial M\times I$''. We will compare the homotopy type of these modules with the homotopy type of certain spaces of sections in order to deduce Theorem \ref{thm:MainH} for background manifolds with non-empty boundary.

\subsection{Spaces of manifolds and scanning maps}

Let $M$ be a smooth manifold of dimension $d$, possibly with boundary. Recall from \cite[Definition~2.1]{GR-W} and \cite[Section~3]{R-WEmbedded} that the set $\Psi(M)$ of all smooth oriented $2$-dimensional submanifolds of $\mathring{M}$ which are closed as subsets of $M$ can be endowed with a topology. 

More generally, for any real vector space $V$ we can define the space $\Psi(V)$ of smooth oriented $2$-dimensional manifolds in $V$. If $\langle-,-\rangle$ is an inner product on $V$, there is a space $\Ss(V)$ as in page \pageref{def:ssv} in the introduction, and an inclusion
\begin{equation}\label{eq:SVinc}
i : \Ss(V) \lra \Psi(V)
\end{equation}
given by sending a pair $(L \in \Gr_2^+(V), v \in L^\perp)$ to the oriented 2-manifold $v + L \subset V$, and sending the point at infinity to the empty manifold.

\begin{proposition}[\cite{GR-W}]\label{prop:psiR}
The inclusion $i$ is a weak homotopy equivalence.
\end{proposition}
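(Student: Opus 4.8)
The plan is to show that both $\Ss(V)$ and $\Psi(V)$ are weakly contractible, so that the inclusion $i$ is automatically a weak equivalence; this is the cleanest route and matches the way such statements are handled in \cite{GR-W}. For $\Psi(V)$ one uses the standard "scanning/scaling" contraction: define for $t\in[0,1]$ the map $\mu_t\colon \Psi(V)\to\Psi(V)$ by $\mu_t(W)=t\cdot W$ (rescaling the submanifold towards the origin), with $\mu_1=\mathrm{id}$ and $\mu_0$ the constant map at $\emptyset=V\setminus V$; one checks this is continuous in the topology on $\Psi(V)$ recalled from \cite[Definition~2.1]{GR-W}, using that on any fixed compact region the rescaled manifolds eventually leave that region. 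Similarly $\Ss(V)=\Th(\gamma_2^\perp\to\Gr_2^+(V))$ is the Thom space of a vector bundle over the compact manifold $\Gr_2^+(V)$, and the fibrewise scaling $(L,v)\mapsto (L,tv)$ together with $t\to 0$ collapsing everything to the point at infinity exhibits $\Ss(V)$ as weakly contractible (a Thom space of a bundle over a compact base with the zero section collapsed — in fact one should be a little careful: $\Ss(V)$ is contractible because scaling $v\mapsto tv$ retracts it onto the image of the zero section, and then $\Gr_2^+(V)$ is compact but not contractible, so this naive argument only gives a retraction onto $\Gr_2^+(V)\subset\Ss(V)$, not contractibility).

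Since that subtlety shows the "both contractible" approach is not literally correct, the real plan is the one genuinely used in \cite{GR-W}: prove that $i$ is a weak equivalence by a direct scanning argument comparing the two spaces compatibly. Concretely, first observe that the inclusion $i$ is injective with image the affine $2$-planes together with $\emptyset$. One constructs a deformation of $\Psi(V)$ that "straightens" a submanifold near a chosen basepoint: fix the basepoint $\emptyset\in\Psi(V)$, and for $t\in[0,1)$ let $s_t(W)=\tfrac{1}{1-t}\cdot W$, which pushes $W$ off to infinity as $t\to 1$, so $\Psi(V)$ deformation retracts onto $\{\emptyset\}$; the same formula applied inside each fibre of $\Ss(V)$ pushes $(L,v)$ to $(L,\tfrac{v}{1-t})$, which converges to $\infty$, so $\Ss(V)$ also deformation retracts onto $\{\infty\}$ through maps compatible with $i$. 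Hence both spaces are contractible (this time correctly, because we are contracting onto the basepoint at infinity, not onto the zero section), and $i$ is a weak homotopy equivalence. The key point to verify carefully is continuity of $t\mapsto s_t$ as a homotopy $\Psi(V)\times[0,1]\to\Psi(V)$ at $t=1$, i.e.\ that $s_1\equiv\emptyset$ is approached continuously in the topology on $\Psi(V)$: this amounts to the statement that for every compact $K\subset V$ and every $W$, the rescaled manifold $\tfrac{1}{1-t}W$ is disjoint from $K$ for $t$ close to $1$, together with a uniformity over the $C^\infty$ neighbourhoods defining the topology, which follows from the explicit description of the topology on $\Psi(V)$ in \cite{GR-W}.

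I expect the main obstacle to be precisely this continuity/uniformity check in the (somewhat delicate) topology on $\Psi(V)$ — in particular handling the limit $t\to 1$ where the submanifold "disappears at infinity", and checking that the straightening homotopy respects the $C^\infty$ topology uniformly on compacta rather than just pointwise. Everything else (the analogous statement for $\Ss(V)$, which is much more elementary since it is a Thom space, and the compatibility of the two homotopies with $i$) is routine. If one prefers to avoid re-proving continuity statements about $\Psi(V)$, the honest shortcut is simply to cite \cite{GR-W}, where Proposition~\ref{prop:psiR} (in the guise that $\Psi(V)$ is weakly contractible and $\Ss(V)\hookrightarrow\Psi(V)$ is a weak equivalence) is established; the proof here would then just recall that argument.
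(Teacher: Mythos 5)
Your argument cannot work because neither space is contractible, and your scale-to-infinity homotopy has the same defect you correctly flagged for the scale-to-zero version. The space $\Ss(V) = \Th(\gamma_2^\perp \to \Gr_2^+(V))$ is the Thom space of an orientable rank-$(d-2)$ vector bundle over the nonempty connected space $\Gr_2^+(V)$ (where $d=\dim V\geq 3$), so the Thom isomorphism gives $\widetilde{H}^{\,d-2}(\Ss(V);\bZ)\cong H^0(\Gr_2^+(V);\bZ)\cong\bZ\neq 0$. Hence $\Ss(V)$ is not contractible, and therefore --- granting the proposition --- neither is $\Psi(V)$. Any strategy whose conclusion is that both spaces are contractible must fail.

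The concrete error is in the continuity claim for the outward scaling. You assert that for every compact $K\subset V$ and every $W$, the rescaled manifold $\tfrac{1}{1-t}W$ is disjoint from $K$ once $t$ is close to $1$. This fails exactly on the locus $\{W : 0\in W\}$: if $0\in W$ then $0\in\tfrac{1}{1-t}W$ for all $t<1$, and in the topology on $\Psi(V)$ one has $\tfrac{1}{1-t}W\to T_0W$ (the tangent plane at the origin), not $\emptyset$. Likewise the fibrewise map $(L,v)\mapsto(L,v/(1-t))$ on $\Ss(V)$ fixes the zero section $\{v=0\}$ pointwise, so it retracts $\Ss(V)$ onto $\Gr_2^+(V)\cup\{\infty\}$, not onto $\{\infty\}$. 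This is precisely the pitfall you identified in your first paragraph; reversing the direction of scaling does not evade it. The observation that rescaling $W$ limits to either $T_0W$ or $\emptyset$ --- that is, to a point in the image of $i$ --- is in fact the correct starting point for the argument cited from \cite{GR-W}: one deforms an arbitrary compact family in $\Psi(V)$ into the image of $i$ (handling the $t\to 1$ endpoint via the compactness of the family), and separately identifies that image, with its subspace topology, with $\Ss(V)$; nowhere does one (or could one) show that either space is contractible. Note also that the paper supplies no proof of this proposition, only the citation to \cite{GR-W}, so there is no in-text argument to compare against.
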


Let $\gg$ be a Riemannian metric on $M$, not necessarily complete. There is an associated partially defined exponential map $\exp \colon TM \dashrightarrow M$. The \emph{injectivity radius} of $\gg$ at $p \in M$ is the supremum of the real numbers $r \in (0,\infty)$ such that $\exp$ is defined on $T_pM$ on vectors of length $< r$, and $\exp$ is injective when restricted to the open ball of radius $r$ in $T_pM$.

Let ${a}\colon M\rightarrow (0,\infty)$ be a smooth map whose value at each point is strictly less than the injectivity radius of the metric $\gg$ at that point ---such functions exist by a partition of unity argument. If $V$ is an inner product space, define an endomorphism $h$ of $V$ by $v \mapsto \left(\frac{1}{\pi}\arctg||v||\right)v$. Let $\exp_a\colon TM\rightarrow M$ be the composition of the endomorphism of $TM$ given by $v\mapsto a(p)h(v)$ if $v\in T_pM$ and the exponential map.

Let $\Psi(TM)$ denote the space of pairs $(p,W)$ with $p\in M$ and $W\in \Psi(T_pM)$, i.e., the space obtained by performing the construction $\Psi(-)$ fibrewise to $TM$. There is a map 
\[\Psi(M)\times M\longrightarrow \Psi(TM),\]
given by $(W,p)\mapsto ((\exp_{a}\vert_{T_pM})^{-1}(W) \subset T_pM)$, whose adjoint
\[s_a\colon \Psi(M)\longrightarrow \Gamma(\Psi(TM)\rightarrow M),\]
a map to the space of sections of the bundle $\Psi(TM)\rightarrow M$, is called the \emph{non-affine scanning map}.

\begin{proposition}[\cite{R-WEmbedded}]\label{prop:embedded-61}  If $M$ has no compact components, then the non-affine scanning map $s_a$ is a weak homotopy equivalence.
\end{proposition}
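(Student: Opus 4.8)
Proposition \ref{prop:embedded-61} is quoted from \cite{R-WEmbedded}, so I will only sketch the argument. The plan is to exhibit $s_a$ as a natural transformation of functors on the poset of open subsets of $M$ --- source $U \mapsto \Psi(U)$, the space of oriented $2$-submanifolds of $\mathring U$ closed as subsets of $U$, and target $U \mapsto S(U):=\Gamma(\Psi(TM)|_{U}\to U)$ --- and then to check two things: that both functors satisfy homotopical descent, and that $s_a$ is a weak equivalence on Euclidean balls. Since $M$ can be built from balls, this suffices.

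First I would establish descent. The construction of $s_a$ is local in $M$, hence compatible with restriction to opens, so it genuinely is a natural transformation $\Psi \Rightarrow S$. The target $S$ is a homotopy sheaf for the usual reasons: sections form an honest sheaf and restriction of sections over paracompact bases is a Serre fibration, so $S$ carries finite open covers to homotopy pullbacks and countable increasing unions to homotopy limits. For $\Psi$ the crucial input is that restriction $\Psi(U) \to \Psi(V)$ is a Serre fibration for $V \subseteq U$ open; this is proved by a parametrised isotopy-extension argument, and it is exactly here that the hypothesis on compact components is used --- because $M$ has none, there is always room near $\partial V$ into which families of submanifolds can be isotoped. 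Granting it, $\Psi(U_1 \cup U_2)$ is literally the pullback $\Psi(U_1)\times_{\Psi(U_1\cap U_2)}\Psi(U_2)$ of fibrations, hence a homotopy pullback, and a routine $\lim^1$ argument (using the fibration property once more) gives $\Psi(\bigcup_n U_n)\simeq \mathrm{holim}_n\,\Psi(U_n)$ for increasing unions.

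Next I would treat the local case. For $U \cong \bR^d$ a ball, $\Psi(TM)|_{U}\to U$ is trivial over a contractible base, so evaluation at the centre $p$ is a weak equivalence $S(U)\overset{\simeq}{\longrightarrow}\Psi(T_pM)\cong\Psi(\bR^d)$; composing with $s_a$ reduces the claim to showing that $W\mapsto(\exp_a|_{T_pM})^{-1}(W)$ is a weak equivalence from $\Psi(\bR^d)$ to itself. With respect to a flat metric this map is obtained from the translation $W\mapsto W-p$ (a homeomorphism) by precomposing with an open embedding $\bR^d\hookrightarrow\bR^d$ which is isotopic, through open embeddings, to the identity; by naturality of $\Psi$ it is therefore homotopic to a homeomorphism. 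Alternatively one uses Proposition \ref{prop:psiR} to replace $\Psi(\bR^d)$ by the explicit model $\Ss(\bR^d)=\Th(\gamma_2^\perp\to\Gr_2^+(\bR^d))$, on which scanning is visibly an equivalence. (The comparison between this non-affine scan on a chart and the affine scan on $\bR^d$ is handled by the corresponding lemma of \cite{R-WEmbedded}.)

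Finally I would assemble the pieces. Pick a countable good cover $\{B_i\}$ of $M$ by charts, so that every nonempty finite intersection of the $B_i$ is again Euclidean, and set $U_n=B_1\cup\cdots\cup B_n$. Induction on $n$, using Mayer--Vietoris (i.e.\ the descent square) at each step with the ball case as base, shows that $s_a\colon\Psi(U_n)\to S(U_n)$ is a weak equivalence for every $n$; the homotopy-limit statements from the first step then transport this to $M=\bigcup_n U_n$. I expect the main obstacle to be precisely the two ``flexibility'' inputs --- the fibration property of $\Psi(U)\to\Psi(V)$ and the Euclidean local case --- since these are where non-compactness really does its work and they form the technical core of \cite{R-WEmbedded}; everything else is formal manipulation of homotopy (co)limits. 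A more conceptual route packaging the same inputs is to recognise $\Psi$ as a flexible, diffeomorphism-invariant sheaf on $d$-manifolds with $\Psi(\bR^d)\simeq\Ss(\bR^d)$ and to invoke Gromov's $h$-principle in the form used in \cite{galatius-2006,GMTW}, which yields the scanning equivalence $\Psi(M)\simeq\Gamma(\Psi(TM)\to M)$ directly.
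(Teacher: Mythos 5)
The paper gives no proof of Proposition \ref{prop:embedded-61}; it is quoted verbatim from \cite{R-WEmbedded} (Theorem~1.2/6.1 there), so the relevant comparison is with that reference. Your sketch correctly identifies the two-step skeleton of the argument -- a local-to-global descent for both $\Psi(-)$ and the section functor, and the base case $\Psi(\bR^d)\simeq\Ss(\bR^d)$ as in Proposition \ref{prop:psiR} -- and correctly points to the $h$-principle framework of \cite{galatius-2006,GMTW,GR-W} as the conceptual home of the proof. Two small cautions. First, the blanket claim that $\Psi(U)\to\Psi(V)$ is a Serre fibration for \emph{arbitrary} open $V\subseteq U$ is stronger than what the references establish or need; they restrict attention to specific geometrically controlled inclusions (e.g.\ $\bR^{n-1}\times(-1,1)\subset\bR^n$, or the open sets arising from a handle decomposition) for which the parametrised isotopy extension argument actually goes through, and it is there, not in arbitrary restrictions, that they invoke it. Second, \cite{R-WEmbedded} organises the induction by a handle decomposition of $M$ with no top-index handles (this is where ``no compact components'' enters: such a decomposition exists exactly then), rather than by an arbitrary countable good cover plus $\lim^1$; your version would work if the fibration property held in the generality you assert, but the handle route is what sidesteps that issue. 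With these caveats, your proposal is an accurate summary of the cited argument.
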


\subsection{Scanning maps with boundary conditions}\label{sec:ScanningWithBdy}

We will also often need scanning maps when $M$ has a boundary, and surfaces are required to satisfy a boundary condition, as in Section \ref{ss:spaces-of-manifolds}. We formalise this as follows. 

Let $M$ be a manifold with boundary, $c \colon (-1,0] \times \partial M \to M$ be a collar, and $\xi \subset \partial M$ be a compact oriented 1-manifold. Write $M(\infty) = M \cup_{\partial M}([0,\infty) \times\partial M)$ for the manifold obtained by attaching an infinite collar to $M$. Let also
$$\Psi(M;\xi) := \{W \in \Psi(M(\infty)) \,\,\vert\,\, W \cap ((-1,\infty) \times\partial M) = (-1,\infty) \times \xi\}.$$
By choosing a Riemannian metric $\gg$ on $M(\infty)$ (which is a product on $(-1,\infty) \times\partial M$) and a function $a$ as above, we obtain a scanning map $s_a$ for $M(\infty)$. If the function $a$ is chosen so that
$$\mathrm{exp}_a(TM(\infty)\vert_{[0,\infty) \times\partial M}) \subset (-1,\infty) \times\partial M,$$
then for $W \in \Psi(M;\xi)$ the section $s_a(W)$ is a product when restricted to $[0,\infty) \times\partial M$, and is independent of $W$. By a slight abuse of notation, we call this product section $s_a([0,\infty)\times \xi)$, and write $\Gamma(\Psi(TM) \to M ;s_a(\xi))$ for the space of sections of $\Psi(TM)\to M$ which agree with $s_a([0,\infty)\times \xi)\vert_{\partial M}$ over $\partial M$. In this case there is a scanning map
\begin{equation}\label{eq:ScanningMap}
s_a \colon \Psi(M;\xi) \lra \Gamma(\Psi(TM) \to M ;s_a(\xi))
\end{equation}
by construction. As in Proposition \ref{prop:embedded-61}, if $M$ has no compact components then this scanning map is  a weak homotopy equivalence.

\subsection{Adding tails to \boldmath$M$}

Suppose that $M$ is a \emph{compact} manifold with collared boundary, and let $N, L\subset \partial M$ be open codimension 0 submanifolds, with $L$ diffeomorphic to a ball.

\begin{df}
We define the following subspaces of $\partial M \times [0,\infty)$:
$$N_{[a,b]} := N \times [a,b], \quad\quad L_{[a,b]} := L \times [a,b],$$
and we also write $N_{[0,\infty)} = N \times [0,\infty)$ and $N_a = N_{[a,a]}$, and similarly for $L$.

We then write
$$M_{a,b} := M \cup N_{[0,a]} \cup L_{[0,b]},$$
and let $M_{a, \infty}$, $M_{\infty, b}$ or $M_{\infty, \infty}$ have their obvious meaning.
\end{df}

Note that the boundary component $N_a \subset M_{a,b}$ has a canonical collar inside $((-1,0] \times \partial M) \cup_{\partial M} (N \times [0,\infty))$; similarly for the boundary component $L_b \subset M_{a,b}$.

For $\delta \subset N$ and $\xi \subset L$ compact oriented 1-manifolds, we define
$$\Psi(M_{a,b};\delta,\xi) := \Psi(M_{a,b};\delta \cup \xi) \subset \Psi(M_{\infty, \infty}),$$
as in Section \ref{sec:ScanningWithBdy}. A careful examination of the topology of $\Psi(M_{\infty,\infty})$ shows that $\Psi(M_{a,b};\delta,\xi)$ is homeomorphic to the disjoint union $\coprod_{[\SS]}{\E(\SS,M_{a,b};\delta\cup\xi)}$ where $[\SS]$ runs along a set of compact oriented surfaces with boundary diffeomorphic to $\delta\cup\xi$, one in each diffeomorphism class.

\subsection{Semi-simplicial models}

In order to show that the map \eqref{eq:ScanningMap} induces an isomorphism in homology in a range of degrees, we will pass through certain auxiliary semi-simplicial spaces.

\begin{df}
For $\xi \subset L_b$ a boundary condition, let $D(M_{\infty,b}; \xi)_p$ be the set of tuples $(a_0, a_1, \ldots, a_p, W)$ where
\begin{enumerate}
\item $0 < a_0 < a_1 < \cdots < a_p$ are real numbers;
\item $W \in \Psi(M_{\infty,b};\xi)$ is a surface satisfying the boundary condition $\xi$, and the $a_i$ are regular values for the projection $p_W \colon W \cap N_{[0,\infty)} \to [0,\infty)$.
\end{enumerate}
We give it the subspace topology from $(\bR^\delta)^{p+1} \times \Psi(M_{\infty,b};\xi)$. The collection of all the spaces $D(M_{\infty,b}; \xi)_p$ for $p\geq 0$ forms a semi-simplicial space, where the $j$th face map is given by forgetting $a_j$, and it is augmented over $\Psi(M_{\infty,b};\xi)$.
\end{df}

\begin{df}
Let ${D}(N_{(0,\infty)})_p$ be the set of tuples $(a_0, a_1, \ldots, a_p, W)$ where
\begin{enumerate}
\item $0 < a_0 < a_1 < \cdots < a_p$ are real numbers;
\item $W \in \Psi(N_{(0,\infty)})$ and the $a_i$ are regular values for the projection $p_W \colon W \to [0,\infty)$.
\end{enumerate}
We topologise this as a subspace of $(\bR^\delta)^{p+1} \times \Psi(N_{(0,\infty)})$. The collection of all these spaces forms a semi-simplicial space where the $j$th face map forgets $a_j$. It is \emph{not} augmented.

Let $\widehat{D}(N_{(0,\infty)})_p$ be the quotient space of ${D}(N_{(0,\infty)})_p$ by the relation
$$(a_0, a_1, \ldots, a_p, W) \sim (a'_0, a'_1, \ldots, a'_p, W')$$
if $a_j=a'_j$ for all $j$ and $p_W^{-1}([a_0, \infty)) = p_{W'}^{-1}([a_0, \infty))$. These form again a semi-simplicial space by forgetting the $a_j$.
\end{df}

There is a semi-simplicial map
$$\pi \colon D(M_{\infty,b}; \xi)_\bullet \lra \widehat{D}(N_{(0,\infty)})_\bullet$$
given by sending a tuple $(a_0, a_1, \ldots, a_p, W)$ to $[a_0, a_1, \ldots, a_p, W \cap N_{(0, \infty)}]$, which factors through the quotient map $r\colon {D}(N_{(0,\infty)})_\bullet \to \widehat{D}(N_{(0,\infty)})_\bullet$. In addition to these semi-simplicial spaces, we require another pair with stricter requirements.

\begin{df}\label{defn:Dpartial}
Let $D_\partial(M_{\infty,b}; \xi)_\bullet \subset D(M_{\infty,b}; \xi)_\bullet$ be the sub-semi-simplicial space where, in addition,
\begin{enumerate}
\item $W \cap M_{a_0, b}$ is connected, and
\item each pair $(p_W^{-1}[a_i, a_{i+1}], p_W^{-1}(a_i))$ is connected.
\end{enumerate}

Similarly, let $\widehat{D}_\partial(N_{(0,\infty)})_\bullet \subset \widehat{D}(N_{(0,\infty)})_\bullet$ be the sub-semi-simplicial space where, in addition, each pair $(p_W^{-1}[a_i, a_{i+1}], p_W^{-1}(a_i))$ is connected. As before, there is a semi-simplicial map $\pi_\partial \colon D_\partial(M_{\infty,b}; \xi)_\bullet \to \widehat{D}_\partial(N_{(0,\infty)})_\bullet$ given by restriction.
\end{df}

If $\Sigma \subset L_{[b,c]}$ is a surface satisfying the boundary condition $\xi \subset L_b$ and the boundary condition $\xi' \subset L_c$, we obtain a semi-simplicial map
$$- \cup \Sigma \colon D(M_{\infty,b}; \xi)_\bullet \lra D(M_{\infty,c}; \xi')_\bullet$$
over $\pi$, and if $(\Sigma, \Sigma \cap L_b)$ is connected then we also obtain a semi-simplicial map
$$- \cup \Sigma \colon D_\partial(M_{\infty,b}; \xi)_\bullet \lra D_\partial(M_{\infty,c}; \xi')_\bullet$$
over $\pi_\partial$.

\subsection{Proof of Theorem \ref{thm:MainH} when \boldmath$\partial M \neq \emptyset$}

Let us choose once and for all a surface $\Sigma \subset L \times [0,3]$ which satisfies the boundary condition $\xi \subset L$ at both ends (with respect to the obvious collars), is connected, and has positive genus. We define
$$D(M_{\infty,\infty}; \xi)_\bullet := \colim_{b \to \infty} D(M_{\infty,b}; \xi)_\bullet,$$
where the colimit is formed using the maps 
\[-\cup \Sigma \colon D(M_{\infty,b}; \xi)_\bullet \lra D(M_{\infty,b+3}; \xi)_\bullet.\] 
We define $D_\partial(M_{\infty,\infty}; \xi)_\bullet$ in the same way. Similarly, we define
$$\Psi(M_{\infty, \infty};\xi) := \colim_{b\to\infty} \Psi(M_{\infty, b};\xi),$$
where the maps in the colimit are again given by union with $\Sigma$.

There is a commutative diagram
\begin{equation}\label{eq:FundamentalDig}
\begin{gathered}
\xymatrix{
D_\partial(M_{\infty,\infty}; \xi)_\bullet \ar[r]\ar[d]^{\pi_\partial} & D(M_{\infty,\infty}; \xi)_\bullet \ar[d]^\pi \ar@{=}[r] & D(M_{\infty,\infty}; \xi)_\bullet \ar[d] \ar[r]^{\epsilon_\bullet} & \Psi(M_{\infty, \infty};\xi) \ar[d]\\
\widehat{D}_\partial(N_{(0,\infty)})_\bullet \ar[r] & \widehat{D}(N_{(0,\infty)})_\bullet & {D}(N_{(0,\infty)})_\bullet \ar[l]_{r_\bullet} \ar[r]^{\epsilon_\bullet}& \Psi(N_{(0,\infty)})
} 
\end{gathered}
\end{equation}
which we will use to compare the leftmost and rightmost vertical maps after geometric realisation. The first step in doing so is the following.

\begin{lemma}
The map $r_\bullet$ is a levelwise weak homotopy equivalence, and the two augmentation maps labelled $\epsilon_\bullet$ are weak homotopy equivalences after geometric realisation.
\end{lemma}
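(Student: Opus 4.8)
The plan is to prove the three assertions separately, since they are of rather different natures.

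\medskip

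\noindent\textbf{The augmentation $\epsilon_\bullet\colon {D}(N_{(0,\infty)})_\bullet \to \Psi(N_{(0,\infty)})$.} First I would show that each augmentation $\epsilon_p$ (the composite of face maps and augmentation) is a fibration. The fibre of $\epsilon_\bullet$ over a fixed $W\in\Psi(N_{(0,\infty)})$ is the semi-simplicial space whose $p$-simplices are tuples $(a_0<\cdots<a_p)$ of regular values of $p_W\colon W\to[0,\infty)$; this is (the nerve-like construction of) the poset of regular values of a proper map with no compact components of fibres going to infinity, and regular values are dense, so by the standard ``filtration by regular values'' argument (as in \cite{GR-W}, \cite{R-WEmbedded}) its realisation is weakly contractible. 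By Criterion \ref{criterion1} this identifies the homotopy fibre of $|\epsilon_\bullet|$ with a contractible space; together with the fact that $\epsilon_\bullet$ admits local sections (one can locally perturb a surface away from a chosen regular value, or simply use that near any $W$ there is a nonempty open set of regular values), a spectral sequence or quasifibration argument shows $|\epsilon_\bullet|$ is a weak homotopy equivalence. The same argument applies verbatim to $\epsilon_\bullet\colon D(M_{\infty,\infty};\xi)_\bullet \to \Psi(M_{\infty,\infty};\xi)$, the only change being that regular values are taken for the projection $p_W$ on the $N$-end $N_{[0,\infty)}$, which does not interact with the boundary condition $\xi$ on the $L$-end. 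One must also check that taking the colimit over $b\to\infty$ (gluing $\Sigma$) commutes with geometric realisation and preserves the weak equivalence, which is automatic since geometric realisation of semi-simplicial spaces commutes with filtered colimits and weak equivalences are preserved under such colimits of closed inclusions.

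\medskip

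\noindent\textbf{The quotient map $r_\bullet\colon {D}(N_{(0,\infty)})_\bullet \to \widehat{D}(N_{(0,\infty)})_\bullet$.} Here I would show that each $r_p$ is a weak homotopy equivalence directly. Fix a $p$-simplex $[a_0,\ldots,a_p,\bar W]$ of $\widehat{D}(N_{(0,\infty)})_p$, i.e.\ the data of the germ $p_{\bar W}^{-1}([a_0,\infty))$. The fibre of $r_p$ over this point is the space of surfaces $W\in\Psi(N_{(0,\infty)})$ agreeing with $\bar W$ on $p_W^{-1}([a_0,\infty))$ and having $a_0$ as a regular value — that is, an essentially free choice of the ``tail'' of $W$ below level $a_0$ with prescribed boundary at level $a_0$. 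Such a space of surfaces in the half-open cylinder $N_{(0,a_0]}$ with fixed (collared, regular) boundary at the top is contractible: one can scan it down towards the boundary, or more simply, it is a space of the form $\Psi$ of a manifold with no compact components and with a fixed boundary condition, hence (after the identification with a space of embedded surfaces) contractible by a standard isotopy-extension/scanning argument. To promote this to a statement about $r_p$ I would verify $r_p$ is a fibration — it is the quotient by the equivalence relation identifying surfaces with the same germ above $a_0$, and locally one can choose sections by ``completing'' a germ to an honest surface continuously — and then the long exact sequence of the fibration gives that $r_p$ is a weak equivalence.

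\medskip

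\noindent\textbf{Main obstacle.} The genuinely delicate point is the topological bookkeeping of the spaces $\Psi(\cdot)$ and their fibrations: verifying that the augmentations and $r_\bullet$ really are (Serre or quasi-) fibrations, that the fibres are what one expects, and that density of regular values can be implemented continuously in families. These are the kinds of arguments made carefully in \cite{GR-W} and \cite{R-WEmbedded}, and I would organise this step by re-using those references as black boxes wherever possible (e.g.\ the ``fibrations from flows / regular values'' technology), reducing the content here to: (i) regular values are dense and open in families; (ii) the relevant restriction maps have local sections; (iii) the fibres of $r_\bullet$ are spaces of surfaces in a cylinder with fixed collared boundary, hence contractible. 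The contractibility in (iii), via scanning in the $N$-direction, is the conceptual heart of the lemma, and is where I would spend the most care.
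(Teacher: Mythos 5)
The paper's own ``proof'' of this lemma is a one-line citation to \cite[Theorems~3.9 and 3.10]{GR-W}, so the real content you are being asked to supply is a sketch of those techniques; your plan is in the right circle of ideas, and your analysis of $r_\bullet$ (identify the fibre of $r_p$ with the space of ways of filling in a surface in $N_{(0,a_0]}$ below a fixed germ at level $a_0$, and contract it by pushing to the open end) is essentially GR-W's Theorem 3.9 argument.

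However, there is a genuine gap in your treatment of the augmentation maps. You propose to show that each $\epsilon_p \colon D(N_{(0,\infty)})_p \to \Psi(N_{(0,\infty)})$ is a Serre fibration and then invoke Criterion~\ref{criterion1}. This is false. Because the $a_j$-coordinates carry the \emph{discrete} topology, the space $D(N_{(0,\infty)})_p$ is a disjoint union, over tuples $(a_0 < \cdots < a_p)$, of the open subsets $\{W : \text{each } a_j \text{ is regular for } p_W\}$, and $\epsilon_p$ is the tautological map. A lift along $\epsilon_p$ of a path $t \mapsto W_t$ would have to keep the $a_j$ constant, but regularity is an open and not a closed condition, so the chosen $a_j$ can cease to be regular for $W_t$ at some $t>0$; thus $\epsilon_p$ does not have the homotopy lifting property. (Weiss's microfibration lemma does not rescue this at the levelwise stage either, since the actual point-set fibres of $\epsilon_p$ are discrete non-contractible sets.) The contractibility of the realised semi-simplicial fibre — the nerve of the directed poset of regular values — is of course correct and is the heart of the matter, but the passage from that to a weak equivalence of realisations must be done by the subtler arguments of \cite[Theorem~3.10]{GR-W}, not by Criterion~\ref{criterion1}. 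Your closing remark about ``local sections $+$ spectral sequence'' does not repair this: local sections of $\epsilon_0$ exist, but this is not enough to conclude that $|\epsilon_\bullet|$ is an equivalence without a genuinely different argument. So either replace the fibration claim with a careful invocation of the GR-W machinery (which operates on the realisation directly), or at least flag this as the part being outsourced.
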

\begin{proof}
The map $r_\bullet$ can be treated with the techniques of \cite[Theorem 3.9]{GR-W}, and the two augmentation maps can be treated with the techniques of \cite[Theorem 3.10]{GR-W}.
\end{proof}

The second step in comparing the leftmost and rightmost vertical maps of \eqref{eq:FundamentalDig} is to show that the unlabelled horizontal maps are weak homotopy equivalences after geometric realisation. This is much more complicated and it is deferred to Section \ref{section:surgery}, although we state the result here.

\begin{proposition}\label{prop:Surgery}
The maps
$$\vert D_\partial(M_{\infty,\infty}; \xi)_\bullet\vert \lra \vert D(M_{\infty,\infty}; \xi)_\bullet\vert \quad\text{and}\quad \vert \widehat{D}_\partial(N_{(0,\infty)})_\bullet\vert \lra \vert \widehat{D}(N_{(0,\infty)})_\bullet\vert$$
are weak homotopy equivalences.
\end{proposition}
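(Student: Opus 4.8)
The plan is to prove both statements by a single ``surgery on simplices'' argument of the type pioneered by Galatius--Randal-Williams, adapted to the present relative setting. The two inclusions in question differ from the full semi-simplicial spaces only by the connectivity constraints: in $D_\partial$ versus $D$ one requires $W\cap M_{a_0,b}$ connected and each slab $(p_W^{-1}[a_i,a_{i+1}],p_W^{-1}(a_i))$ connected, and in $\widehat D_\partial$ versus $\widehat D$ one requires only the latter slab condition. Since $r_\bullet$ is a levelwise weak equivalence and $\widehat D$ is a quotient of $D$ that only remembers $p_W^{-1}([a_0,\infty))$, the two statements are not independent: once we know the first, the second will follow by the same recipe applied to the image of the surfaces under restriction to $N_{(0,\infty)}$, or indeed can be bootstrapped from the first using the commuting square with $\pi_\partial$. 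So I would concentrate on the map $\vert D_\partial(M_{\infty,\infty};\xi)_\bullet\vert \to \vert D(M_{\infty,\infty};\xi)_\bullet\vert$.

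First I would set up the standard machinery: filter $\vert D(M_{\infty,\infty};\xi)_\bullet\vert$ by skeleta and compare with the corresponding skeletal filtration of $\vert D_\partial(M_{\infty,\infty};\xi)_\bullet\vert$, so that it suffices to show each relative term is contractible, i.e.\ that the pair $(D(M_{\infty,\infty};\xi)_p, D_\partial(M_{\infty,\infty};\xi)_p)$ is ``$\infty$-connected relative to its own boundary'' in the sense needed for the colimit over simplices; concretely, one shows that for every compact family $K\to D(M_{\infty,\infty};\xi)_p$ whose restriction to $\partial \Delta^p$-type faces already lands in $D_\partial$, the family can be deformed, rel that part, into $D_\partial$. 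The key geometric input is that $M_{\infty,\infty}$ has been built with \emph{two} infinitely long tails $N_{[0,\infty)}$ and $L_{[0,\infty)}$, and the colimit defining $D(M_{\infty,\infty};\xi)_\bullet$ glues on copies of a fixed positive-genus connected surface $\Sigma$ in the $L$-direction. This is exactly the room needed to perform surgeries: given a surface $W$ representing a simplex, one may push unwanted components and unwanted extra boundary circles of the slabs out along the $N$-tail and then absorb/cap them using the handles provided by the $\Sigma$-stabilisations in the $L$-tail, making $W\cap M_{a_0,b}$ connected and each slab connected, without changing the chosen regular values $a_i$ and without changing $W$ near the part of the boundary of the simplex that is already in $D_\partial$. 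Because we have taken the colimit $b\to\infty$, there is always a fresh, unused copy of $\Sigma$ available to do this surgery, which is what makes the deformation possible at the level of the realisation even though it is not possible at any fixed finite stage.

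In more detail, the surgery proceeds in two stages. Stage one: make each pair $(p_W^{-1}[a_i,a_{i+1}],p_W^{-1}(a_i))$ connected. Working from $i=p$ downwards, for each slab with disconnected total space or with the incoming boundary meeting more than one component, one chooses embedded arcs in the slab joining the components, thickens them, and does ambient surgery along tubes that run out into the $N$-tail between the relevant regular values; here one uses that $\dim M\geq 5$ together with Haefliger-type general position (as already invoked in Sections \ref{section:resolutions} and \ref{section:discresolutions}) to realise these arcs and tubes disjointly and compatibly across the family $K$. Stage two: after stage one every slab is connected, so $W\cap N_{[a_0,\infty)}$ is connected; it remains to connect it to $W\cap M$ and to kill any closed components of $W$ lying entirely inside $M\cup L_{[0,\infty)}$. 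For this one slides such components along $L_{[0,\infty)}$ to a fresh block $L_{[b,b+3]}$ and tubes them into the adjacent copy of $\Sigma$, using the positive genus of $\Sigma$ to have somewhere to attach; this does not disturb the $a_i$ (which live in the $N$-tail) nor the boundary condition $\xi$. Throughout, all surgeries are supported away from a neighbourhood of whatever faces of the simplex already lie in $D_\partial$, which is arranged by choosing the arcs and tubes to depend continuously on the parameter in $K$ and to be empty over that part of the parameter space, so the deformation is rel the relevant sub-object. Iterating this over all $p$ and passing to geometric realisation yields that $\vert D_\partial\vert \to \vert D\vert$ is a weak equivalence; the argument for $\vert \widehat D_\partial\vert \to \vert \widehat D\vert$ is identical, working with the classes $p_W^{-1}([a_0,\infty))$ in place of $W$, the only simplification being that there is no ``connect to $M$'' step.

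The main obstacle I expect is \emph{parametrised} surgery: carrying out all of the above not for a single surface but continuously over a compact family of simplices, with the surgeries matching up along face maps and being genuinely rel the sub-semi-simplicial space $D_\partial$. The delicate points are (a) choosing the surgery data (arcs, tubes, the target blocks $L_{[b,b+3]}$) continuously and disjointly across the family, which forces one to allow $b$ to grow over the parameter space and hence to genuinely work in the colimit; (b) ensuring the regular values $a_i$ stay regular throughout the deformation, which is automatic since the surgeries are supported in fixed collar directions but must be checked; and (c) the usual bookkeeping needed to see that the resulting deformation is compatible with face maps so that it descends to geometric realisations. This is precisely the kind of argument carried out in Section \ref{section:surgery}, where I would expect the bulk of the technical work to reside, leaning on the surgery-theoretic results of \cite{GR-W} and the general-position input of \cite{Haefliger}.
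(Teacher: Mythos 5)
Your high-level strategy (parametrised surgery, pushing unwanted pieces to infinity, organised so as to be compatible with face maps) is the right one and matches the spirit of Section~\ref{section:surgery}. But there are two genuine gaps, one of which would derail the argument as written.

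The serious one is your claim that ``ensuring the regular values $a_i$ stay regular throughout the deformation \dots is automatic since the surgeries are supported in fixed collar directions.'' This is false: any tube that a component of $W$ must travel along in order to be pushed out the $N$-tail necessarily crosses the slices $N_{a_j}$, and during the isotopy it passes through a moment where $p_W^{-1}(a_j)$ has a Morse critical point. This is unavoidable, and it is exactly why the paper does not work with $D_\bullet$ and $D_\partial{}_\bullet$ directly but first introduces the relaxations $D^\natural$ and $D_\partial^\natural$, in which the $a_j$ are allowed to be Morse critical values of index $\geq 1$, and then proves (Lemma~\ref{lem:natural}) that the inclusions $|D_\partial| \to |D_\partial^\natural|$ and $|D| \to |D^\natural|$ are weak equivalences by a bi-semi-simplicial interpolation. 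Your proposal has no mechanism to absorb this loss of regularity, and without one the surgery path exits the semi-simplicial space at intermediate times.

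The second issue is your Stage Two, which routes closed components of $W\cap M_{a_0,b}$ out along the $L$-tail and tubes them into a fresh copy of $\Sigma$. This is not what the paper does, and it creates an avoidable difficulty: you are forced to perform surgery \emph{in the colimit} $b\to\infty$, and over a compact family of simplices the block $L_{[b,b+3]}$ you need is parameter-dependent, so the continuity and compactness bookkeeping becomes genuinely delicate (you flag this yourself). The paper sidesteps it entirely: the single local surgery datum $(\Lambda,e)$ of Definition~\ref{def:local-surgery} tubes \emph{all} of the bad pieces --- the extra slab-components $P(w)$ \emph{and} the stray components $R(w)$ of $W\cap M_{a_0,b}$, including closed ones --- to the distinguished component $W_0$, with every tube running out to $+\infty$ along $N$ (condition~(v)) at a \emph{fixed} value of $b$. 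The colimit over $b$ is then taken only at the very end, once the equivalences are known for each $b$, via the commutative square preceding the proof of Proposition~\ref{prop:surgery}. Finally, where you suggest a skeletal-filtration/relative-term-contractibility argument without spelling out the cofibrancy needed to make that reduction honest, the paper instead builds the bi-semi-simplicial space $\H_{\bullet,\bullet}$ of surfaces decorated with disjoint families of local surgery data and time parameters, shows its augmentations to $D^\natural$ and $D_\partial^\natural$ are equivalences using Criterion~\ref{criterion2} (for which the surjectivity/local-section point is Lemma~\ref{lemma:local-sections}, where the key observation is that the required amount of surgery is upper semi-continuous in $w$), and reads off the desired comparison from the square~\eqref{eq:225}. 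So: right idea, but the regularity failure is a real hole and the $L$-tail detour is both unnecessary and the source of the compactness headache you anticipated.
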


Before moving on to the proof of this proposition, let us show how we will apply it. We choose a Riemannian metric $\gg$ on $M_{\infty,\infty}$, an $a_0 \in (0,\infty)$, and a function $a \colon M_{\infty,\infty} \to (0,\infty)$ bounded above by the injectivity radius, and so that $\exp_a(TM_{\infty,\infty}\vert_{N_{[a_0,\infty)}}) \subset N_{(0,\infty)}$. The non-affine scanning map gives the following commutative diagram:
\begin{equation}\label{eq:FundamentalDig2}
\begin{gathered}
\xymatrix{
\Psi(M_{\infty, b};\xi) \ar[r]^-{s_a} \ar[d]& \Gamma(\Psi(TM_{\infty, b}) \to M_{\infty, b}; s_a(\xi)) \ar[d]^{\Pi_{a_0, b}} \\
\Psi(N_{(0,\infty)}) \ar[r]^-{s_a}& \Gamma(\Psi(TN_{[a_0,\infty)}) \to N_{[a_0,\infty)}),
}
\end{gathered}
\end{equation}
where both vertical maps are given by restriction. By Proposition \ref{prop:embedded-61}, the two non-affine scanning maps are weak homotopy equivalences. (For the lower one, we must use that the restriction map
$$\rho \colon \Gamma(\Psi(TN_{(0,\infty)}) \to N_{(0,\infty)}) \lra \Gamma(\Psi(TN_{[a_0,\infty)}) \to N_{[a_0,\infty)})$$
is an equivalence, and that if we choose a different function $a'$ bounded above by the injectivity radius of $\gg\vert_{N_{(0,\infty)}}$, then the functions $s_a$ and $\rho \circ s_{a'}$ are homotopic.)

Finally, as $N_{[a_0,\infty)} \hookrightarrow M_{\infty, b}$ is a cofibration, the rightmost vertical map is a fibration, so its homotopy fibre over a section $f$ is equivalent to
$$\Gamma(\Psi(TM_{a_0, b}) \to M_{0, b}; f\vert_{N_{a_0}}, s_a(\xi)).$$
The following group completion argument lets us understand the homotopy fibre of $\vert \pi_\partial\vert$. Recall that a map $f\colon X\to Y$ is a \emph{homology fibration} if for each point $y\in Y$, the natural map $\fib(y)\to \hofib(y)$ to the homotopy fibre is a homology equivalence, that is, induces isomorphisms in homology groups.
 
\begin{proposition}\label{prop:group-completion}
If $x=(a_0, W) \in \widehat{D}_\partial(N_{(0,\infty)})_0$ then the fibre\mmnote{fc: I think this is homotopy equivalent to the actual fibre. The actual fibre has as boundary condition the jet of $W$ at $p_W^{-1}(a_0)$.\\orw: ok, but I think saying this is fine.} of $\vert \pi_\partial\vert$ over $x$ is
$$F(a_0, W) := \colim_{b\to\infty} \left(\coprod_{g \geq 0} \mathcal{E}_{g, c}(M_{a_0, b} ; p_W^{-1}(a_0), \xi)\right),$$
where the colimit is formed by $- \cup \Sigma$, and $c$ denotes the number of components of $p_W^{-1}(a_0) \cup \xi$. Furthermore, the map $\vert \pi_\partial\vert$ is a homology fibration. 
\end{proposition}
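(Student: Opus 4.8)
The plan is to identify the fibre first and then upgrade the fibration statement to a homology fibration using a standard group-completion/scanning argument. First I would compute the fibre of $\vert\pi_\partial\vert$ over a $0$-simplex $x=(a_0,W)$. Recall that $D_\partial(M_{\infty,\infty};\xi)_\bullet$ and $\widehat D_\partial(N_{(0,\infty)})_\bullet$ are colimits over $b$ of the $-\cup\Sigma$ maps, and that the fibre of $\vert\pi_\partial\vert$ is the colimit of the fibres of $\pi_\partial$ at finite stages. For each $b$, a point over $x$ consists of a surface $W'\in\Psi(M_{\infty,b};\xi)$ with the same intersection with $N_{[a_0,\infty)}$ as $W$, with $W'\cap M_{a_0,b}$ connected and $a_0$ a regular value; the data of the $a_i$ beyond $a_0$ (which are regular values of a \emph{fixed} submanifold of $N_{(0,\infty)}$, namely the one recorded by the image point in $\widehat D_\partial$) form an auxiliary contractible semi-simplicial direction, so after realisation they contribute nothing. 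Thus the fibre at stage $b$ is, up to weak equivalence, $\coprod_{g\geq 0}\mathcal E_{g,c}(M_{a_0,b};p_W^{-1}(a_0),\xi)$ where $c=\#(p_W^{-1}(a_0)\cup\xi)$, the connectivity conditions being exactly what pins down the genus-$g$ connected-surface components with the prescribed boundary. Passing to the colimit over $b$ gives $F(a_0,W)$ as stated; I would spell out that the connected-with-prescribed-incoming-boundary condition (condition (i) together with (ii) in the definition of $D_\partial$) is what forces the surfaces to lie in the spaces $\mathcal E_{g,c}$ rather than in a larger moduli space of possibly disconnected surfaces.

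Next I would prove that $\vert\pi_\partial\vert$ is a homology fibration. The map $\pi_\partial$ itself at each simplicial level is (up to the contractible auxiliary directions) essentially the forgetful map recording $W\cap N_{(0,\infty)}$, and the key point is a scanning/group-completion argument in the style of McDuff--Segal. I would use the semi-simplicial resolution $\widehat D_\partial(N_{(0,\infty)})_\bullet$ of $\Psi(N_{(0,\infty)})$ (which is a weak equivalence after realisation by the previous lemma and Proposition \ref{prop:Surgery} together with the identifications in the diagram \eqref{eq:FundamentalDig}), and identify $\vert\pi_\partial\vert$ with a map whose target is $\Psi(N_{(0,\infty)})$ up to weak equivalence. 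Now the base $\Psi(N_{(0,\infty)})$ has the structure of a (homotopy-commutative, homotopy-associative) $H$-space via stacking cobordisms in the $[0,\infty)$-direction of $N$, and the spaces $D_\partial(M_{\infty,\infty};\xi)_\bullet$ form a module over it; the self-map $-\cup\Sigma$ that defines the colimit acts compatibly on both sides. Since $\Sigma$ has positive genus, repeatedly gluing it already acts invertibly (up to homotopy) on $\pi_0$ of the relevant monoid after group completion. The standard criterion (McDuff--Segal, as used in \cite{GR-W}, \cite{galatius-2006}, \cite{GMTW}) then says that the map from the module to its base is a homology fibration provided the action of the path components of the $H$-space is ``by homology equivalences in the colimit'', which is precisely arranged by having stabilised by $-\cup\Sigma$.

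I expect the main obstacle to be the verification that $\vert\pi_\partial\vert$ genuinely satisfies the hypotheses of the group-completion/homology-fibration criterion — i.e.\ that after passing to the colimit over $b$, left-multiplication by any element of $\pi_0\Psi(N_{(0,\infty)})$ induces a homology isomorphism on the fibres $F(a_0,W)$. This requires knowing that stabilising by $\Sigma$ (positive genus, connected, prescribed boundary) eventually dominates any other gluing, which in turn rests on the homological stability results of Section \ref{section:homologicalstability} (Theorem \ref{thm:stab} / Proposition \ref{prop:71}) applied to the manifolds $M_{a_0,b}$: one must check these auxiliary manifolds are simply connected of dimension $\geq 5$ so that the stability theorems apply. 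A secondary technical point is the careful bookkeeping showing that the extra $a_i$'s for $i\geq 1$ in $D_\partial(M_{\infty,b};\xi)_p$ really do form a contractible direction over a fixed point of $\widehat D_\partial$, so that the fibre of $\vert\pi_\partial\vert_p$ at each level collapses to the claimed moduli space; this is a semi-simplicial nerve-of-a-poset contractibility argument (the poset of regular values in an interval with a fixed surface germ is filtered), analogous to arguments in \cite{GR-W}. Once these are in place, the statement follows by combining the identification of the fibre with the McDuff--Segal homology-fibration criterion, so I would present the proof as: (1) identify the simplicial fibres and take the colimit; (2) set up the $H$-space/module structure and the stabilisation; (3) invoke homological stability to verify the group-completion hypothesis; (4) conclude via the standard criterion.
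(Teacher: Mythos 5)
Your plan identifies the right two ingredients --- the McDuff--Segal homology-fibration criterion and the homological stability Theorem \ref{thm:stab} --- but the route you propose for combining them differs from the paper's and has a soft spot.

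The paper applies \cite[Proposition 4]{McDuff-Segal} \emph{directly} to the semi-simplicial map $\pi_\partial$, without any $H$-space or module structure. The key observations are that each levelwise map $(\pi_\partial)_p$ is a fibration whose fibre over $[a_0,\ldots,a_p,W]$ depends only on $(a_0,W)$ (so face maps $d_i$ with $i>0$ induce \emph{homeomorphisms} on fibres), while the face map $d_0$ induces on fibres the operation ``union with the cobordism $p_W^{-1}([a_0,a_1])$''. Since this cobordism is connected relative to $p_W^{-1}(a_0)$, it is a composite of maps of type $\alpha$, $\beta$, $\gamma$, and Theorem \ref{thm:stab} shows the induced map on homology is an isomorphism after the colimit over $b$. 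That is the entire verification; no monoid enters.

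Your proposed detour via an $H$-space structure on $\Psi(N_{(0,\infty)})$ and a module over it is problematic: $\Psi(N_{(0,\infty)})$ is a space of arbitrary submanifolds of $N\times(0,\infty)$, and stacking two of them in disjoint intervals does not produce a smooth submanifold, so there is no evident $H$-space structure on that space. (What does carry a composition is a cobordism-category type semi-simplicial model, which is precisely what $D(N_{(0,\infty)})_\bullet$ already is --- but then you are back to the semi-simplicial form of McDuff--Segal, and the module formalism is an unnecessary layer.) Also, your discussion of ``the $a_i$ beyond $a_0$ forming a contractible auxiliary direction'' is beside the point: since $x=(a_0,W)$ is a $0$-simplex and the semi-simplicial realisation has no degeneracies, the preimage of $x$ under $\vert\pi_\partial\vert$ is literally the fibre of $(\pi_\partial)_0$, and identifying it with $F(a_0,W)$ really is elementary. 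Finally, note that the map you need to be a homology isomorphism on fibres is not gluing $\Sigma$ itself but gluing the cobordism $p_W^{-1}([a_0,a_1])$; the $-\cup\Sigma$ colimit is there only to push the genus into the stable range where Theorem \ref{thm:stab} applies.
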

\begin{proof}
Identifying the fibre is elementary. To show that $\vert \pi_\partial\vert$ is a homology fibration we wish to apply \cite[Proposition 4]{McDuff-Segal}. To do this, we observe that $(\pi_\partial)_p$ is a fibration, and that its fibre over $[a_0, a_1, \ldots, a_p, W]$ is $F(a_0, W)$. Thus face maps $d_i$ for $i > 0$ induce homeomorphisms on fibres, but the face map $d_0$ induces a map
$$\colim_{b\to\infty} \left(\coprod_{g \geq 0} \mathcal{E}_{g, c}(M_{a_0, b} ; p_W^{-1}(a_0), \xi)\right) \lra \colim_{b\to\infty} \left(\coprod_{g \geq 0} \mathcal{E}_{g, c'}(M_{a_1, b} ; p_W^{-1}(a_1), \xi)\right)$$
on fibres, given by union with the cobordism $p_W^{-1}([a_0, a_1])$. As this cobordism is connected relative to $p_W^{-1}(a_0)$, union with it may be expressed as a composition of maps of type $\alpha$, $\beta$ and $\gamma$, so by Theorem \ref{thm:stab} the induced map on homology is an isomorphism.
\end{proof}

In all, taking geometric realisation and the colimit of diagrams \eqref{eq:FundamentalDig} and \eqref{eq:FundamentalDig2} over stabilisation of the top row by $- \cup \Sigma$, we obtain a diagram where all horizontal maps are homotopy equivalences. A choice of point $(a_0, W) \in D(N_{(0,\infty)})_0$ such that $p_W^{-1}(a_0)=\emptyset$ gives a compatible collection of basepoints in all the spaces on the bottom row, and we obtain a zig-zag of weak homotopy equivalences between the homotopy fibres of all the vertical maps, taken at this compatible collection of basepoints. In particular, we obtain a zig-zag of homology equivalences between the actual fibres of of $\vert \pi_\partial\vert$ and $\Pi_\infty$,
\begin{equation}\label{eq:colim1}
\colim_{b\to\infty} \left(\coprod_{g \geq 0} \mathcal{E}_{g, c}(M_{a_0, b} ; \emptyset, \xi)\right)
\end{equation}
and
\begin{equation}\label{eq:colim2}
\colim_{b\to\infty} \left( \Gamma(\Psi(TM_{a_0, b}) \to M_{a_0, b}; s_a(\emptyset), s_a(\xi))\right).
\end{equation}

\begin{lemma}\label{lem:SecSpaceEq}
The stabilisation maps between the spaces of sections 
$$\Gamma(\Psi(TM_{a_0, b}) \to M_{a_0, b}; s_a(\emptyset), s_a(\xi))$$
are homotopy equivalences.
\end{lemma}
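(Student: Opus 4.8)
The plan is to identify the stabilisation map explicitly and then exhibit it as the inclusion of the fibre of a fibration whose base is weakly contractible. By construction of the colimit in \eqref{eq:colim2}, the stabilisation map $\Gamma(\Psi(TM_{a_0,b})\to M_{a_0,b};s_a(\emptyset),s_a(\xi))\to\Gamma(\Psi(TM_{a_0,b+3})\to M_{a_0,b+3};s_a(\emptyset),s_a(\xi))$ is the map which extends a section over $M_{a_0,b}$ by a fixed section $\tau$ over the added piece $L_{[b,b+3]}=L\times[b,b+3]$, where $\tau$ may be taken to be a suitable restriction of $s_a(\Sigma)$. In particular $\tau$ agrees with $s_a(\xi)$ over the face $L_b$ along which $L_{[b,b+3]}$ is glued to $M_{a_0,b}$, and with the prescribed boundary data over $A:=\partial M_{a_0,b+3}\cap\overline{L_{[b,b+3]}}$ (the new wall $L_{b+3}$ together with the lateral faces). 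Writing $M_{a_0,b+3}=M_{a_0,b}\cup_{L_b}L_{[b,b+3]}$ and using that $\tau|_{L_b}=s_a(\xi)$, the extension-by-$\tau$ map is a homeomorphism onto the subspace of sections over $M_{a_0,b+3}$ whose restriction to $L_{[b,b+3]}$ equals $\tau$.

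First I would form the restriction fibration $p$ from $\Gamma(\Psi(TM_{a_0,b+3})\to M_{a_0,b+3};s_a(\emptyset),s_a(\xi))$ to the space of sections of $\Psi(TM_{a_0,b+3})$ over $L_{[b,b+3]}$ which agree with $\tau$ on $A$; this is a Serre fibration because $L_{[b,b+3]}\hookrightarrow M_{a_0,b+3}$ is a cofibration (and restriction maps between section spaces with the compact--open topology along cofibrations are fibrations). The fibre of $p$ over the point $\tau$ is precisely the image of the extension-by-$\tau$ map, which by the previous paragraph is identified, via restriction back to $M_{a_0,b}$, with $\Gamma(\Psi(TM_{a_0,b})\to M_{a_0,b};s_a(\emptyset),s_a(\xi))$, and under this identification the fibre inclusion is exactly the stabilisation map. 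Thus it remains to show that the base of $p$ is weakly contractible.

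The crux is to use that $L$ is a ball. Then $L_{[b,b+3]}\cong D^{d-1}\times[0,1]$ is attached to $M_{a_0,b}$ along the (open) face $D^{d-1}\times\{0\}$, so $A$ is the complementary closed $(d-1)$-disc $D^{d-1}\times\{1\}\cup S^{d-2}\times[0,1]$ sitting inside $\partial(D^{d-1}\times[0,1])$, and the inclusion $A\hookrightarrow L_{[b,b+3]}$ is a deformation retract and a cofibration. Moreover $\Psi(TM_{a_0,b+3})|_{L_{[b,b+3]}}$ is a trivial fibre bundle, since $TM$ is trivial over a disc, so the restriction map $\Gamma(\Psi(TM_{a_0,b+3})|_{L_{[b,b+3]}})\to\Gamma(\Psi(TM_{a_0,b+3})|_{A})$ is a fibration which is also a homotopy equivalence (it is $(-\circ\iota)^{*}$ for a homotopy equivalence $\iota\colon A\hookrightarrow L_{[b,b+3]}$ and trivial coefficients), hence each of its fibres is weakly contractible. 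The base of $p$ is the fibre of this map over $\tau|_{A}$, and it is nonempty since it contains $\tau$; therefore it is weakly contractible. A fibration over a weakly contractible base has every fibre inclusion a weak homotopy equivalence, so the stabilisation map is a weak homotopy equivalence; as these section spaces have the homotopy type of CW complexes this is an actual homotopy equivalence.

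The step I expect to cost the most care is purely organisational: tracking the manifold-with-corners structure of $M_{a_0,b+3}=M_{a_0,b}\cup L_{[b,b+3]}$ and the various boundary conditions precisely enough to be certain that (i) $L_{[b,b+3]}$ meets the rest of $M_{a_0,b+3}$ only along a face which is a disc, so that $A$ is a boundary disc and hence a deformation retract, and (ii) all the restriction maps between the relevant section spaces are genuine Serre fibrations --- both standard, but depending on the collars and conventions fixed in Section~\ref{sec:ScanningWithBdy}. It is worth noting that, in contrast to the homological stability input needed for \eqref{eq:colim1}, no connectivity hypothesis on $\Psi(\bR^d)$ (hence no lower bound on $\dim M$ beyond what is needed for the scanning maps to be defined) is used here: the homotopy invariance of these section spaces under $-\cup\Sigma$ is formal, coming solely from $L$ being a ball.
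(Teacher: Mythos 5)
Your argument is correct, but it takes a genuinely different route from the paper. The paper identifies the section space over the cylinder
\[
X = \Gamma_c\bigl(\Psi(T(L \times [0,1])) \to L \times [0,1];\, s_a(\xi), s_a(\xi)\bigr)
\]
(with the boundary condition $s_a(\xi)$ imposed at \emph{both} ends) as a homotopy-associative $H$-space under concatenation, uses $L \cong \bR^{d-1}$ to recognise it as the iterated loop space $\Omega_{s_a(\xi)}(\Omega^{d-1}\Psi(\bR^d))$, concludes that $\pi_0(X)$ is a group, and hence produces a section $f$ with $s_a(\Sigma)\cdot f \simeq \mathrm{const}$; union with $f$ is then an explicit homotopy inverse to the stabilisation map, giving an honest homotopy equivalence. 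You instead fix the boundary condition only on the \emph{outer} part $A$ of $\partial L_{[b,b+3]}$ (the new wall plus the lateral faces), observe that $A \hookrightarrow L_{[b,b+3]}$ is an acyclic cofibration because $L$ is a ball, deduce that the base of the restriction fibration is weakly contractible, and identify the stabilisation map as the resulting fibre inclusion; this yields a weak homotopy equivalence without ever invoking the $H$-structure or the loop-space identification. Both proofs use $L$ being a ball in an essential way, and both are formal in the sense you note (no connectivity input on $\Psi(\bR^d)$). The trade-offs: the paper's argument constructs the homotopy inverse explicitly and so gives a strict homotopy equivalence; yours is arguably cleaner conceptually but only gives a weak equivalence, and the closing remark that these section spaces have CW homotopy type is not obviously justified (the fibre $\Psi(\bR^d)$ is a space of closed submanifolds, not manifestly a CW complex) — though since the lemma is used downstream only to compare homology of mapping telescopes, weak equivalence suffices and that last sentence can simply be dropped. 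The manifold-with-corners bookkeeping you flag (taking closures so that $\overline{L_{[b,b+3]}}\cong D^{d-1}\times[0,3]$, and checking that the two restriction maps are Serre fibrations along the relevant NDR pairs) is exactly the part that needs care, but it is handled the same way the paper implicitly handles the openness of $L$ via compactly supported sections.
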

\begin{proof}
The stabilisation map is given by union with the section
$$s_a(\Sigma) \in \Gamma_c(\Psi(T(L \times [0,1])) \to L \times [0,1];s_a(\xi),s_a(\xi)) =: X$$
obtained by scanning the surface $\Sigma$. The space $X$ is a homotopy associative $H$-space, by concatenating intervals and reparametrising. As $L$ was chosen to be diffeomorphic to $\bR^{d-1}$, we may choose such a diffeomorphism; this identifies $X$ with
$$\mathrm{map}_c(\bR^{d-1} \times [0,1], \Psi(\bR^d);s_a(\xi),s_a(\xi)) \simeq \Omega_{s_a(\xi)} (\Omega^{d-1}\Psi(\bR^d))$$
as an $H$-group. In particular, $\pi_0(X)$ is a group. Thus there is a section $f$ such that $s_a(\Sigma) \cdot f$ is homotopic to the constant section $s_a(\xi) \times [0,1]$, but then union with the section $f$ gives a homotopy inverse to the stabilisation map.
\end{proof}

\begin{corollary}
There is a bijection
$$\pi_0(\Gamma(\Psi(TM_{a_0, b}) \to M_{a_0, b}; s_a(\emptyset), s_a(\xi))) \cong \bZ \times H_2(M;\bZ).$$
\end{corollary}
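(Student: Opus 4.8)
The plan is to reduce the statement to Lemma~\ref{lemma:spaces-pi0} by transporting the computation of $\pi_0$ along the equivalences produced in this section. The key elementary fact is that a homology equivalence $f\colon X\to Y$ induces a \emph{bijection} $\pi_0(X)\to\pi_0(Y)$: since $H_0(-;\bZ)$ of a space is free abelian on its set of path components, $f$ induces an isomorphism $\bZ[\pi_0 X]\to\bZ[\pi_0 Y]$ sending the canonical basis $\pi_0 X$ into the canonical basis $\pi_0 Y$, and any isomorphism of free abelian groups carrying one canonical basis into another restricts to a bijection between those bases.

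Granting this, I would first invoke Lemma~\ref{lem:SecSpaceEq} to identify $\pi_0$ of $\Gamma(\Psi(TM_{a_0, b}) \to M_{a_0, b}; s_a(\emptyset), s_a(\xi))$ with $\pi_0$ of the space in \eqref{eq:colim2}, and then apply the fact above to each map in the zig-zag of homology equivalences constructed above between \eqref{eq:colim1} and \eqref{eq:colim2}. This produces a bijection between $\pi_0\big(\Gamma(\Psi(TM_{a_0, b}) \to M_{a_0, b}; s_a(\emptyset), s_a(\xi))\big)$ and $\pi_0$ of the space in \eqref{eq:colim1}, which is $\colim_{b\to\infty}\coprod_{g\ge 0}\pi_0\mathcal{E}_{g,c}(M_{a_0,b};\emptyset,\xi)$, where $c$ is the number of components of $\xi$.

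It then remains to evaluate this colimit of sets. Because $\xi$ lies in the ball $L_b$ it is null-homologous in $\partial M_{a_0,b}$, so Lemma~\ref{lemma:spaces-pi0} (together with the deformation retraction of $M_{a_0,b}$ onto $M$, which gives $H_2(M_{a_0,b};\bZ)\cong H_2(M;\bZ)$ and preserves the hypotheses of that lemma) shows that each $\pi_0\mathcal{E}_{g,c}(M_{a_0,b};\emptyset,\xi)$ is non-empty and is a torsor over $H_2(M;\bZ)$. The bonding map $-\cup\Sigma$ sends the genus-$g$ summand over $M_{a_0,b}$ into the genus-$(g+k)$ summand over $M_{a_0,b+3}$, where $k\ge 1$ is the genus increase caused by gluing on $\Sigma$ — strictly positive because $\Sigma$ is connected of positive genus meeting the relevant boundary, as one reads off from Euler characteristics — and by the argument of Lemma~\ref{lemma:stabpi0} (adding the relative $2$-chain of $\Sigma$) this map is an $H_2(M;\bZ)$-equivariant bijection between those torsors. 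Hence the indexing diagram of the colimit decomposes into orbits of $(b,g)\mapsto(b+3,g+k)$: the quantity $3g-kb$ is constant along each orbit and takes all values in a fixed coset of $3\bZ$, so the set of orbits is in bijection with $\bZ$, while the colimit over each orbit is a single torsor over $H_2(M;\bZ)$. Therefore $\pi_0$ of \eqref{eq:colim1} is (non-canonically) in bijection with $\bZ\times H_2(M;\bZ)$, which is the claim. I expect the only delicate points to be the elementary observation of the first paragraph and the verification that $\xi$ is null-homologous so that no genus summand is empty; everything after that is routine bookkeeping of a colimit of torsors.
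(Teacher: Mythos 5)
Your proof is correct and follows the same route as the paper's one-line proof, which invokes Lemma~\ref{lemma:spaces-pi0} after implicitly passing through the zig-zag of homology equivalences between \eqref{eq:colim1} and \eqref{eq:colim2}. You have spelled out the details the paper leaves implicit — that homology equivalences induce $\pi_0$-bijections, and the bookkeeping of the colimit of $H_2(M;\bZ)$-torsors along the genus shift — and these check out.
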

\begin{proof}
The set of path components of \eqref{eq:colim1} is isomorphic to $\bZ \times H_2(M;\bZ)$, by Lemma \ref{lemma:spaces-pi0}.
\end{proof}

In Section \ref{sec:PathComp} we give a concrete description of this bijection. Combining the homology equivalence between \eqref{eq:colim1} and \eqref{eq:colim2}, Lemma \ref{lem:SecSpaceEq}, and Theorem \ref{thm:stab}, we see that the scanning map
$$\mathcal{E}_{g, c}(M_{a_0, b} ; \emptyset, \xi) \lra \Gamma(\Psi(TM_{a_0, b}) \to M_{a_0, b}; s_a(\emptyset), s_a(\xi))$$
is a homology isomorphism in degrees $\leq \tfrac{2}{3}(g-1)$. (We have used the fact that $L$ is contractible, so when we write $\Sigma \subset L \times [0,3]$ as the composition of $\alpha$ and $\beta$ maps, the $\beta$ maps are always gluing on a pair of pants with nullhomotopic outgoing boundary, so Theorem \ref{thm:stab} \ref{thm:stab:it:2} gives a stability range $ \leq \tfrac{2}{3}g$ for gluing $\beta$ maps.) 

Extending surfaces and sections cylindrically from $M$ to $M_{a_0, b}$ gives a commutative square
\begin{equation*}
\xymatrix{
\mathcal{E}_{g, c}(M ; \xi) \ar[r]\ar[d]& \Gamma(\Psi(TM) \to M; s_a(\xi)) \ar[d]\\
\mathcal{E}_{g, c}(M_{a_0, b} ; \emptyset, \xi) \ar[r] & \Gamma(\Psi(TM_{a_0, b}) \to M_{a_0, b}; s_a(\emptyset), s_a(\xi))
}
\end{equation*}
where the vertical maps are clearly homotopy equivalences; this proves the first part of Theorem \ref{thm:MainH}. The second part of Theorem \ref{thm:MainH}, \emph{in the case where the manifold $M$ has non-empty boundary}, follows from the commutative square
\begin{equation*}
\xymatrix{
\mathcal{E}_{g, 1}(M ; \xi) \ar[r]\ar[d]^{\gamma_{g,1}}& \Gamma(\Psi(TM) \to M; s_a(\xi)) \ar[d]\\
\mathcal{E}_{g}(M_1) \ar[r] & \Gamma(\Psi(TM_1) \to M_1;s_a(\emptyset))
}
\end{equation*}
where $\xi \subset \partial M$ is a single nullhomotopic circle, ${\gamma_{g,1}}$ is the map that glues on a collar $[0,1] \times \partial M$ containing a disc, and the right-hand map is given by union with the section obtained by scanning the disc. The right-hand map is an equivalence by an argument analogous to that of Lemma \ref{lem:SecSpaceEq}, and the left-hand map is an isomorphism in homology in degrees $ \leq \tfrac{2}{3}g$ by Theorem \ref{thm:stab}. This finishes the proof of Theorem \ref{thm:MainH} in the case where the manifold $M$ has non-empty boundary. In Section \ref{section:closed} we show how to deduce Theorem \ref{thm:MainH} in the case where $M$ has empty boundary.
\section{Surgery}\label{section:surgery}
In this section we prove Proposition \ref{prop:Surgery}. We will prove in detail that the map
\begin{equation}\label{eq:SurgeryProp1}
\vert D_\partial(M_{\infty,\infty}; \xi)_\bullet\vert \lra \vert D(M_{\infty,\infty}; \xi)_\bullet\vert 
\end{equation}
is a weak homotopy equivalence, and then briefly explain the changes in the argument to show that
\begin{equation}\label{eq:SurgeryProp2}
\vert \widehat{D}_\partial(N_{(0,\infty)})_\bullet\vert \lra \vert \widehat{D}(N_{(0,\infty)})_\bullet\vert
\end{equation}
is a weak homotopy equivalence. 

The proof of these results uses a parametrised surgery move similar to that of \cite{GMTW} and \cite{GR-W2}. This will be used in two ways: to make properties (i) and (ii) of Definition \ref{defn:Dpartial} hold. Making property (ii) hold is analogous to the ``positive boundary subcategory'' theorem of \cite{GMTW}, but property (i) does not have an analogue.

At a technical level, we construct the ``do surgery'' maps differently to \cite{GR-W2}. There, these maps are constructed between geometric realisations of semi-simplicial spaces in terms of barycentric coordinates, and need to be glued together very carefully. Instead, ours will be semi-simplicial maps and easy to define. The idea is quite general, and can for example also be used with \cite{GR-W2}.

We first introduce two more auxiliary semi-simplicial spaces.

\begin{df} Define a semi-simplicial space $\Dpn{M_{\infty,b}}{\xi}$ whose space of $i$-simplices is the space of tuples $(W,a_0,\ldots,a_i)$ such that 
\begin{enumerate}
\item $0 < a_0 < a_1 < \cdots < a_i \in \bR$;
\item $W\in \Psi(M_{\infty, b};\xi)$;
\item each $a_j$ is either a regular value of $p_W \colon W \cap N_{[0,\infty)} \to [0,\infty)$, or $p_W^{-1}(a_j)$ contains only Morse critical points of index at least 1. We denote $\delta_j = p_{W}^{-1}(a_j)$;
\item for each $j$, the map $\pi_0(\delta_j)\rightarrow \pi_0(W\cap N_{[a_j,a_{j+1})})$ induced by the inclusion is a surjection;
\item $W \cap (M \cup N_{[0,a_0)} \cup L_{[0,b]})$ is path connected.
\end{enumerate}
Similarly, we let $\Dn{M_{\infty, b}}{\xi}$ have as $i$-simplices those tuples $(W,a_0,\ldots,a_i)$ which satisfy just the first three conditions above. In both cases, the simplices are topologised as a subspace of $(\bR^\delta)^{i+1} \times \Psi(M_{\infty, b};\xi)$ and the face maps are given by forgetting the $a_j$.
\end{df}

\begin{lemma}\label{lem:natural}
The inclusions
$$|\Dp{M_{\infty, b}}{\xi}|\longrightarrow |\Dpn{M_{\infty, b}}{\xi}| \quad\text{and}\quad |\D{M_{\infty, b}}{\xi}|\longrightarrow |\Dn{M_{\infty, b}}{\xi}|$$
are weak homotopy equivalances.
\end{lemma}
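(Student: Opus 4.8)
The two inclusions differ only in that the larger semi-simplicial spaces allow a regular value $a_j$ to be replaced by a level $p_W^{-1}(a_j)$ consisting entirely of Morse critical points of index $\geq 1$. The strategy is the standard one from \cite{GMTW} and \cite[Section~6]{GR-W2}: exhibit both inclusions as (deformation) retracts, or rather interpolate between the two via an intermediate family of semi-simplicial spaces indexed by how many of the levels are allowed to be ``critical'', and push the critical levels off to infinity by a Morse-theoretic flow argument. Concretely, I would first record the key local model: near a level $p_W^{-1}(a_j)$ containing only index $\geq 1$ critical points, the surface $W$ looks like a union of standard saddles and higher handles, and since these critical points have index $\geq 1$ one may isotope $W$ (supported in a small neighbourhood of $p_W^{-1}[a_j-\varepsilon,a_j+\varepsilon]$) so as to replace that critical level by a nearby regular value, at the cost of changing $W$ only up to an isotopy that is the identity outside a compact set and fixes the boundary condition $\xi$. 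The point is that this local surgery does not change the connectivity conditions defining the $\partial$-versions (conditions (4) and (5)), because removing an index $\geq 1$ critical point can only simplify $\pi_0$ of the relevant pieces in a controlled way — this is exactly where the restriction to index $\geq 1$ (as opposed to index $0$) is used.

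Second, I would make this into a global statement by the usual ``bisection'' trick: filter $\Dn{M_{\infty,b}}{\xi}$ (resp.\ $\Dpn{M_{\infty,b}}{\xi}$) by the number $k$ of critical levels allowed among $a_0,\dots,a_i$, obtaining
\[
\D{M_{\infty,b}}{\xi} = \Dn{M_{\infty,b}}{\xi}^{(0)} \subset \Dn{M_{\infty,b}}{\xi}^{(1)} \subset \cdots,
\]
with colimit $\Dn{M_{\infty,b}}{\xi}$. It suffices to show each inclusion $\Dn{M_{\infty,b}}{\xi}^{(k-1)} \hookrightarrow \Dn{M_{\infty,b}}{\xi}^{(k)}$ is a weak equivalence after realisation, and for this one uses that a simplex with a critical level at $a_j$ is connected by an explicit path (sliding $a_j$ slightly and performing the local surgery above, parametrised continuously in the surgery data) to one with a regular value there; the subtlety is to do this continuously over the whole space of $i$-simplices, which is handled, as in \cite{GR-W2}, by working with the space of \emph{surgery data} (a contractible space of choices of how to resolve each critical level) and showing the forgetful map from the space-with-surgery-data down to each of the two semi-simplicial spaces is a levelwise weak equivalence with contractible fibres, hence a weak equivalence on realisations. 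Since the maps used can be taken to be simplicial (the improvement flagged in the opening paragraph of Section~\ref{section:surgery}), one avoids the bi-semi-simplicial bookkeeping of the original argument.

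Third, for the $\widehat{D}_\partial$ versus $\widehat{D}$ statement one reruns the identical argument after passing to the quotient; since the quotient relation only remembers $p_W^{-1}([a_0,\infty))$ and the local surgeries above are supported near individual levels $a_j$ with $j \geq 1$ (or can be arranged to be, after first moving $a_0$), the same paths and the same space of surgery data descend. One also needs the colimit over $b \to \infty$ (union with the fixed positive-genus surface $\Sigma \subset L\times[0,3]$), but since all constructions are supported in the $N$-direction and union with $\Sigma$ commutes with everything in sight, the weak equivalences pass to the colimit; this gives \eqref{eq:SurgeryProp1} and \eqref{eq:SurgeryProp2}.

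\textbf{Main obstacle.} The delicate point is the continuity and the boundary-condition/connectivity control in the local surgery step: one must replace a critical level by a regular one by a compactly supported isotopy that (i) varies continuously as the critical data varies in a family, including through the (codimension one) locus where the number of critical points jumps, and (ii) preserves conditions (4) and (5) that cut out the $\partial$-subspaces. Getting a clean parametrised surgery, rather than an ad hoc pointwise one, is exactly what the ``space of surgery data'' formalism of \cite{GR-W2} is designed to do, so the real work is in checking that that formalism applies verbatim here — in particular that the surgeries needed are only on index $\geq 1$ critical points, so that no $0$-handles (which could disconnect things or create new closed components) are ever created or destroyed.
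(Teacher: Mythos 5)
Your proposal misidentifies what the inclusion actually changes, and so proposes the wrong kind of argument. The spaces $\Dn{M_{\infty,b}}{\xi}$ and $\D{M_{\infty,b}}{\xi}$ (and their $\partial$-versions) have the \emph{same} underlying space of surfaces $W$; the difference is only in which real numbers $a_j$ are allowed as slicing levels. A simplex of $\Dn{-}{-}$ allows $a_j$ to be a critical value of $p_W$ of index $\geq 1$, while a simplex of $\D{-}{-}$ requires all $a_j$ regular. Consequently the way to pass from the larger to the smaller space is \emph{not} to isotope or surger $W$ so that $a_j$ becomes regular (which, incidentally, cannot be done for a single critical point without Morse cancellation against a partner, so the local move you describe is not actually available); it is to leave $W$ alone and insert nearby regular slicing values. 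Your proposal of a parametrised surgery-with-contractible-space-of-data argument is the tool the paper reserves for Proposition~\ref{prop:Surgery}, where the connectivity conditions (iv)--(v) of $D_\partial$ genuinely force one to alter the surface; it is overkill here, and your own ``main obstacle'' (continuity of the isotopy in families, as critical points collide or appear) is a real obstruction to making that version work.

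The paper's proof instead interpolates via a bi-semi-simplicial space $\mathcal{J}_{\bullet,\bullet}$ whose $(i,j)$-simplices are tuples $(W,a_0,\ldots,a_i,b_0,\ldots,b_j)$ with $(W,a_0,\ldots,a_i)$ a simplex of $\Dp{M_{\infty,b}}{\xi}$ (all regular) and the whole tuple a simplex of $\Dpn{M_{\infty,b}}{\xi}$. The two augmentations (forget the $a$'s, resp.\ the $b$'s) are shown to be weak equivalences after realisation by Criterion~\ref{criterion2}: over a fixed $W$ the relevant fibres are \emph{discrete} flag complexes of admissible slicing levels, and the coning vertex is produced by simply choosing a regular value of $p_W$ sandwiched appropriately between the given levels (density of regular values), together with a short argument that nearby-regular intervals give cylinders so connectivity survives. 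No surgery, no isotopy of $W$, no contractible space of surgery data is needed. Local sections of the augmentations come for free since regularity is an open condition for nearby $W'$. If you want to salvage your write-up, drop the Morse-theoretic surgery language entirely and rephrase in terms of auxiliary regular slicing values sandwiched between the given ones; at that point you will essentially have reconstructed the paper's $\mathcal{J}_{\bullet,\bullet}$ argument.
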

\begin{proof}
The argument is the same in both cases; to be specific, we treat the first case. Let ${\mathcal J}_{\bullet,\bullet}$ be the bi-semi-simplicial space whose $(i,j)$-simplices consist of the tuples $(W,a_0,\ldots,a_i,b_0,\ldots,b_j)$ such that $(W,a_0,\ldots,a_i)$ is an $i$-simplex in $\Dp{M_{\infty, b}}{\xi}$ and $(W,a_0,\ldots,a_i,b_0,\ldots,b_j)$ is an $(i+j+1)$-simplex in $\Dpn{M_{\infty, b}}{\xi}$.

The $(p,\bullet)$-face map forgets the value $a_p$ and the $(\bullet,q)$-face map forgets the value $b_q$. It has an augmentation $\epsilon_{-,\bullet}$ to $\Dpn{M_{\infty, b}}{\xi}$ given by forgetting all the values $a_0,\ldots,a_i$ and an augmentation $\epsilon_{\bullet,-}$ to $\Dp{M_{\infty, b}}{\xi}$ given by forgetting all the values $b_0,\ldots,b_i$. The triangle
\begin{equation*}
\xymatrix{
 & \vert{\mathcal J}_{\bullet,\bullet}\vert \ar[ld]_{\vert\epsilon_{\bullet,-}\vert}\ar[rd]^{\vert\epsilon_{-,\bullet}\vert}\\
\vert\Dp{M_{\infty, b}}{\xi}\vert \ar[rr]& & \vert\Dpn{M_{\infty, b}}{\xi}\vert
}
\end{equation*}
commutes up to homotopy, by construction. 

The augmentation maps have local sections. We try to define a section of $\epsilon_{i,-} \colon \mathcal{J}_{i,0} \to D_\partial(M_{\infty,b};\xi)_i$ through the point $(W,a_0,\ldots,a_i,b_0)$ on the open neighbourhood $U$ of $(W,a_0,\ldots,a_i)$ consisting of those $W'$ such that $a_0,\ldots, a_i$ are still regular values and $b_0$ contains only Morse critical points of index at least 1, by the formula $(W', a_0, \ldots, a_i) \mapsto (W', a_0, \ldots, a_i, b_0)$. To see that this defines a section, we must check that $p_{W'}^{-1}([a_j, a_{j+1}))$ and $p_{W'}^{-1}([a_i, b_0))$ all satisfy the connectivity requirement (iv). The first case is immediate: as the $a_j$ remain regular values, $p_{W'}^{-1}([a_j, a_{j+1})) \cong p_{W}^{-1}([a_j, a_{j+1}))$ and 
\[W \cap (M \cup N_{[0,a_0)} \cup L_{[0,b]}) \cong W' \cap (M \cup N_{[0,a_0)} \cup L_{[0,b]}).\]
In the second case, $p_{W'}^{-1}([a_i, b_0))$ differs from $p_{W}^{-1}([a_i, b_0))$ by adding 1- or 2-handles, but this does not change the connectivity property with respect to the lower boundary. We show that the augmentation map $\epsilon_{-,j}$ has local sections in a similar (but easier) way.

The fibre $F_\bullet$ of $\epsilon_{\bullet,-}$ over $(W,a_0,\ldots,a_i)$ has $p$-simplices those tuples of real numbers $(b_0, \ldots, b_p)$ such that $(W,a_0,\ldots,a_i,b_0, \ldots, b_p)$ is a simplex of $\Dpn{M_{\infty, b}}{\xi}$, i.e., $p_W^{-1}(b_j)$ contains only Morse critical points of index at least 1, and $p_W^{-1}([a_i, b_0))$ and each $p_W^{-1}([b_j, b_{j+1}))$ are connected relative to its lower boundary. These conditions only involve pairs of $b_j$'s, so this is a topological flag complex (whose topology is discrete). Given a finite collection $b_1,\ldots, b_n$ of elements of $F_0$, we may choose $a_i < c < \min(b_j)$ such that $[a_i, c]$ consists of regular values of $p_W$. Then $c$ is also in $F_0$, and $(c, b_j) \in F_1$ for each $b_j$. It follows from Criterion \ref{criterion2} (and Remark \ref{rem:criterion2}) that $\vert\epsilon_{\bullet,-}\vert$ is a weak homotopy equivalence.

The fibre $F'_\bullet$ of $\epsilon_{-,\bullet}$ over $(W,b_0,\ldots,b_j)$ has $p$-simplices those tuples of real numbers $(a_0, \ldots, a_p)$ which are regular values of $p_W$, such that 
\[(W,a_0,\ldots,a_p,b_0, \ldots, b_j)\]
is a simplex of $\Dpn{M_{\infty, b}}{\xi}$, which is again seen to be a topological flag complex (whose topology is discrete). For a finite collection $a_1, \ldots, a_n$ of elements of $F'_0$, choose $\max(a_j) < c < b_0$ such that $[c, b_0)$ consists of regular values of $p_W$. Then $c$ is also in $F'_0$, and we claim that each $(a_j, c)$ is a 1-simplex of $F'_\bullet$, i.e., that $p_W^{-1}([a_j, c))$ is path connected relative to $p_W^{-1}(a_j)$. To see this, first note that $p_W^{-1}([a_j, b_0))$ is path connected relative to $p_W^{-1}(a_j)$ by assumption, so there is a path from any point of $p_W^{-1}([a_j, c))$ to $p_W^{-1}(a_j)$ inside of $p_W^{-1}([a_j, b_0))$, but as $[c,b_0)$ consists of regular values $p_W^{-1}([c,b_0))$ is a cylinder, so this path may be homotoped into $p_W^{-1}([a_j, c))$ relative to its ends. It follows from Criterion \ref{criterion2} that $\vert\epsilon_{-,\bullet}\vert$ is a weak homotopy equivalence.
\end{proof}

\subsection{Local surgery move}
Let $w=(W,a_0,\ldots,a_i)$ be a simplex in $\Dn{M_{\infty, b}}{\xi}$. We first construct a path from this $i$-simplex to an $i$-simplex $w'=(W',a_0,\ldots,a_i)$ in $\Dpn{M_{\infty, b}}{\xi}$. In particular, this will prove that the inclusion $\Dpn{M_{\infty, b}}{\xi}\rightarrow \Dn{M_{\infty, b}}{\xi}$ is levelwise $0$-connected. In the last section we use this path to show that it is in fact a homotopy equivalence after geometric realisation.

Let $R(w)=\{W_1,\ldots,W_k\}$ be the set of connected components of $W\cap M_{a_0, b}$, and let $W_0$ be the connected component that contains $\xi$. Define
\[P_{a,b}(W) = \{\omega\in \pi_0(p_W^{-1}[a,b))\mid a\notin p_W(\omega)\},\qquad P(w)=\bigcup_{k=0}^{i-1} P_{a_k,a_{k+1}}(W).\]
Observe that \emph{$w$ is in $\Dpn{M}{\xi}$ if and only if $R(w)\cup P(w)=\emptyset$.} We define the following subsets of $\bR^3$:
\begin{align*}
T' &= (\{0\}\times [-3,3])\cup ((0,5]\times \{0\})\subset \bR^2\subset \bR^3 \\
T &= \{(x,y,z)\in \bR^3\mid d(T',(x,y,z))<1,\: |x|\leq 3,\: y< 5\}
\end{align*}
and let $x_1,x_2 \colon T \to \bR$ be the first and second coordinate functions.

\begin{df}\label{def:local-surgery} Let $w=(W,a_0,\ldots,a_i)$ be an $i$-simplex in $\Dn{M}{\xi}$. A \emph{local surgery datum for $w$} is a pair $Q = (\Lambda, e)$ where $\Lambda$ is a set and $e\colon \Lambda \times T\rightarrow M_{\infty,b}$ is a closed embedding, whose restriction $e_{|\{\lambda\}\times T}$ we denote by $e_\lambda$, such that:
\begin{enumerate}
\item\label{1}  $e^{-1}(W\cap M_{a_i, b}) = \Lambda \times (T\cap x_2^{-1}(\{-3,3\}));$
\item\label{2} $(\mathrm{Id}_\Lambda \times x_1)(e^{-1}(W\cap N_{[a_i,\infty)})) \subset \Lambda\times (4,5)$;
\item\label{3} for each $\lambda \in \Lambda$, $e_\lambda(x_2^{-1}(-3))\subset W_0\cap M_{a_0, b}$;
\item\label{4} for each $\omega \in P(w)\cup R(w)$, there is a $\lambda\in \Lambda$ such that $e_\lambda(x_2^{-1}(3))\subset \omega$;
\item\label{5} $\lim_{x\rightarrow 5} e_\lambda(x,y,z) = \infty$ for all $\lambda\in \Lambda$ and all $(y,z)$ such that $\sqrt{y^2+z^2}<1$;
\item\label{6} for each $\lambda\in \Lambda$ and for each $j= 0,\ldots,i$ there is an $\epsilon>0$ such that for all $a\in (a_j-\epsilon,a_j+\epsilon)$, either $x_2e^{-1}_\lambda(N_{a_j}) \in (-2,-1)$ or $x_1e^{-1}_\lambda(N_{a_j}) \in (2,3)$.
\end{enumerate}
\end{df}

\begin{proposition}\label{prop:local-surgery} A local surgery datum $Q$ for an $i$-simplex $w$ of $\Dn{M}{\xi}$ determines a path $\Phi_Q(t)$ that starts at $w$ and ends in an $i$-simplex of $\Dpn{M_{\infty,b}}{\xi}$. 
\end{proposition}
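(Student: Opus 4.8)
The plan is to perform a sequence of surgeries along the discs $e_\lambda$ and show that the surface stays a legitimate simplex throughout. I would construct the path $\Phi_Q(t)$ by a standard ``handle trade'' picture. The subset $T\subset\bR^3$ is a thickened letter ``T'': the vertical stroke $\{0\}\times[-3,3]$ is a tube running from the component $W_0$ (at $x_2=-3$) to the component $\omega$ that we wish to absorb (at $x_2=3$), and the horizontal stroke $(0,5]\times\{0\}$ is a tube escaping to infinity along the $N_{[a_i,\infty)}$ direction (condition \ref{5}). So each $e_\lambda(T)$ is an embedded copy of a ``pair of pants times interval'' piece meeting $W$ exactly in the two discs $e_\lambda(x_2^{-1}(\pm 3))$ (condition \ref{1}), and meeting these discs inside $W_0$ and inside the chosen bad component respectively (conditions \ref{3}, \ref{4}).

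First I would fix, for each $\lambda$, a smooth one-parameter family of compact surfaces $\Sigma_\lambda(t)\subset T$, $t\in[0,1]$, interpolating between $\Sigma_\lambda(0) = T\cap x_2^{-1}(\{-3,3\})$ (the two parallel discs, which is what $W$ looks like inside $e_\lambda(T)$ at time $0$) and a surface $\Sigma_\lambda(1)$ obtained by doing surgery: cutting out two small subdiscs and gluing in the tube along the vertical stroke, then pushing the resulting ``finger'' out along the horizontal stroke towards $x_1=5$ so that $\Sigma_\lambda(1)$ escapes to infinity there. Define $W'_t$ to agree with $W$ outside $\bigcup_\lambda e_\lambda(T)$ and to equal $e_\lambda(\Sigma_\lambda(t))$ inside each $e_\lambda(T)$; set $\Phi_Q(t) = (W'_t, a_0,\dots,a_i)$. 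Condition \ref{6} is exactly what is needed so that near each level $a_j$ the moving surface $W'_t$ never acquires a new tangency to $N_{a_j}$: the family can be arranged so that in a neighbourhood of $a_j$ either $\Sigma_\lambda(t)$ lives in the region $x_2\in(-2,-1)$ (where it is a fixed product) or in $x_1\in(2,3)$ (likewise a fixed product), so $a_j$ remains a regular value of $p_{W'_t}$ throughout (or keeps exactly its original critical points); and condition \ref{2} ensures the surgery happens in the region $x_1\in(4,5)$, i.e.\ genuinely in $N_{[a_i,\infty)}$ beyond the last marked level, so that $W'_t\cap M_{a_i,b}$ is literally unchanged and all of conditions (1)--(3) of the definition of $\Dn{M_{\infty,b}}{\xi}$ persist for every $t$.

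It then remains to check that the endpoint $w' = \Phi_Q(1)$ lies in $\Dpn{M_{\infty,b}}{\xi}$, i.e.\ satisfies the extra conditions (4) and (5). By the observation recorded before Definition \ref{def:local-surgery}, this is equivalent to $R(w')\cup P(w') = \emptyset$. Here is where conditions \ref{3} and \ref{4} pay off: the surgery along $e_\lambda(T)$ attaches, inside $N_{[a_i,\infty)}$, a tube joining $W_0$ to the component $\omega$ that $e_\lambda(x_2^{-1}(3))$ sat in; since by \ref{4} every $\omega\in R(w)\cup P(w)$ is hit by some such tube, after the surgery all of these components get connected to $W_0$. More precisely, $W'_1\cap M_{a_0,b}$ becomes connected (killing $R$), because each $W_j$, $j\geq 1$, now has a path to $W_0$ running out through the attached tube; and for each $k$, each previously-bad component $\omega\in P_{a_k,a_{k+1}}(W)$ now contains, at level $a_k$, a point of the new tube that descends (inside $p_{W'_1}^{-1}[a_k,a_{k+1})$) towards $a_k$, so it is no longer ``floating'' above $a_k$ — this uses condition \ref{6} once more to guarantee the tube actually crosses the level $a_k$ transversally rather than hovering above it. Hence $R(w')=P(w')=\emptyset$ and $w'$ is a simplex of $\Dpn{M_{\infty,b}}{\xi}$.

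The main obstacle is the bookkeeping in the previous paragraph: verifying that a single surgery move does not accidentally disconnect a pair $p_{W'_1}^{-1}[a_k,a_{k+1})$ or create a new component floating above some $a_k$, and that condition \ref{6} (which only constrains behaviour near the finitely many levels $a_j$) really does suffice to keep all levels regular and all the relevant inclusions $\pi_0(\delta_j)\to\pi_0(W'_t\cap N_{[a_j,a_{j+1})})$ under control during the whole homotopy, not merely at $t=0$ and $t=1$. This is a careful but routine Morse-theoretic and point-set argument; I would isolate it by working in the local model $T\subset\bR^3$, where everything is explicit, and then transport via the embedding $e$. The genuinely new input over \cite{GMTW, GR-W2} is the presence of the ``boundary-connectedness relative to the lower level'' conditions defining $D_\partial$, which is why the definition of local surgery datum tracks $P(w)$ and $R(w)$ separately and insists (condition \ref{3}) that the lower foot of every surgery tube lands in the distinguished component $W_0$.
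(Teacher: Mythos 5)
Your picture of the surgery move is correct and the endpoint analysis (that every bad component of $R(w)\cup P(w)$ gets hooked to $W_0$, so $R(\Phi_Q(1))\cup P(\Phi_Q(1))=\emptyset$) matches the paper's, but the verification that the \emph{path} stays inside $\Dn{M_{\infty,b}}{\xi}$ rests on a false claim. You assert that condition (2) ``ensures the surgery happens in the region $x_1\in(4,5)$, i.e.\ genuinely in $N_{[a_i,\infty)}$\dots, so that $W'_t\cap M_{a_i,b}$ is literally unchanged,'' and that consequently ``$a_j$ remains a regular value of $p_{W'_t}$ throughout (or keeps exactly its original critical points).'' Neither holds. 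Condition (2) constrains the \emph{pre-existing} part of $W$ inside $N_{[a_i,\infty)}$, so that the new tube can be kept disjoint from it; the surgery itself acts on the two discs $e_\lambda(x_2^{-1}(\{\pm 3\}))$, which lie inside $M_{a_i,b}$ by condition (1), so the piece of $W'_t$ in $M_{a_i,b}$ certainly changes. More seriously, as those discs are pushed out and a tube is formed, the moving surface unavoidably sweeps across the planes $e_\lambda^{-1}(N_{a_j})$ and creates new Morse critical points of $p_{W'_t}$ at the levels $a_j$ at intermediate times; no arrangement of the family avoids this.

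The real content of the proof --- absent from your write-up --- is control of the \emph{index} of the new critical points. The construction of the family $h_t$ and the path $\eta$ is rigged so that the critical values of $x_2$ in $(1,2)$ have index $2$ and those of $x_1$ in $(2,3)$ have index $1$ or $2$, and condition (6) (together with a monotonicity consequence of condition (5)) translates those statements about the local coordinates $x_1,x_2$ on $T$ into the statement that every new critical point of $p_{W'_t}$ at a level $a_j$ has Morse index at least $1$. This is exactly what condition (iii) of the definition of $\Dn{M_{\infty,b}}{\xi}$ permits, and it is what keeps the path inside the semi-simplicial space. Replacing this index bookkeeping with the stronger (and false) claim that $a_j$ stays regular or unchanged is where your argument breaks down; it is not a routine Morse-theoretic check to be deferred, but the crux of the proposition.
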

\begin{proof}
Consider the $1$-parameter family of diffeomorphisms of $$Y= T\cup \{(x,y,z)\in \bR^3\mid x\leq 5, ||(y,z)||<1\}$$ given by
\[h_t(x,y,z) =\left\{ 
\begin{array}{ll} 
(y,x+(x-3)te^{2-\frac{1}{1-||(y,z)||^2}}),z)& {\rm if}~||(y,z)||<1, x\leq 3, \\ 
(x,y,z)& {\rm otherwise}.
\end{array}\right.\]
The properties of this family which we will use are the following: 
\begin{enumerate}
\item\label{21} $h_0$ is the identity;
\item\label{22} if $x\in (4,5)$ and $||(y,z)||<1/\sqrt{2}$, then $x_1h_1^{-1}(x,y,z)\in (3,4)$;
\item\label{23} $h_t$ is the identity on $T\cap x_1^{-1}([-\infty,3))$;
\item\label{24} $h_t$ extends to $\bR^3$ with the identity outside $T$.
\end{enumerate}
The family $h_t$ induces a $1$-parameter family of maps
\[H_t\colon \Psi(T)\longrightarrow \Psi(T)\]
given by sending a submanifold $W$ to $h_t(W)\cap T$. From the first property of $h_t$ it follows that $H_0$ is the identity. In Figures \ref{s03} and \ref{s04} we give a picture of the action of $H_t$ on the dark disc at the bottom of Figure \ref{s02}.

\begin{figure}[h]
\centering
\subfloat[]{\label{s02}\includegraphics[width=3.9cm,angle=90]{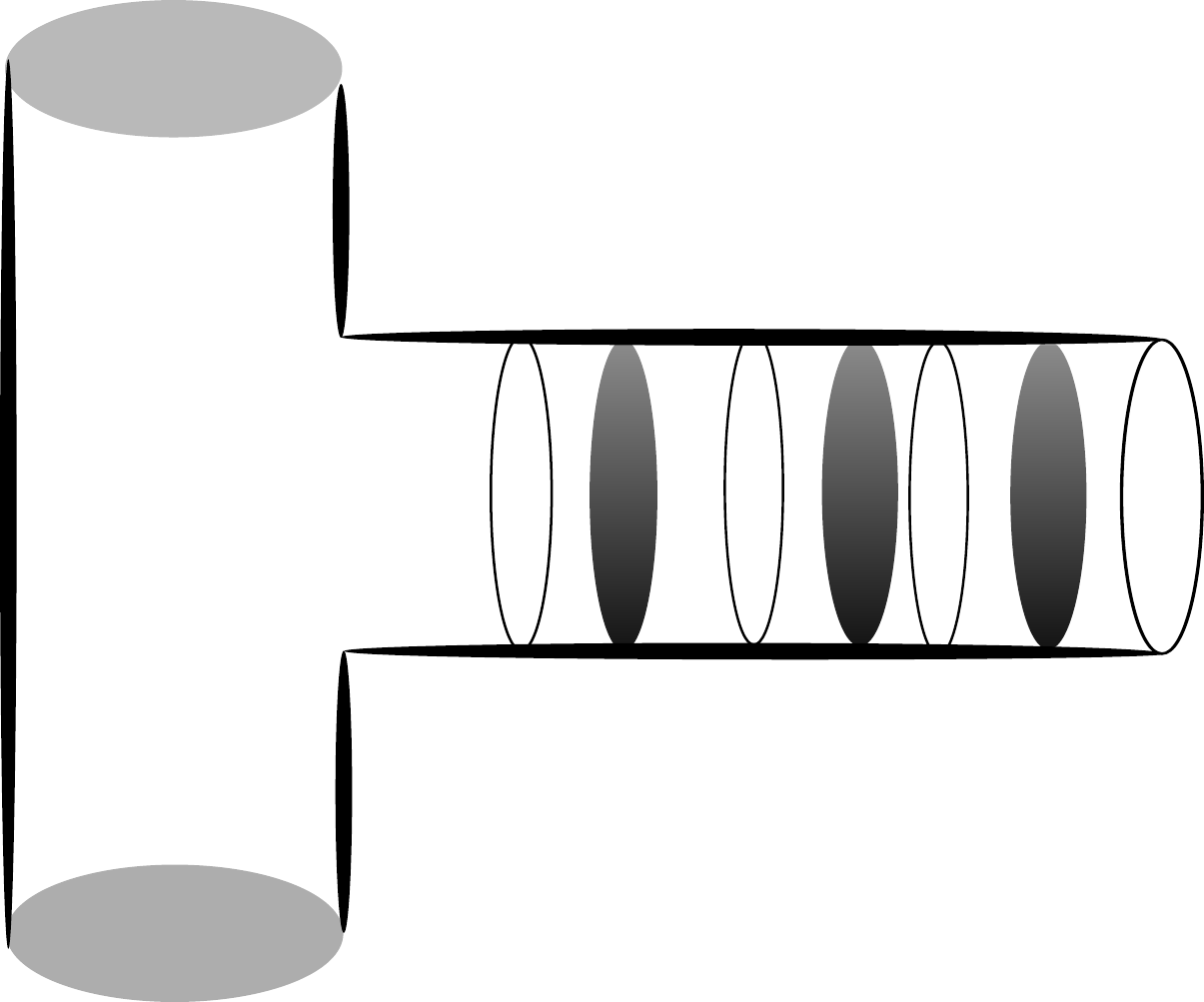}}\hspace{1cm}
\subfloat[]{\label{s03}\includegraphics[width=3.9cm,angle=90]{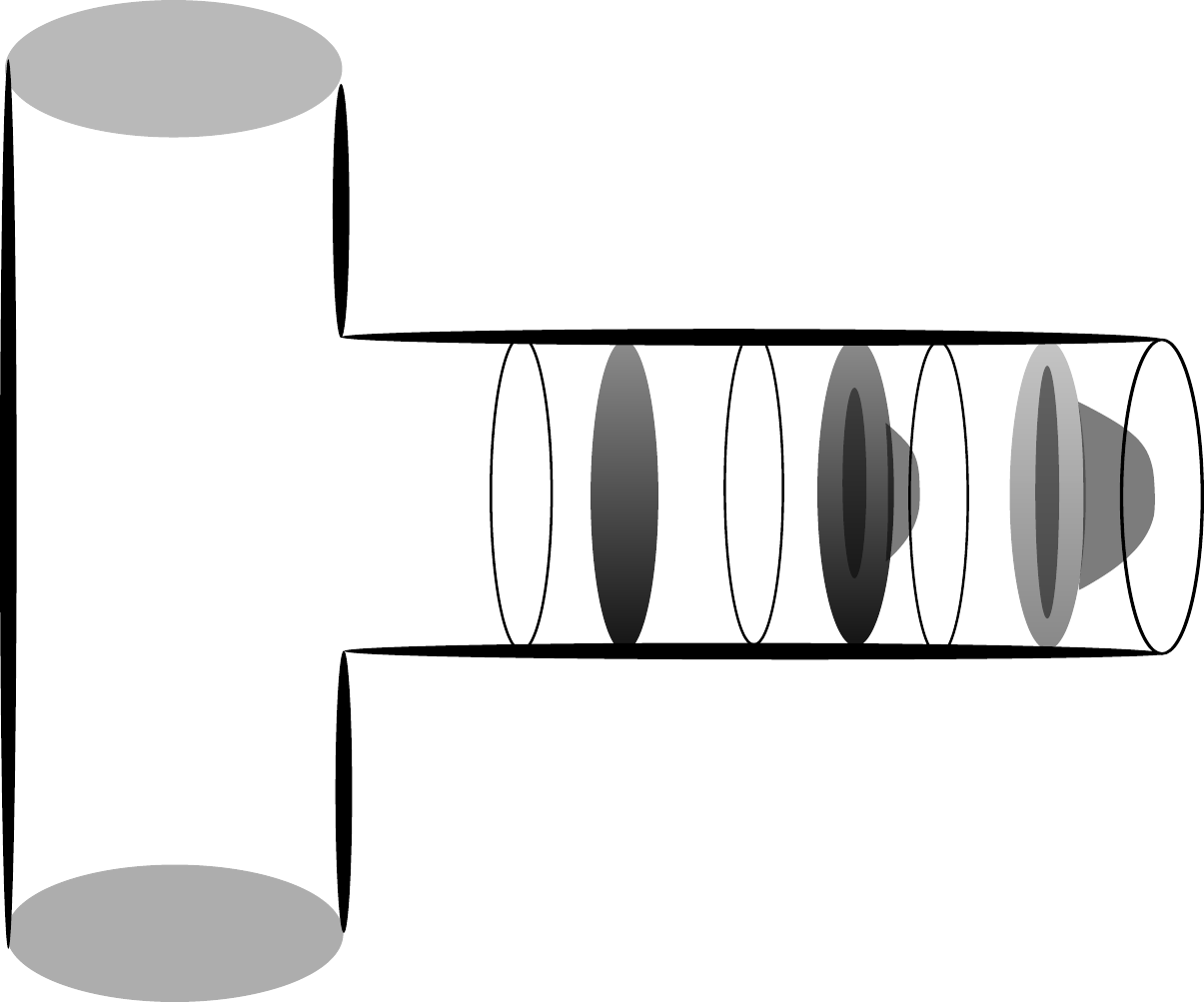}}\\\vspace{1cm}
\subfloat[]{\label{s04}\includegraphics[width=3.9cm,angle=90]{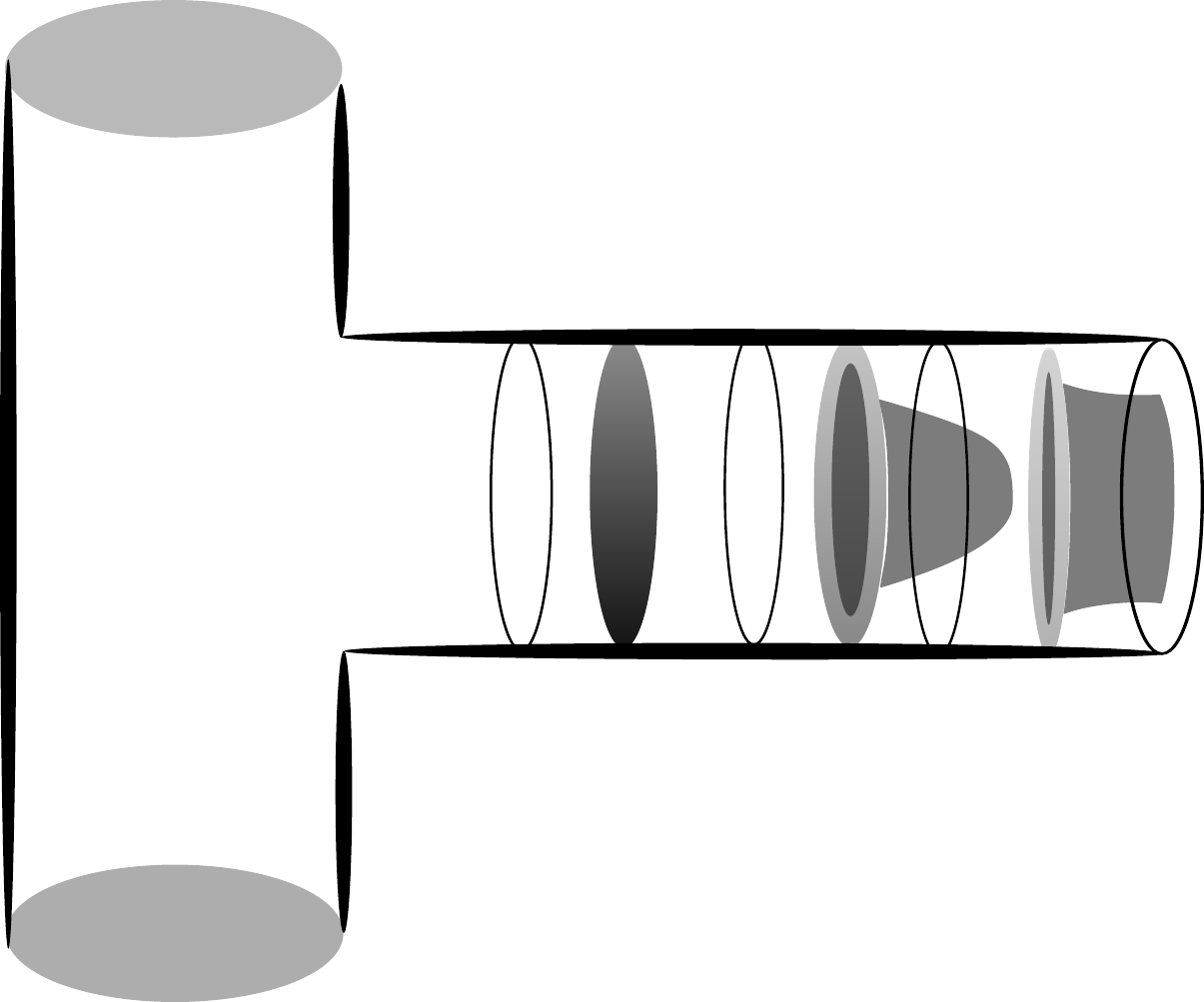}}
\caption{The effect of the family $H_t$ in the surgery movement on discs in $x_1^{-1}((2,3))$, $x_1^{-1}((3,4))$ and $x_1^{-1}((4,5))$.}
\label{figure:surgerynew0}
\end{figure}

Consider now the path $\eta$ in $\Psi(T)$ given in Figure \ref{figure:surgerynew} that starts with the surface $x_2^{-1}(\{-3,3\})$, which is the disjoint union of two open balls in $T$. It pushes both balls to infinity, joins the balls there and then pulls them backwards. In Figure \ref{s01}, a picture at time $0$ is given. The three vertical circles represent the balls $x_1^{-1}(2)$, $x_1^{-1}(3)$ and $x_1^{-1}(4)$ and the horizontal circle represents the ball $x_2^{-1}(-1)$. The planes in the figure will be given an interpretation later. In Figures \ref{s05}, \ref{s06} and \ref{s07}, the ball is pushed to infinity, and in Figures \ref{s08} and \ref{s09} the surface returns in the shape of a (non-compact) pair of pants. The main properties of this movement are the following:
\begin{enumerate}
	\item\label{10} $\eta(0) = x_2^{-1}(\{-3,3\})$;
	\item\label{11} all the values in $(1,2)$ are regular values or Morse critical values of index $2$ for the restriction of $x_2$ to $\eta(t)$;
	\item\label{12} all the values in $(2,3)$ are regular values for the restriction of $x_1$ to $\eta(t)$ or Morse critical values of Morse index $1$ or $2$ (the former possibility happens only in the step from \ref{s08} to \ref{s09});
	\item\label{13} $\eta(t)\cap x_1^{-1}((4,5)) \subset \{(x,y,z)\in T\mid ||(y,z)||<1/\sqrt{2},\, y\in (4,5)\}$;
	\item\label{14} in the surface $\eta(1)$, the circles $x_2^{-1}(\{-3,3\})\cap \eta(1)$ are in the same connected component. 
\end{enumerate}

\begin{figure}
\centering
\subfloat[]{\label{s01}\includegraphics[width=3.9cm,angle=90]{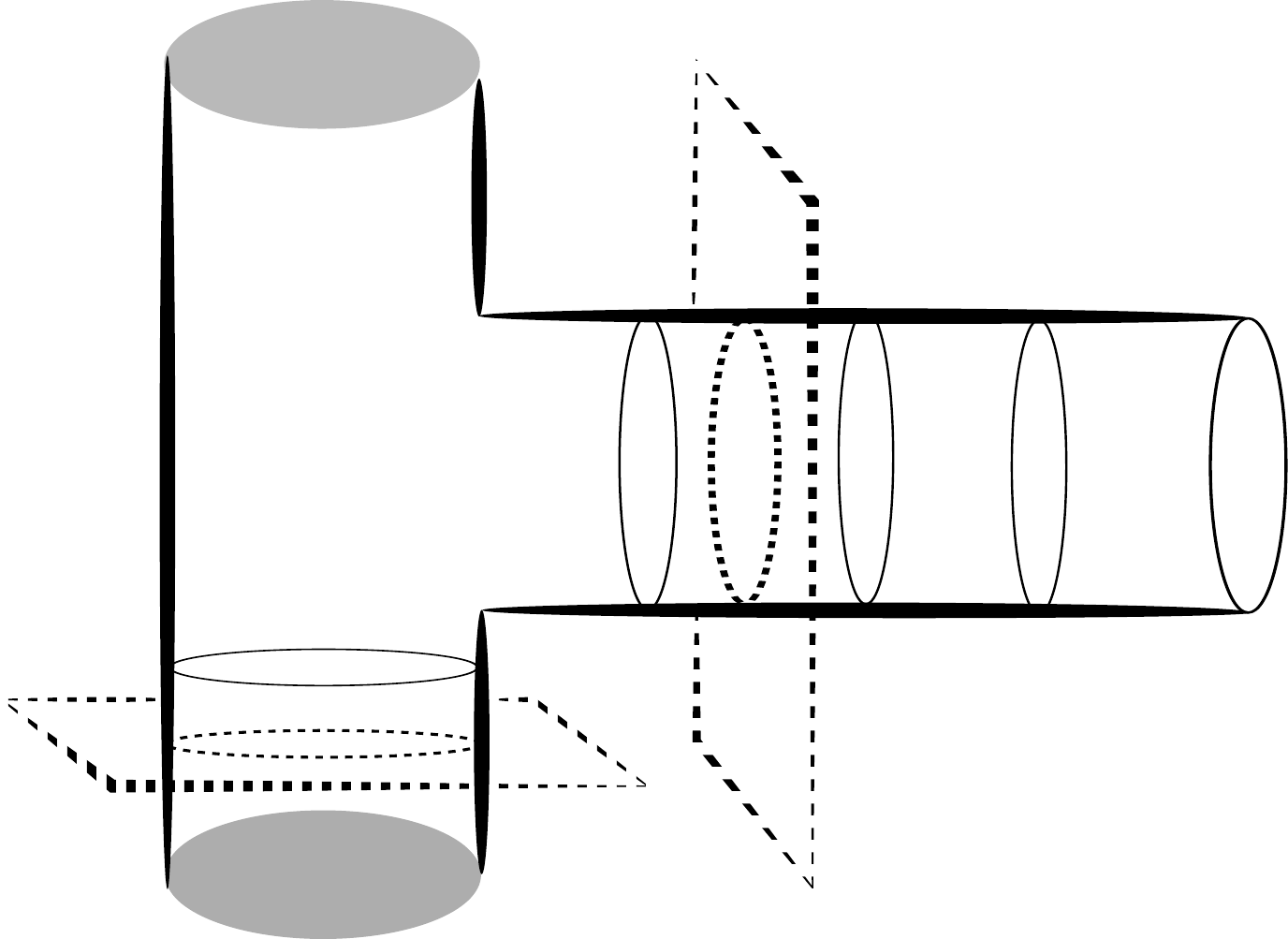}}
\subfloat[]{\label{s05}\includegraphics[width=3.9cm,angle=90]{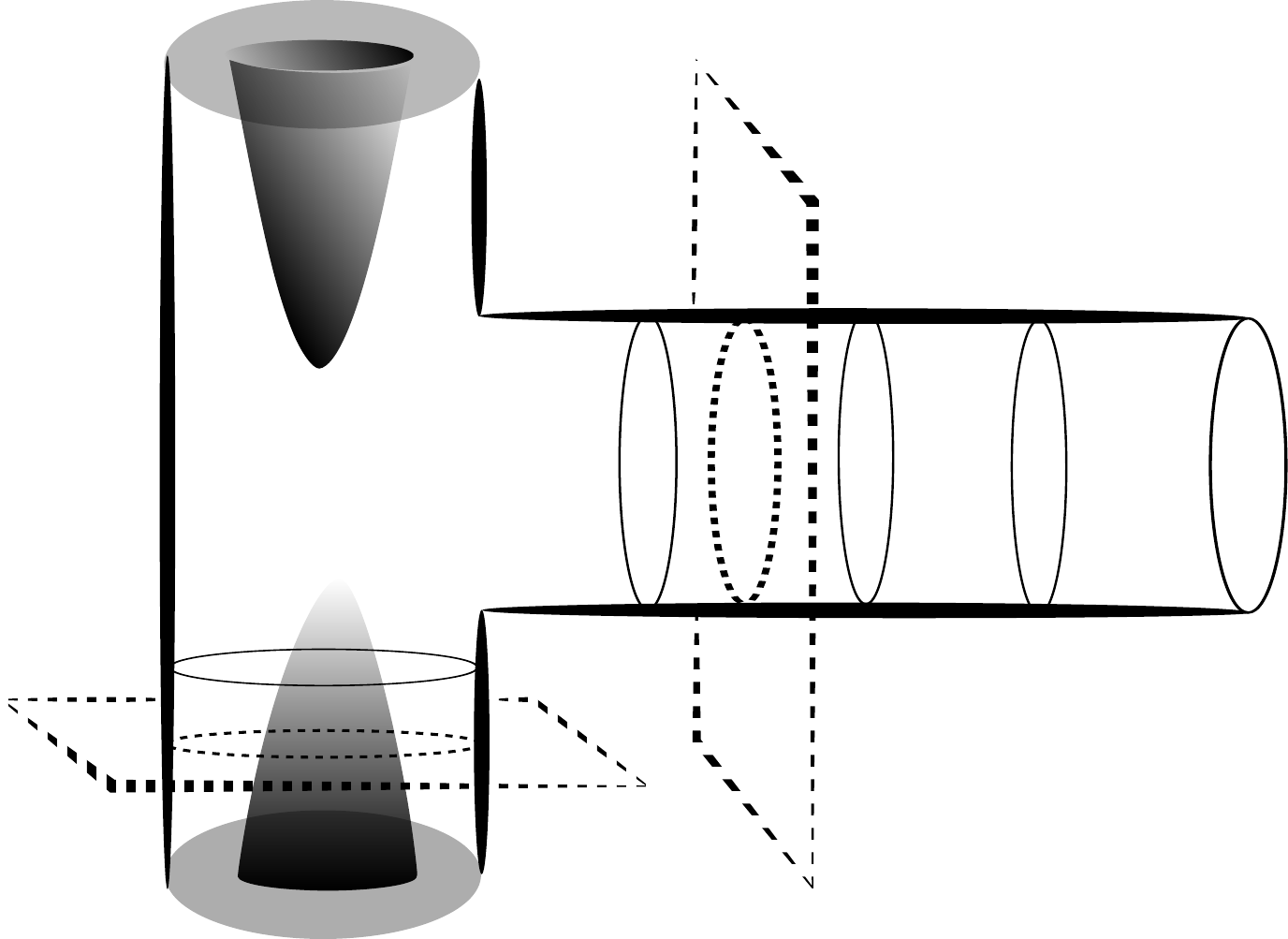}}\\
\subfloat[]{\label{s06}\includegraphics[width=3.9cm,angle=90]{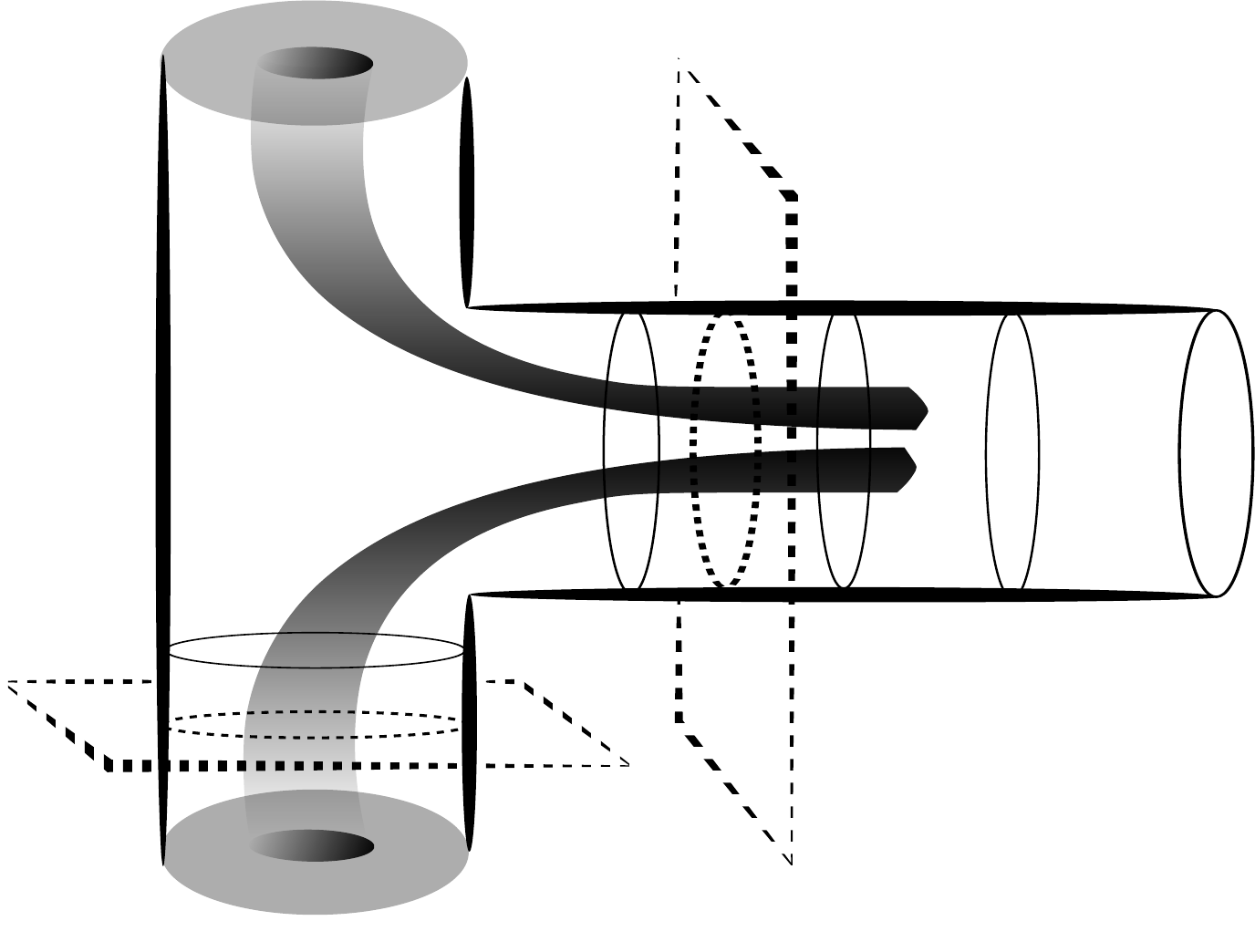}}
\subfloat[]{\label{s07}\includegraphics[width=3.9cm,angle=90]{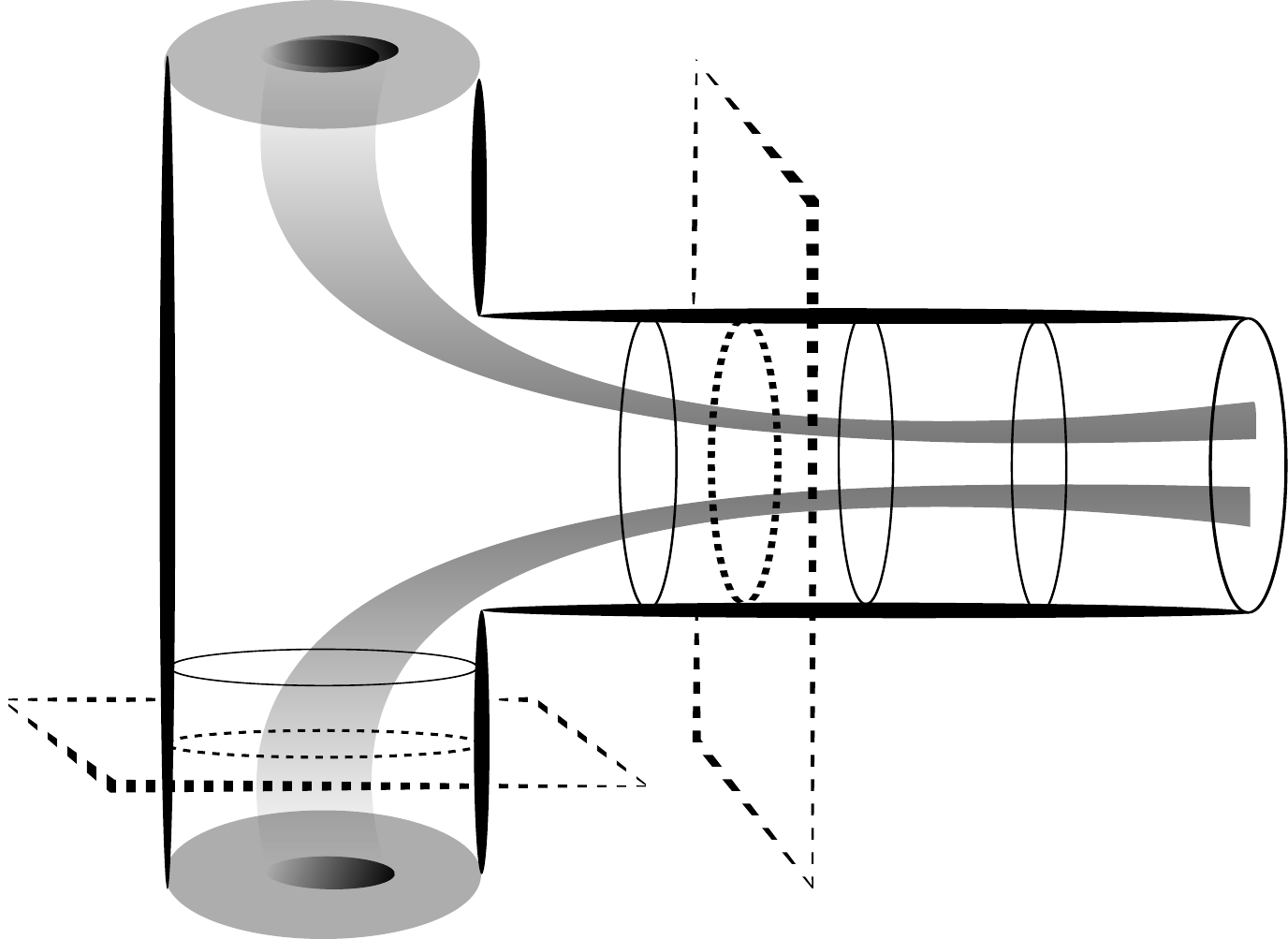}}\\
\subfloat[]{\label{s08}\includegraphics[width=3.9cm,angle=90]{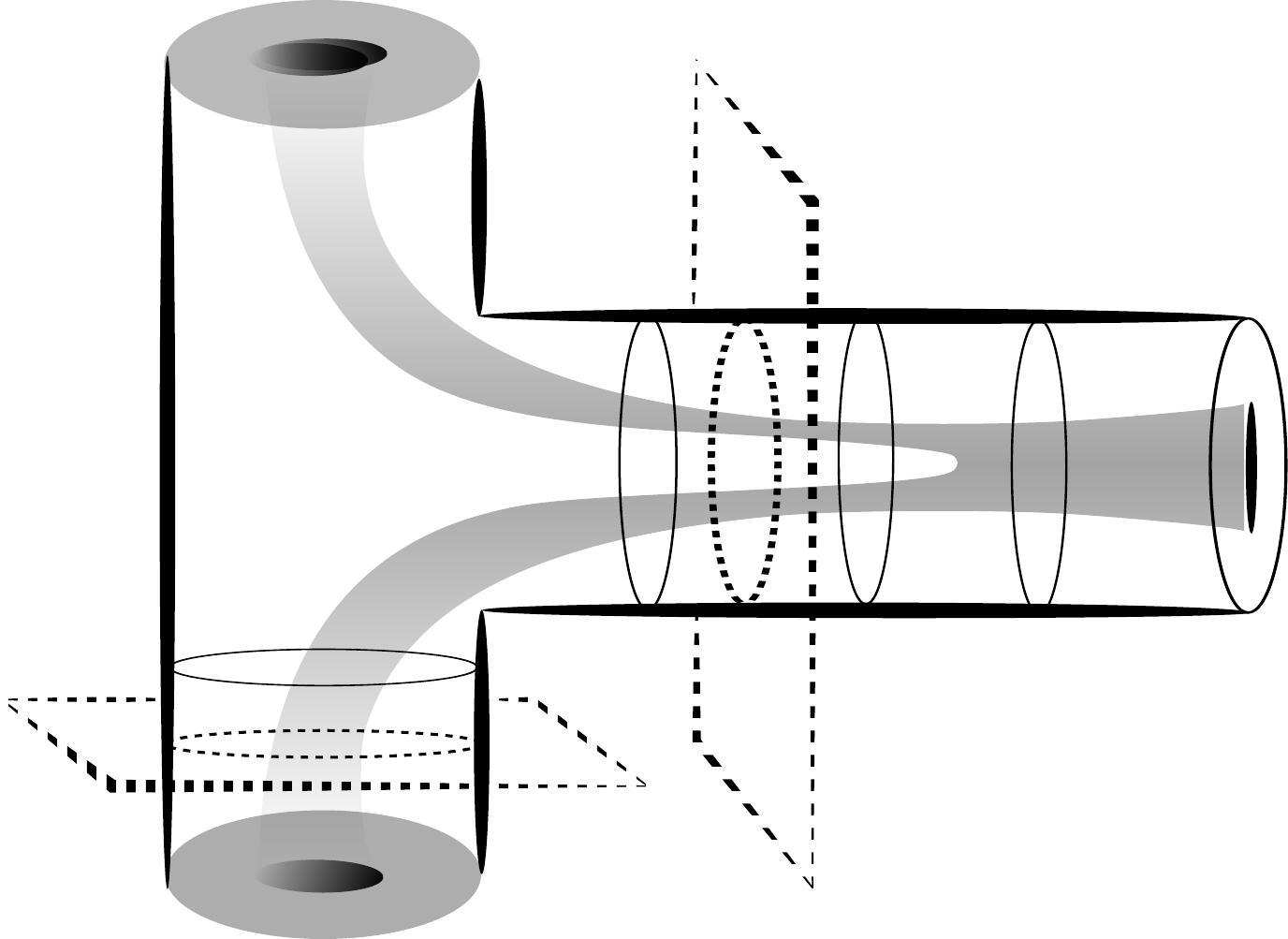}}
\subfloat[]{\label{s09}\includegraphics[width=3.9cm,angle=90]{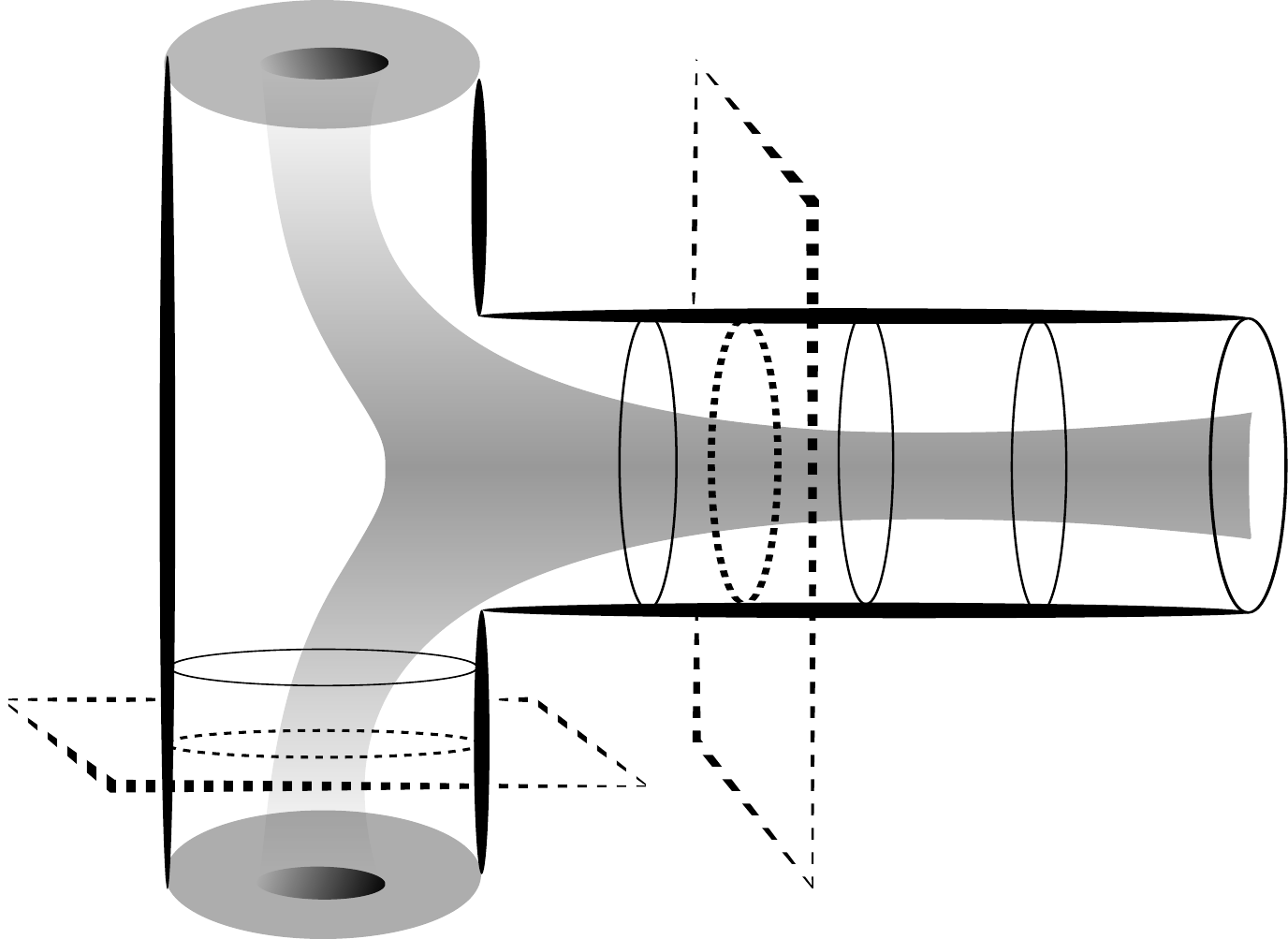}}
\caption{The path $\eta$ in the surgery movement. The shadowed surface at the top of \eqref{s01} is $\gamma(t)$, starting with $\gamma(0) = x_2^{-1}(\{-3,3\})$. The dotted planes are $e^{-1}(N_{a_j})$, and are still planes because of condition \ref{6} of the local surgery data.}
\label{figure:surgerynew}
\end{figure}

Now, let $V\in \Psi(T)$ be the union of the balls $V_0 = x_2^{-1}(\{-3,3\})$ and some surface $V_1\subset \{(x,y,z)\in T\mid x\in (4,5)\}$. We define a path $\phi_V\colon I\rightarrow \Psi(T)$ as 
\[\phi_V(t) = \begin{cases} H_{2t}(V) & \text{if } t\in [0,1/2],\\ H_1(V_1)\cup \eta(2t-1)& \text{if } t\in [1/2,1].\end{cases}\]
property \ref{23} of $h_t$ assures that both paths glue well: $H_1(V) = H_1(V_1)\cup V_0 = H_1(V_1)\cup \eta(0)$. Property \ref{22} for $h_t$ and property \ref{13} for $\eta$ assure that the union $H_1(V_1)\cup \eta(2t-1)$ is a union of disjoint surfaces, hence a surface. Hence the path is well-defined. We will use the following properties of this path:
\begin{enumerate}
	\item\label{30} $\phi_{V}(0) = V$ by property \ref{21} of $h_t$;
	\item\label{31} $\phi_{V}(1)\cap x_2^{-1}(\{-3,3\})$ is connected, by property \ref{14} of $\eta$.
\end{enumerate}

If we are given a set $V_\Lambda$ of surfaces $V_\lambda\subset \Psi(\{\lambda\}\times T)$ indexed by $\lambda$, we denote by $\phi_{V_\Lambda}(t)$ the result of performing $\phi_{V_\lambda}(t)$ in each $\lambda\times T$.

Now suppose we are given a surgery datum $Q$  for $w$, and let us define a path $\Phi_Q$ in $\Dn{M_{\infty,b}}{\xi}$ starting at $w= (W,a_0,\ldots,a_i)$ as 
\[\Phi_Q(t) = (W_Q(t),a_0,\ldots,a_i),\]
 where
\[W_Q(t)\cap e = e\phi_{e^{-1}(W)}(t), \qquad W_Q(t)\setminus e = W\setminus e.\]
There are five things to check for each $\lambda\in \Lambda$ in order to verify that this path is well-defined. First, that $e^{-1}_\lambda(W)$ is the union of $V_0$ and some surface $V_1$ as above is granted by conditions \ref{1} and \ref{2} of the surgery data, hence $\phi_{e^{-1}_\lambda(W)}$ is well-defined. Second, that $\Phi_Q(0) = w$ follows from property \ref{30} of $\phi_V$. Third, that the union of the two pieces of $W_Q(t)$ is indeed a surface is guaranteed by property \ref{24} of $h_t$. Fourth: as described, the embedding $e_\lambda$ does not induce a map $\{\lambda\}\times \Psi(T)\rightarrow \Psi(M_{\infty,b})$. Condition \ref{5} of the surgery data and properties \ref{23} and \ref{24} of $h_t$ grant that the precomposition $I\rightarrow \Psi(T)\rightarrow \Psi(M_\infty,b)$ with $\phi_{e^{-1}_\lambda(W)}$ is continuous. In other words, they grant that the surface $W_Q(t)\subset M_{\infty,b}$ is closed in $M(\infty)$ and that $W\cap M_{a,b}$ is compact. Fifth, that $(a_0,\ldots,a_i)$ are regular values or Morse critical points of index $1$ or $2$ is a consequence of properties \ref{11} and \ref{12} of the path $\eta$, together with the following consequences of conditions \ref{5} and \ref{6} of the surgery data:
\begin{enumerate}
\item If $x_2e^{-1}_\lambda(a_j)\in (-2,-1)$, then $p_{W_Q(t)}^{-1}(a_j) = x_2^{-1}(b_j)$ for some $b_j\in (1,2)$.
\item If $x_1e^{-1}_\lambda(a_j)\in (2,3)$, then $p_{W_Q(t)}^{-1}(a_j) = x_1^{-1}(b_j)$ for some $b_j\in (2,3)$.
\item $\frac{\partial}{\partial x_1} p_We_\lambda(x,y,z) > 0$ if $y\in (-2,-1)$.
\item $\frac{\partial}{\partial x_2} p_We_\lambda(x,y,z) < 0$ if $x\in (2,3)$.
\end{enumerate}

Finally, from conditions \ref{2} and \ref{3} in the definition of surgery datum and property \ref{31} of $\phi_{V}$, it follows that $P(\Phi_Q(1))\cup R(\Phi_Q(1))$ is the empty set, hence $\Phi_Q(1)\in \Dn{M_{\infty,b}}{\xi}$. 
\qedhere
\end{proof}

\begin{remark} This move is a simplified version of the one used in \cite{GMTW}. The one used there is more powerful and extends to surfaces with any tangential structure. Sadly, that move needs to push parts of the surface to both $+\infty$ and $-\infty$, while here we are only allowed to push things to $+\infty$.
\end{remark}

\subsection{Global surgery move}

We will now construct a bi-semi-simplicial space $\H_{\bullet,\bullet}$ with an augmentation to $\Dn{M_{\infty, b}}{\xi}$ which, over each simplex of $\Dn{M_{\infty, b}}{\xi}$, consists of certain tuples of local surgery data. This will allow us to compare it to $\Dpn{M_{\infty, b}}{\xi}$ by ``doing surgery'' in an appropriate way.

\begin{df} Let $\H_{\bullet,\bullet}$ be the bi-semi-simplicial space whose space of $(i,j)$-simplices is the space of tuples $(w,Q_0,\ldots,Q_j,s_0,\ldots,s_j)$ where 
\begin{enumerate}
\item $w$ is an $i$-simplex in $\Dn{M_{\infty, b}}{\xi}$;
\item each $Q_q$ is a local surgery datum for $w$;
\item\label{gs:3} the embeddings in $Q_0,\ldots, Q_j$ are pairwise disjoint;
\item $(s_0,\ldots,s_j)\in [0,1]^{j+1}$.
\end{enumerate}
The $(p,\bullet)$th face map forgets the regular value $a_p\in w$ and the $(\bullet,q)$th face map is
$$\partial_{\bullet,q}(w,Q_0,\ldots, Q_i,s_0,\ldots,s_i) = (\Phi_{Q_q}(s_q), Q_0,\ldots,\hat{Q}_q,\ldots,Q_i,s_0,\ldots,\hat{s}_q,\ldots,s_i).$$
\end{df}
There is an augmentation map $\epsilon_{\bullet,\bullet}$ to $\Dn{M_{\infty, b}}{\xi}$ given by performing the surgery $Q_q$ on $w$ up to time $s_q$ for all $q$ and forgetting all the surgery data. Let $\H_{\bullet,\bullet}^1$ be the bi-semi-simplicial subspace of those simplices such that $s_0=\ldots=s_j=1$. Note that by Proposition \ref{prop:local-surgery} the restriction $\epsilon_{\bullet,\bullet}^1$ of $\epsilon_{\bullet,\bullet}$ to this subspace gives an augmentation onto $\Dpn{M_{\infty, b}}{\xi}$ and the following diagram commutes:
\begin{equation}\label{eq:225}
\begin{gathered}
\xymatrix{
\H^1_{\bullet, \bullet}\ar[r]\ar[d]^{\epsilon_{\bullet,\bullet}^1} &\H_{\bullet, \bullet}\ar[d]^{\epsilon_{\bullet,\bullet}} \\
\Dpn{M_{\infty, b}}{\xi} \ar[r] & \Dn{M_{\infty, b}}{\xi}.
}
\end{gathered}
\end{equation}

\begin{proposition}\label{prop:surgery} If $M$ has dimension at least $4$, the inclusion of $\H_{\bullet,\bullet}^1$ into $\H_{\bullet,\bullet}$ and the augmentation maps are weak homotopy equivalences after geometric realisation.
\end{proposition}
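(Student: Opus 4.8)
The plan is to establish the three asserted equivalences --- the inclusion $|\H^1_{\bullet,\bullet}| \to |\H_{\bullet,\bullet}|$ and the two augmentations $|\epsilon_{\bullet,\bullet}|$ and $|\epsilon^1_{\bullet,\bullet}|$ --- separately, following \cite[\S6]{GR-W2}. Since the geometric realisation of a bi-semi-simplicial space may be computed one simplicial direction at a time, it is enough in each case to fix the first coordinate (the tuple $a_0<\cdots<a_i$) and realise in it at the end; and by an equivariance argument as in Section \ref{section:retractile} --- the spaces involved are locally retractile for a suitable diffeomorphism group and the forgetful and augmentation maps are equivariant --- these augmentations are levelwise fibrations, so that by Criterion \ref{criterion1} the problem reduces to showing that a semi-simplicial space of surgery data lying over a fixed simplex is weakly contractible.

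For $\epsilon_{\bullet,\bullet}$ the content is that over each simplex $w = (W,a_0,\ldots,a_i)$ of $\Dn{M_{\infty,b}}{\xi}$ the semi-simplicial space of local surgery data is weakly contractible. The disjointness requirement \ref{gs:3} gives this the structure of a topological flag complex, and Criterion \ref{criterion2} (with Remark \ref{rem:criterion2}) reduces the claim to two general-position statements: that every $w$ admits some local surgery datum, and that any finite family of local surgery data for a fixed $w$ admits a further one disjoint from all of them. Both amount to embedding finitely many disjoint copies of the three-dimensional slab $T$ --- each built around an arc running from the distinguished component $W_0$ to a ``bad'' component $\omega \in P(w) \cup R(w)$ and then out to infinity, meeting $W$ only in the way prescribed by Definition \ref{def:local-surgery} --- and this is precisely where the hypothesis $\dim M \ge 4$ is needed, since an embedded arc then generically misses the surface $W$ (as $1 + 2 < \dim M$) and arcs belonging to different data are generically disjoint. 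Proposition \ref{prop:local-surgery} and the interpolation parameters $s_q \in [0,1]$ then supply the contracting homotopies. The augmentation $\epsilon^1_{\bullet,\bullet}$ is treated identically with all surgeries completed: Proposition \ref{prop:local-surgery} ensures that $\Phi_Q(1)$ lies in $\Dpn{M_{\infty,b}}{\xi}$, and since the data are disjoint, performing several completed surgeries again lands in $\Dpn{M_{\infty,b}}{\xi}$, so $\epsilon^1_{\bullet,\bullet}$ is well defined and the same argument applies.

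The inclusion $\H^1_{\bullet,\bullet} \hookrightarrow \H_{\bullet,\bullet}$ is the soft part: the surgery-parameter coordinates $(s_0,\ldots,s_j) \in [0,1]^{j+1}$ contribute a contractible factor, and one checks, compatibly with the bi-semi-simplicial structure (the only point requiring care, since the face map $\partial_{\bullet,q}$ itself performs the surgery $Q_q$ up to time $s_q$ and then discards it), that $|\H_{\bullet,\bullet}|$ collapses onto $|\H^1_{\bullet,\bullet}|$; this is carried out exactly as in \cite[\S6]{GR-W2}. Granting the three equivalences, Proposition \ref{prop:Surgery} follows: the square \eqref{eq:225} has weakly equivalent top, left and right edges after realisation, hence a weakly equivalent bottom edge $|\Dpn{M_{\infty,b}}{\xi}| \to |\Dn{M_{\infty,b}}{\xi}|$; combining this with Lemma \ref{lem:natural} and taking the colimit over $b$ shows that \eqref{eq:SurgeryProp1} is a weak homotopy equivalence, and \eqref{eq:SurgeryProp2} is obtained by running the same construction with $N_{(0,\infty)}$ in place of $M_{\infty,b}$.

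The hard part, I expect, is the content of the second paragraph: controlling the explicit surgery move of Proposition \ref{prop:local-surgery} in the presence of all the bookkeeping encoded in $\Dn{M_{\infty,b}}{\xi}$ and $\Dpn{M_{\infty,b}}{\xi}$ --- keeping each $a_j$ a regular value or a level of Morse critical points of index $\ge 1$ throughout the move, and achieving the surjectivity-on-$\pi_0$ and connectivity conditions --- and doing all this uniformly and disjointly over a whole finite family of surgery data at once. That it can be arranged with simplicial maps only, instead of the push-to-$\pm\infty$ mechanism of \cite{GMTW}, is what forces the bi-semi-simplicial object $\H_{\bullet,\bullet}$ and the parameters $s_q$, and is the technical heart of the section.
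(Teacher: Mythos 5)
Your overall plan — show three of the four maps in the square \eqref{eq:225} are weak equivalences and deduce the fourth — is the right shape, and the flag-complex idea is the correct driving mechanism. But there are two genuine gaps. First, you propose to apply Criterion \ref{criterion2} to the space of surgery data with the pairwise-disjointness condition \ref{gs:3} as stated, which requires the full three-dimensional slabs $e_q(\Lambda_q\times T)$ to be disjoint. To produce an orthogonal $0$-simplex one then needs a new arc (the core $T'$, which is $1$-dimensional) to be disjoint not merely from the old arcs and from $W$ — which is what your general-position count addresses — but from the old $3$-dimensional tubes; when $\dim M = 4$ a generic $1$-dimensional core meets a $3$-dimensional tube in a nonempty discrete set, so the disjointness you need cannot be obtained by perturbation and the argument fails in the stated range. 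The paper gets around this by introducing the auxiliary bi-semi-simplicial space $\H'_{\bullet,\bullet}$ in which only the cores $e_q(\Lambda_q\times T')$ are required to be pairwise disjoint (so general position suffices for $\dim M \ge 3$), applying Criterion \ref{criterion2} there via Lemma \ref{lemma:local-sections}, and then showing that the inclusion $\H^0_{\bullet,\bullet}\hookrightarrow\H'_{\bullet,\bullet}$ is a levelwise weak equivalence by a ``shrink the tube'' deformation and a compactness argument on relative homotopy classes. This intermediate step is not cosmetic: it is what makes the dimension hypothesis $\dim M \ge 4$ correct.

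Second, the reduction via Criterion \ref{criterion1} assumes the augmentations are levelwise fibrations, which you justify by retractility. This is not established, and it is not obvious: the augmentation $\epsilon_{i,j}$ performs partial surgeries on the underlying surface, so the equivariance/retractility argument of Section \ref{section:retractile} does not apply verbatim. The paper avoids the issue by using Criterion \ref{criterion2} directly, which requires only local sections of the augmentation (proved by hand in Lemma \ref{lemma:local-sections}) rather than the fibration property. Finally, a smaller point: the paper does not treat $\epsilon^1_{\bullet,\bullet}$ ``identically'' to $\epsilon_{\bullet,\bullet}$; it observes that over $\Dpn{M_{\infty,b}}{\xi}$ (where $P(w)\cup R(w)=\emptyset$) the \emph{empty} surgery datum is admissible, which supplies a simplicial contraction of $\H^1_{i,\bullet}\to\Dpn{M_{\infty,b}}{\xi}_i$ for each $i$. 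This is a cleaner and shorter argument than re-running the flag-complex machinery, and it is worth noticing that the empty datum is exactly unavailable over a general simplex of $\Dn{M_{\infty,b}}{\xi}$, which is why $\epsilon_{\bullet,\bullet}$ requires the harder argument.
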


The first part of Proposition \ref{prop:Surgery} now follows from the commutative diagram
\begin{equation*}
\begin{gathered}
\xymatrix{
\vert \Dp{M_{\infty, b}}{\xi}\vert  \ar[d]\ar[r] & \vert \D{M_{\infty, b}}{\xi}\vert  \ar[d]\\
\vert \Dpn{M_{\infty, b}}{\xi}\vert  \ar[r] & \vert \Dn{M_{\infty, b}}{\xi}\vert ,
}
\end{gathered}
\end{equation*}
after taking the limit when $b\to\infty$, since the vertical maps are equivalences by Lemma \ref{lem:natural} and the lower map is an equivalence by \eqref{eq:225} and Proposition \ref{prop:surgery}. As we remarked earlier, the second part of Proposition \ref{prop:Surgery} is proved similarly.

\begin{proof}[Proof of Proposition \emph{\ref{prop:surgery}}] It is clear that the inclusion $\H^1_{\bullet,\bullet}\rightarrow \H_{\bullet,\bullet}$ is a levelwise equivalence. To see that the augmentation map $\epsilon_{\bullet,\bullet}^1$ is a homotopy equivalence after geometric realisation, we notice that the augmented semi-simplicial space $\epsilon_{i,\bullet}^1: \H^1_{i,\bullet} \to D_\partial^\natural(M_{\infty,b};\xi)_i$ has a simplicial contraction, by adding the empty surgery data.

For the map $\epsilon_{\bullet,\bullet}$, let $\H^0_{\bullet,\bullet}$ be the semi-simplicial subspace of $\H_{\bullet,\bullet}$ where the simplices are required to have all $s_i$ equal to 0, and let $\H'_{\bullet,\bullet}$ be the semi-simplicial space defined as $\H_{\bullet,\bullet}^0$, but replacing condition \ref{gs:3} in the definition of $\H_{\bullet,\bullet}$ by 
\begin{itemize}
	\item[(iii$^\prime$)] The restrictions of the embeddings $e_q$ in each $Q_q = (\Lambda_q,e_q)$ to the subspace $\Lambda\times T'\subset \Lambda\times T$ are pairwise disjoint.
\end{itemize} 
Notice that $\H^0_{\bullet,\bullet}\subset \H'_{\bullet,\bullet}$ and the following diagram is commutative:
\[\xymatrix{
\H'_{\bullet,\bullet} \ar[dr]_{\epsilon_{\bullet,\bullet}'} & \H^0_{\bullet,\bullet}\ar[d]^{\epsilon_{\bullet,\bullet}^0}\ar[l]\ar[r] & \H_{\bullet,\bullet}\ar[dl]^{\epsilon_{\bullet,\bullet}} \\
&\Dn{M_{\infty, b}}{\xi}.&
}\]
We next prove that the following statements are true, concluding that the augmentation map $\epsilon_{\bullet,\bullet}$ for $\H_{\bullet,\bullet}$ is a homotopy equivalence after geometric realisation, hence finishing the proof of this proposition.
\begin{enumerate}[(i)]
	\item The inclusion of $\H^0_{\bullet,\bullet}$ into $\H_{\bullet,\bullet}$ is a levelwise homotopy equivalence.
	\item  The inclusion of $\H^0_{\bullet,\bullet}$ into $\H'_{\bullet,\bullet}$ is a levelwise homotopy equivalence.
	\item The augmentation map $\epsilon_{\bullet,\bullet}'$ is a homotopy equivalence.

\end{enumerate}

Statement (i) is clear. For statement (ii), we will prove that the inclusion $\H^0_{\bullet,\bullet}\rightarrow \H'_{\bullet,\bullet}$ is a levelwise weak homotopy equivalence. Consider the deformation $h\colon \H'_{i,j}\times (0,1]\rightarrow \H'_{i,j}$ that sends a tuple $(w,Q_0,\ldots,Q_i)$ to the tuple $(w,h_t(Q_0),\ldots,h_t(Q_i))$, where $h_t(Q_q) = (\Lambda_q,h_t(e_q))$ and $h_t(e_q)(x,y,z) = h_t(tx,y,tz)$. Under this deformation, any point eventually ends up, and stays, in the subspace $\H^0_{i,j}$. If $f \colon (D^n, S^{n-1}) \to (\H'_{i,j}, \H^0_{i,j})$ represents a relative homotopy class, then because $D^n$ is compact the map $h(-, t)\circ f$ has image in $\H^0_{i,j}$ for some $t$, so the homotopy class of $f$ is trivial.

For statement (iii), we notice that $\H'_{i,\bullet}\rightarrow \DDn(M_{\infty, b};\xi)_i$ is an augmented topological flag complex, so we may apply Criterion \ref{criterion2} to show that it is a weak homotopy equivalence. Then $\epsilon'_{\bullet,\bullet}$ will be a levelwise equivalence in the $i$-direction, hence a weak homotopy equivalence after realisation.

We will prove in Lemma \ref{lemma:local-sections} that the augmentation map is surjective and has local sections. Moreover, given $w \in D_\partial^\natural(M_{\infty,b};\xi)_i$ and a non-empty finite collection $(w,Q_0),\ldots,(w,Q_j)$ of $(i,0)$-simplices over $w$, as the dimension of $M$ is $>2$, we can perturb the restriction $e_{0|\Lambda\times T'}$ of $e_0\in Q_0$ to be disjoint from $Q_0,\ldots,Q_j$, and any extension $e_{j+1}$ to $\Lambda\times T$ of this perturbation will define a $0$-simplex orthogonal to the given ones. \qedhere
\end{proof}

\begin{lemma}\label{lemma:local-sections} The augmentation map $\epsilon_{i,0}' \colon \H'_{i,0}\rightarrow \DDn(M_{\infty, b};\xi)_i$ is surjective and has local sections.
\end{lemma}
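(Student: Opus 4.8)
The plan is to show that the augmentation map $\epsilon_{i,0}'$ is surjective by constructing, for every $i$-simplex $w$ of $\DDn(M_{\infty,b};\xi)_i$, a local surgery datum $Q$ for $w$ with the additional property required by $\H'_{i,0}$ (which for a single datum is vacuous, since condition (iii$'$) only constrains pairwise disjointness of the $T'$-parts when there is more than one datum). So the substance is really: construct a single local surgery datum for any $w$, and then show this can be done continuously in a neighbourhood of $w$, giving the local sections.

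First I would fix an $i$-simplex $w = (W,a_0,\ldots,a_i)$ of $\DDn(M_{\infty,b};\xi)_i$ and recall the finite sets $R(w)$ (connected components of $W \cap M_{a_0,b}$) and $P(w)$ (components of the intermediate cobordisms not touching their lower boundary). For each $\omega \in R(w) \cup P(w)$ I would choose an arc in $M_{\infty,b}$ running from a point of $W_0 \cap M_{a_0,b}$ (the component containing $\xi$) out to a point of $\omega$, and then out to $+\infty$ along $N_{[a_i,\infty)}$, arranged so that the arc is transverse to all the level sets $N_{a_j}$ and crosses each one either ``going up'' in the $y$-direction or ``going down'' in the $x$-direction in the model $T$ --- this is exactly condition \ref{6} in Definition \ref{def:local-surgery}. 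Since $\dim M \geq 4$, these finitely many arcs (one for each $\omega$, indexed by a set $\Lambda$) can be chosen pairwise disjoint and disjoint from $W$ except at their two specified endpoints on $W$, using general position; and each arc admits a tubular neighbourhood modelled on $\{\lambda\}\times T$ meeting $W$ precisely in the two discs $x_2^{-1}(\{-3,3\})$. Thickening the arcs to such embeddings $e\colon \Lambda \times T \to M_{\infty,b}$ and checking conditions \ref{1}--\ref{6} one by one --- conditions \ref{1}, \ref{2}, \ref{5} are about the behaviour near $W$ and near infinity and are built into the choice of tube, conditions \ref{3}, \ref{4} record the endpoints of the arcs, and condition \ref{6} is the transversality arrangement above --- produces the required local surgery datum. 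This establishes surjectivity.

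For local sections, I would argue that all the choices just made can be carried out continuously in $W$: given $w$ with datum $Q = (\Lambda,e)$, for nearby $W'$ the embedding $e$ still meets $W'$ in a manifold isotopic to $\Lambda \times (T \cap x_2^{-1}(\{-3,3\}))$ (after a small ambient isotopy supported near $e(\Lambda\times T)$, which exists because $W'$ is $C^\infty$-close to $W$), and the transversality conditions \ref{6} are open. So one can define $s(W') = (W', \phi(W')(Q))$ where $\phi(W')$ is a continuously varying ambient isotopy correcting $e$ to a genuine surgery datum for $W'$; shrinking the neighbourhood if necessary so that $a_0,\ldots,a_i$ remain of the allowed type for $W'$ and the components of $R$ and $P$ do not merge or split, this is a section with $s(w) = (w,Q)$. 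The main obstacle is the bookkeeping in this last step: one must check that the correspondence $W \rightsquigarrow (R(w), P(w))$ is locally constant on the relevant stratum and that the isotopy correcting $e$ can be chosen continuously and compatibly with conditions \ref{1}--\ref{6} simultaneously --- in particular that condition \ref{5}, the ``limit to $\infty$'' condition, is preserved, which uses that $W' \cap M_{a,b}$ stays compact for nearby $W'$ because $W$ and $W'$ agree outside a compact set. None of this is deep, but it is where the care lies.
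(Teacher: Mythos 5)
Your surjectivity argument follows the same route as the paper: pick one arc per $\omega \in R(w)\cup P(w)$ from $W_0$ out to $\omega$ and off to $+\infty$, make them pairwise disjoint and disjoint from $W$ by general position (using $\dim M \geq 4$), thicken to embeddings of $\Lambda\times T$, and then fix up conditions one at a time, with (vi) by a final deformation. That part is fine.

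The local-sections argument has a genuine gap, precisely at the step you flag as ``the main obstacle.'' You claim one can shrink the neighbourhood so that ``the components of $R$ and $P$ do not merge or split,'' i.e.\ that $w' \mapsto (R(w'), P(w'))$ is locally constant (on ``the relevant stratum''). This is false, and no amount of shrinking fixes it. The space $\DDn(M_{\infty,b};\xi)_i$ allows each $a_j$ to be a Morse critical value of index $\geq 1$ for $p_W$. For $W'$ arbitrarily close to $W$, such a critical point can move strictly below $a_j$; this attaches a $1$-handle to $W'\cap(M\cup N_{[0,a_j)}\cup L_{[0,b]})$ relative to $W\cap(M\cup N_{[0,a_j)}\cup L_{[0,b]})$, which can \emph{merge} two components of $R(w)$ (or of an intermediate $P_{a_j,a_{j+1}}$). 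So $R$ and $P$ are not locally constant, and there is no Whitney-type stratum on which to restrict, since $\DDn$ is precisely the space in which critical values are permitted.

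What rescues the argument --- and what the paper's proof uses --- is that this failure goes only one way: for $w'$ near $w$, each relevant cobordism piece of $W'$ is obtained from the corresponding piece of $W$ by attaching $1$- and $2$-handles (and possibly subtracting handles from the top end), so $R(w')\cup P(w')$ is a quotient of (a subset of) $R(w)\cup P(w)$; the map is upper semi-continuous, not locally constant. And upper semi-continuity is enough, because condition (iv) of Definition \ref{def:local-surgery} only asks that for each $\omega' \in R(w')\cup P(w')$ there \emph{exists} some $\lambda$ with $e_\lambda(x_2^{-1}(3))\subset\omega'$: since each $\omega'$ contains (the image under the correcting diffeomorphism of) at least one old $\omega$, the old $\lambda$ serves, possibly with redundant tubes. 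You also leave the construction of the ``continuously varying ambient isotopy'' informal; the paper makes this precise by trivialising the fibre bundle $E\to U$ of surfaces-within-$M_{a,b}$ over a neighbourhood and then invoking $\Diff_c$-local retractibility of $\Emb(W\cap M_{a,b}, M_{\infty,b})$ to produce a continuous $\phi\colon U''\to \Diff_c(M_{\infty,b})$ moving $W\cap M_{a,b}$ onto $W'\cap M_{a,b}$, which is then applied to $e$. Your sketch is morally the same but would need this to be filled in as well.
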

\begin{proof}
First we show that $\epsilon_{i,0}'$ is surjective: if $w\in D_\partial^\natural(M_{\infty, b};\xi)_i$, let $\Lambda = P(w)\cup R(w)$. As $M$ is connected, it is clear that we may take a smooth map $e'\colon \Lambda\times T'\rightarrow M_{\infty,b}$ satisfying the restriction of conditions \ref{1}, \ref{2}, \ref{3}, \ref{4} and \ref{5} of the local surgery data to $T'$, except that of being an embedding and that of being disjoint from $W$ outside $e(x_1^{-1}((4,5)))$, $e(0,-3,0)$ and $e(0,3,0)$. As the dimension of $M$ is at least $4$, a small perturbation makes it satisfy the latter properties. Again, as the dimension is greater than $3$, we may thicken the embedding $e'$ to an embedding $e\colon \Lambda\times T\rightarrow M_{\infty,b}$ that satisfies all conditions except \ref{6}, and we may deform $e$ to satisfy this last condition.

Next, we show that $\epsilon_{i,0}'$ has local sections. Let $(w,(\Lambda, e)) \in \H'_{i,0}$. We need to find a neighbourhood $U$ of $w$ in $\DDn(M_{\infty, b};\xi)_i$ and a section $s\colon U\rightarrow \H'_{i,0}$ so that $s(w) = (w,(\Lambda, e))$. Write $w=(W, b_0, \ldots, b_i)$ and choose a regular value $a > b_i$ of $p_W$. Let $U$ be an open neighbourhood of $w$ in $\DDn(M_{\infty, b};\xi)_i$ for which $a$ remains regular. The space
$$E := \{((W', b_0, \ldots, b_i), x \in W' \cap M_{a,b}) \in U \times M_{a,b}\}$$
over $U$ is a fibre bundle, and so it is locally trivial. Choosing a trivialisation on a smaller neighbourhood $U'$ of $w$, we obtain a map
$$\psi\colon U' \lra \Emb(W\cap M_{a,b}, M_{\infty,b}),$$
and using the $\Diff_c(M_{\infty,b})$-locally retractile property of $\Emb(W\cap M_{a,b}, M_{\infty,b})$ we obtain an even smaller neighbourhood $U''$ and a map
$$\phi \colon U'' \lra \Diff_c(M_{\infty,b})$$
such that $\phi(W', b_0, \ldots, b_i)(W \cap M_{a,b}) = W' \cap M_{a,b}$ for $(W', b_0, \ldots, b_i) \in U''$.

We now attempt to define a section $s \colon U'' \to \H'_{i,0}$ by
$$s(W', b_0, \ldots, b_i) = ((W', b_0, \ldots, b_i), (\Lambda, \phi(W', b_0, \ldots, b_i)\circ e)).$$
To check that this is indeed a section, we must verify the six properties of Definition \ref{def:local-surgery} for these data. Properties \ref{1} and \ref{3} are immediate from the fact that inside $M_{a,b}$ the data $(W', (\Lambda, \phi(W', b_0, \ldots, b_i)\circ e))$ agree with the data $(W, Q)$ modified by a diffeomorphism of $M_{\infty,b}$. Property \ref{5} is automatic, and property \ref{2} holds at the point $w$ and is an open condition, so it also holds on some neighbourhood $w \in U''' \subset U''$.  Property \ref{6} holds after perhaps shrinking $U''$, as then the diffeomorphisms $\phi(U'')$ may be assumed to be supported away from $e \cap p^{-1}(\{b_0, \ldots, b_i\})$.

This leaves property \ref{4}, which follows from the important observation that if $w'$ is sufficiently close to $w$, then $P(w')$ and $R(w')$ can only be \emph{smaller} than $P(w)$ and $R(w)$; i.e., the amount of surgery we must do to obtain suitably connected surfaces is upper semi-continuous. More precisely, if $w'$ is sufficiently close to $w$ then $W' \cap (M \cup N_{[0,b_0)} \cup L_{[0,b]})$ is obtained from $W \cap (M \cup N_{[0,b_0)} \cup L_{[0,b]})$ by attaching 1- and 2-handles at $b_0$, and $p_{W'}^{-1}([b_i, b_{i+1}))$ is obtained from $p_{W'}^{-1}([b_i, b_{i+1}))$ by attaching 1- and 2-handles at $b_{i+1}$, or subtracting 1- and 2-handles at $b_i$, neither of which change the required connectivity properties.\qedhere
\end{proof}

\section{Stable homology of the space of surfaces in a closed manifold}\label{section:closed}
In this section we prove Theorem \ref{thm:MainH} for manifolds $M$ with empty boundary. Before doing so, we briefly study the set of path components of the space of sections $\Gamma_c(\mathcal{S}(TM) \to M)$ for such manifolds. We fix a complete Riemannian metric $\gg$ on $M$.

\subsection{Path components of \boldmath $\Gamma_c(\Ss(TM) \to M)$}\label{sec:PathComp}

The space $\Ss(TM)$ is a bundle of Thom spaces over $M$, with fibre over $p\in M$ given by $\mathrm{Th}(\gamma_2^\perp \to \Gr_2^+(T_pM))$, the Thom space of the orthogonal complement to the tautological bundle over the Grassmannian of oriented 2-planes in $T_pM$. Similarly, we can form the bundle of Grassmannians $q\colon \Gr_2^+(TM) \to M$, which comes equipped with a bundle injection $\gamma_2 \hookrightarrow q^*TM$ from the tautological bundle to the pullback of the tangent bundle of $M$. We let $\gamma_2^\perp \to \Gr_2^+(TM)$ denote the orthogonal complement to $\gamma_2$ in $q^*TM$.

There is a map
$$c\colon \Ss(TM) \lra \mathrm{Th}(\gamma_2^\perp \to \Gr_2^+(TM))$$
given by identifying all the points at infinity. If we choose an orientation of $TM$ there is an induced orientation of $\gamma_2^\perp$, hence a Thom class
$$u \in H^{d-2}(\mathrm{Th}(\gamma_2^\perp \to \Gr_2^+(TM));\bZ).$$
There is also an Euler class $e=e(\gamma_2) \in H^2(\Gr_2^+(TM);\bZ)$, and so a class
$$u\cdot e \in H^{d}(\mathrm{Th}(\gamma_2^\perp \to \Gr_2^+(TM));\bZ).$$
By abuse of notation, we use the names $u$ and $u \cdot e$ for the cohomology classes on $\Ss(TM)$ given by $c^*(u)$ and $c^*(u \cdot e)$ respectively.

There are maps
\[\begin{array}{rcccl}
\pi\colon \Gamma_c(\Ss(TM) \to M) &\longrightarrow& H_c^{d-2}(M;\bZ) &\longrightarrow & H_2(M;\bZ) \\
s &\longmapsto& s^*(u) &\longmapsto& \pi(s)\\[0.2cm]
\chi\colon \Gamma_c(\Ss(TM) \to M) &\longrightarrow& H_c^d(M;\bZ) &\longrightarrow & H_0(M;\bZ) \\
s &\longmapsto& s^*(u\cdot e) &\longmapsto& \chi(s)
\end{array}\]
obtained by pulling back the classes $e$ or $u\cdot e$ along a section, and then applying Poincar{\'e} duality.

\begin{lemma}
If $M$ is connected, then under the scanning map 
\[\s \colon \mathcal{E}_g(M) \lra \Gamma_c(\Ss(TM) \to M)\]
 we have 
\begin{align*}
\pi(\s([f \colon \Sigma_g \hookrightarrow M])) &= f_*([\Sigma_g]) \in H_2(M;\bZ)\\
\chi(\s([f \colon \Sigma_g \hookrightarrow M])) &= 2-2g \in \bZ = H_0(M;\bZ).
\end{align*}
\end{lemma}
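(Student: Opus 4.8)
The plan is to compute the two Poincar\'e duals directly by understanding what the scanning section $\s([f])$ looks like near the surface $f(\Sigma_g)$ and away from it, and then evaluating $s^*(u)$ and $s^*(u\cdot e)$ against a transversality/localisation argument.

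First I would observe that, by the very definition of the (non-affine) scanning map, for a point $p$ far from $f(\Sigma_g)$ the section $\s([f])$ takes the value $\infty$ in the fibre $\Ss(T_pM)$, while for $p$ in a tubular neighbourhood of $f(\Sigma_g)$ the section records the plane $T_qW$ tangent to the surface together with a normal displacement vector. Concretely, if we pick a tubular neighbourhood $\mathrm{exp}\colon N W \supset N^\epsilon W \hookrightarrow M$ of $W = f(\Sigma_g)$, then $\s([f])$ is supported (in the sense of differing from $\infty$) in this neighbourhood, and there it is the composite of the zero section $W \to \Gr_2^+(TM)$ classifying $TW$, followed by the fibrewise Thom collapse of the normal bundle $N^\epsilon W$. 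This identifies $\s([f])$, up to the contractible choices involved, with the composition of the Pontryagin--Thom collapse map $M_+ \to \mathrm{Th}(N W)$ with the bundle map $\mathrm{Th}(NW) \to \Ss(TM)$ covering $W \hookrightarrow M$ that on the Grassmannian factor classifies $TW \subset TM|_W$ and on the Thom factor is the identification $NW \cong \gamma_2^\perp|_{TW}$.

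Given this, the computation of $\pi$ is a standard Pontryagin--Thom / Thom-class manipulation. Pulling back the Thom class $u \in H^{d-2}(\mathrm{Th}(\gamma_2^\perp \to \Gr_2^+(TM)))$ along $c \circ \s([f])$ is the same as pulling back the Thom class of $NW$ along the collapse map $M_+ \to \mathrm{Th}(NW)$, which by definition is the Thom class of the normal bundle; its image in $H^{d-2}(M)$ is the cohomology class Poincar\'e dual to $[W] = f_*([\Sigma_g]) \in H_2(M;\bZ)$ (here using that $W$ has codimension $d-2$, and that $M$ has no boundary so compactly-supported Poincar\'e duality applies). Hence $\pi(\s([f])) = f_*([\Sigma_g])$.

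For $\chi$, I would proceed identically: $s^*(u\cdot e) = s^*(u) \cdot s^*(e)$, and the first factor is the Thom class pulled back from $\mathrm{Th}(NW)$ as above, while $s^*(e)$ restricted to the neighbourhood of $W$ is the Euler class of the tangent bundle $TW$ pulled back along the projection of the tubular neighbourhood. Multiplying the Thom class of $NW$ by (the pullback of) $e(TW)$ and applying Poincar\'e duality gives the pushforward to $H_0(M;\bZ) = \bZ$ of $e(TW) \in H^2(W;\bZ)$, i.e.\ $\langle e(TW), [\Sigma_g]\rangle = \chi(\Sigma_g) = 2 - 2g$. The only points requiring care, and the main obstacle, are: (a) pinning down precisely that the scanning section is genuinely concentrated near $W$ and agrees there with the Thom-collapse description, rather than merely being homotopic to something of that form --- this needs a careful unwinding of the affine-approximation formula for $\mathscr{S}_g^\tau$ in the introduction, or the equivalent reduction via $\Ss(V)\hookrightarrow\Psi(V)$ and Proposition \ref{prop:psiR}; and (b) keeping the Poincar\'e duality conventions consistent between the compactly-supported cohomology of $M$ and the cohomology of the closed surface $\Sigma_g$, so that the signs come out as stated. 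Neither is deep, but both are the kind of bookkeeping where errors creep in, so I would write them out explicitly.
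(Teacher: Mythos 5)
Your proposal is correct and follows the same underlying idea as the paper: localize the scanning section near $W=f(\Sigma_g)$, observe that $s^*(u)$ is Poincar\'e dual to $W$, and that $s^*(u\cdot e)$ is the Euler number of $TW$, hence $\chi(\Sigma_g)=2-2g$. The only cosmetic difference is that the paper phrases this via transversality of $s$ to the zero section $\Gr_2^+(TM)\subset \Ss(TM)$ and the Poincar\'e--Hopf theorem, while you factor the scanning section explicitly through the Pontryagin--Thom collapse $M_+\to\mathrm{Th}(NW)$; these are two ways of saying the same thing.
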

\begin{proof}
The cohomology class $u \in H^{d-2}(\Ss(TM);\bZ)$ is Poincar{\'e} dual to the class of the submanifold $\Gr_2^+(TM) \subset \Ss(TM)$, so if $s$ is a (suitably transverse) section, then $s^*(u)$ is Poincar{\'e} dual to the submanifold $s^{-1}(\Gr_2^+(TM))$. 

The cohomology class $u\cdot e \in H^{d}(\Ss(TM);\bZ)$ is Poincar{\'e} dual to the class of the submanifold $Z \subset \Gr_2^+(TM) \subset \Ss(TM)$, which is the zero set of a transverse section of $\gamma_2 \to \Gr_2^+(TM)$. Thus if $s$ is a (suitably transverse) section, then the class $s^*(u \cdot e)$ is Poincar{\'e} dual to the set of zeroes of a section of $Ts^{-1}(\Gr_2^+(TM))$ which is transverse to the zero section. The latter is $\chi(s^{-1}(\Gr_2^+(TM)))$ by the Poincar{\'e}--Hopf theorem.

The map obtained by scanning an embedded submanifold $f(\Sigma_g)$ is suitably transverse, and $s^{-1}(\Gr_2^+(TM)) = f(\Sigma_g)$, so the claimed identities hold.
\end{proof}

\begin{proposition}\label{prop:sections-components}
If $M$ is connected, so $H_0(M;\bZ) = \bZ$, then the map $\chi$ takes values in $2\bZ$. If $M$ is simply-connected and of dimension $d \geq 5$, then the map
$$\chi \times \pi \colon \pi_0(\Gamma_c(\Ss(TM) \to M)) \lra 2\bZ \times H_2(M;\bZ)$$
is a bijection.
\end{proposition}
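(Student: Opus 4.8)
The plan is to prove the two assertions by rather different methods: the first by transversality together with a Stiefel--Whitney class computation, the second by obstruction theory. \emph{Evenness of $\chi$.} Given a compactly supported section $s$, the first step is to make it transverse to both $\Gr_2^+(TM)\subset\Ss(TM)$ and to the submanifold $Z\subset\Gr_2^+(TM)$ (the zero locus of a generic section of $\gamma_2$) by a compactly supported homotopy, exactly as in the proof of the preceding lemma. Then $W:=s^{-1}(\Gr_2^+(TM))$ is a compact surface: it is contained in the compact support of $s$, since $\Gr_2^+(TM)$ is disjoint from the section at infinity, and it is closed in $M$. Let $\bar s:=s|_W\colon W\to\Gr_2^+(TM)$ and $\mathcal L:=\bar s^*\gamma_2\subset TM|_W$. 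Transversality identifies the normal bundle $\nu(W\subset M)$ with $\bar s^*\gamma_2^\perp$, and since $\gamma_2\oplus\gamma_2^\perp=q^*TM$ (with $q\colon\Gr_2^+(TM)\to M$ the projection) this gives a splitting $TM|_W=\mathcal L\oplus\nu$; moreover, as in the proof of the preceding lemma, $\chi(s)=\langle e(\mathcal L),[W]\rangle$. As $TM$ and $\gamma_2$ are oriented, $\gamma_2^\perp$ and hence $\nu$ carries a canonical orientation, so $W$ inherits an orientation. Comparing the two splittings $TM|_W=\mathcal L\oplus\nu=TW\oplus\nu$ by the Whitney sum formula — all first Stiefel--Whitney classes involved vanishing by orientability — yields $w_2(\mathcal L)=w_2(TW)$, and $w_2(TW)=0$ because $W$ is an oriented closed surface. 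Since $e(\mathcal L)\equiv w_2(\mathcal L)\pmod 2$, we conclude that $\chi(s)=\langle e(\mathcal L),[W]\rangle$ is even.

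\emph{Surjectivity of $\chi\times\pi$.} As $M$ is simply connected of dimension at least $5$, every $x\in H_2(M;\bZ)=\pi_2(M)$ is represented by an embedded $2$-sphere; ambient connected sum with a standardly embedded genus-$g$ surface inside a disjoint ball produces, for each $g\geq 0$, an embedded genus-$g$ surface $W_0$ with $[W_0]=x$, and by the preceding lemma $\chi(\mathscr{S}(W_0))=2-2g$ and $\pi(\mathscr{S}(W_0))=x$. This realises every element of $\{\,2-2g : g\geq 0\,\}\times H_2(M;\bZ)$. To realise $\chi=2+2m$ with $m\geq 1$, apply the scanning construction — which is equally defined for not necessarily connected surfaces — to $W'=W_0\sqcup S_1\sqcup\cdots\sqcup S_m$ with the $S_i$ disjoint embedded $2$-spheres in disjoint balls; the same computation as in the preceding lemma gives $\chi(\mathscr{S}(W'))=\chi(W')=2+2m$ and $\pi(\mathscr{S}(W'))=[W']=x$. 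Together with the first part this shows that $\chi\times\pi$ maps onto $2\bZ\times H_2(M;\bZ)$.

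\emph{Injectivity of $\chi\times\pi$.} This is the substantive point, and I would argue it by obstruction theory. The key input is the homotopy type of the fibre $F:=\Ss(\bR^d)$ through dimension $d$: as the Thom space of $\gamma_2^\perp\to\Gr_2^+(\bR^d)$ it is $(d-3)$-connected with $\widetilde{H}_{d-2}(F)=\bZ$, $\widetilde{H}_{d-1}(F)=0$, $\widetilde{H}_d(F)=\bZ$, and — because $w_2(\gamma_2)\neq 0$, equivalently $\mathrm{Sq}^2$ acts nontrivially on the Thom class $u$ — its $d$-skeleton is $S^{d-2}\cup_\eta e^d\simeq\Sigma^{d-4}\mathbb{CP}^2$. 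From this one computes $\pi_{d-2}(F)=\bZ$, $\pi_{d-1}(F)=0$, and $\pi_d(F)=\bZ$, the Hurewicz homomorphism $\pi_d(F)\to H_d(F)\cong\bZ$ being multiplication by $2$; also $u$ generates $H^{d-2}(F;\bZ)$ and $u\cdot e$ generates $H^d(F;\bZ)$. Since $M$ is simply connected, all coefficient systems below are untwisted. Let $s_0,s_1$ be compactly supported sections with $\chi(s_0)=\chi(s_1)$ and $\pi(s_0)=\pi(s_1)$. As $F$ is $(d-3)$-connected, $s_0$ and $s_1$ agree up to homotopy on the $(d-3)$-skeleton of $M$; the primary difference obstructing extension of such a homotopy over the $(d-2)$-skeleton is $s_0^*u-s_1^*u$, which is Poincaré dual to $\pi(s_0)-\pi(s_1)=0$; the obstruction to extending over the $(d-1)$-skeleton lies in $H^{d-1}(M;\pi_{d-1}F)=0$; and since $\pi_{d-1}(F)=0$ the resulting difference class $\mathfrak{o}(s_0,s_1)\in H^d(M;\pi_d F)$ is well defined, and $s_0\simeq s_1$ if and only if $\mathfrak{o}(s_0,s_1)=0$. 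Naturality along the map $F\to K(\bZ,d)$ representing $u\cdot e$ identifies the image of $\mathfrak{o}(s_0,s_1)$ under $H^d(M;\pi_d F)\to H^d(M;\bZ)$ with $s_0^*(u\cdot e)-s_1^*(u\cdot e)$, which is Poincaré dual to $\chi(s_0)-\chi(s_1)=0$; but by the Hurewicz computation this map $H^d(M;\pi_d F)\cong\bZ\to H^d(M;\bZ)\cong\bZ$ is multiplication by $2$, hence injective, so $\mathfrak{o}(s_0,s_1)=0$ and $s_0\simeq s_1$. For non-compact $M$ the same argument runs relative to a neighbourhood of infinity, with compactly supported cohomology throughout and using in addition that $H^{d-1}_c(M;\pi_d F)\cong H_1(M;\bZ)=0$. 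I expect the main obstacle to be the fibre computation itself — in particular the identification $\pi_d(\Ss(\bR^d))\cong\bZ$ with Hurewicz homomorphism multiplication by $2$, which is precisely the homotopy-theoretic origin of the evenness of $\chi$ — and this requires pinning down the low-dimensional cell structure of the Thom space $\Ss(\bR^d)$, notably that the attaching map of its $d$-cell is $\eta$.
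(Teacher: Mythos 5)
Your proof is correct, and while it shares the paper's key technical core — identifying the low skeleton of $\Ss(\bR^d)$ as $\Sigma^{d-4}\bC\bP^2$, computing $\pi_{d-2}F\cong\bZ$, $\pi_{d-1}F=0$, $\pi_d F\cong\bZ$ with Hurewicz map of degree two, and then running obstruction theory — it differs in two substantive places. For evenness of $\chi$, the paper argues purely cohomologically: using $\Sq^2(u)=u\cdot w_2(\gamma_2^\perp)=u\cdot e + u\cdot p^*w_2(M)$ and Wu's theorem $\Sq^2(s^*u)=v_2(M)\cdot s^*u=w_2(M)\cdot s^*u$ to force $s^*(u\cdot e)=0\in H^d(M;\bF_2)$, with no transversality involved. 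Your transversality-plus-Whitney-sum argument is a geometric unpacking of the same fact, and is perfectly valid — one only needs the standard care you do not quite spell out, namely that $W=s^{-1}(\Gr_2^+(TM))$ is a genuine closed oriented surface (using that the preimage is cut out inside the open smooth locus $\gamma_2^\perp\subset\Ss(TM)$ and confined to the compact support of $s$), and your recognition that the Poincaré dual is $e(\mathcal{L})$ rather than $e(TW)$, which you reconcile correctly via $w(\mathcal{L})w(\nu)=w(TM|_W)=w(TW)w(\nu)$. The other genuine difference: you prove surjectivity explicitly by scanning possibly disconnected embedded surfaces, whereas the paper's proof only establishes injectivity (taking the containment of the image in $2\bZ\times H_2(M;\bZ)$ from the evenness argument but leaving the hitting of every such pair implicit — presumably to follow from the same obstruction-theoretic analysis, or from the earlier bijection $\pi_0\E^+\cong\bZ\times H_2$). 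Your route buys a concrete check of surjectivity and a geometric explanation of the $2\bZ$, at the cost of requiring transversality for arbitrary compactly supported sections; the paper's route avoids transversality but does not write out surjectivity. Both reach the same place.
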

\begin{proof}
The space of compactly supported sections is the space of compactly supported lifts along $p\colon \Ss(TM)\rightarrow M$ of the identity map of $M$. We will use the notation $F = \Ss(\bR^d) = \Th(\gamma_2^\perp \to \Gr_2^+(\bR^d))$ for the fibre of the map $p$, and suppose for simplicity that $M$ is compact.

The map $\Gr_2^+(\bR^d) \to \Gr_2^+(\bR^\infty)$ induces an isomorphism in cohomology in degrees $\leq d-1$, so
$$\bZ[e(\gamma_2)] \lra H^*(\Gr_2^+(\bR^d);\bZ)$$
is an isomorphism in degrees $\leq d-1$, and hence
$$u \cdot \bZ[e(\gamma_2)] \lra \widetilde{H}^*(\Ss(\bR^d);\bZ)$$
is an isomorphism in degrees $\leq 2d-3$. As there are cohomology classes $u \cdot e^i \in H^*(\Ss(TM), M;\bZ)$ restricting to $u \cdot e(\gamma_2)^i$ on the fibre, the bundle $p$ satisfies the conditions of the (relative) Leray--Hirsch theorem in degrees $\leq 2d-3$, so
$$H^*(M;\bZ) \otimes (u \cdot \bZ[e(\gamma_2)]) \lra H^*(\Ss(TM), M;\bZ)$$
is an isomorphism in this range of degrees.

Let us show that $\chi$ takes even values. As $q^*TM = \gamma_2 \oplus \gamma_2^\perp$, we calculate in the $\bF_2$-cohomology of $\Th(\gamma_2^\perp \rigtharrow\Gr_2^+(TM))$%
$$\mathrm{Sq}^2(u) = u \cdot w_2(\gamma_2^\perp) = u \cdot (w_2(\gamma_2) + q^*w_2(M)) = u \cdot e + u \cdot q^*w_2(M)$$
and so pulling back via $c$ we have
$$\mathrm{Sq}^2(u) = u \cdot e + u \cdot p^*w_2(M) \in H^{d}(\Ss(TM);\bF_2).$$
Thus for any section $s$ we have $\mathrm{Sq}^2(s^*u) = s^*(u \cdot e) + s^*u \cdot w_2(M)$ in the $\bF_2$-cohomology of $M$. However $\mathrm{Sq}^2(s^*u) = v_2(M) \cdot s^*u = w_2(M) \cdot s^*u$ as $M$ is simply-connected, and so $s^*(u \cdot e) = 0 \in H^d(M;\bF_2)$. Thus $s^*(u \cdot e) \in H^d(M;\bZ)=\bZ$ is even, as claimed.

We will be required to know $\pi_k(\Ss(\bR^d))$ for $k \leq d$. By considering the cohomology calculation above in degrees $\leq 2d-3$, we see that as long as $d \geq 4$ then $\Ss(\bR^d)$ has a cell structure whose $(d+1)$-skeleton $X$ consists of a $(d-2)$-cell and a $d$-cell. Because
$$\Sq^2(u) = u \cdot w_2(\gamma_2^\perp) = u \cdot w_2(\gamma_2) \neq 0,$$
we see that the $d$-cell is attached along a non-trivial map $S^{d-1} \to S^{d-2}$, which must be the Hopf map as long as $d \geq 5$. Thus $X \simeq \Sigma^{d-4}\bC\bP^2$, and it remains to calculate the homotopy groups of this space in degrees $\leq d$. By the Blakers--Massey theorem, the map of pairs $\pi_k(S^{d-2}, S^{d-1}) \to \pi_k(X,*)$ is an isomorphism for $k\leq 2d-5$, so for $k\leq d$ as we have assumed that $d \geq 5$. Calculating by means of the known stable homotopy groups of spheres in this range shows that
$$\pi_{d-2}(\Ss(\bR^d)) \cong \bZ, \quad\quad \pi_{d-1}(\Ss(\bR^d)) =0 ,\quad\quad \pi_{d}(\Ss(\bR^d)) \cong \bZ,$$
and also that the Hurewicz map is injective in these degrees. (When $d=5$ we must use that $\pi_5(S^3)=\bZ/2\langle \eta^2\rangle$, even though it is not in the stable range; this may be found in Toda's book \cite{Toda}.)

Let $s_0$ and $s_1$ be two sections of $p$ which have the same value of the invariants $\pi$ and $\chi$, and let us show that they are fibrewise homotopic. We obtain a diagram
\begin{equation}\label{eq:lifting}
\begin{gathered}
\xymatrix{
\{0,1\} \times M \ar[r]^-{s_0 \cup s_1} \ar[d]& \Ss(TM) \ar[d]^p \ar[r]^-{p \times u} & M \times K(\bZ, d-2) \ar[d]^-{\mathrm{proj}}\\
[0,1] \times M \ar@{-->}[ru] \ar[r]^-{\mathrm{proj}} & M \ar@{=}[r]& M
}
\end{gathered}
\end{equation}
and we must supply the dashed arrow. By obstruction theory, the first possible obstruction lies in
$$H^{d-1}([0,1]\times M, \{0,1\} \times M ; \pi_{d-2}(\Ss(\bR^d))) \cong H^{d-2}(M; \bZ)$$
and it must be $\pi(s_0)-\pi(s_1)$, as it agrees with the first possible obstruction for the (trivial) right-hand fibration in \eqref{eq:lifting}. But we have assumed that $\pi(s_0)-\pi(s_1)$ is zero, so there is no obstruction at this stage. The next possible obstruction lies in
$$H^{d+1}([0,1]\times M, \{0,1\} \times M ; \pi_{d}(\Ss(\bR^d))) \cong H^{d}(M; \bZ),$$
and by comparing it with the trivial bundle $M \times K(\bZ,d) \to M$ via $p \times (u \cdot e)$, as above, and using the injectivity of the Hurewicz map, we see that this obstruction vanishes if and only if $\chi(s_0)-\chi(s_1)$ does; we have assumed this. As $M$ has dimension $d$, there are no higher obstructions to constructing the dotted map, which gives a fibrewise homotopy between the two sections.

\end{proof}

\subsection{A scanning map independent of the metric}\label{section:closed2}

In order to prove Theorem \ref{thm:MainH} for background manifolds without boundary, it is very inconvenient that the source $\E^\tau(\SS_g,M)$ of the scanning map (see page \pageref{def:ssv}) depends on the metric so does not admit an action of the diffeomorphism group of $M$, and as a result, we cannot use the techniques in Section \ref{section:retractile} to prove that certain maps are fibrations. We present here an alternative definition of the scanning map which is defined for surfaces in smooth manifolds (as opposed to surfaces in Riemannian manifolds).

Let $\en{g}{M}$ be the space of pairs $(W,u)$, where $W\in \e{g}{M}$ and $u\colon NW\to M$ is a tubular neighbourhood, topologised as a quotient of $\TubEmb(\SS_{g,b},M;\delta)$ by the action of $\Diff(\SS_{g,b})$. It has an action of $\Diff(M)$ in the obvious way (cf. Section \ref{ss:thickened-embeddings}).

\begin{lemma}\label{lemma:retractil-enu} The space $\en{g}{M}$ is $\Diff(M)$-locally retractile.
\end{lemma}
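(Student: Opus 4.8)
The goal is to show that $\en{g}{M}$ is $\Diff(M)$-locally retractile. The natural approach is to exhibit $\en{g}{M}$ as the orbit space, or as a quotient with local sections, of a space we already know to be $\Diff(M)$-locally retractile, and then invoke Lemma \ref{lemma:retractil-source}. The space $\en{g}{M}$ is by construction the quotient of $\TubEmb(\SS_{g},M)$ --- more precisely of the space of triples $(e,t,L)$ with $k=0$, so just $(e,t)$ --- by the (discretized) action of $\Diffor(\SS_{g})$ on the source surface. By Proposition \ref{prop:retractil-fibrations} the space $\TubEmb(\SS_{g},M)$ (with $C$ irrelevant since $k=0$) is $\Diff(M)$-locally retractile. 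So what is needed is that the quotient map
\[
\TubEmb(\SS_{g},M) \lra \TubEmb(\SS_{g},M)/\Diffor(\SS_{g}) = \en{g}{M}
\]
has local sections: then Lemma \ref{lemma:retractil-source} applies directly with $G = \Diff(M)$, since the $\Diff(M)$-action and the $\Diffor(\SS_{g})$-action commute and the quotient map is $\Diff(M)$-equivariant.

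First I would set up the comparison with the spaces of embeddings. Recall from Section \ref{ss:thickened-embeddings} that $\TubEmb(\SS_{g},M)$ receives, for each vector bundle $V \to \SS_{g}$ of the correct rank, a principal $\Bun(V,V)$-bundle $\phi_V : \Emb(V,M) \to \TubEmb(\SS_{g},M)$ onto a union of components (Lemma \ref{lemma:111}(i)), and the $\Diff(M)$-action is continuous (Lemma \ref{lemma:111}(ii)). On the level of embeddings, Proposition \ref{prop:binz-fisher} tells us that $\Emb(\SS_{g},M;d) \to \Emb(\SS_{g},M;d)/\Diffor_\partial(\SS_{g})$ is a fibre bundle, hence has local sections; since $\SS_g$ is closed there are no boundary constraints to worry about and $\Diffor_\partial = \Diffor$. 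The key point to check is that the quotient $\TubEmb(\SS_{g},M) \to \en{g}{M}$ inherits local sections from this, compatibly with the $\phi_V$-charts: given a point $(e_0, t_0) \in \TubEmb(\SS_{g},M)$ with image $(W_0, u_0) \in \en{g}{M}$, choose $V = NW_0$ pulled back via $e_0$ and a lift $(f_0, \ast) \in \phi_V^{-1}(e_0, t_0)$; then a local section of $\Emb(V,M) \to \Emb(V,M)/\Bun(V,V)\Diffor(\SS_g)$ around $f_0$, composed with $\phi_V$, gives the desired local section of $\TubEmb(\SS_{g},M) \to \en{g}{M}$ near $(W_0,u_0)$.

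Alternatively --- and this may be cleaner --- one can avoid discussing sections of the double quotient by factoring: $\en{g}{M} = \TubEmb(\SS_{g},M)/\Diffor(\SS_{g})$, and it suffices to produce a local section of this single quotient map. Using the local trivialization $\sigma, \tau$ of $\phi_V$ constructed in the proof of Lemma \ref{lemma:111}(i), near $(W_0,u_0)$ the space $\en{g}{M}$ looks like $(\hat U \times \Bun(V,V))/(\Bun(V,V) \rtimes \Diffor(\SS_g))$, and the combination of a $\Bun(V,V)$-local section (tautological, as $\phi_V$ is a principal bundle) with the $\Diffor(\SS_g)$-local section from Proposition \ref{prop:binz-fisher} furnishes the required local section. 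Then Lemma \ref{lemma:retractil-source}, applied to the $\Diff(M)$-equivariant map $\TubEmb(\SS_{g},M) \to \en{g}{M}$ (which is $\Diff(M)$-equivariant because the $\Diff(M)$- and $\Diffor(\SS_{g})$-actions on triples are defined on disjoint ``slots'' and hence commute), together with Proposition \ref{prop:retractil-fibrations}, yields that $\en{g}{M}$ is $\Diff(M)$-locally retractile.

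\textbf{Main obstacle.} The only genuine subtlety is verifying that the quotient map $\TubEmb(\SS_{g},M) \to \en{g}{M}$ has local sections with respect to the quotient topology we put on $\en{g}{M}$ (the finest topology making all $\phi_V$ continuous, passed to the $\Diffor(\SS_g)$-quotient). This requires checking that Proposition \ref{prop:binz-fisher} interacts correctly with the $\Bun(V,V)$-bundle structure, i.e.\ that a $\Diffor(\SS_g)$-local section at the level of embeddings $\Emb(V,M)$ descends through $\phi_V$; this is where one must be careful that the two group actions are compatible and that $\phi_V$ being an open map (it is a locally trivial bundle onto its image) lets sections push forward. I expect this bookkeeping, rather than any deep geometry, to be the crux of the argument.
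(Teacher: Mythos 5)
Your proposal is correct and follows essentially the same strategy as the paper: recognise $\en{g}{M}$ as the quotient of the $\Diff(M)$-locally retractile space $\TubEmb(\SS_g,M)$ (Proposition \ref{prop:retractil-fibrations}) by $\Diffor(\SS_g)$, check that this quotient map has local sections, and then apply Lemma \ref{lemma:retractil-source}. The paper produces the local sections slightly more directly — it builds a local trivialisation of the quotient map $\TubEmb(\SS_g,M)\to\en{g}{M}$ by mimicking the proof of Lemma \ref{lemma:111}(i), but using a $\Diff(M)$-local retraction of $\e{g}{M}$ (Corollary \ref{cor:retractil-embedded}) in place of one of $\Emb(B,M)$ — whereas you route through Proposition \ref{prop:binz-fisher} at the $\Emb$-level and the $\phi_V$-charts; these are just two formulations of the same underlying argument and neither is materially harder.
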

\begin{proof}
As in the proof of Lemma \ref{lemma:111}, one can produce a local trivialisation of the quotient map using a $\Diff(M)$-local retraction of $\e{g,b}{M;\delta}$ and therefore prove that the quotient map is a locally trivial fibration, and, since the source of the quotient map is $\Diff(M)$-locally retractile, the target is also $\Diff(M)$-locally retractile by Lemma \ref{lemma:retractil-source}.
\end{proof}

For a vector space $V$, define $\Sss(V)$ to be the Thom space of the tautological bundle $\gamma_{n-2}(V)\to \Gr_{n-2}^+(V)$ over the Grassmannian of oriented $2$-planes in $V$. For a vector bundle $E$, let $\Sss(E)$ be the result of applying fibrewise this construction.

We then define a map $M \times \en{g}{M} \to \Gamma_c(\Sss(TM)\to M;\infty)$ by 
\begin{align*}
(p, W, u) & \longmapsto 
\begin{cases}
\infty_p \in \Sss(T_p M) & \text{if } \begin{array}{l} p \nin u(NW),\end{array}\\
\left(D_vu_{q}(T_vN_qW), D_vu_q(-v)\right) \subset \gamma_{n-2}(T_pM) & \text{if } 
\end{cases}
\end{align*}
where $D_vu_q\colon T_{v}N_qW\to T_pM$ is the differential of the restriction of the tubular neighbourhood to the subspace $N_qW\subset NW$ and we consider $-v\in N_q(M)$ as lying in $T_vN_qM$ using the canonical isomorphism $N_qM\cong T_vN_qM$. The adjoint to this map,
\begin{equation*}
\sss \colon \en{g}{M} \lra \Gamma_c(\Sss(TM) \to M;\infty),
\end{equation*}
is the (smooth) \emph{scanning map}. The forgetful map $\en{g}{M} \to \e{g}{M}$ is a locally trivial fibration because it is $\Diff_c(M)$-equivariant and $\e{g}{M}$ is $\Diff_c(M)$-locally retractile. As its fibre is contractible by Lemma \ref{lemma:tubular-contractible}, it follows that the map is homotopy equivalence. 

If a metric on $M$ is given, we can canonically identify $\Sss(TM)$ with $\Ss(TM)$, and the diagram given by the Riemannian and the smooth scanning maps
\[\xymatrix{
\Gamma_c(\Ss(TM)\to M) \ar[rr]^\cong && \Gamma_c(\Sss(TM)\to M) \\
\E^{\tau}(\SS_g,M) \ar[rr]^\simeq \ar[u]^{\s}&& \en{g}{M} \ar[u]^{\sss}
}\]
commutes on the nose, hence the scanning map $\sss$ is a perfect replacement of the scanning map $\s$.

We denote by $\infty_p$ the point at infinity of $\Sss(T_pM)$ and write $\infty = \bigcup_p{\infty_p}$, and we define the support of $f\in \Gmc{M}$, denoted $\supp f$, to be the closure of $M\backslash f^{-1}(\infty)$.

\subsection{Proof of Theorem \ref{thm:MainH} when \boldmath$\partial M = \emptyset$}\label{section:closed3}

\begin{df}\label{defn:DiscRes}
Let $\g{g}{M}_\bullet$ be the semi-simplicial space whose  $i$-simplices are tuples $(W,u;d_0,\ldots,d_i)$, where
\begin{enumerate}[(i)]
\item\label{it:101} $(W,u)\in \en{g}{M}$;
\item\label{it:100} $d_0,\ldots,d_i$ are disjoint embeddings of the closed unit disc into~$M$; 
\item\label{it:102} $d_j(0)$ does not belong to the image of $u$, for all $j$. 
\end{enumerate}
The semi-simplicial structure is as usual given by forgetting data, which gives a semi-simplicial space augmented over $\en{g}{M}$.
\end{df}

\begin{proposition} If the dimension of $M$ is at least $3$, then $\g{g}{M}_\bullet$ is a resolution of~$\en{g}{M}$.
\end{proposition}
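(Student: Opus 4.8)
The plan is to transcribe the argument used for the arc and path resolutions (Propositions \ref{prop:resolutions-discs} and \ref{prop:resolution-paths}): reduce via Criterion \ref{criterion1} to the contractibility of a semi-simplicial fibre, and then establish that contractibility by recognising the fibre as a topological flag complex and applying Criterion \ref{criterion2}.

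Concretely, I would first observe that each augmentation map $\epsilon_i\colon\g{g}{M}_i\to\en{g}{M}$ is a fibration: the group $\Diff_c(M)$ acts on $\g{g}{M}_i$ (on the pair $(W,u)$ as in Section \ref{ss:thickened-embeddings}, and on the discs by postcomposition), the augmentation and all face maps are equivariant, and $\en{g}{M}$ is $\Diff(M)$-locally retractile by Lemma \ref{lemma:retractil-enu}; hence each composite $\epsilon_i$ to $\en{g}{M}$ is a locally trivial fibration by Lemma \ref{lemma:retractil-target}. By Criterion \ref{criterion1}, the semi-simplicial fibre $\Fib_{(W,u)}(\epsilon_\bullet)$ over a point $(W,u)\in\en{g}{M}$ is weakly equivalent to the homotopy fibre of $|\epsilon_\bullet|$, so it suffices to show $|\Fib_{(W,u)}(\epsilon_\bullet)|$ is weakly contractible. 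The $i$-simplices of $\Fib_{(W,u)}(\epsilon_\bullet)$ are the tuples $(d_0,\dots,d_i)$ of pairwise disjoint embedded closed discs with each $d_j(0)\notin\mathrm{Im}(u)$. Pairwise disjointness of compact discs is an open condition on tuples and the remaining requirements already define the space of $0$-simplices, so the product map $\Fib_{(W,u)}(\epsilon_\bullet)_i\to\Fib_{(W,u)}(\epsilon_\bullet)_0^{\times(i+1)}$ is an open embedding and the flag axiom holds; thus $\Fib_{(W,u)}(\epsilon_\bullet)$ is a topological flag complex, over a point.

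Then I would verify the two hypotheses of Criterion \ref{criterion2}: (a) the space of $0$-simplices is non-empty (it has local sections trivially), and (b) the completion property — given finitely many discs $d_0,\dots,d_n$ there is a further disc $d_\infty$, with centre off $\mathrm{Im}(u)$, disjoint from all of them. For (a) one uses that $M$ is closed while $\mathrm{Im}(u)\cong NW$ is a non-compact open submanifold, so $M\setminus\mathrm{Im}(u)\neq\emptyset$, and a small disc may be planted with centre at one of its points. For (b) one uses general position, available because $\dim M\geq 3$ makes each $d_j(D^2)$ a compact nowhere-dense subset of the connected manifold $M$: the union $\bigcup_j d_j(D^2)$ is closed nowhere-dense, hence does not contain the non-empty closed set $M\setminus\mathrm{Im}(u)$ (this last point being where one uses that $M\setminus\mathrm{Im}(u)$, the complement of a tubular neighbourhood of a non-empty surface, cannot be exhausted by finitely many pairwise disjoint planar discs), so one can choose a point off both sets and take a sufficiently small embedded disc there. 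With (a) and (b) in hand, Criterion \ref{criterion2} (and Remark \ref{rem:criterion2}) gives $|\Fib_{(W,u)}(\epsilon_\bullet)|\simeq *$, and therefore the augmentation $|\g{g}{M}_\bullet|\to\en{g}{M}$ is a weak homotopy equivalence, i.e.\ $\g{g}{M}_\bullet$ is a resolution of $\en{g}{M}$.

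The only non-formal point, and the one I expect to require care, is step (b): producing a new disc whose centre lies in the closed set $M\setminus\mathrm{Im}(u)$ and which is disjoint from the given discs. Everything else is a routine adaptation of arguments already carried out in the paper, so the bulk of the work is this small general-position lemma, which relies on $\dim M\geq 3$ and on the fact that $M\setminus\mathrm{Im}(u)$ is a genuinely non-degenerate piece of $M$.
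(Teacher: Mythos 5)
Your plan to reduce via Criterion \ref{criterion1} to the fibre and then argue contractibility via the flag-complex Criterion \ref{criterion2} follows the paper's general template, but step (b) -- the completion property -- contains a genuine error. The discs $d_j$ in Definition \ref{defn:DiscRes} are embeddings of the closed \emph{$d$-dimensional} unit disc $D^d$ (this is made explicit in the proposition that follows, where the base of the fibration is $C_i(M)=\Emb(\{0,\dots,i\}\times D^d,M)$), not 2-discs as you wrote. Since they are codimension $0$, the images $d_j(D^d)$ are \emph{not} nowhere dense -- each has non-empty interior. Finitely many embedded closed $d$-balls can easily cover the compact set $M\setminus\mathrm{Im}(u)$ (indeed, they can cover all of $M$ when $M$ is closed), so there need not be any point of $M\setminus\mathrm{Im}(u)$ at which to plant a new disc $d_\infty$ disjoint from $d_1,\dots,d_n$. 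The completion hypothesis of Criterion \ref{criterion2} therefore fails for the flag complex $\Fib_{(W,u)}(\epsilon_\bullet)$, and your argument does not go through.

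The paper circumvents exactly this problem by introducing an intermediate semi-simplicial space $G_\bullet$ in which the discs are \emph{not} required to be pairwise disjoint (only their centres $d_j(0)$ must be distinct), and the centres need only avoid the surface $W$ itself rather than the entire tubular neighbourhood $\mathrm{Im}(u)$. With these relaxations $G_\bullet$ is a topological flag complex in which the completion property is easy: $M\setminus\bigl(W\cup\bigcup_j d_j(0)\bigr)$ is a non-empty manifold of dimension $\geq 3$, so one can place a new disc wherever one likes (overlaps are allowed). One then shows that the inclusion $\g{g}{M}_\bullet\hookrightarrow G_\bullet$ is a levelwise weak homotopy equivalence by a shrinking homotopy (shrink the discs toward their centres and shrink the tubular neighbourhood $u$), exactly as in the proof of the analogous inclusion $\f{g}{M}_\bullet\hookrightarrow F_g(M)_\bullet$ a few paragraphs later. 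That reduction step is the key missing ingredient in your proposal; without it, the direct application of Criterion \ref{criterion2} to $\g{g}{M}_\bullet$ is not valid.
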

\begin{proof}
Let $G_\bullet$ be the semi-simplicial space constructed similarly to the above, with $i$-simplices consisting of those tuples $(W,u;d_0,\ldots,d_i)$ such that condition (\ref{it:101}) above holds, as well as
\begin{enumerate}[(i$^\prime$)]
\setcounter{enumi}{1}
\item $d_0,\ldots,d_i \colon D^d \hookrightarrow M$ are embeddings of the closed unit disc into $M$ such that the $d_j(0)$ are distinct;
\item $d_j(0) \cap W = \emptyset$, for all $j$.
\end{enumerate}
There is an inclusion $\g{g}{M}_\bullet\hookrightarrow G_\bullet$, which is a levelwise weak homotopy equivalence, by shrinking the discs and the tubular neighbourhood $u$. Now $G_\bullet$ is an augmented topological flag complex over $\e{g}{M}$, so we apply Criterion \ref{criterion2}. The augmentation map is a fibration by Corollary \ref{Cerf-fibration}, hence has local sections, and given any finite (possibly empty) collection $(W,u,d_0),\ldots,(W,u,d_i)$ of $0$-simplices over $(W,u)$, the complement $M\setminus( W \cup \bigcup d_j(0))$ is a non-empty manifold of dimension at least $3$, so there is an embedding $d_{i+1}$ of a closed $d$-ball into it. Then $(W,u,d_{i+1})$ is a $0$-simplex orthogonal to all the former $0$-simplices. 
\end{proof}
\begin{proposition} There are fibrations
\[\en{g}{M\setminus \cup d_j(0)}\longrightarrow \g{g}{M}_i\longrightarrow C_i(M) =: \Emb(\{0,1,\ldots,i\} \times D^d, M)\]
where the fibre is taken over the point $(d_0,\ldots,d_i)$.
\end{proposition}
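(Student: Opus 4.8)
The plan is to mimic the proofs of the analogous fibration statements already established in the paper, in particular Proposition~\ref{prop:fibrations-discs} and Proposition~\ref{prop:fibrations-paths}. First I would observe that the restriction map
\[
\g{g}{M}_i \longrightarrow C_i(M) = \Emb(\{0,1,\ldots,i\}\times D^d, M)
\]
which sends a tuple $(W,u;d_0,\ldots,d_i)$ to $(d_0,\ldots,d_i)$ is $\Diff(M)$-equivariant, where $\Diff(M)$ acts on the source through its action on $\en{g}{M}$ (Lemma~\ref{lemma:retractil-enu}) and on the choice of discs, and on the target by postcomposition. Then I would invoke Proposition~\ref{prop:retractil-embeddings}, which says $C_i(M) = \Emb(\{0,\ldots,i\}\times D^d, M;[\cdot])$ is $\Diff(M)$-locally retractile (the disjoint union of $i+1$ closed $d$-balls is compact), so by Lemma~\ref{lemma:retractil-target} the restriction map is a locally trivial fibration; in particular it is a Serre fibration, and its homotopy fibre is its actual fibre.

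Next I would identify the fibre over a point $(d_0,\ldots,d_i)\in C_i(M)$. By definition, this fibre consists of those pairs $(W,u)\in\en{g}{M}$ together with the fixed discs $d_0,\ldots,d_i$, subject to the constraint (\ref{it:102}) that $d_j(0)$ does not lie in the image of the tubular neighbourhood $u$, for every $j$. Since the image $u(NW)$ is an open neighbourhood of $W$ and avoiding the finitely many points $d_0(0),\ldots,d_i(0)$ is an open condition on the pair $(W,u)$, this fibre is naturally identified with the space of pairs $(W,u)$ where $W$ is an oriented genus-$g$ surface in $M$ avoiding the points $d_j(0)$ and $u$ is a tubular neighbourhood of $W$ in $M$ avoiding those points — which is exactly $\en{g}{M\setminus\bigcup_j d_j(0)}$, using that a tubular neighbourhood of a surface in $M$ that misses the points $d_j(0)$ is the same as a tubular neighbourhood in the open submanifold $M\setminus\bigcup_j d_j(0)$, because $M\setminus\bigcup_j d_j(0)$ is open in $M$ (so tangent and normal bundles agree) and $W$, being compact, sits inside it with room to spare. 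This last identification needs a small argument that every tubular neighbourhood of $W$ inside the open set restricts from one inside $M$ and conversely — but this is entirely parallel to the identification of fibres in Proposition~\ref{prop:fibrations-discs}, where the fibre was described as $\e{g,b}{M({\bf v});\stminus{\delta}{u}}$.

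The main obstacle, such as it is, is purely bookkeeping: being careful that ``avoiding the point $d_j(0)$'' for the pair $(W,u)$ is equivalent to ``being a surface-with-tubular-neighbourhood in the open manifold $M\setminus\bigcup_j d_j(0)$'', and that this homeomorphism of fibres is compatible with the topologies (both are quotients of spaces of thickened embeddings, by the construction of $\en{g}{-}$ recalled just before Definition~\ref{defn:DiscRes}). Once that is set up, the proof is short. I would write it as follows.

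\begin{proof}
The restriction map is $\Diff(M)$-equivariant, where $\Diff(M)$ acts on $\g{g}{M}_i$ via its action on $\en{g}{M}$ and on the discs $(d_0,\ldots,d_i)$, and acts on $C_i(M) = \Emb(\{0,1,\ldots,i\}\times D^d,M)$ by postcomposition. Since $\{0,1,\ldots,i\}\times D^d$ is compact, $C_i(M)$ is $\Diff(M)$-locally retractile by Proposition~\ref{prop:retractil-embeddings}. Hence by Lemma~\ref{lemma:retractil-target} the restriction map is a locally trivial fibration, so in particular a Serre fibration whose homotopy fibre over any point coincides with the fibre.

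Fix a point $(d_0,\ldots,d_i)\in C_i(M)$ and let $Z = \bigcup_{j} d_j(0) \subset M$, a finite set of points, so that $M\setminus Z$ is an open submanifold of $M$. The fibre of the restriction map over $(d_0,\ldots,d_i)$ is the space of pairs $(W,u)\in \en{g}{M}$ such that $d_j(0)\notin u(NW)$ for all $j$, i.e.\ such that $u(NW)\subset M\setminus Z$. Since $u\colon NW\to M$ is an open embedding with image an open neighbourhood of the compact surface $W$, the condition $u(NW)\cap Z = \emptyset$ is exactly the condition that $u$ be a tubular neighbourhood of $W$ taken inside the open manifold $M\setminus Z$; and conversely any tubular neighbourhood of a genus-$g$ surface inside $M\setminus Z$ is, by composition with the open inclusion $M\setminus Z\hookrightarrow M$, such a pair $(W,u)$. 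This identification is a homeomorphism for the topologies defined in Section~\ref{ss:thickened-embeddings}, as both sides are the corresponding quotients of $\TubEmb(\SS_{g,b},-)$. Therefore the fibre is canonically homeomorphic to $\en{g}{M\setminus Z} = \en{g}{M\setminus\bigcup_j d_j(0)}$, which gives the asserted homotopy fibre sequence.
\end{proof}
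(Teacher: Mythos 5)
Your proof is correct and takes essentially the same route as the paper: the paper's one-line citation of Corollary~\ref{Cerf-fibration} is an application of the same $\Diff(M)$-local-retractility machinery (Proposition~\ref{prop:retractil-embeddings} combined with Lemma~\ref{lemma:retractil-target}) that you invoke explicitly. Your identification of the fibre as $\en{g}{M\setminus\bigcup d_j(0)}$, which the paper leaves implicit, is also correct.
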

\begin{proof}
This is a consequence of Corollary \ref{Cerf-fibration}.
\end{proof}

In the notation of the last section, we let $\Gmg{M}{g}$ denote the collection of path components $\chi^{-1}(2-2g)$. Thus it consists of those sections which have ``formal genus $g$''.

\begin{df}
Let $\f{g}{M}_\bullet$ be the semi-simplicial space whose $i$-simplices are tuples $(f,(d_0,h_0),\ldots,(d_i,h_i))$, where 
\begin{enumerate}[(i)]
\item\label{it:201} $f\in \Gmg{M}{g}$;
\item\label{it:202} $d_0,\ldots,d_i \colon D^d \hookrightarrow M$ are disjoint embeddings of the closed unit disc of dimension $d$ into~$M$;
\item\label{04}\label{it:203} $h_0,\ldots,h_i \colon [0,1]\times M \to \Sss(TM)$ are homotopies of sections such that 
$$h_j(0,-) = f(-),\qquad d_j(0)\notin \supp h_j(1,-),$$ and the homotopy $h_j$ is constant outside of the set $d_j(D^d)$.
\end{enumerate}
The $j$th face map forgets $(d_j,h_j)$, and forgetting everything but $f$ gives an augmentation to the space $\Gmg{M}{g}$.
\end{df}

\begin{proposition} If $M$ has dimension at least $3$, then $\f{g}{M}_\bullet$ is a resolution of $\Gmg{M}{g}$.
\end{proposition}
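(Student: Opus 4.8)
The plan is to follow the same pattern used for the two earlier resolutions in this section (the disc resolution $\g{g}{M}_\bullet$ of $\en{g}{M}$ and the analogous complexes in Section \ref{section:surgery}): one first upgrades $\f{g}{M}_\bullet$ to a slightly looser semi-simplicial space $F_\bullet$, which is a genuine augmented topological flag complex over $\Gmg{M}{g}$, shows the inclusion $\f{g}{M}_\bullet \hookrightarrow F_\bullet$ is a levelwise weak equivalence, and then verifies the two hypotheses of Criterion \ref{criterion2} for $F_\bullet$. Concretely, I would let $F_\bullet$ have $i$-simplices $(f,(d_0,h_0),\dots,(d_i,h_i))$ satisfying (\ref{it:201}), together with: the $d_j$ are embeddings of $D^d$ whose \emph{centres} $d_j(0)$ are distinct (but the discs themselves need not be disjoint), and $h_0,\dots,h_i$ are homotopies of sections with $h_j(0,-)=f$, $d_j(0)\notin\supp h_j(1,-)$, each $h_j$ constant outside $d_j(D^d)$. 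The inclusion $\f{g}{M}_\bullet \hookrightarrow F_\bullet$ is a levelwise weak equivalence by the usual shrinking argument: given a compact family of simplices of $F_\bullet$, one isotopes the discs $d_j$ (fixing their centres) so that they become pairwise disjoint and simultaneously shrinks the supports of the homotopies so that they remain constant outside the shrunken discs, exactly as in the proof for $\g{g}{M}_\bullet$.

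Next I would check that $F_\bullet$ is an augmented topological flag complex over $\Gmg{M}{g}$. Conditions (1) and (2) in the definition of a flag complex are immediate: a tuple is an $i$-simplex precisely when the centres $d_j(0)$ are pairwise distinct and each $h_j$ is a homotopy of the required sort, both of which are conditions on the individual $0$-simplices and on pairs (distinctness of centres is a condition on pairs), and the product map into the fibre product is an open embedding because ``distinct centres'' is an open condition. For hypothesis (1) of Criterion \ref{criterion2}, I need the augmentation $\epsilon_0\colon \f{g}{M}_0 \to \Gmg{M}{g}$ (equivalently that of $F_\bullet$) to have local sections. Around a given $0$-simplex $(f,(d_0,h_0))$ one can use that the evaluation-at-a-disc-centre map on section spaces is a fibration (sections form a sheaf, and restricting a section to a neighbourhood of a point and extending back is a standard local-section argument), so a nearby section $f'$ can be fed the same disc $d_0$, and a homotopy $h_0'$ from $f'$ to something not supported at $d_0(0)$ can be produced by a local surgery of the section inside $d_0(D^d)$ depending continuously on $f'$ — this is exactly the kind of move carried out in Section \ref{sec:PathComp} and in the local surgery construction of Section \ref{section:surgery}, and is possible because $\dim M \ge 3$ gives enough room to remove the ``2-plane at $d_0(0)$'' of the section by a compactly supported homotopy inside a $d$-ball.

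For hypothesis (2) of Criterion \ref{criterion2}, given $0$-simplices $(f,(d_1,h_1)),\dots,(f,(d_n,h_n))$ over a fixed $f$, I would produce a single $(f,(d_\infty,h_\infty))$ with each $((f,(d_j,h_j)),(f,(d_\infty,h_\infty)))$ a $1$-simplex of $F_\bullet$: since $M$ is connected of dimension $\ge 3$, the set $M\setminus\{d_1(0),\dots,d_n(0)\}$ is a nonempty open manifold, so I can pick a point $p$ in it, an embedded disc $d_\infty$ with $d_\infty(0)=p$, and a compactly supported homotopy $h_\infty$ (supported in $d_\infty(D^d)$) from $f$ to a section not supported at $p$, again by the local-surgery-of-a-section move. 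Automatically $d_\infty(0)$ is distinct from every $d_j(0)$, and the homotopies $h_j$, $h_\infty$ are not required to be compatible in any way (the flag condition only constrains centres), so $((f,(d_j,h_j)),(f,(d_\infty,h_\infty)))$ is a $1$-simplex. Criterion \ref{criterion2} then yields that $|F_\bullet|\to \Gmg{M}{g}$, hence $|\f{g}{M}_\bullet|\to\Gmg{M}{g}$, is a weak homotopy equivalence, i.e.\ $\f{g}{M}_\bullet$ is a resolution.

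The main obstacle I expect is hypothesis (1): making the choice of homotopy $h_0'$ for a section $f'$ near $f$ depend \emph{continuously} on $f'$ while keeping it constant outside the fixed disc $d_0(D^d)$. This requires a parametrised version of the ``a compactly supported section of $\Sss(TM)$ on a $d$-ball can be homotoped rel boundary to have empty support near the centre'' statement, which is where the hypothesis $\dim M\ge 3$ is used (so that the relevant fibre $\Sss(\bR^d)$ is simply connected and the point-support of a section can be pushed off a chosen point); I would phrase this as a small lemma about section spaces, proved by the same local-retractibility and general-position reasoning as elsewhere in the paper, and then the continuity is automatic from naturality.
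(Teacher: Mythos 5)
Your proposal follows essentially the same approach as the paper: the same auxiliary semi-simplicial space $F_\bullet$ (relaxing disjointness of the discs to distinctness of their centres), the same shrinking deformation to show that the inclusion $\f{g}{M}_\bullet\hookrightarrow F_\bullet$ is a levelwise weak equivalence, and the same application of Criterion~\ref{criterion2}, with the same use of path-connectedness of the fibre $\Sss(T_pM)$ to cone off a further disc-and-homotopy for hypothesis (ii). Two small remarks: the obstacle you flag for hypothesis~(i) is real and the paper dispatches it rather briefly (one does indeed need a continuously varying homotopy $h_0'$ supported in a fixed disc, which can be produced by first merging $f'$ with $f$ inside the disc using pointwise closeness and then appending the given $h_0$); and your justification of the hypothesis $\dim M\ge 3$ via simple-connectedness of $\Sss(\bR^d)$ is slightly off --- for $d=3$ one has $\Sss(\bR^3)\simeq S^3\vee S^1$, which is connected but not simply connected --- path-connectedness of the fibre, which holds for all $d\ge 3$, is what is actually used.
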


\begin{proof}Let us define $F_g(M)_\bullet$ as the semi-simplicial space whose $i$-simplices are tuples $(f,(d_0,h_0),\ldots,(d_i,h_i))$ such that conditions (\ref{it:201}) and (\ref{it:203}) above hold and condition (\ref{it:202}) is replaced by
\begin{enumerate}[(i$^\prime$)]
\setcounter{enumi}{1}
	\item $d_0,\ldots,d_i\colon D^d\hookrightarrow M$ are embeddings such that the $d_j(0)$ are distinct,
\end{enumerate}
and whose face maps are given by forgetting data, and it has an augmentation to $\Gmg{M}{g}$ that forgets everything but $f$. There is an obvious semi-simplicial inclusion $\f{g}{M}_\bullet\hookrightarrow F_g(M)_\bullet$ over $\Gmg{M}{g}$, and the lemma will follow from the following statements:
\begin{enumerate}[(1)]
	\item the semi-simplicial inclusion is a levelwise weak homotopy equivalence and
	\item the augmentation of $F_g(M)_\bullet$ is a weak homotopy equivalence. 
\end{enumerate}
For the first statement, take a smooth function $\lambda\colon [0,\infty)\rightarrow [0,1]$ with 
\[\lambda([1,\infty)) = 0,\; \lambda([0,1/2])=1.\]
Consider the following deformation $H_s\colon F_g(M)_i\times (0,1]\rightarrow F_g(M)_i$ (which restricts to a deformation of $\f{g}{M}_i$):
\[
H_s(f) = f,\; H_s(d_j)(y) = d_j(sy), \;
H_s(h_j)(t,x) = 
\begin{cases} 
h_j(\lambda(||sy||)t,sy) &\mathrm{if}~x = d_j(sy) \\
x &\mathrm{otherwise}.
\end{cases}
\]
Under this deformation, every $i$-simplex eventually ends up, and stays, in the subspace $\f{g}{M}_i$. If $f \colon (D^n, S^{n-1}) \to (F_g(M)_i, \f{g}{M}_i)$ represents a relative homotopy class, then because $D^n$ is compact the map $h(-, t)\circ f$ has image in $\f{g}{M}_i$ for some $t$, so the homotopy class of $f$ is trivial.

For the second statement, note that $F_g(M)_\bullet$ is a topological flag complex augmented over $\e{g}{M}$ whose augmentation is a fibration by Lemmas \ref{cor:retractil-embedded} and \ref{lemma:retractil-target}. Given a possibly empty finite collection of $0$-simplices 
\[(f,d_0,h_0),\ldots,(f,d_i,h_i)\]
 over $f$, we may find an embedding of a disc $d_{i+1}$ such that $d_{i+1}(0)$ is different from the points $d_0(0),\ldots,d_i(0)$. We may also find a homotopy $h_{i+1}$ satisfying condition (\ref{it:203}) for the embedding $d_{i+1}$ and the section $f$, because the space $\Sss(T_{d_{i+1}(0)}M)$ is path connected. Hence the conditions of Criterion \ref{criterion2} hold, so the augmentation for $F_g(M)_\bullet$ is a weak homotopy equivalence.
\end{proof}

\begin{proposition}\label{prop:fibrations-sections} There are homotopy fibrations
\[\Gmg{M\setminus \cup d_j(0)}{g}\longrightarrow \f{g}{M}_i\longrightarrow C_i(M),\]
where the fibre is taken over the point $(d_0,\ldots,d_i)$.
\end{proposition}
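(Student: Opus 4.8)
The plan is to mimic the structure of the proof of Proposition \ref{prop:fibrations-discs} (and its siblings), obtaining the fibration by a local retractibility argument. First I would show that the evaluation-type map $\f{g}{M}_i\to C_i(M)$ sending $(f,(d_0,h_0),\ldots,(d_i,h_i))$ to $(d_0,\ldots,d_i)$ is $\Diff_c(M)$-equivariant: a compactly supported diffeomorphism $\varphi$ acts on a section $f$ by $\varphi_*f=\varphi_*\circ f\circ\varphi^{-1}$ (using the induced bundle map on $\Sss(TM)$), on the discs $d_j$ by postcomposition, and on the homotopies $h_j$ by the same recipe applied levelwise; all three conditions in the definition of an $i$-simplex are preserved, and the map to $C_i(M)$ is visibly equivariant. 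Next, $C_i(M)=\Emb(\{0,\ldots,i\}\times D^d,M)$ is $\Diff(M)$-locally retractile by Proposition \ref{prop:retractil-embeddings}, hence $\Diff_c(M)$-locally retractile by Lemma \ref{lemma:retractil-identity} (using that $\Emb$ of a compact manifold is locally path-connected and that the identity component of $\Diff(M)$ contains a neighbourhood of $\Id$ realising any nearby embedding — one arranges the retraction to be supported in a compact set). Then Lemma \ref{lemma:retractil-target} gives that the map is a locally trivial fibration, in particular a Serre fibration.

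The second step is to identify the fibre over a point $(d_0,\ldots,d_i)\in C_i(M)$. By definition the fibre consists of tuples $(f,(h_0),\ldots,(h_i))$ with $f\in\Gmg{M}{g}$ and each $h_j$ a homotopy of sections, constant outside $d_j(D^d)$, from $f$ to a section whose support misses $d_j(0)$. I claim this fibre is weakly equivalent to $\Gmg{M\setminus\cup d_j(0)}{g}$. The map is restriction: given such a tuple, the final sections $h_j(1,-)$ agree with $f$ away from the disjoint discs $d_j(D^d)$, and one obtains a well-defined section over $M\setminus\cup d_j(0)$ by using $f$ outside $\cup d_j(D^d)$ and $h_j(1,-)$ on each $d_j(D^d)\setminus\{d_j(0)\}$ — one checks these patch, and that the formal Euler characteristic $\chi$ is unchanged since the homotopies $h_j$ do not cross the total space over $d_j(0)$ (so scanning-genus $g$ is preserved). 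Conversely, a section over $M\setminus\cup d_j(0)$ together with the standard fact that $\Sss(T_{d_j(0)}M)$, and more relevantly the space of local sections near $d_j(0)$ relative to its behaviour near $\partial d_j(D^d)$, is highly connected (it is $\Omega^d$ of a highly-connected space, cf.\ the homotopy-group computations in Proposition \ref{prop:sections-components}) lets one contract the space of extensions-together-with-homotopies over each disc. Thus the homotopy fibre of $\f{g}{M}_i\to C_i(M)$ is $\Gmg{M\setminus\cup d_j(0)}{g}$, which is the desired homotopy fibre sequence.

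Concretely I would package the fibre identification as follows: introduce the auxiliary space of tuples $(g;(h_0,\ldots,h_i))$ over a \emph{fixed} configuration where $g$ is a section over $M\setminus\cup d_j(0)$ and $h_j$ is a nullhomotopy (rel complement of $d_j(D^d)$) of the ``discrepancy'' of $g$ near $d_j(0)$; show the forgetful map to $\Gmg{M\setminus\cup d_j(0)}{g}$ is a fibration with fibre a product of $i+1$ copies of a contractible space of homotopies, where contractibility comes from the fact that a pointed loop space of a space whose low homotopy groups are computed in Proposition \ref{prop:sections-components} is enough for $d\geq 3$, or more simply from an explicit linear contraction using that the relevant mapping space is an $H$-group and the relevant component is the identity. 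I expect this last point — rigorously contracting the space of "homotopies witnessing that a section can be pushed off a point inside a disc, relative to the disc boundary" — to be the main obstacle; it is the analogue of Lemma \ref{lemma:tubular-contractible} in the present setting and should follow by the same style of argument (fix a reference, deformation-retract onto it) but needs care about the basepoint component and about doing all $i+1$ discs simultaneously, which is handled by disjointness exactly as in condition \eqref{gs:3}-type arguments elsewhere. Once these two steps are in place, the statement follows immediately, in parallel with Proposition \ref{prop:fibrations-sections}'s companion result for $\g{g}{M}_i$.
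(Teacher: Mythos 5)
Your first step — the map $\f{g}{M}_i\to C_i(M)$ is equivariant, $C_i(M)$ is locally retractile by Proposition \ref{prop:retractil-embeddings}, and Lemma \ref{lemma:retractil-target} makes it a fibration — is the same as the paper's; using $\Diff_c(M)$ instead of $\Diff_\partial(M)=\Diff(M)$ is immaterial since $\partial M=\emptyset$ here. The divergence, and the gap, is in identifying the fibre $\Fib_i$. You propose to compare $\Fib_i$ with $\Gmg{M\setminus\cup d_j(0)}{g}$ by a restriction map, then argue its fibres are contractible, and you single this contractibility out as the ``main obstacle'' without resolving it — and the justifications you sketch are not correct. Over a fixed $g$, the fibre of your restriction map is a product over $j$ of spaces of pairs (an extension of $g$ across $d_j(D^d)$, together with a path $h_j$ in the section space over $d_j(D^d)$ rel $\partial d_j(D^d)$ from that extension to $g|_{d_j(D^d)}$); this is a path space with one endpoint fixed, hence contractible by the elementary reparametrisation trick. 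It has nothing to do with the homotopy-group computations of Proposition \ref{prop:sections-components} or with an $H$-group structure, and neither of those would give contractibility (a loop space of a highly connected space is highly connected, not contractible). Your route would also obligate you to check separately that the restriction map is a fibration.

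The paper avoids all of this with a single explicit deformation retraction of $\Fib_i$ onto the subspace $Y$ of tuples in which every $h_j$ is the constant homotopy (so $d_j(0)\notin\supp f$ already): the retraction absorbs the homotopies into $f$ by setting $H_t(f)=h_0(t,-)\circ\cdots\circ h_i(t,-)$ (disjoint supports make this well defined) and $H_t(h_j)(s,-)=h_j(t+s(1-t),-)$. Then $Y\to\Gmg{M\setminus\cup d_j(0)}{g}$, $(f,\mathrm{const},\ldots,\mathrm{const})\mapsto f|_{M\setminus\cup d_j(0)}$, is a homeomorphism, and one is done. This is exactly the ``fix a reference and deformation-retract onto it'' move you guessed was needed, but applied directly to $\Fib_i$ instead of to the fibres of an auxiliary map, which is both shorter and removes the need to prove any additional fibration property.
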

\begin{proof}
The space $C_i(M)$ is $\Diff_\partial(M)$-locally retractile by Lemma \ref{prop:retractil-embeddings}, and the map is equivariant for the action of $\Diff_\partial(M)$; hence, by Lemma \ref{lemma:retractil-target}, this is a locally trivial fibration. The fibre is the space $\Fib_i$ of tuples $(f,h_0,\ldots,h_i)$ where $f\in \Gmg{M}{g}$ and $h_j$ is a homotopy of $f$ supported in $d_j$ such that $h_j(1,-)$ is a section supported away $d_j(0)$. Since the homotopies $h_j$ have disjoint support, we may compose them. There is a homotopy
\begin{align*}
H\colon I\times \Fib_i&\longrightarrow  \Fib_i\\
(t,(f,(h_0,\ldots,h_i)))&\longmapsto (H_t(f),H_t(h_0),\ldots,H_t(h_i))
\end{align*}
where 
\begin{align*}
H_t(f)(-) &= h_0(t,-)\circ\ldots\circ h_i(t,-) \\
H_t(h_j)(s,-) &= h_j(t+s(1-t),-).
\end{align*}
This homotopy deformation retracts $\Fib_i$ into the subspace $Y$ of those tuples $(f,h_0,\ldots,h_i)$ such that $d_j(0)\notin\supp h_j$ and $h_j$ is the constant homotopy. Finally, there is a map
\[Y\longrightarrow \Gmg{M\setminus \cup d_j(0)}{g}\]
given by sending $(f,h_0,\ldots,h_i)$ (recall that these homotopies are all constant) to $f_{|M\setminus \cup d_j(0)}$, and this map is a homeomorphism.
\end{proof}

By condition (\ref{it:102}) of Definition \ref{defn:DiscRes}, the scanning map
\[\sss\colon \en{g}{M}\longrightarrow \Gmg{M}{g}\]
extends to a semi-simplicial map $\sss_\bullet\colon \g{g}{M}_\bullet\to \f{g}{M}_\bullet$ given on $i$-simplices by sending each tuple $(W,u,d_0,\ldots,d_i)$ to the tuple $(\sss(W,u),(d_0,\Id),\ldots,(d_i,\Id)$, where $\Id$ denotes the constant homotopy.

\begin{proposition} The resolution $\sss_\bullet$ of the scanning map is a levelwise homology equivalence in degrees $\leq \frac{1}{3}(2g-2)$. Hence the scanning map is also a homology equivalence in those degrees.
\end{proposition}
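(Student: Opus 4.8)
The strategy is the one used by McDuff for configuration spaces: delete discs to reduce to the already-established bordered case, and then propagate the conclusion up the two resolutions.

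Fix a Riemannian metric on $M$, so that $\Sss(TM)\cong\Ss(TM)$ and, by the last displayed diagram of Section~\ref{section:closed}, $\sss$ is identified with the scanning map $\s$. Fix $i\geq 0$, and for a configuration $(d_0,\ldots,d_i)\in C_i(M)$ write $M^{(i)}:=M\setminus\bigcup_{j=0}^i d_j(\mathring D^d)$; this is a simply connected manifold (deleting finitely many discs from a manifold of dimension $\geq 3$ does not alter $\pi_1$), of dimension $\geq 5$, with non-empty boundary $\coprod_{j=0}^i S^{d-1}$, and with no closed components. First I would observe that the inclusion $M^{(i)}\hookrightarrow M\setminus\bigcup_{j=0}^i d_j(0)$ induces weak homotopy equivalences
$$\e{g}{M^{(i)}}\ \xrightarrow{\ \sim\ }\ \e{g}{M\setminus\cup_{j=0}^i d_j(0)},\qquad \Gamma_c(\Ss(TM^{(i)})\to M^{(i)};\infty)_g\ \xrightarrow{\ \sim\ }\ \Gmg{M\setminus \cup_{j=0}^i d_j(0)}{g}.$$
Indeed, a closed genus $g$ surface in $M\setminus\cup_j d_j(0)$ is compact, hence contained in $M$ minus some small discs around the $d_j(0)$, so the right-hand source is a filtered colimit of spaces of surfaces in submanifolds diffeomorphic to $M^{(i)}$ along inclusions which are weak equivalences by isotopy extension (the surface has codimension $\geq 3$); and a compactly supported section of $\Ss$ over $M\setminus\cup_j d_j(0)$ is, by compact support, equal to $\infty$ near the punctures, hence the same datum as a compactly supported section over $M^{(i)}$ equal to $\infty$ on $\partial M^{(i)}$, the formal Euler characteristic $\chi$ of Section~\ref{sec:PathComp} being unchanged since it is a compactly supported characteristic number. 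Under these identifications $\sss$ for $M\setminus\cup_j d_j(0)$ becomes the scanning map $\mathscr{S}_g$ of $M^{(i)}$, so by Theorem~\ref{thm:MainH} in the case of non-empty boundary (established in Sections~\ref{section:stablehomology} and~\ref{section:surgery}) it induces an isomorphism on integral homology in degrees $\leq\tfrac13(2g-2)$.

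Since $\sss_\bullet$ preserves the discs $d_j$, the map $\sss_i$ is a map of fibrations over $\mathrm{id}_{C_i(M)}$ from $\g{g}{M}_i\to C_i(M)$ to $\f{g}{M}_i\to C_i(M)$ (Proposition~\ref{prop:fibrations-sections} and the fibration sequence preceding it), which on the fibre over $(d_0,\ldots,d_i)$ is the scanning map of $M^{(i)}$ just discussed. As it is fibrewise it induces a map of local coefficient systems $\mathcal{H}_*(\text{fibre})$ on $C_i(M)$ which is an isomorphism in degrees $\leq\tfrac13(2g-2)$; comparing the two Serre spectral sequences, for every $(p,q)$ with $p+q\leq\tfrac13(2g-2)$ the differentials out of and into $E^r_{p,q}$ have their other endpoint of second index $\leq p+q-1\leq\tfrac13(2g-2)$ (here one uses $r\leq p$), so the comparison map is an isomorphism on every page in total degree $\leq\tfrac13(2g-2)$, hence on $E^\infty$, hence on $H_n$ for $n\leq\tfrac13(2g-2)$. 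This is the first assertion. The same bookkeeping applied to the skeletal spectral sequences $E^1_{s,t}=H_t(\g{g}{M}_s)\Rightarrow H_{s+t}(|\g{g}{M}_\bullet|)$ and its counterpart for $\f{g}{M}_\bullet$ then shows $|\sss_\bullet|$ is a homology isomorphism in degrees $\leq\tfrac13(2g-2)$; since both augmentations are weak equivalences (the semi-simplicial spaces are resolutions) and $\en{g}{M}\to\e{g}{M}$ is one too, with $\sss$ corresponding to the scanning map $\e{g}{M}\to\Gamma_c(\Ss(TM)\to M)_g$, the scanning map is a homology isomorphism in degrees $\leq\tfrac13(2g-2)$ — which is Theorem~\ref{thm:MainH}, and in particular (taking $b=0$) Theorem~\ref{thm:Main}, for $M$ closed.

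The step requiring the most care is the fibre identification in the second paragraph: precisely matching $\Gmg{M\setminus \cup d_j(0)}{g}$ with the target $\Gamma_c(\Ss(TM^{(i)})\to M^{(i)};\infty)_g$ of the scanning map of the bordered manifold $M^{(i)}$, i.e.\ checking that the formal-genus condition $\chi=2-2g$ is the correct one on both sides and survives capping off the boundary spheres, and that deleting points rather than open discs does not change the weak homotopy type of the spaces of embedded surfaces and of compactly supported sections. Everything after that — the two spectral sequence comparisons — is routine.
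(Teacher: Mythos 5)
Your proof takes essentially the same route as the paper's: identify $\sss_i$ as a map of fibrations over $C_i(M)$, recognize the fibre map as the scanning map for $M\setminus\cup d_j(0)$ (which, as the paper tersely notes, ``does not have boundary but does admit a boundary''), apply the already-established bordered case of Theorem~\ref{thm:MainH}, and propagate through the Serre and skeletal spectral sequences. You spell out the colimit-over-shrinking-discs identification of $M\setminus\cup d_j(0)$ with a bordered manifold $M^{(i)}$ and the two spectral sequence comparisons, both of which the paper leaves implicit; that is a reasonable expansion.

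One point where your spectral-sequence bookkeeping is slightly off: your bound ``other endpoint of second index $\leq p+q-1$'' is correct, but having an isomorphism on $E^2_{p,q}$ for all $q$ in a range does \emph{not}, by induction alone, give an isomorphism on every page $E^r$ in total degree $\leq k$ --- the source of the incoming differential $d_r\colon E^r_{p+r,q-r+1}\to E^r_{p,q}$ sits in total degree $p+q+1$, which escapes the range, and to control the image one also needs an epimorphism one degree higher (the Zeeman comparison hypothesis). In the present situation this is available: Theorem~\ref{thm:stab} provides the homology epimorphism in degree $\lfloor\tfrac13(2g-2)\rfloor+1$, and this propagates through the proof of the bordered case of Theorem~\ref{thm:MainH} to give the required surjectivity of the fibre map one degree above the isomorphism range. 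With that supplement the two spectral sequence comparisons do close in the stated degrees, exactly as the paper tacitly assumes.
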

\begin{proof}
The induced map on the space of $i$-simplices is a map of fibrations over $C_i(M)$, and the induced map on fibres is
\[\sss_i\colon \en{g}{M\setminus \cup d_j(0)}\longrightarrow \Gmg{M\setminus \cup d_j(0)}{g}.\]
 As $\sss_i$ is a scanning map, Theorem \ref{thm:MainH} for surfaces in a manifold with boundary (which was proven in the preceding two sections) asserts that $\sss_i$ is a homology equivalence in degrees $\leq \frac{1}{3}(2g-2)$. Note that although $M\setminus \cup d_j(0)$ does not have boundary, it does admit a boundary.
\end{proof}

\bibliographystyle{amsalpha}
\bibliography{biblio}

\end{document}